\newtheorem{theorem}{Theorem}[section]
\newtheorem{lemma}[theorem]{Lemma}
\newtheorem{proposition}[theorem]{Proposition}
\newtheorem{prop}[theorem]{Proposition}
\newtheorem{corollary}[theorem]{Corollary}
\theoremstyle{definition}
\newtheorem{propdef}[theorem]{Proposition-Definition}
\theoremstyle{definition}
\newtheorem{remark}[theorem]{Remark}
\theoremstyle{definition}
\newtheorem{definition}[theorem]{Definition}
\theoremstyle{definition}
\theoremstyle{definition}
\theoremstyle{definition}
\theoremstyle{definition}
\newtheorem{assumption}[theorem]{Assumption}
\theoremstyle{definition}
\crefname{figure}{Figure}{Figures}
\def\Acal{\mathcal{A}}\def\Ocal{\mathcal{O}}\def\Ucal{\mathcal{U}}\def\Vcal{\mathcal{V}}\def\Xcal{\mathcal{X}}
\def\one{{\mathbbm{1}}}
\def\C{\mathbb{C}}
\def\R{\mathbb{R}}
\def\Z{\mathbb{Z}}
\def\Q{\mathbb{Q}}
\def\<{{\langle}}
\def\>{{\rangle}}
\def\la{{\lambda}}
\def\diag{{ \operatorname{diag}}}
\def\CC{{\mathbb C}}
\def\SL{\operatorname{SL}}
\def\Z{{\mathbb Z}}
\def\R{{\mathbb R}}
\def\diag{{\rm diag}}
\def\id{{\operatorname{id}}}
\newcounter{todobackgr}[section]
\newcounter{todofigure}[section]
\def\ub{{\bar u}}
\def\d#1{\dot{#1}}
\def\ds{\dot{s}}
\def\dw{\dot{w}}
\def\du{\dot{u}}
\def\dv{\dot{v}}
\def\da{\dot{a}}
\def\alphacheck{\alpha^\vee}
\DeclareRobustCommand{\cev}[1]{%
  \mathpalette\do@cev{#1}%
}
\newcommand{\do@cev}[2]{%
  \fix@cev{#1}{+}%
  \reflectbox{$\m@th#1\vec{\reflectbox{$\fix@cev{#1}{-}\m@th#1#2\fix@cev{#1}{+}$}}$}%
  \fix@cev{#1}{-}%
}
\newcommand{\fix@cev}[2]{%
  \ifx#1\displaystyle
    \mkern#23mu
  \else
    \ifx#1\textstyle
      \mkern#23mu
    \else
      \ifx#1\scriptstyle
        \mkern#22mu
      \else
        \mkern#22mu
      \fi
    \fi
  \fi
}
\def\Rich_#1^#2{\accentset{\circ}{R}_{#1,#2}}
\def\Rtp_#1^#2{R_{#1,#2}^{>0}}
\def\Rtnn_#1^#2{R_{#1,#2}^{\geq0}}
\def\PR_#1^#2{\accentset{\circ}{\Pi}_{#1,#2}}%
\def\PRtp_#1^#2{\Pi_{#1,#2}^{>0}}%
\def\PRtnn_#1^#2{\Pi_{#1,#2}^{\geq0}}%
\def\PRcl_#1^#2{\Pi_{#1,#2}}%
\def\PRR_#1^#2{\accentset{\circ}{\Pi}_{#1,#2}^\R}%
\def\PRRcl_#1^#2{\Pi_{#1,#2}^\R}%
\def\bt{{\mathbf{t}}}
\def\bw{{\mathbf{w}}}
\def\bv{{\mathbf{v}}}
\def\bu{{\mathbf{u}}}
\def\bx{{\mathbf{x}}}
\def\by{{\mathbf{y}}}
\def\Hom{\operatorname{Hom}}
\def\BR{B(\R)}
\def\hjmap{\kappa}
\def\hjmp_#1{\hjmap_{#1}}
\def\Uom_#1{U^{\diamond,-}_{#1}}
\def\xrasim{\xrightarrow{\sim}}
\def\Richaff_#1^#2{\accentset{\circ}{\mathcal{R}}_{#1}^{#2}}
\def\Cast{\C^\times}
\def\Povar_#1{\accentset{\circ}{\Pi}_{#1}}
\def\Povarcl_#1{\Pi_{#1}}
\def\RPovar_#1{\accentset{\circ}{\Pi}^\R_{#1}}
\def\RPovarcl_#1{\Pi^\R_{#1}}
\def\Povtp_#1^#2{\Pi_{#1,#2}^{>0}}
\def\Povtnn_#1{\Pi_{#1}^{\geq0}}
\def\Star_#1{\operatorname{Star}_{#1}}
\def\Startnn_#1{\operatorname{Star}^{\geq0}_{#1}}
\def\Link{\operatorname{Lk}}
\def\Lkx_#1{\Link_{#1}}
\def\Lkxx_#1^#2{\accentset{\circ}{\Link}_{#1}^{#2}}
\def\Lktxx_#1^#2{\Link^{>0}_{#1,#2}}
\def\Starxx_#1^#2{\operatorname{Star}_{#1,#2}}
\def\Startxx_#1^#2{\operatorname{Star}^{\geq0}_{#1,#2}}
\def\sctnn_#1{\sc^{\geq0}_{#1}}
\def\sctp_#1^#2{\sc^{>0}_{#1,#2}}
\def\Seps_#1{S_{#1}}
\def\Lktpe_#1^#2{\Link^{>0}_{#1,#2}}
\def\Lktnne_#1{\Link^{\geq0}_{#1}}
\def\Lktp_#1^#2{\Link^{>0}_{#1,#2}}
\def\Lktnn_#1{\Link^{\geq0}_{#1}}
\def\sc{Z}
\def\sco_#1^#2{\accentset{\circ}{\sc}_{#1,#2}}
\def\sccl_#1^#2{\sc_{#1}^{#2}}
\def\Y{\mathcal{Y}}
\def\Yo_#1{\accentset{\circ}{\Y}_{#1}}
\def\Ycl_#1{\Y_{#1}}
\def\Ytp_#1{\Y_{#1}^{>0}}
\def\strg(#1){\normg{#1}}
\def\normg#1{\|#1\|}
\def\Spec{\operatorname{Spec}}
\def\int{{\operatorname{init}}}
\def\domk{\Delta_{\omega_k,\omega_k}}
\def\DOM^#1_#2{\Delta^{#1 \omega_i}_{#2 \omega_i}}
\def\DOMr^#1_#2{\Delta^{#1 \omega_r}_{#2 \omega_r}}
\def\DOMir^#1_#2{\Delta^{#1 \omega_{i_r}}_{#2 \omega_{i_r}}}
\def\om{\omega}
\def\line#1{\overline{#1}}
\def\lline#1{\overline{\overline{#1}}}
\def\sv{s^{\mathbf{v}}}
\newcommand*{\smallcap}{{\mathbin{\scalebox{0.5}{\ensuremath{\cap}}}}}%
\def\RLtp_#1^#2{\cev R_{#1,#2}^{>0}}
\def\Rsf_#1^#2{\Rich_{#1}^{#2}(K)}
\def\LRsf_#1^#2{\LRich_{#1}^{#2}(K)}
\def\pre{{\,\operatorname{pre}}}
\def\tpre_#1^#2{\vec\twistop^\pre_{#1,#2}}
\def\Ltpre_#1^#2{\cev\twistop^\pre_{#1,#2}}
\def\tpreL_#1^#2{\Ltpre_{#1}^{#2}}
\def\twistop{\tau}
\def\twist_#1^#2{\vec\twistop_{#1,#2}}
\def\twistL_#1^#2{\cev\twistop_{#1,#2}}
\def\om{\omega}
\def\sv_#1{s^{\bv}_{#1}}
\def\capBWB(#1){(#1)^{\smallcap w}}
\def\capBWoB(#1){(#1)^{\smallcap w_0}}
\def\capBWBnopar(#1){#1^{\smallcap w}}
\def\Pio_#1^#2{\Pi^\circ_{#1,#2}}
\def\UW{\wedge^\arrR}
\def\chmnrR_#1{\vec\Delta_{#1}}
\def\chmnrL_#1{\cev\Delta_{#1}}
\def\Yo{y_0}
\def\H{H}
\def\Dpm_#1{\Delta^{\pm}_{#1}}
\def\Dmp_#1{\Delta^{\mp}_{#1}}
\def\ach{\alphacheck}
\def\MS{\operatorname{MS}}
\def\TMSR_#1^#2{\vec \tau^{\,\MS}_{#1,#2}}
\def\TMSL_#1^#2{\cev \tau^{\,\MS}_{#1,#2}}
\def\Vlax_#1{V(\la)_{#1}}
\def\rev{\operatorname{rev}}
\def\A{{\mathcal A}}
\def\C{{\mathbb C}}
\def\Z{{\mathbb Z}}
\def\x{{\mathbf{x}}}
\def\a{{\mathbf{a}}}
\def\Y{{\overline{Y}}}
\def\tB{{\tilde B}}
\def\ord{{\mathrm{ord}}}
\def\dlog{\operatorname{dlog}}
\def\relspc{\ }
\newcommand{\Rrel}[1]{\stackrel{#1\relspc}{\longrightarrow}} 
\newcommand{\Rwrel}[1]{\stackrel{#1\relspc}{\Longrightarrow}}
\newcommand{\Lrel}[1]{ \stackrel{\relspc#1}{\longleftarrow}}
\newcommand{\Lwrel}[1]{\stackrel{\relspc#1}{\Longleftarrow}}
\def\CV(#1){x_{#1}}
\def\BR{\accentset{\circ}{R}}
\def\pmn{\pm I}
\def\pmnm{(\pmn)^m} %
\def\DRW{(\pmn)^m} %
\def\Demprod{\delta}
\def\br{\beta}  %
\def\bigBR{\accentset{\circ}{\mathcal{Y}}} %
\def\BigBR{\mathcal{Y}} %
\def\torus{T}
\def\torus(#1){T_{#1}}
\def\Cx{{\C^\times}}
\newcommand{\apd}[1]{^{\< #1\>}}
\newcommand{\crossing}[1]{\Delta_{#1}}
\def\grid_#1^#2{\Delta_{#1}^{#2}}
\def\grid_#1{\Delta_{#1}}
\def\z{\bar{z}}
\def\xbr{\x_{\br}}
\def\Abr{\A_{\br}}
\newcommand*\bigcdot{\mathpalette\bigcdot@{.4}}
\newcommand*\bigcdot@[2]{\mathbin{\vcenter{\hbox{\scalebox{#2}{$\m@th#1\bullet$}}}}}
\def\fro{\operatorname{fro}}
\def\mut{\operatorname{mut}}
\def\Jfro{J_{\bu}^{\fro}}
\def\Jmut{J_{\bu}^{\mut}}
\def\Hollow{[m]\setminus\Jo}
\def\fro{\operatorname{fro}}
\def\mut{\operatorname{mut}}
\def\Jfro{J_{\bu}^{\fro}}
\def\Jmut{J_{\bu}^{\mut}}
\numberwithin{equation}{section}
\def\todoflr#1(#2){\textcolor{green!70!black}{\bf \{#1\}(#2)}\xspace}
\def\brp{\br'}
\def\wo{w_\circ}
\def\sinkrec{sink-recurrent\xspace}
\def\top{{\operatorname{top}}}
\def\Ptop_#1(#2){P^\top(#1;#2)}
\def\igrw[#1]#2{\includegraphics[width=#1\textwidth]{#2}}
\def\figref#1(#2){Figure~\hyperref[#1]{\ref*{#1}(#2)}}
\def\tabref#1(#2){Table~\hyperref[#1]{\ref*{#1}(#2)}}
\def\xbr{\x_\br}
\def\UU{W}
\def\Jo{J_\br}
\def\Poincare{Poincar\'e\xspace}
\def\Jfro{\Jo^{\fro}}
\def\Jmut{\Jo^{\mut}}
\def\lemmaref#1(#2){Lemma~\hyperref[#1]{\ref*{#1}(#2)}}
\def\Chmin(#1){\crossing{#1}}
\def\ds{\dot{s}}
\def\Xbul{X_{\bullet}}
\def\Ybul{Y_{\bullet}}
\def\Bruh{\Xcal}
\def\Bruho{\accentset{\circ}{\Xcal}}
\def\cham{chamber\xspace}
\def\sign{{\rm sign}}
\def\mut{{\rm mut}}
\def\fro{{\rm fro}}
\def\d{\mathbf{d}}
\def\tpar{t}
\def\UF#1{\dot{#1}} %
\def\UG{\UF{G}}
\def\UB{\UF{B}}
\def\UU{\UF{U}}
\def\UH{\UF{\H}}
\def\ux{\UF{x}}
\def\uy{\UF{y}}
\def\UI{\UF{I}}
\def\UW{\UF{W}}
\def\ui{{i'}}
\def\uj{{j'}}
\def\sig{\sigma}
\def\orb[#1]{\bm{#1}}
\def\us{{\tilde s}}
\def\uwo{{\tilde w}_0}
\def\bru{{\tilde\br}}
\def\um{\tilde{m}}
\def\relab{\la_\br}
\def\chL#1{X^\ast(#1)}
\def\cochL#1{X_\ast(#1)}
\def\uom{\UF{\om}}
\def\uach{\UF{\alpha}^\vee}
\def\ualpha{\UF{\alpha}}
\def\uup#1{\tilde{u}_{#1}}
\def\wwp#1{\tilde{w}_{#1}}
\def\ugrid_#1{\Delta'_{#1}}
\def\UXbul{\UF{X}_{\bullet}}
\def\UYbul{\UF{Y}_{\bullet}}
\def\UX{\UF{X}}
\def\UY{\UF{Y}}
\def\UJ{\UF{J}}
\def\UJo{\UF{J}_{\bru}}
\def\UJmut{\UF{J}^{\mut}}
\def\UJfro{\UF{J}^{\fro}}
\def\ucrossing#1{\tilde{\Delta}_{#1}}
\def\Lci#1#2{L_{#1,#2}}
\def\Lpci#1#2{L'_{#1,#2}}
\def\omp{\om'}
\def\Tbr{T_{\br}}
\def\eps{\epsilon}
\def\ombrc{\om_{\br,c}}
\def\ombrcp{\om_{\br,\cp}}
\def\ombrpc{\om_{\br',c}}
\def\ombrx#1{\om_{\br,#1}}
\def\ombrpx#1{\om_{\br',#1}}
\def\ombr{\om_{\br}}
\def\ombrp{\om_{\br'}}
\def\omcoef{\tB}
\def\Sbr{\Sigma_{\br}}
\def\Sbrp{\Sigma_{\br'}}
\def\ifreeze#1[#2]{#1^{\backslash #2}} %
\def\icon#1[#2]{#1^{\slash #2}} %
\def\bmref#1{\ref{#1}\xspace}
\def\Acalbr{\Acal(\Sbr)}
\def\brl{\tilde{\br}}
\def\ml{\tilde{\mis}}
\def\tSig{\dot{\Sigma}}
\def\tSigma{\tSig}
\DeclareMathOperator{\braid}{braid}
\DeclareMathOperator{\lift}{lift}
\DeclareMathOperator{\fold}{fold}
\DeclareMathOperator{\res}{res}
\def\frQ{\dot{Q}^{\operatorname{fr}}}
\def\line#1{\overline{#1}}
\def\lline#1{\overline{\overline{#1}}}
\def\hr{h^{+}}
\def\hb{h^{-}}
\def\hrb{h^{\pm}}
\def\UGU{U_+ \backslash G / U_+}
\def\tV{\tilde V}
\def\tTbr{\tilde{T}_\br}
\def\tparp{t'}
\def\btparp{\bt'}
\def\ordM{M_\br}
\def\ombrp{\om_{\br'}}
\def\ombrpc{\om_{\br',c}}
\def\tzero{t^*}
\def\bvap{\bu}
\def\short{short\xspace}
\def\llong{long\xspace}
\def\npm{{n+m}}
\def\pb{^\ast} %
\def\mis{\phi} %
\def\cp{e}
\def\tbeta{{\tilde \beta}}
\def\nrmfont{\textup}
\def\Digr{\Gamma}
\def\Digrt{\tilde\Gamma}
\def\sinkrec{sink-recurrent\xspace}
\def\ND{N^{\operatorname{in}}_s(\Digr)}
\def\NDSH{\widehat{N}^{\operatorname{in}}_s(\Digr(\Sigma))}
\def\NDH{\widehat{N}^{\operatorname{in}}_s(\Digr)}
\def\froi{f} %
\def\si{s} %
\def\orbit#1{#1}
\def\orbSet{J}
\def \fseed#1{#1} %
\def \fmtx#1{#1}
\def \midbr{\delta}
\def \midbrl{\tilde{\midbr}}
\def \quasi{\sim}
\def\dwo{\dot{w}_\circ}
\def\ris{r}
\def\risW{r}
\def\risV{r}
\def\USigma{\UF{\Sigma}}
\def\UT{\UF{T}}
\def\ud{\UF{\d}}
\def\Sheta{\sig}
\def\sheta{\sig}
\def\ua{a'}
\def\ub{b'}
\def\ucartan{\UF{a}}
\def\utB{\dot\tB}
\def\JJmut{J^{\mut}}
\def\JJfro{J^{\fro}}
\def\uk{k'}
\def\UQ{\UF{Q}}
\def\bz{\mathbf{z}}
\def\negspc{\\[-11pt]}
\def\uni{\underline{i}}
\def\unj{\underline{j}}
\def\uii{i''}
\def\orbC{{\orb[C]}}
\def\orbCp{{\orb[C']}}
\def\uc{{c'}}
\def\one{1\,}
\def\uone{\underline{1}\,}
\def\two{2\,}
\def\utwo{\underline{2}\,}
\def\twox{2}
\def\utwox{\underline{2}}
\def\fseedx{\iota^\ast}
\def\ptil{\tilde p}
\def\mtil{\tilde m}
\def\ubx{\UF{\bx}}
\def\uby{\UF{\by}}
\def\Qres{\UQ_{\res}}
\def\frQres{\frQ_{\res}}
\def\ucp{{\cp'}}
\def\cpp{{\cp'}}
\def\uwo{{\tilde w}_\circ}
\def\uisub_#1{{i'_{#1}}}
\def\tVo{\tilde{V}^\circ}
\def\tVgeq_#1{\tilde{V}_{\geq #1}}
\def\Vo{V^\circ}
\def\btgeq_#1{\btparp_{> #1}}
\def\ombrest{\omega_{\operatorname{rest}}}
\def\ombrestp{\omega'_{\operatorname{rest}}}
\def\Iset{F}
\def\Vproj{p}
\def\perm{\tau}
\def\ordbr{\ord^{(\br)}}
\def\ordbrp{\ord^{(\brp)}}
\def\dbr{\d_{\br}}
\begin{document}

\def\ups#1{u_{#1}}
\def\wp#1{w_{#1}}
\def\dwp#1{\dw_{#1}}
\def\vp#1{u_{#1}} 
\def\pd#1{_{#1}}
\def\pu#1{^{#1}}
\def\up#1{u_{#1}}

\title{Braid variety cluster structures, II: general type}

\author{Pavel Galashin}
\address{Department of Mathematics, University of California, Los Angeles, 520 Portola Plaza,
Los Angeles, CA 90025, USA}
\email{\href{mailto:galashin@math.ucla.edu}{galashin@math.ucla.edu}}

\author{Thomas Lam}
\address{Department of Mathematics, University of Michigan, 2074 East Hall, 530 Church Street, Ann Arbor, MI 48109-1043, USA}
\email{\href{mailto:tfylam@umich.edu}{tfylam@umich.edu}}

\author{Melissa Sherman-Bennett}
\address{Department of Mathematics, University of California Davis, One Shields Ave, Davis, CA 95616}
\email{\href{mailto:mshermanbennett@ucdavis.edu}{mshermanbennett@ucdavis.edu}}

\thanks{P.G.\ was supported by an Alfred P. Sloan Research Fellowship and by the National Science Foundation under Grants No.~DMS-1954121 and No.~DMS-2046915. T.L.\ was supported by the National Science Foundation under Grants No.~DMS-1953852 and No.~DMS-2348799. M.S.B. was supported by the National Science Foundation under Award No.~DMS-2103282 and No.DMS-2444020. 
 Any opinions, findings, and conclusions or recommendations expressed in this material are
	those of the authors and do not necessarily reflect the views of the National Science
	Foundation.}

\date\today

\makeatletter
\@namedef{subjclassname@2020}{%
  \textup{2020} Mathematics Subject Classification}
\makeatother

\subjclass[2020]{
  Primary:
  13F60. %
  Secondary:
  14M15.
}

\keywords{Cluster algebra, braid variety, open Richardson variety, local acyclicity, Deodhar hypersurface, algebraic group.}

\begin{abstract}
We show that braid varieties for any complex simple algebraic group $G$ are cluster varieties. This includes open Richardson varieties inside the flag variety $G/B$.
\end{abstract}
\maketitle

\section{Introduction}\label{sec:intro}

This is one of two papers concerned with the construction of cluster structures on braid varieties.  In the present paper, we work in the setting of a general simple algebraic group $G$ and construct cluster seeds using algebraic  geometry.  In the companion paper~\cite{GLSBS1}, joint with David Speyer, we give an alternative proof in the special case $G = \SL_n$, using the combinatorics of plabic graphs and surfaces.  The current work is logically independent of~\cite{GLSBS1}, which, however, ultimately produces the same cluster structure in the case $G = \SL_n$. 

Let $G$ be a complex, simple, simply-connected algebraic group, $B_{\pm}$ opposing Borel subgroups, $U_{\pm}$ their unipotent radicals, $\H:= B_+ \cap B_-$ the torus, $I$ the vertex set of the Dynkin diagram, $W$ the Weyl group with simple generators $s_i, i \in I$, and denote by $\dot w$ the lift of $w\in W$ to $G$ as in~\eqref{eq:dw_dfn}.   Let $\wo \in W$ denote the longest element and $i \mapsto i^*$ the action of $\wo$ on $I$.  Let $\alpha_i, \alpha_i^\vee, \omega_i$ for $i \in I$ denote the simple roots, simple coroots, fundamental weights, respectively, and let $A = (a_{ij})_{i,j\in I}$ be the Cartan matrix given by $a_{ij}:= \< \alpha_i, \ach_j \>$.  Denote $d_i := 2/(\alpha_i,\alpha_i)$ so that $d_i a_{ij} = d_j a_{ji}$.

\subsection{Double braid varieties}\label{sec:intro:double}
A \emph{double braid word} $\br = i_1 i_2 \cdots i_m$ is a word in the alphabet $\pm I$.  For $i \in I$, we set $(-i)^*:=-i^*$. For $i\in\pmn$, define
\begin{equation}\label{eq:s_i-notation}%
 s_i^+:=
  \begin{cases}
    s_i, &\text{if }i>0,\\
    \id, &\text{if }i<0,
  \end{cases}\quad   s_i^-:=
  \begin{cases}
    \id, &\text{if }i>0, \\
    s_{-i}, &\text{if }i<0.\\
  \end{cases}
\end{equation}

A \emph{weighted} (or \emph{framed}) \emph{flag} is an element $F = gU_+ \in G/U_+$.  %
Two weighted flags $(F,F')$ are \emph{weakly $w$-related} (resp., \emph{strictly $w$-related}) if there exist $g \in G$ and $h\in H$ (resp., $g \in G$) such that $(gF,gF') = (U_+, h\dot{w}U_+)$ (resp., $(gF,gF') = (U_+, \dot{w}U_+)$).  We write this as $F \Rwrel{w} F'$ (resp., $F \Rrel{w} F'$).

Suppose that the Demazure product of $\br$ is $\wo$;  see \eqref{eq:dem}. We consider the set $\bigBR_\br$ of tuples $(X_\bullet,Y_\bullet)$ of weighted flags satisfying the relative position conditions
\begin{equation}\label{eq:braidDiagram}
	\begin{tikzcd}
		X_0& \arrow[l,"{s_{i_1}^+}"'] X_1& \arrow[l,"{s_{i_2}^+}"'] \cdots& \arrow[l,"{s_{i_m}^+}"'] 
		X_m \\
		Y_0 \arrow[r,"{s_{i_1^\ast}^-}"'] \arrow[u, Rightarrow, "\wo"']& Y_1 \arrow[r,"{s_{i_2^\ast}^-}"'] & \cdots \arrow[r,"{s_{i_m^\ast}^-}"'] & Y_m. \ar[equal]{u} %
	\end{tikzcd}
\end{equation}
The group $G$ acts on $\bigBR_\br$ by acting on each individual weighted flag, and this action is free. The \emph{double braid variety} $\BR_\br:=\bigBR_\br/G$ is defined as the quotient of $\bigBR_\br$ modulo this $G$-action. It is a smooth, affine, irreducible complex algebraic variety (\cref{prop:smooth_affine_etc}).

Double braid varieties include open Richardson varieties~\cite{Rietsch,KLS2,Lec}, open positroid varieties~\cite{KLS}, double Bott-Samelson cells~\cite{ShWe}, the strata in \cite{WY}, and the braid varieties of~\cite{Mellit_cell,CGGS}; see~\cite{GLSBS1} for further discussion. Braid varieties have deep connections to knot theory, as their cohomology recovers part of the Khovanov--Rozansky homology~\cite{KR1,KhoSoe} of the associated link \cite{GL_qtcat,Trinh,CGGS2}.

For each $\beta$, we construct a cluster seed $\Sigma_\br$.  Our main result settles conjectures of~\cite{Lec,CGGS2} and generalizes work of~\cite{BFZ,GL_cluster,Ing,ShWe}. 
\begin{theorem}\label{thm:main}
The coordinate ring of $\BR_\br$ is isomorphic to the cluster algebra $\Abr = \A(\Sigma_\br)$.
\end{theorem}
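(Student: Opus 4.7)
The plan is to prove Theorem~\ref{thm:main} by constructing an explicit seed $\Sigma_\br$ from the double braid word $\br$, showing that its cluster variables and their one-step mutations are regular on $\BR_\br$, and then promoting the resulting algebra inclusion $\Abr \hookrightarrow \O(\BR_\br)$ to an equality via a local-acyclicity/starfish-style argument using Deodhar hypersurfaces.

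\textbf{Step 1 (building the seed).} First I would attach to each double braid word $\br$ a combinatorial object — morally a decorated braid diagram with chambers — from which both the quiver $Q_\br$ (frozen vs.\ mutable vertices, arrows) and an initial cluster $\x_\br$ of regular functions on $\BR_\br$ can be read off. The cluster variables should be given by generalized minors $\Delta_{u \omega_i, v\omega_i}$ evaluated on the pair $(g X_\bullet, g Y_\bullet)$ for a canonical representative $g$ of the $G$-orbit, one per chamber of the braid diagram, mimicking the chamber ansatz of \cite{BFZ} and the microlocal weave construction. One needs to check that these functions descend to the $G$-quotient and are algebraically independent, so that $\Sigma_\br$ is a legitimate seed.

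\textbf{Step 2 (cluster variables and one-step neighbors are regular).} Next I would verify that for every mutable vertex $k$ of $Q_\br$ the mutated variable $\x'_k$, computed by the exchange relation, is also a regular function on $\BR_\br$. For a carefully chosen model this amounts to a determinantal identity among generalized minors — a Plücker-type identity — holding on all of $\BR_\br$. I would prove this by reducing to a local computation in an $\SL_2$- or $\SL_3$-subgroup corresponding to the crossing being mutated, which is the natural general-type analog of the square/bigon move on plabic graphs used in~\cite{GLSBS1}. At the same time I would establish invariance of $\Sigma_\br$, up to quasi-cluster transformation, under the elementary braid-word moves \cumv, \sqmv, and the insertion/removal of trivial pieces, so that $\Abr$ depends only on $\BR_\br$ and not on $\br$.

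\textbf{Step 3 (equality via Deodhar hypersurfaces and the starfish lemma).} With $\Abr \hookrightarrow \O(\BR_\br)$ in hand, equality is the substantive part. I would show that the vanishing locus of each frozen or mutable cluster variable $x_k \in \x_\br$ is an irreducible hypersurface in $\BR_\br$ — a Deodhar-type divisor — and that these hypersurfaces together exhaust the boundary of the open cluster chart cut out by $\x_\br$. Combined with Step~2, this puts the seed in the situation of Muller's starfish lemma: the cluster variables and their one-step mutations are regular, the initial cluster torus is open, and the complement of the union of neighboring cluster tori has codimension $\ge 2$. Local acyclicity of $\Abr$ then yields $\O(\BR_\br) \subseteq \Abr$, completing the proof. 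A base case (a single reduced braid word for $\wo$, where $\BR_\br$ degenerates to something like a double Bott--Samelson cell or the open Richardson $\Pio_{e}^{\wo}$) can be handled by invoking the cluster structures of~\cite{BFZ,GL_cluster,ShWe,Ing}.

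\textbf{Main obstacle.} The hardest step is Step~3, specifically the codimension-2 and irreducibility analysis of the Deodhar hypersurfaces in general Lie type. In type $A$ these are controlled by the combinatorics of plabic graphs as in the companion paper, but for arbitrary $G$ one must instead extract this information intrinsically from the group-theoretic geometry of $\BR_\br$ — roughly, by decomposing each hypersurface into strata indexed by modifications of the braid word $\br$ and showing each such stratum is cut out scheme-theoretically by a single cluster variable. Combined with the verification that mutation corresponds geometrically to a local modification of the braid data, this is where the general-type argument diverges most sharply from the $\SL_n$ approach of~\cite{GLSBS1}, and is where I would expect to spend the bulk of the effort.
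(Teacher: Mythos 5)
Your outline matches the paper's architecture in broad strokes (seed built from generalized minors on a Deodhar torus, irreducible Deodhar hypersurfaces, regularity of one-step mutations via Fomin--Zelevinsky minor identities checked in rank-two subgroups, and a starfish/local-acyclicity argument to upgrade $\Abr\subseteq\C[\BR_\br]$ to equality), but it has a genuine gap at its core: there is no mechanism to run the induction. All of the double braid moves \ref{bm1}--\ref{bm5} preserve the length $m$ of the word, so they can never reduce a general $\br$ with $m>\ell(\wo)$ to your proposed base case of a reduced word (where, incidentally, $\BR_\br$ is just a point, not a Bott--Samelson cell or Richardson variety). The paper's essential additional ingredient is a \emph{deletion--contraction recurrence} (\cref{thm:abstract-clust-stuc-del-con,thm:br_del_con}): after braid moves one arranges $\br=ii\br'$, and then $\BR_\br$ decomposes as the open locus $\{x_2\neq 0\}\cong \BR_{i\br'}\times\Cx$ and the divisor $\{x_2=0\}\cong\C\times\BR_{\br'}$, from which the cluster structure on $\BR_\br$ is deduced from the two strictly shorter words. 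This recurrence is not merely a convenience: it is what proves local acyclicity (sink-recurrence) of $\Sbr$ --- which you invoke in Step~3 but never establish --- and it is also how the paper verifies that the mutated torus at the distinguished vertex actually embeds in $\BR_\br$, i.e.\ the codimension-two condition you assert. Without it, Step~3 is circular.

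A second, smaller gap: your $\SL_2$/$\SL_3$ reduction in Step~2 only handles short braid moves, so it suffices for simply-laced $G$. For the long braid relations ($m_{ij}\in\{4,6\}$) a single move corresponds to a \emph{sequence} of mutations, and the paper must pass through folding from the simply-laced cover $\UG$ (\cref{sec:folding,sec:mult_laced}), with explicit case analysis depending on which crossings are hollow. Relatedly, the cluster variables are not the chamber minors themselves but the Laurent monomials in them obtained by inverting the unitriangular order-of-vanishing matrix (\cref{prop:triang}); this normalization is what makes the exchange relations come out with unit coefficients, and your Step~1 skips it.
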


It would be interesting to compare our construction and the cluster-categorical approach of \cite{GLS_rigid,BIRS,Lec, Menard, CaoKel}, as was done in type A in \cite{SSB}.

\begin{remark}
At the final stages of completing our construction, we learned that a cluster structure for braid varieties was independently announced in a recent preprint~\cite{CGGLSS}. 
We thank the authors of~\cite{CGGLSS} for updating us on their progress. It would be interesting to understand the relation between our approach and their Legendrian-geometric viewpoint.
\end{remark}

  One application of \cref{thm:main} is that a \emph{curious Lefschetz theorem} (see~\cite{HR,LS}, \cite[Theorem~10.1]{GLSBS1}, and~\cite[Theorem~1.5]{GL_qtcat}) holds for double braid varieties; see \cref{thm:cur_lef}. In the case of open Richardson varieties, this implies that the doubly-graded extension group ${\rm Ext}_{\mathcal{O}}(M_w,M_v)$ of two Verma modules in Category $\Ocal$ satisfies curious Lefschetz; cf.~\cite[Section~1.11]{GL_qtcat}.

\subsection{Cluster variables and Deodhar geometry}\label{sec:intro:constr}
To construct a cluster structure on $\BR_\br$, we need to identify certain regular functions on $\BR_\br$ as (initial) cluster variables, and then construct the exchange matrix $\tB$, or quiver, of the initial seed.   In previous works \cite{Scott,GL_cluster,BFZ,ShWe} establishing cluster structures on the Grassmannian, positroid varieties, double Bruhat cells and double Bott Samelson cells, the cluster variables are \emph{(generalized) minors} of some kind.  Determinantal identities satisfied by the minors become exchange relations in the cluster algebra.  A fundamental obstacle, already pointed out by Leclerc \cite{Lec}, to extending these constructions to open Richardson varieties or braid varieties, is that the (generalized) minors are no longer irreducible elements of the coordinate ring $\C[\BR_\br]$.

In the present work, we construct cluster variables using \emph{Deodhar geometry}; see \cref{sec:deodhar-geometry} for details and \cite{GLSBS1,GalTut} for examples.\footnote{To compare our quivers to the quivers in~\cite{GLSBS1,GalTut}, all arrows need to be reversed.} We introduce an open dense algebraic torus $\Tbr\subset \BR_\br$ called the \emph{Deodhar torus}, so named for its relation to the Deodhar decomposition of Richardson varieties \cite{Deo, MR}. It is defined by requiring the weighted flags $X_c,Y_c$ to be weakly $\wp c$-related, where $\wp c\in W$ is maximal possible subject to~\eqref{eq:braidDiagram} (for each $c=0,1,\dots,m$). The complement $\BR_\br\setminus \Tbr$ is a union of irreducible \emph{mutable Deodhar hypersurfaces} $\{V_c\mid c\in\Jmut\}$. 
We define a partial compactification of $\BR_\br$ so that the complement of $\Tbr$ in it also includes \emph{frozen Deodhar hypersurfaces} $\{V_c\mid c\in\Jfro\}$. We let $\Jo:=\Jfro\sqcup\Jmut$. 
The following definition, suggested by David Speyer, is key to our approach.

\begin{propdef}\label{propdef:intro_cluster}
For $c \in \Jo$, define the \emph{cluster variable} $x_c$ to be the unique character of $\torus(\br)$ that vanishes to order one on $V_c$ and has neither a pole nor a zero on $V_{\cp}$ for $\cp\in\Jo\setminus\{c\}$. We denote the cluster by $\xbr=\{x_c\}_{c \in \Jo}$.
\end{propdef}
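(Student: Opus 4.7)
The plan is to leverage the fact that $\Tbr$ is an algebraic torus of dimension $|\Jo|=\dim\BR_\br$, so its character lattice $X^\ast(\Tbr)$ is free abelian of rank $|\Jo|$. Let $\widehat{\BR}_\br$ denote the partial compactification of $\BR_\br$ introduced above, whose boundary against $\Tbr$ is exactly $\bigcup_{c\in\Jo}V_c$. A character $\chi$ of $\Tbr$ is regular and invertible on $\Tbr$, so it extends to a rational function on $\widehat{\BR}_\br$ whose principal divisor is supported on the boundary. This yields a homomorphism
\[
\Phi\colon X^\ast(\Tbr)\;\longrightarrow\;\bigoplus_{c\in\Jo}\Z\cdot V_c,\qquad \chi\mapsto \bigl(\operatorname{ord}_{V_c}(\chi)\bigr)_{c\in\Jo},
\]
between free abelian groups of the same rank. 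The statement is equivalent to $\Phi$ being an isomorphism, in which case $x_c$ is defined to be the preimage of the basis vector corresponding to $V_c$.

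Uniqueness of $x_c$ is injectivity of $\Phi$: if $\Phi(\chi)=0$, then $\chi$ extends to a regular invertible function on $\widehat{\BR}_\br$, and I would verify $\mathcal O^\ast(\widehat{\BR}_\br)=\C^\times$ using the presentation of $\BR_\br$ as a geometric quotient of a configuration space of weighted flags by the free action of $G$; any global unit pulls back to a $G$-invariant unit on this configuration space, and such a unit must be a scalar since the configuration space sits as an open subset of a product of Bruhat cells in copies of $G/U_+$. A character of a torus that is a constant function is trivial, so $\chi=1$. Consequently, any two characters satisfying the divisor conditions of the proposition differ by an element of $\ker\Phi=\{1\}$ and therefore coincide.

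For existence I would appeal to the standard excision sequence attached to the open inclusion $\Tbr\hookrightarrow \widehat{\BR}_\br$,
\[
1\to \mathcal O^\ast(\widehat{\BR}_\br)/\C^\times \to X^\ast(\Tbr) \xrightarrow{\;\Phi\;}\bigoplus_{c\in\Jo}\Z\cdot V_c \to \operatorname{Cl}(\widehat{\BR}_\br)\to \operatorname{Cl}(\Tbr)=0,
\]
whose last term vanishes since $\Tbr$ is a torus. Combined with the injectivity above, surjectivity of $\Phi$ is equivalent to the vanishing of the Weil class group $\operatorname{Cl}(\widehat{\BR}_\br)$. The more concrete route I would pursue is to produce, for each $c\in\Jo$, an explicit rational function $f_c$ on $\widehat{\BR}_\br$ with $\operatorname{div}(f_c)=V_c$: a natural candidate is a generalized minor attached to the chamber of the braid word indexing $c$ together with the fundamental weight $\omega_{i_c}$, evaluated on the weighted-flag configuration $(\Xbul,\Ybul)$. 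One then checks that $f_c$ is regular and nonvanishing on $\Tbr$ and rescales by a constant to obtain a character.

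The main obstacle is the existence portion, which encodes the genuine geometric content of the Deodhar decomposition; equivalently, one needs to exhibit principal equations cutting out each $V_c$, or prove the vanishing of $\operatorname{Cl}(\widehat{\BR}_\br)$. I would approach this by induction on the length of $\br$, observing that adjoining one letter to $\br$ either enlarges $\Tbr$ by a $\C^\times$ factor and leaves the boundary stratification essentially unchanged, or introduces exactly one new hypersurface whose defining equation is readily described by the generalized minor across the freshly added crossing. Once triviality of global units on $\widehat{\BR}_\br$ is in hand, uniqueness is formal, so the heart of the proof is this explicit identification of each boundary hypersurface with the vanishing locus of a single character.
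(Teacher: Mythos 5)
Your framing is right---the statement is precisely the assertion that the order-of-vanishing map $\Phi\colon \chL\Tbr\to\bigoplus_{\cp\in\Jo}\Z\cdot V_{\cp}$ is an isomorphism---and your uniqueness argument via triviality of units on the partial compactification can be made to work. But the existence half, which you yourself flag as the main obstacle, is where the proposal has a genuine gap, and the concrete candidate you offer does not close it. The generalized minor attached to the chamber at a solid crossing $c$ (the chamber minor $\crossing{c}=\grid_{c-1,i_c}$ of the paper) does \emph{not} have divisor $V_c$: its order of vanishing on $V_{\cp}$ equals $1$ for $\cp=c$ and $0$ for $\cp<c$, but for $\cp>c$ it is nonnegative and in general strictly positive (this is exactly what \cref{prop:APS-gives-var-nonappearance} detects through the almost positive sequences). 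These off-diagonal orders are genuinely nonzero in general---they are the reason $x_c$ is a nontrivial Laurent monomial in the chamber minors rather than $\crossing{c}$ itself. Consequently your inductive step (``introduces exactly one new hypersurface whose defining equation is readily described by the generalized minor across the freshly added crossing'') fails as stated: the new chamber minor also vanishes along previously existing Deodhar hypersurfaces and must be corrected by a monomial in the already-constructed $x_{\cp}$, $\cp>c$.

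The paper supplies exactly the two inputs that make this correction possible and that your sketch does not establish: (i) the solid chamber minors $(\crossing{c})_{c\in\Jo}$ form a $\Z$-basis of $\chL\Tbr$ (\cref{prop:deodharIsTorus}, via the monomial transition relations among grid minors), and (ii) the matrix $\ordM=(\ord_{V_{\cp}}\crossing{c})_{c,\cp\in\Jo}$ is upper unitriangular with nonnegative integer entries (\cref{prop:triang}, proved by an explicit $\SL_2$ computation showing that $\grid_{\cp-1,i_{\cp}}$ is linear of degree one in the Deodhar parameter across crossing $\cp$). Inverting $\ordM$ over $\Z$ then yields existence and uniqueness simultaneously. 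Your alternative excision/class-group route is sound in outline, but it silently relies on the nontrivial facts that $\BigBR_\br\setminus\tTbr$ is a union of finitely many irreducible codimension-one components indexed by $\Jo$ (\cref{prop:V_d_irr}) and that $\BigBR_\br$ has trivial class group and only constant units; and even granting these it is nonconstructive, whereas the rest of the paper repeatedly needs $x_c$ written explicitly as a Laurent monomial in chamber minors. Under either route, the missing ingredient you would have to supply is the unitriangularity statement of \cref{prop:triang}.
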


We show that the cluster variables thus defined form a basis of the character lattice of $\Tbr$, and that they extend to regular functions on the braid variety $\BR_\br$.  A particular set of generalized minors also form a basis of the character lattice of $\Tbr$; the two sets of functions are related by an upper-triangular monomial transformation.  Since Deodhar hypersurfaces are irreducible, cluster variables are the irreducible factors of these generalized minors.

\subsection{Exchange matrix}
In earlier works on the construction of cluster structures, the exchange matrix $\tB$ is obtained directly from the combinatorics of planar bipartite graphs~\cite{Pos,Scott,GL_cluster} or double wiring diagrams~\cite{BFZ,ShWe}, and can be defined using ``local contributions" from each edge or each crossing, respectively. 
Our construction of $\tB$ uses similar combinatorics, but in addition we must take into account the monomial transformation between cluster variables and generalized minors.
 
In the spirit of Fock and Goncharov \cite{FoGo_X}, the exchange matrix $\tB$ is equivalent to the datum of a $2$-form $\ombr$ on $\BR_\br$.  We define $\ombr$ in terms of generalized minors as a sum of local contributions for each letter of the double braid word $\br$.  We introduce integers $\dbr=(d_c)_{c\in\Jo}$  and then expand $\ombr$ in the basis of cluster variables:
\begin{equation}\label{eq:intro:omcoef_dfn}
  \ombr=\sum_{c,\cp\in\Jo:\ c\leq \cp} d_\cp \omcoef_{c\cp} \dlog x_c \wedge \dlog x_\cp =\sum_{c,\cp\in\Jo:\ c\leq \cp} d_c \omcoef_{\cp c} \dlog x_\cp \wedge \dlog x_c.
\end{equation}
The coefficients $\omcoef_{c\cp}$ %
define a $\Jo\times\Jmut$ integer matrix $\tB:=(\omcoef_{c\cp})$ . %
The principal $\Jmut\times\Jmut$ part of the matrix $\tB$ is skew-symmetrizable, with symmetrizer $\diag(d_c\mid c\in\Jmut)$.
 Therefore $\Sbr:=(\xbr,\tB)$ is a seed of a cluster algebra $\Acalbr$. The content of \cref{thm:main} is that $\Acalbr=\C[\BR_\br]$.
 
The factorization of generalized minors into cluster variables, and thus the exchange matrix $\tB$ itself, is difficult to describe directly in a combinatorial manner.  In the case $G=\SL_n$, we described the factorization in~\cite{GLSBS1} using the combinatorics of \emph{3D plabic graphs}. In this work, we approach it geometrically, via orders of vanishing of the minors on Deodhar hypersurfaces. In \cref{sec:comb-algor}, we give an algorithm, implemented in~\cite{GalTut}, for computing these orders of vanishing --- and thus the entire cluster seed --- using only root-system combinatorics.
 
\subsection{Deletion-contraction induction}\label{sec:intro:overview}
Our proof of \cref{thm:main} is inductive.  We introduce in \cref{sec:dc} a \emph{deletion-contraction recurrence} in the context of cluster varieties, which serves as the main ingredient of the inductive step.  Let $X$ be an algebraic variety and let $\Sigma$ be a cluster seed on $X$. Consider a sink in the mutable part of the quiver, with corresponding cluster variable $x$. We consider two subvarieties $W := \{x\neq0\}$ and $V := \{x=0\}$ of $X$. We show in \cref{thm:abstract-clust-stuc-del-con} that if both $V$ and $W$ are cluster varieties, and if some technical assumptions on $X$ and $\Sigma$ are satisfied, then $X$ is a cluster variety.  We expect that deletion-contraction can be applied in situations beyond braid varieties.

We apply deletion-contraction to $\BR_\br$ when $\br=ii\br'$ starts with a repeated letter; the braids $i\br'$ and $\br'$ correspond to deletion and contraction, respectively. To transform an arbitrary braid word $\br$ to one of this form, we utilize \emph{double braid moves} $\beta \sim \beta'$ on double braid words that induce natural isomorphisms $\BR_\br \cong \BR_{\br'}$. The full flexibility of these moves is the reason we consider double braid words and varieties here. Every double braid variety is isomorphic to some braid variety of~\cite{Mellit_cell,CGGS}, but using double braid words rather than usual braid words gives us access to more seeds.

In \cref{thm:moves}, we prove the key feature of double braid moves $\beta \sim \beta'$: under the isomorphism $\BR_\br \cong \BR_{\br'}$ the corresponding seeds $\Sigma_\br$ and $\Sigma_{\br'}$ are related by mutation. In \cref{sec:2braid}, we prove \cref{thm:moves} in the simply-laced case (i.e., for $G$ of type $A$, $D$, $E$);
 the seeds $\Sigma_\br$ and $\Sigma_{\br'}$ either coincide or are related by a single mutation. 
The proof of \cref{thm:moves} in the multiply-laced case is achieved via folding in \cref{sec:folding,sec:mult_laced}; the seeds $\Sigma_\br$ and $\Sigma_{\br'}$ are related by a sequence of mutations.

\subsection*{Acknowledgments}
We are indebted to David Speyer for his contributions to this project.  %
We thank Roger Casals, Eugene Gorsky, and Daping Weng for inspiring conversations. We are grateful to the anonymous referees for their valuable feedback on the first version of the manuscript.

\section{Deodhar geometry}\label{sec:deodhar-geometry}
We discuss the geometry of the double braid variety $\BR_\br$ with the goal of defining a cluster seed on it. The ingredients of a cluster seed were outlined in \cref{sec:intro:constr}. In \cref{sec:deo_tor}, we construct a \emph{Deodhar torus} $\Tbr\subset\BR_\br$. In \cref{sec:deo_cluster}, we introduce a family $\xbr=\{x_c\}_{c\in\Jo}$ of \emph{cluster variables} and show that they are regular functions on $\BR_\br$. Finally, in \cref{sec:deo_form}, we introduce a $2$-form $\ombr$ on $\Tbr$ from which the $\tB$-matrix can be extracted via~\eqref{eq:intro:omcoef_dfn}.

\subsection{Background}\label{sec:background}
 For each $i\in I$, we fix a group homomorphism 
\begin{equation*}%
  \phi_i:\SL_2\to G,\quad \begin{pmatrix}
1 & t\\
0 & 1
  \end{pmatrix}\mapsto x_i(t),\quad  \begin{pmatrix}
1 & 0\\
t & 1
  \end{pmatrix}\mapsto y_i(t),
\end{equation*} 
where $x_i(t),y_i(t)$ are the exponentiated Chevalley generators. The data $(\H,B_+,B_-,x_i,y_i;i\in I)$ is a \emph{pinning} of $G$; see~\cite[Section~1.1]{Lus2}.

Let $\Phi$ be the root system of $G$, with positive roots $\Phi^+$ corresponding to $B_+$. Let $\chL\H:=\Hom(\H,\Cast)$ be the \emph{character lattice} of $\H$ and $\cochL\H:=\Hom(\Cast,\H)$ be the \emph{cocharacter lattice} of $\H$. Let $\{\alpha_i\}_{i\in I}\subset\chL\H$ (resp., $\{\ach_i\}_{i\in I}\subset\cochL\H$, $\{\om_i\}_{i\in I}\subset\chL\H$) be the simple roots (resp., simple coroots, fundamental weights) of $\Phi^+$. 
 We have a natural pairing $\<\cdot,\cdot\>:\chL\H\times\cochL\H\to\Z$ satisfying $\<\om_i,\ach_j\>=\delta_{ij}$ and $\<\alpha_i,\ach_j\>=a_{ij}$, where $A=(a_{ij})_{i,j\in I}$ is the Cartan matrix of $G$.

Let the Weyl group $W$ have simple generators $\{s_i\}_{i\in I}$, length function $\ell(\cdot)$, and identity $\id\in W$. For $i\in I$, we set
\begin{equation*}%
  \ds_i=\line{s_i}:=\phi_i \begin{pmatrix}
    0 & -1\\ 
    1 & 0
  \end{pmatrix},\quad   \ds_i^{-1}=\lline{s_i}:=\phi_i \begin{pmatrix}
    0 & 1\\ 
    -1 & 0
  \end{pmatrix}.
\end{equation*}
For a reduced expression $w=s_{i_1}s_{i_2}\cdots s_{i_l}$, where $l=\ell(w)$, we set 
\begin{equation}\label{eq:dw_dfn}
  \dw=\line{w}:=\line{s_{i_1}}\cdot \line{s_{i_2}}\cdots \line{s_{i_l}},\quad \lline{w}:=\lline{s_{i_1}}\cdot \lline{s_{i_2}}\cdots \lline{s_{i_l}}.
\end{equation}
The resulting product does not depend on the choice of the reduced expression. 
For $u\in W$ and $h\in\H$, we set $u\cdot h:=\du h\du^{-1}=\line{u} h \line{u}^{-1}=\lline{u} h \lline{u}^{-1}$.
 We also consider elements
\begin{equation}\label{eq:zi_dfn}
  z_i(t):=\phi_i \begin{pmatrix}
	t & -1\\
	1 & 0
\end{pmatrix}
=x_i(t) \ds_i,%
\quad
\z_i(t):=\phi_i \begin{pmatrix}
	t & 1\\
	-1 & 0
\end{pmatrix}
=x_i(-t) \ds^{-1}_i.%
\end{equation}

For each $w\in W$, it is well known~\cite[Proposition~28.1]{Hum} that the multiplication map gives rise to an isomorphism
\begin{equation}\label{eq:U_factorization}
  (\dw^{-1} U_+ \dw \cap U_-) \times (\dw^{-1} U_+ \dw \cap U_+)\xrasim \dw^{-1} U_+ \dw.
\end{equation}

\subsection{Weighted flags}
Recall from \cref{sec:intro:double} that a \emph{weighted flag} is an element $F = gU_+ \in G/U_+$.  Associated to a weighted flag $F$ is the flag $\pi(F)=gB_+$, the image of $F$ in $G/B_+$.

	The following elementary facts can be found in e.g.~\cite[Appendix]{ShWe}; see also~\cite[Section~6.2]{GLSBS1}.
\begin{lemma}\label{lemma:rel-pos-facts} Let $F,F',F''$ be weighted flags. Suppose $v, w \in W$ with $\ell(vw)= \ell(v)+\ell(w)$.
	\begin{enumerate}
		\item $F \Rrel{\id} F'$ if and only if $F=F'$.
		\item If $F \Rrel{v} F' \Rrel{w} F''$, then $F \Rrel{vw}F''$.
		\item\label{rel-pos-facts3}  If $F \Rrel{vw}F''$, then there exists a unique $F'$ such that $F \Rrel{v} F' \Rrel{w} F''$. If $F\Rwrel{vw} F''$ then there exist unique $F'_1,F'_2$ such that $F \Rrel{v} F'_1 \Rwrel{w} F''$ and $F \Rwrel{v} F'_2 \Rrel{w} F''$.
	\end{enumerate}
\end{lemma}

\begin{lemma}\label{lem:paramSingleStep}%
  Suppose $F \Rrel{s_i} F'$ and say $F=gU_+$. Then there exists a unique $t \in \CC$ such that $F'=gz_i(t) U_+$. Similarly, if $F'=g'U_+$, there exists a unique $t' \in \CC$ such that $F=g' \z_i(t')U_+$. The maps $(g,F')\mapsto t$ and $(g',F)\mapsto t'$ are regular on the appropriate subvarieties of $G\times G/U_+$. %
\end{lemma}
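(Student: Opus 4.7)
\smallskip

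\noindent\textbf{Proof proposal.}
The plan is to reduce both parts to a standard parametrization of a one-dimensional Bruhat cell via the root subgroup $U_{\alpha_i}$, then deduce regularity from algebraicity of the multiplication map. The key observation is that the strict relation $\Rrel{s_i}$ is $G$-equivariant: if $g_0$ witnesses $F \Rrel{s_i} F'$ via $(g_0 F, g_0 F') = (U_+, \ds_i U_+)$, then $g_0 h^{-1}$ witnesses $hF \Rrel{s_i} hF'$. Taking $h = g^{-1}$ reduces the first claim to the case $F = U_+$: we must parametrize $\{F'' \in G/U_+ : U_+ \Rrel{s_i} F''\}$. Unpacking the definition, this set equals $\{u^{-1} \ds_i U_+ : u \in U_+\} = U_+ \ds_i U_+ / U_+$.

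The key structural fact is that every element of $U_+ \ds_i U_+$ admits a unique decomposition $x_i(t)\ds_i u = z_i(t) u$ with $t \in \CC$ and $u \in U_+$. Existence uses the root-subgroup decomposition $U_+ = U_{\alpha_i} \cdot (U_+ \cap \ds_i U_+ \ds_i^{-1})$: since $s_i$ permutes $\Phi^+ \setminus \{\alpha_i\}$, the second factor equals $\prod_{\beta \in \Phi^+\setminus\{\alpha_i\}} U_\beta$ and is absorbed into the right $U_+$ after conjugating past $\ds_i$. Uniqueness reduces, via the pinning $\phi_i$, to the $\SL_2$ identity $z_i(t_2)^{-1} z_i(t_1) = y_i(-(t_1 - t_2))$, forcing $t_1 = t_2$ whenever the difference lies in $U_+$. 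Applied to $g^{-1} F'$, this yields the unique $t$ with $F' = g z_i(t) U_+$. The second claim is the mirror argument: $F \Rrel{s_i} U_+$ characterizes $F \in U_+ \ds_i^{-1} U_+ / U_+$, and the analogous unique factorization (with the sign conventions already baked into~\eqref{eq:zi_dfn}) gives the parametrization by $\z_i(t')$.

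For regularity, the multiplication map $\CC \times U_+ \to U_+ \ds_i U_+$, $(t,u) \mapsto z_i(t) u$, is a morphism of smooth affine varieties which is bijective by the uniqueness above; since source and target are irreducible of the same dimension, this is an isomorphism and its inverse is regular. To view $t$ as a function of $(g, F')$, cover $G/U_+$ by Zariski-open subsets on which the $U_+$-bundle $G \to G/U_+$ admits regular sections, lift $F'$ locally to $g' \in G$, and extract $t$ from the regular inverse applied to $g^{-1} g' \in U_+ \ds_i U_+$. The output depends only on the coset $F' = g' U_+$, so gluing over the cover gives a regular function on the whole subvariety of $G \times G/U_+$ cut out by the condition $gU_+ \Rrel{s_i} F'$; the argument for $t'$ is identical.

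The hardest part is purely bookkeeping: verifying that the matrix conventions for $\ds_i^{\pm 1}$, $z_i$, $\z_i$ from~\eqref{eq:dw_dfn}--\eqref{eq:zi_dfn} produce the $s_i$-relation in the correct direction. This is a single computation inside $\phi_i(\SL_2)$, and no substantive difficulty is expected.
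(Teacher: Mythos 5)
Your proof is correct. The paper itself states this lemma without proof (it is grouped with the ``elementary facts'' for which the authors point to the appendix of \cite{ShWe} and to \cite{GLSBS1}), so there is no in-text argument to compare against; but your route --- translate by $g^{-1}$ to reduce to parametrizing $U_+\ds_i U_+/U_+$, factor $U_+=U_{\alpha_i}\cdot U_+(\Psi)$ with $\ds_i$ normalizing $U_+(\Psi)$, and get uniqueness from $z_i(t_2)^{-1}z_i(t_1)=y_i(t_2-t_1)\in U_+\cap U_-=\{e\}$ --- is exactly the computation the authors carry out later in the proof of \cref{prop:triang} using \eqref{eq:U_factorization}, and your regularity argument via local sections of the $U_+$-bundle $G\to G/U_+$ is sound.
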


\begin{lemma}\label{lem:morePositionFacts} %
	Suppose $F \Rwrel{v} g U_+ \Rrel{s_i} g z_i(t) U_+$ and $F \Rwrel{w} g z_i(t) U_+$. If $vs_i>v$, then $w=vs_i$ for all $t \in \C$. If $vs_i<v$, then there exists $\tzero\in\C$ such that $w=vs_i$ for $t=\tzero$ and $w=v$ for $t \in \C\setminus\{\tzero\}$. %
\end{lemma}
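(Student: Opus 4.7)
\textbf{Proof plan for \cref{lem:morePositionFacts}.} By acting with $G$, we may assume $g = 1$, so $F_1 = U_+$, $F_2 = z_i(t) U_+ = x_i(t) \dot s_i U_+$, and $F = aU_+$ with $a^{-1} \in U_+ H \dot v U_+$. Write $a^{-1} = u_1 h \dot v u_2$ with $u_1, u_2 \in U_+$ and $h \in H$; we must locate the unique Bruhat cell $U_+ H \dot w U_+$ containing
\[ a^{-1} z_i(t) = u_1 h \dot v \, u_2 x_i(t) \, \dot s_i. \]
The central maneuver is to conjugate $u_2 x_i(t) \in U_+$ by $\dot s_i$. Using the factorization~\eqref{eq:U_factorization}, $U_+ = U_{\alpha_i} \cdot U_+^i$, where $U_+^i := U_+ \cap \dot s_i U_+ \dot s_i^{-1}$ is normal in $U_+$ (generated by $U_\alpha$ for positive roots $\alpha \neq \alpha_i$), so $U_+/U_+^i \cong \C$ additively. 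Letting $r \in \C$ be the ``$x_i$-coordinate'' of $u_2$, one gets $u_2 x_i(t) = x_i(r+t)\,\tilde u(t)$ with $\tilde u(t) \in U_+^i$. Combined with the rank-one identity $\dot s_i^{-1} x_i(c) \dot s_i = y_i(-c)$ and the inclusion $\dot s_i^{-1} U_+^i \dot s_i \subseteq U_+$, this yields
\[
a^{-1} z_i(t) = u_1 h \cdot \dot v \dot s_i \cdot y_i(\si(t)) \cdot u''(t), \qquad \si(t) := -(r+t),
\]
for some $u''(t) \in U_+$; note that $\si(\cdot)$ is affine linear and nonconstant in $t$, vanishing at the single point $\tzero := -r$.

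\textbf{Case 1: $vs_i > v$.} Then $\dot v \dot s_i = \dot{(vs_i)}$, and the descent condition $(vs_i)s_i = v < vs_i$ forces $(vs_i)(\alpha_i) < 0$, so $\dot{(vs_i)}\, y_i(\si)\, \dot{(vs_i)}^{-1} \in U_+$ for every value of $\si$. Thus $a^{-1} z_i(t) \in U_+ H \dot{(vs_i)} U_+$ for all $t$, giving $w = vs_i$. \textbf{Case 2: $vs_i < v$.} Now $\dot v \dot s_i = \dot{(vs_i)}\, \dot s_i^2 = \dot{(vs_i)}\, \alpha_i^\vee(-1)$, which differs from $\dot{(vs_i)}$ only by an $H$-factor that can be commuted through; so $a^{-1} z_i(t) \in U_+ H \cdot \dot{(vs_i)}\, y_i(\si(t))\, U_+$. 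At $t = \tzero$ we have $\si = 0$ and $y_i(0) = 1$, landing us in $U_+ H \dot{(vs_i)} U_+$, which gives $w = vs_i$. For $t \neq \tzero$ (so $\si \neq 0$), the $\SL_2$ identity
\[ y_i(\si) = x_i(\si^{-1})\, \alpha_i^\vee(\si^{-1})\, \dot s_i\, x_i(\si^{-1}) \]
places $y_i(\si)$ into $B_+ \dot s_i B_+$, and the length-additive Bruhat multiplication $B_+ \dot{(vs_i)} B_+ \cdot B_+ \dot s_i B_+ = B_+ \dot v B_+$ (valid since $\ell(vs_i) + \ell(s_i) = \ell(v)$) shows the product lies in $U_+ H \dot v U_+$, giving $w = v$.

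The main obstacle is the bookkeeping needed to verify that the coordinate $\si(t)$ extracted after conjugation by $\dot s_i$ is a genuinely affine nonconstant function of $t$, which is what guarantees the existence and uniqueness of $\tzero$ in Case 2. This rests on the normality of $U_+^i$ in $U_+$ (making the $x_i$-coordinate an additive character $U_+\to\C$) and the explicit rank-one identity in $\phi_i(\SL_2)$. Once these are established, the torus-bookkeeping via $\dot s_i^2 = \alpha_i^\vee(-1)$ and the standard length-additive Bruhat multiplication finish both cases; the uniqueness of $w$ is automatic from the Bruhat decomposition $G = \bigsqcup_w B_+ \dot w U_+$.
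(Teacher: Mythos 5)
Your proof is correct. Note that the paper does not actually prove \cref{lem:morePositionFacts}: it is grouped with the elementary relative-position facts deferred to the appendix of \cite{ShWe} and to \cite[Section~6.2]{GLSBS1}, so there is no in-paper argument to compare against. Your computation is the standard rank-one argument such references rely on: after translating to $g=1$, the relation $w$ is read off from the $B_+\times U_+$ double coset of $a^{-1}z_i(t)$; the factorization $u_2x_i(t)=x_i(r+t)\,\tilde u(t)$ through the normal subgroup $U_+\cap\dot s_iU_+\dot s_i^{-1}$ correctly produces the affine parameter $\sigma(t)=-(r+t)$, whence the existence and uniqueness of $t^*=-r$; and the two cases are settled by $\dot s_i^{-1}x_i(c)\dot s_i=y_i(-c)$, $\dot s_i^{\,2}=\alpha_i^\vee(-1)$, the $\SL_2$ identity placing $y_i(\sigma)$ in $B_+\dot s_iB_+$ for $\sigma\neq0$, and length-additive Bruhat multiplication --- all of which check out.
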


\subsection{Distinguished subexpressions}\label{sec:dist_subexpr} To define the Deodhar torus of $\BR_{\beta}$, we need the following combinatorics.

Fix $\br=i_1i_2\dots i_m\in\pmnm$. We write $[m]:=\{1,2,\dots,m\}$ and $[0,m]:=\{0,1,\dots,m\}$.  
 Recall the notation $s_{i}^+$ and $s_i^{-}$ from \eqref{eq:s_i-notation}.

Given two elements $u, v\in W$ with $u \leq v$ in the Bruhat order, we write $\max(u,v):=v$ and $\min(u,v):=u$. We also define $u * s_i := \max(u, us_i)$.
The \emph{Demazure product} of $\br$ is defined by
\begin{equation}\label{eq:dem}
  \Demprod(\br):= s_{i_m}^- *s_{i_{m-1}}^-*\cdots* s_{i_1}^- * s_{i_1}^+*s_{i_2}^+ * \cdots* s_{i_m}^+ \quad \in W.
\end{equation}
 From now on, we assume that $\Demprod(\br)=\wo$.

\begin{definition}\label{dfn:pds}
  A \emph{$\wo$-subexpression} of $\br$ is a sequence $\bu=(\up0,\up1,\dots,\up m) \in W^{m+1}$ such that $\up0=\id$, $\up m=\wo$, and such that for each $c\in[m]$, we have either $\up{c-1}=\up{c}$ or $\up{c-1}=s_{i_c}^-\up{c}s_{i_c}^+$. Since $\Demprod(\br)=\wo$ there exists a unique ``rightmost'' subexpression, called the \emph{positive distinguished subexpression}; see~\cite[Lemma~3.5]{MR}. It is given by $\up m:=\wo$ and
  $\up{c-1}:=\min(\up c, s_{i_c}^-\up{c} s_{i_c}^+)$
  for all $c=m,m-1,\dots,1$. 
\end{definition}
From now on, we fix $\bu=(\up0,\up1,\dots,\up m)$ to be the positive distinguished $\wo$-subexpression of $\br$.   We also define $\wp c:= \wo\up c $ for $c \in [0, m]$ and $\bw=\wo\bu:=(\wp0,\wp1,\dots,\wp m)$. Note that $\wp0=\wo$.
We set $\Jo:=\{c\in[m]\mid \up c=\up{c-1}\}$. We refer to the indices in $\Jo$ as \emph{solid crossings} and to the indices in $\Hollow$ as \emph{hollow crossings}.  We denote $d(\br) := m - \ell(\wo)=|\Jo|$. 

\subsection{Double braid varieties and the Deodhar torus}\label{sec:deo_tor}
Recall that 
\begin{equation*}%
  \bigBR_{\br}:= \{(X_\bullet, Y_\bullet)\in(G/U_+)^{[0, m]} \times (G/U_+)^{[0, m]}\mid  (\Xbul,\Ybul) \text{ satisfy } \eqref{eq:braidDiagram}\}
\end{equation*}
and that $G$ acts on $\bigBR_\br$ by acting on each weighted flag.

\begin{proposition}\label{prop:smooth_affine_etc}
The $G$-action on $\bigBR_\br$ is free. The quotient $\BR_\br:=\bigBR_\br/G$ is a smooth, factorial, affine, irreducible complex algebraic variety of dimension $d(\br)$.  
\end{proposition}
\begin{proof}
We repeat the argument from~\cite[Proposition~6.13]{GLSBS1}. 
Consider the space of tuples of weighted flags satisfying
\begin{equation}\label{eq:Ugauge}
  \begin{tikzcd}
    X_0 = U_+& \arrow[l,"\textcolor{black}{s_{i_1}^+}"'] X_1& \arrow[l,"\textcolor{black}{s_{i_2}^+}"'] \cdots& \arrow[l,"\textcolor{black}{s_{i_m}^+}"'] 
    X_m \\
    Y_0 \arrow[r,"\textcolor{black}{s_{i_1^\ast}^-}"'] %
    & Y_1 \arrow[r,"\textcolor{black}{s_{i_2^\ast}^-}"'] & \cdots \arrow[r,"\textcolor{black}{s_{i_m^\ast}^-}"'] & Y_m \arrow[u, rightarrow, "\id"']
  \end{tikzcd} 
\end{equation}
This space is an iterated $\C$-bundle and is thus affine. Imposing the condition that $U_+$ and $Y_0$ are weakly $\wo$-related (that is, $Y_0\in B_+ \wo B_+=U_+ \wo B_+$) cuts out a nonempty smooth affine open subset $V$ of the iterated $\C$-bundle. The braid variety $\BR_{\br}$ is the quotient of $V$ by the diagonal action of $U_+=\text{Stab}_G(U_+)$. The group $U_+$ acts freely on $U_+ \dot{w}_0 B_+$ and thus acts freely on $V$. It follows that the quotient $\BR_{\br}$ is also smooth and affine; it is also clearly irreducible. Explicitly, one may fix the $U_+$-action by identifying $\BR_\br$ with the subvariety of $V$ where $Y_m\in \wo B_+$, which is the viewpoint of~\cite{CGGS}.

To see that $\BR_{\br}$ is factorial, note that $\C[V]$ is a UFD, as it is a localization of a polynomial ring. The coordinate ring $\C[\BR_\br]$ is equal to $\C[V]^{U_+}$, which is also a UFD by 
\cite[Theorem 3.17]{PopVin}.

For the dimension, note that 
\[\dim( \bigBR_{\br})= \dim (G/U_+) + m= \dim(G)- \ell(\wo ) + m,\]
so $\dim(\BR_{\br})=\dim( \bigBR_{\br})-\dim(G) = m-\ell(\wo ) = d(\br)$.
\end{proof}

Let $\BigBR_\br$ be a partial compactification of $\bigBR_\br$ obtained by removing the condition $X_0 \Lwrel{\wo} Y_0$ from~\eqref{eq:braidDiagram}.

\begin{definition}
Let $\tTbr\subset\BigBR_\br$ be the set of tuples $(\Xbul,\Ybul)\in\BigBR_\br$ satisfying
\begin{equation}\label{eq:DTcond}
  X_c \Lwrel{\wp c} Y_c \quad\text{ for $c\in[0,m]$.}
\end{equation}
Since $\wp0 = \wo$, we have $\tTbr \subset\bigBR_\br$ and thus $G$ acts freely on $\tTbr$. Define the \emph{Deodhar torus} $\Tbr\subset\BR_\br$ to be the quotient $\Tbr:=\tTbr / G$.
\end{definition}
\noindent We will show in \cref{cor:deo_tor} below that $\Tbr$ is indeed a $d(\br)$-dimensional algebraic torus.

\begin{lemma}\label{lem:tTbr_open_dense}
The subsets $\tTbr\subset \bigBR_\br$ are open dense in $\BigBR_\br$.
\end{lemma}
\begin{proof}
We can parameterize the variety $\BigBR_\br$ as follows. We choose an arbitrary weighted flag $X_m=Y_m$, and then for $c=m,m-1,\dots,1$, assuming $(X_c,Y_c)=(g_cU_+,g'_cU_+)$, we set (cf. \cref{lem:paramSingleStep})
  \begin{equation}\label{eq:propagate_left}
    (X_{c-1},Y_{c-1}):=
    \begin{cases}
      (g_cz_{i_c}(\tparp_c)U_+,g'_cU_+), &\text{if $i_c>0$,}\\
      (g_cU_+,g'_c\z_{|i_c|^*}(\tparp_c)U_+), &\text{if $i_c<0$,}\\
    \end{cases}
  \end{equation}
 for arbitrary parameters $\btparp:=(\tparp_1,\tparp_2,\dots,\tparp_m)\in\C^m$.

For $(\Xbul, \Ybul)$ to be a point in $\tTbr$, \eqref{eq:DTcond} needs to be satisfied for each $c\in[0,m]$. It is clearly satisfied for $c=m$. If it is satisfied for some solid index $c\in[m]$, then it will be satisfied for $c-1$ if and only if the parameter $\tparp_c$ is not equal to the value $\tzero$ from \cref{lem:morePositionFacts}. If the index $c\in[m]$ is instead hollow and~\eqref{eq:DTcond} is satisfied for $c$ it will be satisfied for $c-1$ regardless of the value of $\tparp_c$. 
Thus, $\tTbr$ is indeed an open dense subset of $\BigBR_\br$. Since $\tTbr\subset\bigBR_\br$, $\bigBR_\br$ is dense in $\BigBR_\br$. Since $\bigBR_\br$ is cut out of $\BigBR_\br$ by an open condition $X_0 \Lwrel{\wo} Y_0$, it is open in $\BigBR_\br$.
\end{proof}

\subsection{$\H$-valued functions}\label{sec:torus-valued}
Recall that $\H= B_+ \cap B_-$ is the Cartan torus of $G$. Over the next sections, we will discuss various functions on $\Tbr$ and $\BR_{\br}$. To do so, it is convenient to introduce a regular map $\Tbr \to H$. We also use this map in this section to show that $\Tbr$ is an algebraic torus.

Given $(\Xbul,\Ybul) \in \BR_\br$, let $Z_c := Y_c^{-1} X_c \in U_+ \backslash G / U_+$. Abusing notation, we use double cosets $Z_c \in U_+ \backslash G / U_+$ interchangeably with their representatives in $G$. 
 For $(\Xbul,\Ybul) \in \Tbr$, $Z_c$ belongs to the Bruhat cell $\Bruho_{\wp c}:=B_+\wp c B_+=U_+\wp c \H U_+$ of $G$.\footnote{Here and below, we omit the dot over $\dwp c$ in products such as $B_+ \wp c B_+$ that involve multiplying $\dwp c$ by a subgroup of $G$ containing $\H$.
} %
 There exist unique elements $\hr_c,\hb_c\in\H$ satisfying 
\begin{equation}\label{eq:h_c_dfn}
  Z_c\in U_+ \dwp c \hr_c U_+,\quad Z_c\in U_+ \line{\wo} \hb_c \lline{\up c} U_+, \quad\text{thus,}\quad \hb_c=\up c\cdot \hr_c.
\end{equation}
The third statement follows from the first two since $\dwp  c=\line{\wo}\cdot \lline{\up c}$ and $\up c\cdot  \hr_c=\lline{\up c} \hr_c \lline{\up c}^{-1}$. %
The map $(\Xbul,\Ybul)\mapsto \hrb_c$ is a rational $\H$-valued function on $\BR_\br$ (resp., on $\BigBR_\br$), regular on $\Tbr$ (resp., on $\tTbr$).

\begin{lemma}\label{lemma:tpar}
  There exist rational functions $(\tpar_c)_{c\in\Jo}$ on $\BR_\br$ such that for $c\in[m]$, 
\begin{equation}\label{eq:h_tpar}
  \hr_{c-1}=
  \begin{cases}
    s_{i_c}\cdot  \hr_c, &\text{if $c$ is hollow, $i_c\in I$;}\\
     \alphacheck_{i_c}(\tpar_c) \hr_c, &\text{if $c$ is solid, $i_c\in I$;}\\
  \end{cases} \quad
  \hb_{c-1}=
  \begin{cases}
    s_{|i_c|}\cdot  \hb_c, &\text{if $c$ is hollow, $i_c\in -I$;}\\
     \alphacheck_{|i_c|}(\tpar_c) \hb_c, &\text{if $c$ is solid, $i_c\in -I$.}\\
  \end{cases}
\end{equation}
\end{lemma}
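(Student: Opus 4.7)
The plan is to prove all four cases by a direct computation of the transition from $Z_c$ to $Z_{c-1}$, made explicit by \cref{lem:paramSingleStep}. Choosing compatible lifts $x_c, y_c \in G$ of $X_c, Y_c$, a positive letter $i_c = i \in I$ gives $Y_{c-1} = Y_c$ and $x_{c-1} = x_c z_i(t)$ for a unique $t$, whence $Z_{c-1} = Z_c \cdot z_i(t)$ in $G$; dually, a negative letter $i_c = -j \in -I$ gives $X_{c-1} = X_c$ and $Z_{c-1} = z_{j^\ast}(t) \cdot Z_c$. By the symmetry $X_\bullet \leftrightarrow Y_\bullet$ exchanging the two parameterizations $Z_c \in U_+ \dwp c \hr_c U_+$ and $Z_c \in U_+ \line{\wo} \hb_c \lline{u_c} U_+$, it suffices to treat the positive-letter case for $\hr_c$; the negative-letter case transcribes directly to $\hb_c$.

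In the hollow case ($u_{c-1} = u_c s_i < u_c$, i.e.\ $w_c(\alpha_i) > 0$), the argument is short: using the $U_+$-freedom on both sides of $Z_c = y_c^{-1} x_c$ I adjust lifts so that $Z_c = \dwp c\, \hr_c$ holds exactly in $G$, and then expand
\begin{equation*}
  Z_c \cdot z_i(t) \;=\; \dwp c\, \hr_c\, x_i(t)\, \ds_i \;=\; \dwp c\, x_i\!\bigl(\alpha_i(\hr_c)\, t\bigr)\, \ds_i\, (s_i \cdot \hr_c).
\end{equation*}
Since $w_c(\alpha_i) > 0$, the conjugate $\dwp c\, x_i(\cdot)\, \dwp c^{-1}$ lies in $U_+$, and because lengths add we have $\dwp c\, \ds_i = \dwp{c-1}$. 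Moving the $U_+$-factor to the left yields $Z_{c-1} \in U_+\, \dwp{c-1}\, (s_i \cdot \hr_c)\, U_+$, so $\hr_{c-1} = s_i \cdot \hr_c$, independently of $t$.

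In the solid case ($u_{c-1} = u_c$, so $w_{c-1} = w_c$ and $w_c(\alpha_i) < 0$), I pass to generalized minors in the fundamental representations $V_{\omega_j}$, using the convention $v_{w\omega_j} := \dw\, v_{\omega_j}$ and the matrix-coefficient identity $\omega_j(\hr_c) = \langle v_{w_c\omega_j}^\vee,\, Z_c\, v_{\omega_j}\rangle$. For $j \neq i$, the vanishing $\langle \omega_j, \ach_i\rangle = 0$ makes $\C\cdot v_{\omega_j}$ a trivial $\phi_i(\SL_2)$-submodule of $V_{\omega_j}$, so $z_i(t)\, v_{\omega_j} = v_{\omega_j}$ and hence $\omega_j(\hr_{c-1}) = \omega_j(\hr_c)$. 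For $j = i$, the expansion $z_i(t)\, v_{\omega_i} = v_{s_i\omega_i} + t\, v_{\omega_i}$ inside the two-dimensional $\phi_i(\SL_2)$-span of $\{v_{\omega_i}, v_{s_i\omega_i}\}$ yields $\omega_i(\hr_{c-1}) = \tpar_c\cdot \omega_i(\hr_c)$ for a rational function $\tpar_c$ on $\BR_\br$. Since $\{\omega_j\}_{j \in I}$ is a basis of $\chL{H}$ (as $G$ is simply-connected) and $\omega_j(\alphacheck_i(u)) = u^{\delta_{ij}}$, comparing fundamental-weight characters forces $\hr_{c-1} = \alphacheck_i(\tpar_c)\, \hr_c$.

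The main technical obstacle is the solid case, where one must verify that the unipotent factors appearing in the Bruhat decomposition of $Z_c$ do not produce unwanted higher-weight corrections to the matrix coefficients. This reduces to a Bruhat-order argument exploiting $u_c \in U_+^{w_c}$ together with the sign $w_c(\alpha_i) < 0$, which rules out the relevant raising paths in the weight lattice. Once set up, the negative-letter case proceeds in parallel with $\hb_c$ replacing $\hr_c$, the transition $Z_{c-1} = z_{j^\ast}(t)\, Z_c$ acting on the left, and the hollow/solid dichotomy now keyed to the sign of $u_c^{-1}(\alpha_{|i_c|})$ instead of $w_c(\alpha_i)$.
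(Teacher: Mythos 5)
Your argument is correct in substance but follows a partly different route from the paper's in the solid case. The hollow case is identical to the paper: there too one writes $Z_{c-1}=\dwp c\hr_c x_{i_c}(t)\ds_{i_c}$, absorbs the unipotent factor into $U_+$ using $\wp c\alpha_{i_c}>0$, and reads off $\hr_{c-1}=s_{i_c}\cdot\hr_c$. In the solid case the paper stays inside the group: it applies the $\SL_2$ identity $x_i(t)\ds_i=y_i(1/t)\ach_i(t)x_i(-1/t)$ of~\eqref{eq:SL2_xs_sx} and pushes the $y_{i_c}$ factor through $\dwp c$ into $U_+$ (using $\wp c\alpha_{i_c}<0$), which exhibits the torus factor $\ach_{i_c}(\tpar_c)$ directly. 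You instead evaluate fundamental-weight matrix coefficients, i.e.\ you use the identity $(\hr_c)^{\om_k}=\Delta_{\wp c\om_k,\om_k}(Z_c)$, which is precisely \cref{lemma:hc} (stated after \cref{lemma:tpar} in the paper but proved independently of it, so there is no circularity). Your route is arguably cleaner on one point: for $j\neq i$ the vector $v_{\om_j}$ is genuinely fixed by $\phi_i(\SL_2)$, so there are no higher-weight corrections to rule out, and for $j=i$ the extra term $\langle v^\vee_{w_c\om_i},Z_c v_{s_i\om_i}\rangle$ is simply absorbed into the definition $\tpar_c:=\om_i(\hr_{c-1})/\om_i(\hr_c)$, which is the unique rational function satisfying the claimed relation. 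Consequently the ``main technical obstacle'' you describe is not actually an obstacle; the only input needed is the standard fact that extremal weight spaces are one-dimensional (which is what makes the matrix coefficient insensitive to the $U_+\times U_+$ ambiguity in $Z_c$, and is the content of the paper's proof of \cref{lemma:hc} via~\eqref{eq:U_factorization}), and the phrase ``$u_c\in U_+^{w_c}$'' does not parse as written, since $u_c$ is a Weyl group element.

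Two details need repair. First, for $i_c=-j\in-I$ the left factor is $\z_{j^*}(t)^{-1}=\ds_{j^*}x_{j^*}(t)$, not $z_{j^*}(t)=x_{j^*}(t)\ds_{j^*}$; these are different group elements (indeed $\ds_{j^*}x_{j^*}(t)=y_{j^*}(-t)\ds_{j^*}$), and the distinction matters once you expand the matrix coefficient on the covector side. Second, the negative-letter case is not a literal transcription by the symmetry you invoke: one must work in the parametrization $Z_c\in U_+\line{\wo}\hb_c\lline{\up c}U_+$ and use the twist $\wo\alpha_{j^*}=-\alpha_j$, equivalently $\ach_{j^*}(1/\tpar_c)\line{\wo}=\line{\wo}\ach_j(\tpar_c)$ as in the paper's proof, to convert the index $j^*$ appearing in the left factor into the index $j=|i_c|$ appearing in the statement; the hollow/solid dichotomy is then governed by the sign of $\up c^{-1}\alpha_{|i_c|}$, as you say. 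This is routine, but it should be written out rather than asserted.
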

\begin{proof}
Fix $(\Xbul, \Ybul)\in \Tbr$. For $c\in\Jo$, define $\tpar_c$ to be such that if $Z_c=\dwp c \hr_c=\line{\wo} \hb_c \lline{\up c}$ then $Z_{c-1}=Z_c z_{i_c}(\tpar_c)$ if $i_c\in I$ and $Z_{c-1}=\z_{|i_c|^\ast}(\tpar_c)^{-1} Z_c$ if $i_c\in-I$; see \cref{lem:paramSingleStep}. The following identities in $G$ can be checked inside $\SL_2$:
\begin{equation}\label{eq:SL2_xs_sx}
  x_i(t)\ds_i=y_i(1/t)\alphacheck_i(t) x_i(-1/t) \quad\text{and}\quad \ds_i x_i(t)=x_i(-1/t) \alphacheck_i(1/t) y_i(1/t).
\end{equation}

Suppose that $i_c\in I$. We have $Z_{c-1}= \dwp c \hr_c x_{i_c}(\tpar_c) \ds_{i_c}$. If $c$ is hollow then $\dwp c \hr_c x_{i_c}(\tpar_c)\in U_+ \dwp c \hr_c$, and thus $Z_{c-1}\in U_+ \dwp c \ds_{i_c} (s_{i_c}\cdot \hr_c)$. This implies that $\hr_{c-1}=s_{i_c}\cdot \hr_c$. If $c$ is solid then we use the first identity in~\eqref{eq:SL2_xs_sx} to write $Z_{c-1}= \dwp c \hr_c y_{i_c}(1/\tpar_c)\alphacheck_{i_c}(\tpar_c) x_{i_c}(-1/\tpar_c)$. Since $\dwp c\hr_c y_{i_c}(t)\in U_+ \dwp c\hr_c$, we see that $Z_{c-1}\in U_+ \dwp c \hr_c \alphacheck_{i_c}(\tpar_c) U_+$, which implies that $\hr_{c-1}=\hr_c \alphacheck_{i_c}(\tpar_c)$. 

The case when $i_c\in -I$ is handled similarly. When $c$ is solid, we use the second identity in~\eqref{eq:SL2_xs_sx} together with $\ach_{|i_c|^\ast}(1/\tpar_c) \line{\wo}=\line{\wo} \ach_{|i_c|}(\tpar_c)$; see~\cite[Equation~(1.2)]{FZ_double}. 
\end{proof}

\begin{corollary}\label{cor:h_stays}
Suppose $c$ is hollow. If $i_c\in I$ then $\hb_{c-1}=\hb_c$, and if $i_c\in -I$ then $\hr_{c-1}=\hr_c$. 
\end{corollary}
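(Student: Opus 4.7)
The plan is to derive the corollary as a direct computation from Lemma~\ref{lemma:tpar} together with the key identity $\hb_c = \up c \cdot \hr_c$ from~\eqref{eq:h_c_dfn}, by exploiting the fact that when $c$ is hollow, $\up{c-1}$ and $\up c$ differ by an appropriate multiplication by $s_{i_c}$ (or $s_{|i_c|}$). The two cases are symmetric and purely formal.

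First, consider the case $i_c \in I$ with $c$ hollow. Unpacking \cref{dfn:pds} and the conventions in~\eqref{eq:s_i-notation}, hollowness means $\up{c-1} = s_{i_c}^- \up c s_{i_c}^+ = \up c s_{i_c}$. Combining $\hb_{c-1} = \up{c-1}\cdot \hr_{c-1}$ with the hollow case of \cref{lemma:tpar} for $i_c\in I$ gives
\[
\hb_{c-1} \;=\; (\up c s_{i_c})\cdot (s_{i_c}\cdot \hr_c) \;=\; \up c\cdot \hr_c \;=\; \hb_c,
\]
where we used that the $W$-action on $\H$ satisfies $s_{i_c}s_{i_c} = \id$.

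Next, consider $i_c \in -I$ with $c$ hollow. Now $\up{c-1} = s_{i_c}^- \up c s_{i_c}^+ = s_{|i_c|}\up c$. The hollow case of \cref{lemma:tpar} for $i_c\in -I$ gives $\hb_{c-1} = s_{|i_c|}\cdot \hb_c$. Applying $\hb_* = \up * \cdot \hr_*$ to both sides,
\[
(s_{|i_c|}\up c)\cdot \hr_{c-1} \;=\; s_{|i_c|}\cdot (\up c\cdot \hr_c) \;=\; (s_{|i_c|}\up c)\cdot \hr_c,
\]
and cancelling the $W$-action of the invertible element $s_{|i_c|}\up c$ yields $\hr_{c-1} = \hr_c$.

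There is no serious obstacle: the only subtlety is keeping track of which side of $\up c$ the reflection $s_{|i_c|}$ acts on, which is dictated by whether $i_c$ is positive or negative through the definitions of $s_i^{\pm}$. The result is essentially the statement that the $\H$-valued function \emph{not} twisted by $\up c$ (i.e., $\hb$ when the step acts on the $X$-side, and $\hr$ when the step acts on the $Y$-side) is unchanged across a hollow crossing, because the change in $\up{c-1}$ versus $\up c$ exactly compensates the change in $\hr$ versus $\hb$ predicted by \cref{lemma:tpar}.
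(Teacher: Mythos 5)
Your proof is correct and is exactly the argument the paper intends (the paper states this as an immediate corollary of Lemma~\ref{lemma:tpar} with no written proof): combine the hollow case of~\eqref{eq:h_tpar} with $\hb_c=\up c\cdot\hr_c$ from~\eqref{eq:h_c_dfn} and the identity $\up{c-1}=s_{i_c}^-\up c s_{i_c}^+$ at a hollow crossing. The only implicit step, that $u\cdot(v\cdot h)=(uv)\cdot h$ even though $\overline{uv}$ need not equal $\line{u}\,\line{v}$, is harmless since the two representatives differ by an element of the abelian group $\H$, which conjugates trivially on $\H$.
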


\begin{corollary}\label{cor:deo_tor}
The Deodhar torus $\Tbr \subset \BR_{\br}$ is isomorphic to an algebraic torus of dimension $d(\br)$, and the functions $(t_c)_{c\in\Jo}$ form a basis of the character lattice of $\Tbr$.
\end{corollary}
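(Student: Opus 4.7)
The plan is to exhibit an explicit isomorphism $\Phi: \Tbr \xrightarrow{\sim} (\Cast)^{\Jo}$ given by $(\Xbul, \Ybul) \mapsto (t_c)_{c \in \Jo}$, built using a slice of the free $G$-action together with the recursion from \cref{lemma:tpar}.

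To see that $\Phi$ is well-defined, I would verify each $t_c$ is nowhere vanishing on $\Tbr$. For a solid $c$ with $i_c > 0$, the proof of \cref{lemma:tpar} writes $Z_{c-1} = \dwp{c}\hr_c\,x_{i_c}(t_c)\,\ds_{i_c}$, and by~\eqref{eq:SL2_xs_sx} this lies in the Bruhat cell of $\wp{c-1} = \wp{c}$ exactly when $t_c \neq 0$. If instead $t_c = 0$, then $Z_{c-1} = \dwp{c}\ds_{i_c}(s_{i_c}\cdot\hr_c)$ lies in the strictly smaller Bruhat cell of $\wp{c}s_{i_c}$, contradicting $(\Xbul, \Ybul) \in \Tbr$. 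The case $i_c < 0$ is symmetric.

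To construct the inverse $\Psi: (\Cast)^{\Jo} \to \Tbr$, fix the slice $Y_0 := U_+$ and $X_0 := \dwo\,\hr_0\,U_+$, with $\hr_0 \in H$ to be determined. Iteratively applying \cref{lem:paramSingleStep}, I would write $X_c, Y_c$ as regular functions of the $m$ chain parameters along the braid diagram (one per link---for $X$ when $i_c > 0$ and for $Y$ when $i_c < 0$); the $|\Jo|$ solid chain parameters are declared equal to the given $t_c$'s. The resulting $Y_m, X_m \in G/U_+$ are regular functions of $\hr_0$ and the $\ell(\wo)$ hollow chain parameters, totaling $\rank G + \ell(\wo) = \dim G/U_+$ unknowns. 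The endpoint equation $X_m = Y_m$ is a system of $\dim G/U_+$ equations; I would argue it has a unique solution in two stages: the ``double-coset'' piece $Z_m = U_+$ (amounting to $\rank G$ equations) determines $\hr_0$ uniquely by running the recursion of \cref{lemma:tpar} backward from the boundary condition $\hr_m = 1$, independently of the hollow chain parameters; the remaining $\ell(\wo)$ equations then uniquely pin down the hollow parameters. Then $\Phi \circ \Psi = \id$ by matching at solid crossings and $\Psi \circ \Phi = \id$ by uniqueness of both constructions, so $\Tbr \cong (\Cast)^{\Jo}$ with the $t_c$'s as coordinate characters, forming a $\Z$-basis of the character lattice.

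The main obstacle is the uniqueness of the hollow chain parameters in the construction of $\Psi$. I would handle this by processing hollow crossings in order: by \cref{lem:morePositionFacts}, at each hollow step the chain parameter enters the representative of $Z_c$ linearly through a single root subgroup, so the system of $\ell(\wo)$ equations on the hollow parameters triangularizes and can be solved uniquely, yielding a regular inverse $\Psi$.
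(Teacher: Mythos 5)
Your overall strategy --- slice the free $G$-action by $Y_0:=U_+$, $X_0:=\dwo\hr_0U_+$ and build an explicit inverse by propagating along the braid --- is a reasonable way to flesh out what the paper dispatches in one line (the paper's point being that \eqref{eq:h_tpar} reconstructs every double coset $Z_c$, hence the $G$-orbit, from $(t_c)_{c\in\Jo}$ and $\hr_m=1$), and your nonvanishing argument for $\Phi$ matches the paper's. However, two steps in your construction of $\Psi$ have genuine gaps. The first is the identification of the solid chain parameters with the $t_c$'s. The parameter of \cref{lem:paramSingleStep} depends on a choice of representative $g$ of a flag, whereas $t_c$ is defined in \cref{lemma:tpar} through the canonical representative $Z_c=\dwp c\hr_c$ of a double coset; moreover your propagation runs left to right via $\z_{i_c}$, while $t_c$ is defined by the right-to-left relation $Z_{c-1}=Z_cz_{i_c}(t_c)$. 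Already for $G=\SL_2$ and $\br$ a word in positive letters, the rightward chain parameter $s$ at the first crossing satisfies $t_1=1/s$ (cf.\ \eqref{eq:SL2_xs_sx}), not $t_1=s$. So ``declaring the solid chain parameters equal to the given $t_c$'s'' does not give $\Phi\circ\Psi=\id$; you must instead choose each chain parameter so that the resulting \emph{function} $t_c$ takes the prescribed value, using that $t_c$ is an invertible function of that parameter given the data preceding it.

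The second gap concerns the hollow parameters. Leaving them free and solving the terminal equation $X_m=Y_m$ is not justified: the unique solvability of that system is exactly what needs proof, the nonvanishing of the pivots in your claimed triangularization is equivalent to the intermediate Deodhar conditions $X_c\Lwrel{\wp c}Y_c$ (which you never impose), and even a unique solution of $X_m=Y_m$ need only lie in $\BR_\br$, not in $\Tbr$. Note also that the recursion \eqref{eq:h_tpar}, which you use to determine $\hr_0$ ``independently of the hollow parameters,'' is only valid when every $Z_c$ lies in $\Bruho_{\wp c}$, so invoking it presupposes the very membership in $\Tbr$ that is in question. The repair is to impose the Deodhar condition as you propagate: at a hollow crossing, $\wp{c-1}$ and $\wp{c}$ differ by a length-additive simple factor, so \cref{lemma:rel-pos-facts}(3) (equivalently \cref{lem:morePositionFacts}) forces the next flag \emph{uniquely}, with no free parameter at all; at a solid crossing the parameter is free away from one excluded value, which in the $t_c$-coordinate is exactly $t_c=0$. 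With the intermediate conditions in place, the terminal condition reduces via \eqref{eq:h_tpar} to $\hr_m=1$, which pins down $\hr_0$, and both composites are the identity.
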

\begin{proof}
For $c\in\Jo$, the function $\tpar_c$ is regular on $\Tbr$ by \cref{lem:paramSingleStep}.  With notation as in the proof of \cref{lemma:tpar}, we have $Z_c=\dwp c \hr_c=\line{\wo} \hb_c \lline{\up c}$ and $Z_{c-1}=Z_c z_{i_c}(\tpar_c)$ if $i_c\in I$ and $Z_{c-1}=\z_{|i_c|^\ast}(\tpar_c)^{-1} Z_c$ if $i_c\in-I$.  It follows that $t_c \neq 0$ if and only if $Z_{c-1} \in \Bruho_{\wp{c-1}}=\Bruho_{\wp{c}}$ (thus, $\tzero = 0$ in the notation of \cref{lem:morePositionFacts}).  Thus, we get a regular map $\Tbr\to(\Cx)^{\Jo}$, $(\Xbul,\Ybul)\mapsto (\tpar_c)_{c\in\Jo}$.

To show that this map is an isomorphism, we construct the inverse: given $(\tpar_c)_{c\in\Jo}$, we explain how to recover $(\Xbul,\Ybul)\in\Tbr$. We have $\hr_m=\hb_m=1\in\H$, and for $c=m-1,m-2,\dots,0$, we recover $\hr_c,\hb_c$ using~\eqref{eq:h_tpar}. Next, since $(\Xbul,\Ybul)$ is defined up to $G$-action, we may assume that $Y_0 = U_+$ and $X_0 = \dwo\hr_0$. Suppose that for $c=1,2,\dots,m$, $(X_{c-1},Y_{c-1})$ has been recovered.
 If $c$ is hollow then $(X_c,Y_c)$ is recovered uniquely using part~\eqref{rel-pos-facts3} of \cref{lemma:rel-pos-facts}. If $c$ is solid then the parameter $\tparp_c$ in~\eqref{eq:propagate_left} is recovered uniquely from $t_c$ by an argument similar to the proof of \cref{lemma:tpar}. (Alternatively, one can recover $\tparp_c$ as a ratio of grid minors discussed in the next subsection.) We have constructed the inverse map, and it is clearly regular on the torus $(\Cx)^{c\in\Jo}$.
\end{proof}

\subsection{Grid and chamber minors}\label{sec:minors} In this section, we introduce a basis of characters of $\Tbr$ consisting of certain generalized minors called \emph{grid minors}. This basis will be crucial in our definition of the exchange matrix. We will show later in \cref{cor:cluster_vars} that grid minors are related to cluster variables by an invertible monomial transformation.

Recall that $A = (a_{ij})_{i,j\in I}$, $a_{ij}:= \< \alpha_i, \ach_j \>$ is the Cartan matrix, and that $d_i a_{ij} = d_j a_{ji}$.  For $i,j \in \pm I$, we define $a_{ij} = 0$ if $i,j$ have different signs, and $a_{ij}= a_{(-i)(-j)}$ otherwise.  Also set $d_{-i} := d_{i}$ for $i\in I$. Abusing notation, given a double braid word $\br$, we 
let 
\begin{equation}\label{eq:d_dfn}
  \dbr=(d_c)_{c\in\Jo}, \quad\text{where}\quad d_c := d_{i_c} \quad\text{for $c\in\Jo$}.
\end{equation}

Following~\cite[Definition~1.4]{FZ_double}, for $i \in I$ and $v,w \in W$, we have a \emph{generalized minor} $\Delta_{v\omega_i, w\omega_i} : G \to \C$. It is a regular function satisfying
\begin{align}%
\label{eq:delta(yxy)}
  \Delta_{\om_i,\om_i}(y_-xy_+)&=  \Delta_{\om_i,\om_i}(x) \quad\text{for all $(y_-,x,y_+)\in U_-\times G\times U_+$;}\\
  \Delta_{v\om_i,w\om_i}(x)&=\Delta_{\om_i,\om_i}\left(\lline{v^{-1}} x\line{w}\right)=\Delta_{\om_i,\om_i}\left(\dv^{-1} x\dw\right);
\end{align}
see~\cite[Section~1.4]{FZ_double}. For $h\in\H$, we have $\Delta_{\om_i,\om_i}(h)=h^{\om_i}$. For $x\in G$, we also have~\cite[Equation~(2.14)]{FZ_double}
\begin{equation}\label{eq:domk(xh)}
  \domk(xh)=\domk(hx)=h^{\om_k}\domk(x).
\end{equation}

\begin{definition}\label{dfn:grid_minors}
	For $c \in [0,m]$ and $k \in I$, we define the \emph{grid minors} %
	\begin{equation}\label{eq:DeltaGrid}
          \grid_{c, k}(\Xbul,\Ybul)= \Delta_{\wp c\omega_k, \omega_k}(Z_c) \quad\text{and}\quad \grid_{c,-k}(\Xbul,\Ybul)=\Delta_{\wo \omega_k, u\pd{c}^{-1}\omega_k}(Z_c).
	\end{equation}
\end{definition} 
\noindent The \emph{chamber minors} are defined as $\crossing{c}:=\Delta_{c-1, i_{c}}$, for $c\in[m]$.

\begin{lemma}\label{lemma:hc}
For $c \in [0,m]$ and $k\in I$, the grid minors $\grid_{c,k}$ and $\grid_{c,-k}$ are well-defined regular functions on $\tTbr$. For $(\Xbul,\Ybul)\in\tTbr$, we have
\begin{equation}\label{eq:hc}
  \grid_{c,k}(\Xbul,\Ybul) =  (\hr_c)^{\om_k} \quad\text{and}\quad \grid_{c,-k}(\Xbul,\Ybul) = (\hb_c)^{\om_k}.
\end{equation}
\end{lemma}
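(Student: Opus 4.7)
The plan is to reduce both identities to the defining identity $\Delta_{\om_k,\om_k}(h)=h^{\om_k}$ by exploiting the $U_-\times U_+$-biinvariance of the basic minor $\Delta_{\om_k,\om_k}$ (from~\eqref{eq:delta(yxy)}) together with the formula $\Delta_{v\om_k,w\om_k}(x)=\Delta_{\om_k,\om_k}(\dv^{-1}x\dw)$.

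For the first identity, I would apply this formula with $v=\wp c$ and $w=\id$ to get $\grid_{c,k}(\Xbul,\Ybul) = \Delta_{\om_k,\om_k}(\dwp c^{-1}Z_c)$. Using the decomposition $Z_c=u_1\dwp c\,\hr_c\,u_2$ from~\eqref{eq:h_c_dfn}, this becomes $\Delta_{\om_k,\om_k}\bigl((\dwp c^{-1}u_1\dwp c)\,\hr_c\,u_2\bigr)$. The factorization~\eqref{eq:U_factorization} writes $\dwp c^{-1}u_1\dwp c=u_-u_+'$ with $u_-\in U_-$ and $u_+'\in U_+$, and since $\hr_c\in\H$ normalizes $U_+$, we can push $u_+'$ past $\hr_c$ into $u_2$. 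This yields an element of $U_-\hr_c U_+$, to which biinvariance immediately applies, giving $\hr_c^{\om_k}$.

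For the second identity, I would apply the same formula with $v=\wo$ and $w=\up c^{-1}$, and use the decomposition $Z_c=u_1\line{\wo}\hb_c\lline{\up c}u_2$ from~\eqref{eq:h_c_dfn}. On the left factor, $\dwo^{-1}U_+\dwo=U_-$ (since $\wo$ sends $\Phi^+$ to $\Phi^-$ and $\dwo$ and $\line{\wo}$ differ by a torus element that normalizes $U_\pm$), so $\dwo^{-1}u_1\line{\wo}\in U_-$. On the right factor, the key clean observation is $\lline{\up c}\cdot\dot{\up c^{-1}}=1$: directly from~\eqref{eq:dw_dfn}, for a reduced expression of $w$ one has $\lline{w}\line{w^{-1}}=\ds_{i_1}^{-1}\cdots\ds_{i_l}^{-1}\ds_{i_l}\cdots\ds_{i_1}=1$ by telescoping. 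Hence $\lline{\up c}u_2\dot{\up c^{-1}}=\lline{\up c}u_2\lline{\up c}^{-1}$, a conjugate of a $U_+$ element; factoring it in $U_-\cdot U_+$ via an analogue of~\eqref{eq:U_factorization} and absorbing the $U_-$ part through $\hb_c$ (again using that $\H$ normalizes $U_-$) puts $\dwo^{-1}Z_c\dot{\up c^{-1}}$ into $U_-\hb_c U_+$, so biinvariance gives $\hb_c^{\om_k}$.

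The only nonroutine point is the bookkeeping between the two lifts $\line{(\cdot)}$ and $\lline{(\cdot)}$, in particular verifying the cancellation $\lline{\up c}\dot{\up c^{-1}}=1$ so that no stray torus factor appears on the right side of $Z_c$; once this is in hand, both computations are straightforward applications of Bruhat invariance of the minor.
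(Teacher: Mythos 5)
Your proof is correct and follows essentially the same route as the paper: the paper proves the first identity exactly as you do (conjugate $y_+'$ across $\dwp{c}$, factor via~\eqref{eq:U_factorization}, absorb the $U_+$ part through $\hr_c$, and apply biinvariance), and dismisses the second as ``similar.'' Your worked-out version of the second identity, including the cancellation $\lline{\up c}\,\line{\up c^{-1}}=1$ and the fact that $\dwo^{-1}U_+\dwo=U_-$, is a correct filling-in of that omitted detail.
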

\begin{proof}
Recall that we view $Z_c \in \UGU$ as an element of $G$, and that for $(\Xbul,\Ybul)\in\tTbr$, we have $Z_c\in\Bruho_{\wp c}=U_+\wp c \H U_+$ for all $c\in[0,m]$.  Write $Z_c=y_+'\dwp c \hr_c y_+''$ for $y_+',y_+''\in U_+$. We have
\begin{equation*}%
  \grid_{c,k}(\Xbul,\Ybul)= \Delta_{\wp c\omega_k, \omega_k}(Z_c)
=\domk\left(\lline{\wp c^{-1}} y_+'\dwp c \hr_c y_+''\right)
=\domk(\dwp c^{-1} y_+' \dwp c \hr_c).
\end{equation*}
Factorizing $\dwp c^{-1} y_+' \dwp c=b_-b_+$ for $(b_-,b_+)\in U_-\times U_+$ using~\eqref{eq:U_factorization}, using $(\hr_c)^{-1}b_+\hr_c\in U_+$, and applying~\eqref{eq:domk(xh)}, we get the first identity in~\eqref{eq:hc}. In particular, since the result does not depend on $y_+',y_+''\in U_+$, we see that $\grid_{c,k}(Z_c)$ is invariant under the $U_+\times U_+$-action on $Z_c$, and thus descends to a well-defined function on $U_+ \backslash \Bruho_{\wp c} / U_+$.
 The proof for $\grid_{c,-k}$ is similar.
\end{proof}

\begin{corollary}\label{cor:grid_bigBR}
For each $c\in[0,m]$ and $k\in I$, the grid minors $\grid_{c,k}$ and $\grid_{c,-k}$ are well-defined regular functions on $\BigBR_\br$, $\bigBR_\br$, and $\BR_\br$. 
 These regular functions commute with the quotient map $\bigBR_\br\to\BR_\br$ and the inclusion map $\bigBR_\br\hookrightarrow \BigBR_\br$.
\end{corollary}
\begin{proof}
For $(\Xbul,\Ybul)\in\tTbr$, we have $Z_c\in\Bruho_{\wp c}$, and the proof of \cref{lemma:hc} implies that $\Delta_{\wp c\omega_k, \omega_k}$ is well defined on $U_+ \backslash \Bruho_{\wp c} / U_+$. Since $\tTbr$ is dense in $\BigBR_\br$ by \cref{lem:tTbr_open_dense}, it follows that for $(\Xbul,\Ybul)\in\BigBR_\br$, we have $Z_c\in\Bruh_{\wp c}$. By continuity, it follows that $\Delta_{\wp c\omega_k, \omega_k}$ is well defined on $U_+ \backslash \Bruh_{\wp c} / U_+$. Thus, $\grid_{c,k}$ (and similarly $\grid_{c,-k}$) is well defined on $\BigBR_\br$. The fact that it commutes with the quotient map $\bigBR_\br\to\BR_\br$ follows since the map $(X_c,Y_c)\mapsto Z_c=Y_c^{-1}X_c$ commutes with the $G$-action $(X_c,Y_c)\mapsto (gX_c,gY_c)$, $g\in G$, on $\bigBR_\br$. It also trivially commutes with the inclusion map $\bigBR_\br\hookrightarrow \BigBR_\br$.
\end{proof}

Combining \cref{lemma:hc} with~\eqref{eq:h_tpar}, we get the following. 
\begin{corollary}\label{cor:crossing_changes_one_minor}
If $c$ is solid and $k\in\pmn$ has the same sign as $i_c$ then 
\begin{equation}\label{eq:crossing_changes_one_minor}
  \grid_{c-1,k}=
  \begin{cases}
    \tpar_c\grid_{c,k}, &\text{if $k= i_c$;}\\
    \grid_{c,k}, &\text{if $k\neq i_c$.}\\
  \end{cases}
\end{equation}
\end{corollary}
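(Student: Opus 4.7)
The plan is to combine the two preceding lemmas directly, reducing the claim to a one-line computation with fundamental-weight characters. Since both sides of the desired identity are rational functions on the irreducible variety $\BR_\br$ (the right side is rational by \cref{lemma:tpar}, the left side by \cref{lemma:hc} plus regularity of $\grid_{c,k}$), it suffices to verify the identity on the Zariski-dense open subset $\Tbr$.

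On $\Tbr$, \cref{lemma:hc} expresses each grid minor as a fundamental-weight character of $\hrb_c$: for $k\in I$, $\grid_{c,k}=(\hr_c)^{\om_k}$ and $\grid_{c,-k}=(\hb_c)^{\om_k}$. So I split into two cases according to the sign shared by $k$ and $i_c$. If $i_c\in I$ and $k\in I$, then \cref{lemma:tpar} gives $\hr_{c-1}=\ach_{i_c}(\tpar_c)\,\hr_c$, hence
\[
\grid_{c-1,k}=\big(\ach_{i_c}(\tpar_c)\,\hr_c\big)^{\om_k}=\tpar_c^{\<\om_k,\ach_{i_c}\>}\,(\hr_c)^{\om_k}=\tpar_c^{\delta_{k,i_c}}\,\grid_{c,k},
\]
using the defining pairing $\<\om_i,\ach_j\>=\delta_{ij}$ recalled in the background. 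If instead $i_c\in -I$ and $k\in-I$, the analogous computation with $\hb_{c-1}=\ach_{|i_c|}(\tpar_c)\,\hb_c$ yields
\[
\grid_{c-1,k}=\tpar_c^{\<\om_{|k|},\ach_{|i_c|}\>}\,\grid_{c,k}=\tpar_c^{\delta_{|k|,|i_c|}}\,\grid_{c,k},
\]
and $\delta_{|k|,|i_c|}=\delta_{k,i_c}$ since $k$ and $i_c$ already share a sign. In each case the exponent is $1$ when $k=i_c$ and $0$ otherwise, matching the two branches of the stated formula.

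There is no real obstacle here: the only thing to notice is that \cref{lemma:tpar} and \cref{lemma:hc} were proved on $\Tbr$ but give information about rational functions on all of $\BR_\br$, so density of $\Tbr$ in the irreducible $\BR_\br$ finishes the argument.
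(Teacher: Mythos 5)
Your proposal is correct and is exactly the paper's argument: the paper derives this corollary by "combining \cref{lemma:hc} with~\eqref{eq:h_tpar}," which is precisely the computation you spell out using $\<\om_k,\ach_j\>=\delta_{kj}$. The added remark about passing from $\Tbr$ to all of $\BR_\br$ by density is a harmless (and reasonable) elaboration of what the paper leaves implicit.
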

\begin{proposition}\ \label{prop:deodharIsTorus} 
\begin{enumerate}
\item\label{item:deo_tor3} The grid minors are characters of $\Tbr $.
\item\label{item:deo_tor2}  The solid chamber minors $(\Delta_c)_{c\in\Jo}$ form a basis of the character lattice of $\Tbr$.
\end{enumerate}
\end{proposition}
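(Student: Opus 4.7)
The plan for both parts is to reduce everything to explicit Laurent monomial formulas in the coordinates $(t_c)_{c \in \Jo}$ of $\Tbr$ provided by \cref{cor:deo_tor}.

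For part (1), by \cref{lemma:hc} each grid minor $\grid_{c,\pm k}$ is the evaluation of the fundamental weight $\om_k$ on the $\H$-valued function $\hr_c$ or $\hb_c$, hence is a nowhere-vanishing regular function on $\Tbr$. To promote it to a character of $\Tbr$, I would argue by downward induction on $c$ that $\hr_c$ and $\hb_c$ factor as products of (possibly Weyl-translated) cocharacters $\ach_{|i_{c'}|}(t_{c'})$ indexed by solid $c' > c$, so that evaluation at any $\om_j$ yields a Laurent monomial in the $t_{c'}$'s. The base case $c = m$ uses $X_m = Y_m$, forcing $Z_m \in U_+$ and hence $\hr_m = \hb_m = 1$. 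The inductive step is a direct application of \cref{lemma:tpar}: passing from $c$ to $c-1$, one either left-multiplies by a cocharacter at $t_c$ (solid crossing) or conjugates by a simple reflection (hollow crossing, with the parallel factor held fixed by \cref{cor:h_stays}).

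For part (2), since $|\Jo| = d(\br) = \dim \Tbr$, it suffices to show the change-of-basis matrix from $(t_c)_{c \in \Jo}$ to $(\crossing{c})_{c \in \Jo}$ is unimodular. Ordering $\Jo$ naturally as $c_1 < c_2 < \cdots < c_n$, the decisive identity is
\[
  \crossing{c_j} = t_{c_j} \cdot \mu_{c_j},
\]
where $\mu_{c_j}$ is a Laurent monomial depending only on $\{t_{c_{j'}} : j' > j\}$; this makes the change-of-basis matrix upper triangular with unit diagonal, hence unimodular. For $i_{c_j} \in I$, the identity follows directly from \cref{lemma:tpar}, namely $\hr_{c_j - 1} = \ach_{i_{c_j}}(t_{c_j}) \hr_{c_j}$, giving
\[
  \crossing{c_j} = (\hr_{c_j - 1})^{\om_{i_{c_j}}} = t_{c_j}^{\<\om_{i_{c_j}}, \ach_{i_{c_j}}\>} \cdot (\hr_{c_j})^{\om_{i_{c_j}}} = t_{c_j} \cdot (\hr_{c_j})^{\om_{i_{c_j}}},
\]
with the second factor being a Laurent monomial in $\{t_{c'} : c' > c_j\}$ by part (1). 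The case $i_{c_j} \in -I$ is identical after replacing $\hr$ by $\hb$.

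No substantive obstacle is anticipated. The one point deserving mild care is in the inductive step of part (1): one must verify that Weyl-translates of cocharacters are again cocharacters, so that evaluating fundamental weights on them produces Laurent monomials rather than merely rational functions.
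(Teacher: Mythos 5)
Your proof is correct and follows essentially the same route as the paper: both arguments combine \cref{lemma:tpar} and \cref{lemma:hc} to run a downward induction from $c=m$ (where $\hr_m=\hb_m=1$) showing that grid minors are Laurent monomials in the coordinates $(t_c)_{c\in\Jo}$ of \cref{cor:deo_tor}, and then part (2) follows from the upper-unitriangular relation $\crossing{c}=t_c\,\grid_{c,i_c}$ at solid crossings. The paper merely packages the same computation as statements (i)--(iv) about grid minors being Laurent monomials in the solid chamber minors, rather than in the $t_c$'s directly.
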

\begin{proof}

We relate the parameters $(\tpar_c)_{c\in\Jo}$ from \cref{cor:deo_tor} to the grid and \cham minors by combining~\eqref{eq:h_tpar} with~\eqref{eq:hc}. Suppose that $i_c\in I$ and let $k\in I$. If $k\neq i_c$ then $\grid_{c-1,k}=\grid_{c,k}$ by \cref{cor:crossing_changes_one_minor}. If $c$ is hollow then 
\begin{equation*}%
  \grid_{c-1,i_c}=(\hr_{c-1})^{\om_{i_c}}=(s_{i_c}\cdot \hr_c)^{\om_{i_c}}=(\hr_c)^{s_{i_c} \om_{i_c}}. 
\end{equation*}
Expand $s_{i_c} \om_{i_c}=\om_{i_c}-\alpha_{i_c}$ in the basis of fundamental weights using $\alpha_{i_c}=\sum_{j\in I} a_{i_cj}\om_j$. This gives
\begin{equation}\label{eq:grid_mon_reln}
  \grid_{c-1,i}\grid_{c,i}\prod_{j\neq i} \grid_{c,j}^{a_{ij}}=1, \quad\text{if $c$ is hollow and $i:=i_c$},
\end{equation}
which holds for $i_c \in \pm I$.  If $c$ is solid, \cref{cor:crossing_changes_one_minor} yields $\grid_{c-1,i_c}=\tpar_c\grid_{c,i_c}$.  Thus,
 \begin{enumerate}[label=(\roman*)]
 \item\label{item:gridCrossingMono0} For each solid $c\in\Jo$, $\tpar_c=\grid_{c-1,i_c}/\grid_{c,i_c}$ is a ratio of two grid minors.
 \item\label{item:gridCrossingMono1} For each solid $c\in\Jo$, the grid minors $(\grid_{c-1, j})_{j\in\pmn}$ are Laurent monomials in the grid minors $(\grid_{c, k})_{k\in\pmn}$ and the \cham minor $\crossing{c}=\grid_{c-1, i_c}$.
 \item\label{item:gridCrossingMono2} For each hollow $c\in[m]\setminus\Jo$, the grid minors $(\grid_{c-1, j})_{j\in\pmn}$ are Laurent monomials in the grid minors $(\grid_{c, k})_{k\in\pmn}$.
 \item\label{item:gridCrossingMono3} Every grid minor $\grid_{c,j}$ is a Laurent monomial in the solid \cham minors $(\crossing{\cp})_{\cp\in\Jo}$.
 \end{enumerate}
We have already shown~\ref{item:gridCrossingMono0}--\ref{item:gridCrossingMono2}, and~\ref{item:gridCrossingMono3} follows from~\ref{item:gridCrossingMono1}--\ref{item:gridCrossingMono2}. This implies the result.
\end{proof}

\subsection{Almost positive sequences and Deodhar hypersurfaces}
Here we discuss the complement of the Deodhar torus and its irreducible components, the Deodhar hypersurfaces. We first introduce additional combinatorics, which we will use to describe an open subset of the Deodhar hypersurface. 

Recall that we have $\up{c-1}=\min(\up c,s_{i_c}^-\up{c}s_{i_c}^+)$ and $\wp{c-1}=\max(\wp c, s_{i_c^\ast}^-\wp{c} s_{i_c}^+)$ for all $c\in[m]$. 
\begin{definition}%
\label{def:almostPos}
  Let $\cp \in \Jo$. Let $\vp{m}\apd{\cp}:=\wo$, and for $c=m,m-1,\dots,1$, define
  \[\vp{c-1}\apd{\cp}:=\begin{cases}
      \max(\vp \cp\apd\cp,s_{i_\cp}^- \vp{\cp}\apd{\cp}  s_{i_\cp}^+),& \text{if $c=\cp$,}\\
      \min(\vp c\apd\cp, s_{i_{c}}^- \vp{c}\apd{\cp} s_{i_{c}}^+),& \text{otherwise}.
    \end{cases} \]
  We call the sequence $\bvap\apd \cp:=(\vp0\apd{\cp}, \dots, \vp m\apd{\cp})$ the \emph{$\langle \cp \rangle$-almost positive sequence}. We set $\wp c\apd \cp:=\wo\vp c\apd \cp$ for all $c\in[0,m]$, and write 
$\bw\apd \cp=\wo\bvap\apd \cp:=(\wo\vp0\apd{\cp}, \dots, \wo\vp m\apd{\cp})$.
\end{definition}
\begin{definition}\label{dfn:mutable}
We say that $\cp\in\Jo$ is \emph{mutable} if $\vp0\apd \cp=\id$. Otherwise, $\cp$ is \emph{frozen}. We let $\Jmut$ (resp., $\Jfro$) denote the set of mutable (resp., frozen) indices.
\end{definition}

\begin{definition}\label{dfn:V_d}
Let $\cp \in \Jo$. Define the \emph{open Deodhar hypersurface} $\tVo_\cp\subset\BigBR_\br$ by
\begin{equation}\label{eq:tVo_dfn}
  \tVo_\cp := \{(\Xbul,\Ybul)\in\BigBR_\br\mid X_{c} \Lwrel{\wp{c}\apd{\cp}} Y_c \quad\text{for all $c \in [0,m]$} \}.
\end{equation}
Define the \emph{(closed) Deodhar hypersurface} $\tV_\cp \subset \BigBR_{\br}$ to be the closure of $\tVo_\cp$.
\end{definition}

It follows that an index $\cp\in\Jo$ is mutable (resp., frozen) if and only if $\tVo_\cp\subset\bigBR_\br$ (resp., $\tVo_\cp\cap\bigBR_\br=\emptyset$). If $\cp$ is mutable then the $G$-action on $\tVo_\cp$ is free. In this case, we set $\Vo_\cp:=\tVo_\cp/G$ and let $V_\cp$ be the closure of $\Vo_\cp$ in $\BR_\br$.

\begin{proposition}\label{prop:V_d_irr}
The closed subset $\BigBR_\br\setminus \tTbr$ is a union of the Deodhar hypersurfaces $\tV_\cp$ for $\cp\in\Jo$. Each $\tV_\cp$ is irreducible and has codimension one in $\BigBR_\br$, and the hypersurfaces $\tV_\cp,\tV_{\cpp}$ are distinct for distinct $\cp,\cpp\in\Jo$.
\end{proposition}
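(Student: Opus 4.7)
The plan is to work in the parameterization of \cref{rmk:braidVarIntuition}, which identifies $\BigBR_\br$ with $(G/U_+)\times\C^m$ via $(X_m,\bt')=(X_m,t'_1,\dots,t'_m)$; under this identification $\BigBR_\br$ is smooth and irreducible. For each point and each $c\in[0,m]$, let $\tilde v_c\in W$ denote the relative position defined by $Y_c\Rwrel{\tilde v_c}X_c$.

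First I would analyze these positions by decreasing induction on $c$, starting from $\tilde v_m=\id=\wp m$. Applying \cref{lem:morePositionFacts} to each transition $c\to c-1$ (on the $X$-side when $i_c>0$, on the $Y$-side when $i_c<0$) yields the following dichotomy: if the position is forced to increase strictly (the hollow case, which coincides with $\wp{c-1}>\wp c$ when $\tilde v_c=\wp c$), then $\tilde v_{c-1}$ equals the larger value for every $t'_c$; if the position is generically preserved (the solid case, $\wp{c-1}=\wp c$), then $\tilde v_{c-1}=\tilde v_c$ for generic $t'_c$, with a single critical value $t'_c=t^*_c$ at which $\tilde v_{c-1}$ drops by one simple reflection. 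The function $t^*_c$ is rational in $(X_m,t'_{c'})_{c'>c}$ and regular on the open locus where $\tilde v_{c'}=\wp{c'}$ for all $c'\geq c$. By induction $\tilde v_c\leq\wp c$ everywhere, with equality precisely on $\tTbr$; since each Bruhat cell $\Bruho_{\wp c}$ is locally closed in $G$, it follows that $\tTbr$ is open in $\BigBR_\br$, and its complement is closed.

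Next, for each $\cp\in\Jo$, I would describe the open stratum $\tV_\cp^\circ:=\{\tilde v_c=\wp c\apd\cp\text{ for all }c\}$. The defining recursion of the $\apd\cp$-sequence (\cref{def:almostPos}) is arranged so that this stratum is cut out inside $\BigBR_\br$ by the single equation $t'_\cp=t^*_\cp$, together with the open inequalities $t'_c\neq t^*_c$ at each solid index $c\neq\cp$ of the $\wp$- or $\apd\cp$-sequence. Indeed, imposing $t'_\cp=t^*_\cp$ at the solid index $c=\cp$ of the $\wp$-sequence produces the forced drop $\tilde v_{\cp-1}=\wp{\cp-1}\apd\cp$; subsequent transitions for $c<\cp$ propagate this through the $\apd\cp$-sequence by the same dichotomy, staying generic exactly when the above open conditions hold. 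Thus $\tV_\cp^\circ$ is a non-empty Zariski-open subset of the irreducible codimension-one hypersurface $\{t'_\cp=t^*_\cp\}$ in the parameter space, hence irreducible of codimension one in $\BigBR_\br$; its closure $\tV_\cp$ inherits both properties.

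Finally, I would verify the set-theoretic equality $\BigBR_\br\setminus\tTbr=\bigcup_{\cp\in\Jo}\tV_\cp$. The inclusion $\supseteq$ is immediate since $\tilde v_{\cp-1}=\wp{\cp-1}\apd\cp<\wp{\cp-1}$ on $\tV_\cp^\circ$, placing $\tV_\cp^\circ$ (and hence its closure) in the closed set $\BigBR_\br\setminus\tTbr$. For $\subseteq$, take $p\in\BigBR_\br\setminus\tTbr$ and let $\cp\in[m]$ be the largest index with $\tilde v_{\cp-1}(p)<\wp{\cp-1}$; then $\tilde v_\cp(p)=\wp\cp$, and the dichotomy above forces $\cp\in\Jo$, $t'_\cp(p)=t^*_\cp$, and $\tilde v_{\cp-1}(p)=\wp{\cp-1}\apd\cp$. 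Perturbing $(t'_{c'}(p))_{c'<\cp}$ to generic values while keeping $X_m$ and $(t'_{c'})_{c'\geq\cp}$ fixed produces a family of points in $\tV_\cp^\circ$ whose limit is $p$, giving $p\in\tV_\cp$. The main obstacle will be the careful bookkeeping for the inductive position analysis and confirming that the critical functions $t^*_c$ are well-defined rational functions with the asserted domains, though all of this is a direct consequence of the parameterization together with \cref{lem:morePositionFacts}.
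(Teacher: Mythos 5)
Your proposal is correct and follows essentially the same route as the paper: it uses the parameterization of \cref{rmk:braidVarIntuition} together with \cref{lem:morePositionFacts} to exhibit the locus \eqref{eq:deodharHyp} as an iterated fiber bundle (one fiber collapsing from $\C$ to a point at the crossing $\cp$), giving irreducibility and codimension one, and then handles the union by taking the largest index where \eqref{eq:DTcond} fails and showing the point lies in the closure of the generic stratum of $\tV_\cp$. Your explicit bookkeeping with the critical values $t^*_c$ and the final perturbation argument are just a more hands-on rendering of the paper's density claim for $\tV''_\cp\subset\tV'_\cp$.
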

\begin{proof}
 We prove the second sentence. 
As explained in the proof of \cref{lem:tTbr_open_dense}, $\BigBR_\br$ is an iterated fiber bundle over $G/U_+$, where each fiber is either $\C$ (if $c$ is hollow) or $\Cx$ (if $c$ is solid). Similarly, $\tVo$ is an iterated fiber bundle over $G/U_+$, where each fiber is either $\C$, $\Cx$, or (in the case of the crossing $c = \cp$) a point. It follows that $\tV_\cp$ is an irreducible subvariety of $\BigBR_\br$ of codimension one: we have $\dim\BigBR_\br = \dim(G/U_+) + m$ and $\dim\tV_\cp = \dim\tVo_\cp = \dim(G/U_+) + m-1$. For distinct $\cp,\cpp\in\Jo$, any point $(\Xbul,\Ybul)$ in $\tVo_{\cpp}$ satisfies $Z_{\cp-1} \in\Bruho_{\wp{\cp-1}}$. Meanwhile, any point $(\Xbul,\Ybul)$ in $\tV_{\cp}$ satisfies $Z_{\cp-1} \in\Bruh_{\wp{\cp-1}\apd{\cp}}$. Since $\wp{\cp-1}\apd{\cp}<\wp{\cp-1}$ in the Bruhat order, we see that $\tVo_{\cpp}\not\subset\tV_\cp$, and thus $\tV_{\cpp}\neq\tV_\cp$.

We now prove the first sentence. First, recall from \cref{lem:tTbr_open_dense} that $\tTbr$ is open in $\BigBR_\br$, so $\BigBR_\br\setminus\tTbr$ is closed. For each $\cp\in[m]$, we introduce an auxiliary subset
\begin{equation}\label{eq:tVgeq_dfn}
  \tVgeq_\cp := \{(\Xbul,\Ybul)\in\BigBR_\br\mid X_{c} \Lwrel{\wp{c}} Y_c \quad\text{for all $c \geq \cp$,\quad  and\quad 
$X_{\cp-1} \Lwrel{w'} Y_{\cp-1}$ for $w'\neq \wp{\cp-1}$.} \}.
\end{equation}
Recall that if $\cp$ is hollow then $X_{\cp} \Lwrel{\wp{\cp}} Y_\cp$ implies $X_{\cp-1} \Lwrel{\wp{\cp-1}} Y_{\cp-1}$. Thus, $\tVgeq_\cp$ is empty unless $\cp\in\Jo$. For $\cp\in\Jo$, the element $w'$ in~\eqref{eq:tVgeq_dfn} must be equal to $\wp{\cp-1}\apd{\cp}$ because there are only two possibilities for the relative position of $(X_{\cp-1}, Y_{\cp-1})$ given that $X_{\cp} \Lwrel{\wp{\cp}} Y_{\cp}$.

Let $(\Xbul,\Ybul)\in\BigBR_\br\setminus\tTbr$. Then~\eqref{eq:DTcond} must fail for $(\Xbul,\Ybul)$ for some index $c\in[0,m]$. We always have $X_m\Lwrel{\wp{m}}Y_m$, so there exists a unique $\cp\in[m]$ such that $(\Xbul,\Ybul)\in\tVgeq_\cp$. 
Applying an iterated fiber bundle argument as above, we see that $\tVo_\cp$ is an open dense subset of $\tVgeq_\cp$, and therefore 
\begin{equation}\label{eq:tVo_inside_tV}
  \tVo_\cp\subset\tVgeq_\cp \subset \tV_\cp.
\end{equation}
Thus, $(\Xbul,\Ybul)\in\tV_\cp$. We have shown that $\BigBR_\br\setminus\tTbr=\bigcup_{\cp\in\Jo} \tV_\cp$. 
\end{proof}

\subsection{Cluster variables}\label{sec:deo_cluster}
By \cref{prop:V_d_irr}, the irreducible components of $\BigBR_\br \setminus \tTbr$
 are the Deodhar hypersurfaces $\tV_\cp$, $\cp \in \Jo$. 
 For a grid minor $\grid_{c,k}$ and $\cp\in\Jo$, we denote by $\ord_{V_\cp}\Delta_{c,k}\in\Z$ the order of vanishing of $\Delta_{c,k}$ on the hypersurface $\tV_\cp$; cf. \cref{cor:grid_bigBR}. Since $\grid_{c,k}$ is regular on $\BigBR_\br$, we have that $\ord_{V_\cp}\grid_{c,k}\geq0$. In this section, we utilize properties of $\ord_{V_\cp}\grid_{c,k}$ to define a new basis of characters of $\Tbr$, the cluster variables.

We have the following basic unitriangularity property.
\begin{proposition}\label{prop:triang}
For $\cp\in\Jo$ solid, $c\in[0,m]$, and $k\in\pmn$, we have
\begin{equation}\label{eq:ord_matrix}
  \ord_{V_\cp}\grid_{c,k}=
  \begin{cases}
    0, &\text{if $\cp\leq c$;}\\
    1, &\text{if $(c,k)=(\cp-1,i_\cp)$, i.e., $\Delta_{c,k}=\Delta_{\cp}$.}\\
  \end{cases}
\end{equation}
\end{proposition}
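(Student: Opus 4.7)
The plan is to handle the two cases separately, leveraging the parameterization of $\BigBR_\br$ from \cref{rmk:braidVarIntuition} and the character-theoretic description of grid minors in \cref{lemma:hc}.

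For $\cp \leq c$, the key observation is that in \cref{def:almostPos} the ``max'' step is applied only at position $\cp$; every other recursion step (in particular for indices in $(\cp, m]$) uses ``min,'' matching the positive distinguished recursion. Hence $\vp{c'}\apd\cp = \up{c'}$ and $\wp{c'}\apd\cp = \wp{c'}$ for all $c' \geq \cp$. Consequently, generically on $\tV_\cp$ one has $Z_c \in \Bruho_{\wp c}$, and \cref{lemma:hc} identifies $\grid_{c,k}$ with a character of $\H$ evaluated at the torus component $\hrb_c$ of $Z_c$. Since characters are nowhere vanishing on a torus, $\grid_{c,k}$ is not identically zero on $\tV_\cp$, giving $\ord_{V_\cp}\grid_{c,k} = 0$.

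For the chamber-minor case $(c,k) = (\cp-1, i_\cp)$, I would apply \cref{cor:crossing_changes_one_minor}: since $\cp$ is solid and $k = i_\cp$ has the same sign as $i_\cp$,
\[
\crossing{\cp} = \Delta_{\cp-1, i_\cp} = t_\cp \, \Delta_{\cp, i_\cp},
\]
where $t_\cp$ is the function from \cref{lemma:tpar}. By the first case applied at $c = \cp$, one has $\ord_{V_\cp}\Delta_{\cp, i_\cp} = 0$, so it suffices to show $\ord_{V_\cp} t_\cp = 1$. To this end I would pass to the parameterization $\BigBR_\br \cong G/U_+ \times \C^m$ from \cref{rmk:braidVarIntuition}; a direct comparison of \cref{lemma:tpar} with \eqref{eq:propagate_left} identifies $t_\cp$ with the coordinate function $\tparp_\cp$ on the $\C^m$-factor.

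The main obstacle is to promote this identification to an equality $\tV_\cp = \{\tparp_\cp = 0\}$. For the containment $\tV_\cp \subseteq \{\tparp_\cp = 0\}$: generically on $\tV_\cp$, the weak relative position of $(X_{\cp-1}, Y_{\cp-1})$ is $\wp{\cp-1}\apd\cp$, which---because $\cp$ is solid---is strictly smaller in Bruhat order than the generic position $\wp{\cp-1} = \wp\cp$; invoking the ``$vs_i < v$'' case of \cref{lem:morePositionFacts} together with the identification $\tzero = 0$ recorded in the proof of \cref{cor:deo_tor} forces $\tparp_\cp = 0$. For the equality: the hyperplane $\{\tparp_\cp = 0\} \cong G/U_+ \times \C^{m-1}$ is irreducible of codimension one in $\BigBR_\br$, and by \cref{prop:V_d_irr} so is $\tV_\cp$; the two therefore coincide. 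Consequently $\tparp_\cp$ is a defining equation for $\tV_\cp$, yielding $\ord_{V_\cp} t_\cp = 1$.
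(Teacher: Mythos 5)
Your handling of the case $\cp\le c$ is correct and is essentially the paper's argument, and the reduction of the second case to showing $\ord_{V_\cp}t_\cp=1$ via $\crossing{\cp}=t_\cp\,\grid_{\cp,i_\cp}$ is legitimate. The gap is in the final step. The function $t_\cp$ of \cref{lemma:tpar} is \emph{not} the coordinate $\tparp_\cp$ of the parameterization \eqref{eq:propagate_left}, and $\tV_\cp$ is \emph{not} the coordinate hyperplane $\{\tparp_\cp=0\}$. Indeed, $t_\cp$ is defined using the canonical representative $Z_\cp=\dwp{\cp}\hr_\cp$, whereas $\tparp_\cp$ is computed from whatever representative has been propagated from the right; by \cref{lem:paramSingleStep} the single-step parameter depends on the choice of representative, shifting by the $x_{i_\cp}$-component of the $U_+$-factor when the representative is changed. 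Likewise, the identity ``$\tzero=0$'' that you quote from the proof of \cref{cor:deo_tor} is established there only for the canonical representative. For the representative actually used in \eqref{eq:propagate_left} one has $\tzero=-p$, where $p$ is the $x_{i_\cp}$-component of the $U_+$-part of $\dwp{\cp}^{-1}Z_\cp$, and $p$ varies nontrivially with the data to the right of $\cp$. Hence $\tV_\cp$ is the graph $\{\tparp_\cp=-p\}$ rather than $\{\tparp_\cp=0\}$, the claimed containment $\tV_\cp\subseteq\{\tparp_\cp=0\}$ fails, and the two irreducible hypersurfaces you compare are genuinely different. This offset is exactly what the paper's proof is built to handle: it factors $y_+=x_k(p)\,y_+'$ and derives $\grid_{\cp-1,k}=(p+\tparp_\cp)\,\domk(h)$, from which both the vanishing on $\tV_\cp$ and the bound $\ord_{V_\cp}\grid_{\cp-1,k}\le 1$ (degree one in $\tparp_\cp$) follow.

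The repair is close to what you wrote: either track the offset $p$ and replace $\{\tparp_\cp=0\}$ by the graph $\{p+\tparp_\cp=0\}$ throughout (which is still irreducible of codimension one, and on which $t_\cp=p+\tparp_\cp$ visibly vanishes to order exactly one), or restrict to the open locus $\{Z_\cp\in\Bruho_{\wp{\cp}}\}$ --- which contains the generic point of $\tV_\cp$ --- and re-parameterize $X_{\cp-1}$ there using the canonical representative, so that the step-$\cp$ parameter genuinely equals $t_\cp$ and is a coordinate in a product decomposition. As written, though, the identification $t_\cp=\tparp_\cp$ is false and the argument does not close.
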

\begin{proof}
 Suppose that $\cp\leq c$. Let $(\Xbul,\Ybul)$ be a generic point in $\tV_\cp$. Then we have $X_c \Lwrel{\wp c} Y_c$, and thus $\grid_{c,k}(\Xbul,\Ybul)\neq0$. It follows that $\ord_{V_\cp}\grid_{c,k}=0$ when $\cp\leq c$.

Suppose now that $(c,k)=(\cp-1,i_\cp)$. Recall that we have introduced an open dense subset $\tVgeq_\cp\subset\tV_\cp$ in~\eqref{eq:tVgeq_dfn}. The subset $\tTbr\cup\tVgeq_\cp\subset\tTbr\cup \tV_\cp$ is thus also open dense, and recall that for $(\Xbul,\Ybul)\in\tTbr\cup\tVgeq_\cp$, we have $X_{\cpp}\Lwrel{\wp{\cpp}} Y_{\cpp}$ for all $\cpp\geq \cp$. 

We may parameterize $\BigBR_\br$ using parameters $(\bt',X_m=Y_m)$ as in~\eqref{eq:propagate_left}. We specialize all of these parameters except for $t:=\tparp_\cp$ to some fixed generic values, and view the resulting tuple $(\Xbul,\Ybul)=(\Xbul(t),\Ybul(t))$ as a function of $t$. Since the parameters $\btgeq_\cp :=(\tparp_{\cp+1},\dots,\tparp_{m},X_m=Y_m)$ are generic, we have $(\Xbul,\Ybul)\in\tTbr\cup\tVgeq_\cp$ (and therefore $X_\cp\Lwrel{\wp \cp} Y_\cp$) for all $t\in\C$.

Assume that $i_\cp\in I$. Thus, we have $Z_\cp\in U_+\dwp \cp h U_+$ for $h = \hr_\cp\in\H$. The proof of \cref{lemma:hc} implies that $\dwp \cp^{-1}Z_\cp\in U_-h U_+$. Let us write $\dwp \cp^{-1}Z_\cp=y_- h y_+$ for $(y_-,y_+)\in U_-\times U_+$. Since $\cp\in\Jo$ is solid, we have $\wp{\cp-1}=\wp \cp$. We find
\begin{equation*}%
  \grid_{\cp-1,k}(\Xbul,\Ybul)=\Delta_{\wp{\cp-1}\om_k,\om_k} (Z_\cp z_k(t)) = \domk(\dwp{\cp}^{-1}Z_\cp z_k(t))
=\domk(y_- h y_+ z_k(t)).
\end{equation*}
Recall that $z_k(t)=x_k(t)\ds_k$. Let $\Psi:=\Phi^+\setminus\{\alpha_k\}$ and let $U_+(\Psi):=(\ds_k^{-1} U_+ \ds_k)\cap U_+$ be the corresponding \emph{root subgroup}; see~\cite[Theorem~26.3]{Hum}. We have $x_k(-t)U_+(\Psi)x_k(t)\subset U_+(\Psi)$ by~\cite[Lemma~32.5]{Hum}. Next, we have $\ds_k^{-1} U_+(\Psi) \ds_k\subset U_+(\Psi)$, since $s_k$ permutes $\Psi$. Using~\eqref{eq:U_factorization}, we can factorize $y_+=x_k(p) y_+'$ for some $p\in\C$ and $y_+'\in U_+(\Psi)$. (Here $p$ depends only on the parameters in $\btgeq_\cp$ and not on $t$.) We therefore get $y_+'x_k(t)\ds_k\in x_k(t)\ds_k U_+$. Using~\eqref{eq:delta(yxy)}, we get
\begin{equation*}%
  \grid_{\cp-1,k}(\Xbul,\Ybul)=\domk(y_- h x_k(p)y_+' x_k(t)\ds_k) = \domk(h x_k(p+t)\ds_k).
\end{equation*}
It is clear that if $p+t=0$ then $\grid_{\cp-1,k}(\Xbul,\Ybul)=0$. If $p+t\neq0$, applying the first identity in~\eqref{eq:SL2_xs_sx} to $x_k(p+t)\ds_k$ and using~\eqref{eq:domk(xh)}, we find
\begin{equation}\label{eq:grid=p+t}
  \grid_{\cp-1,k}(\Xbul,\Ybul)=(p+t)\domk(h).
\end{equation}
Thus,~\eqref{eq:grid=p+t} holds regardless of whether $p+t=0$, and we have $p+t=0$ if and only if the condition $X_{\cp-1} \Lwrel{\wp{\cp-1}} Y_{\cp-1}$ fails, i.e., $(\Xbul,\Ybul)\in\tVgeq_\cp$. By~\eqref{eq:grid=p+t}, since $\domk(h)\neq0$, we have $p+t=0$ if and only if $\grid_{\cp-1,k}(\Xbul,\Ybul)=0$. Since $\grid_{\cp-1,k}$ is of degree $1$ in $t$, we find that $\ord_{V_\cp}\grid_{\cp-1,k}\leq 1$. On the other hand, we have shown that $\grid_{\cp-1,k}$ vanishes on $\tVgeq_\cp$, and thus on $\tV_\cp$ (cf.~\eqref{eq:tVo_inside_tV}), so $\ord_{V_\cp}\grid_{\cp-1,k}\geq 1$.
\end{proof}

The integers $\ord_{V_\cp} \grid_{c, k}$ are nonnegative. Our next result shows that whether $\ord_{V_\cp} \grid_{c, k}$ is zero or positive is determined by the almost positive subexpression $\bu\apd{\cp}$.  The stronger result that $\ord_{V_\cp} \grid_{c, k} \in \{0,1\}$ holds when $G =\SL_n$~\cite[Proposition~7.10]{GLSBS1}.  The precise value of $\ord_{V_\cp} \grid_{c, k}$ for $G$ of arbitrary type is given in \cref{sec:comb-algor}.

\begin{proposition} \label{prop:APS-gives-var-nonappearance} 
For all $c\in[0,m]$, $\cp\in\Jo$, and $k\in I$, we have 
\begin{equation*}%
  \ord_{V_\cp} \grid_{c, k}=0 \quad \Longleftrightarrow \quad u\pd{c} \omega_k = u\pd{c}\apd{\cp}\omega_k
\quad\text{and} \quad
  \ord_{V_\cp} \grid_{c, -k}=0 \quad \Longleftrightarrow \quad u\pd{c}^{-1} \omega_k = (u\pd{c}\apd{\cp})^{-1}\omega_k.
\end{equation*}
\end{proposition}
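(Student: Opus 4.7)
The plan is to reduce to a generic-point computation and then use standard weight considerations in the fundamental representations. Because $\grid_{c,k}$ is regular on $\BigBR_\br$ (\cref{lemma:grid_bigBR}) and $\tV_\cp$ is irreducible of codimension one (\cref{prop:V_d_irr}), $\ord_{V_\cp}\grid_{c,k}=0$ will be equivalent to $\grid_{c,k}$ being nonzero at a generic point of $\tV_\cp$. At such a point $Z_c\in\Bruho_{\wp c\apd\cp}$, giving the two decompositions
\[
Z_c \;=\; u_1\,\dwp c\apd\cp\,\hr_c\apd\cp\,u_2 \;=\; u_1'\,\line{w_0}\,\hb_c\apd\cp\,\lline{\up c\apd\cp}\,u_2',
\]
with $u_1,u_2,u_1',u_2'\in U_+$ and with $\hr_c\apd\cp$ and $\hb_c\apd\cp=\up c\apd\cp\cdot\hr_c\apd\cp$ ranging over dense subsets of $\H$ as the point of $\tV_\cp$ varies.

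For $k\in I$, I would interpret $\grid_{c,k}(Z_c)=\Delta_{\wp c\omega_k,\omega_k}(Z_c)$ as the coefficient of the extremal weight vector $\dwp c v_{\omega_k}$ in $Z_c v_{\omega_k}$, where $v_{\omega_k}$ is the highest weight vector of the fundamental representation $V(\omega_k)$. The first decomposition together with $U_+v_{\omega_k}=v_{\omega_k}$ yields $Z_cv_{\omega_k}=(\hr_c\apd\cp)^{\omega_k}\cdot u_1\dwp c\apd\cp v_{\omega_k}$, which is a sum of the weight-$\wp c\apd\cp\omega_k$ vector $\dwp c\apd\cp v_{\omega_k}$ and corrections of strictly higher weights (since the $U_+$-action raises weights by positive roots). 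The standard Bruhat fact that $\up c\leq\up c\apd\cp$ implies $\wp c\apd\cp\leq\wp c$ in Bruhat, hence $\wp c\omega_k\leq\wp c\apd\cp\omega_k$ in the dominance order, combined with one-dimensionality of extremal weight spaces, will then show that the $\wp c\omega_k$-component of $Z_cv_{\omega_k}$ is nonzero iff $\wp c\omega_k=\wp c\apd\cp\omega_k$, equivalently $\up c\omega_k=\up c\apd\cp\omega_k$.

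For $k\in -I$ I would run a dual argument based on the second decomposition. After simplifying $\grid_{c,-k}(Z_c)=\Delta_{w_0\omega_k,\up c^{-1}\omega_k}(Z_c)$ using~\eqref{eq:delta(yxy)}, the identity $\line{w_0}^{-1}U_+\line{w_0}=U_-$, and~\eqref{eq:domk(xh)}, the expression reduces to
\[
\grid_{c,-k}(Z_c) \;=\; (\hb_c\apd\cp)^{\omega_k}\cdot\Delta_{\omega_k,\omega_k}\bigl(\lline{\up c\apd\cp}\,u_2'\,\line{\up c^{-1}}\bigr),
\]
which is the $v_{\omega_k}$-coefficient of $\lline{\up c\apd\cp}u_2'\line{\up c^{-1}}v_{\omega_k}$. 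Since $\line{\up c^{-1}}v_{\omega_k}$ has weight $\up c^{-1}\omega_k$, the factor $u_2'\in U_+$ only raises weights, and $\lline{\up c\apd\cp}$ translates the whole weight decomposition by $\up c\apd\cp$, this coefficient is nonzero iff $(\up c\apd\cp)^{-1}\omega_k\geq\up c^{-1}\omega_k$ in the dominance order with equality. The reverse inequality follows from $\up c\leq\up c\apd\cp$, so the coefficient will be nonzero precisely when $\up c^{-1}\omega_k=(\up c\apd\cp)^{-1}\omega_k$.

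The main delicate point will be bookkeeping scalars in the one-dimensional extremal weight spaces when comparing the lifts $\dwp c$ with $\dwp c\apd\cp$ (and $\lline{\up c}$ with $\lline{\up c\apd\cp}$), and verifying that these scalars are indeed nonzero; this should follow from the uniform properties of the Tits section used throughout the paper.
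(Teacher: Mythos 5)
Your proof is correct and follows essentially the same route as the paper: pass to a generic point of $\tV_{\cp}$, where $Z_c$ lies in the smaller Bruhat cell $\Bruho_{\wp{c}\apd{\cp}}\subset\Bruh_{\wp{c}}$, and apply the vanishing criterion for the generalized minors $\Delta_{\wp{c}\omega_k,\omega_k}$ and $\Delta_{\wo\omega_k,\up{c}^{-1}\omega_k}$ on Bruhat cells. The only difference is that you work out explicitly, via weight considerations in $V(\omega_k)$, the step the paper dispatches as ``well known'' (an analog of \cite[Proposition~2.4]{FZ_double}); the scalar bookkeeping you flag at the end is automatic, since when the extremal weights agree the two lifts give nonzero vectors in the same one-dimensional weight space.
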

\begin{proof}
Let $\leq$ denote the Bruhat order on $W$ and the quotient order on the orbit $W\om_k$ of the fundamental weight $\om_k$.  Comparing \cref{dfn:pds,def:almostPos}, we see that $\up c\leq \vp c\apd \cp$ and $\wp c\geq \wp c\apd \cp$ for all $c\in[0,m]$.  Thus
$  \wp c\om_k\geq \wp c\apd \cp\om_k$ for all $c\in[0,m]$ and $k\in I$.
For  $(\Xbul,\Ybul)\in\tVo_\cp$, we have $Z_c\in\Bruho_{\wp c\apd \cp}\subset \Bruh_{\wp c}$ for all $c\in[0,m]$, because $\wp c\apd \cp\leq \wp c$. Recall that $\grid_{c,k}(\Xbul,\Ybul)=\Delta_{\wp c\omega_k, \omega_k}(Z_c)$. It is well known that the function $\Delta_{\wp c\omega_k, \omega_k}(Z_c)$, when restricted to $Z_c\in \Bruh_{\wp c}$, does not vanish at $Z_c\in\Bruho_{\wp c\apd \cp}$ if and only if $\wp c\apd \cp\om_k=\wp c \om_k$; this can be shown by e.g. adapting the proof of~\cite[Proposition~2.4]{FZ_double}. Similarly, we consider $\grid_{c,-k}(\Xbul,\Ybul)=\Delta_{\wo \omega_k, u\pd{c}^{-1}\omega_k}(Z_c)$ and observe that this function does not vanish at $Z_c\in\Bruho_{\wo\up c\apd \cp}\subset\Bruh_{\wo\up c}$ if and only if $u\pd{c}^{-1} \omega_k = (u\pd{c}\apd{\cp})^{-1}\omega_k$.
\end{proof}

\begin{corollary}\label{cor:upper-tri-change-of-basis}
The $\Jo\times\Jo$ matrix $\ordM=(\ord_{V_\cp} \grid_c)_{c,\cp\in\Jo}$ is upper unitriangular.
\end{corollary}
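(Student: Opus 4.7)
The plan is to deduce this corollary as a direct consequence of \cref{prop:triang}, essentially by unpacking the definition of the chamber minors and matching the two cases of the triangularity formula against the definition of upper unitriangularity.

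First I would recall that by definition $\Delta_c = \Delta_{c-1,i_c}$, so entries of the matrix $\ordM$ take the form $\ord_{V_{\cp}}\grid_{c-1,i_c}$ with $c,\cp \in \Jo$ (all indices in $\Jo$ are solid by definition). Viewing $c$ as a row index and $\cp$ as a column index, upper unitriangularity amounts to the two claims: (i) $\ord_{V_c}\Delta_c = 1$ for all $c \in \Jo$, and (ii) $\ord_{V_{\cp}}\Delta_c = 0$ whenever $c > \cp$.

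For the diagonal, I would specialize \cref{prop:triang} to $(c',k) = (c-1,i_c)$ (reading the proposition with $c'$ playing the role of the first argument of the grid minor) with $\cp = c$: then $(c-1,i_c) = (\cp-1, i_{\cp})$, which is precisely the second case of~\eqref{eq:ord_matrix}, yielding $\ord_{V_c}\Delta_c = 1$. For the sub-diagonal entries, the hypothesis $c > \cp$ together with $c,\cp \in \Z$ gives $\cp \le c-1$, so the first case of~\eqref{eq:ord_matrix} applies to $\grid_{c-1,i_c}$, yielding $\ord_{V_{\cp}}\grid_{c-1,i_c} = 0$. Combining the two observations establishes upper unitriangularity.

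There is no real obstacle here; the only thing to be careful about is the index bookkeeping (the first argument of the grid minor is $c-1$, not $c$, so the inequality $\cp \le c'$ from \cref{prop:triang} becomes $\cp \le c-1$, i.e., $\cp < c$, which exactly matches the strict inequality in the definition of upper triangularity).
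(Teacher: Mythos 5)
Your proof is correct and is exactly the argument the paper intends (the paper states this corollary without proof as an immediate consequence of \cref{prop:triang}): the diagonal entries are $1$ by the second case of~\eqref{eq:ord_matrix} since $\Delta_c=\Delta_{c-1,i_c}$, and the entries with $c>\cp$ vanish by the first case since then $\cp\le c-1$. The index bookkeeping you flag (first argument $c-1$, not $c$) is indeed the only point requiring care, and you handle it correctly.
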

\noindent Inverting the matrix $\ordM$, we arrive at the following definition, which is crucial for our analysis; cf. \cref{propdef:intro_cluster}. Recall from \cref{prop:deodharIsTorus}  that a character on $\Tbr$ is just a Laurent monomial in the solid chamber minors $\{\crossing{c}\}_{c\in\Jo}$.

\begin{definition}
For $c\in \Jo$, the \emph{cluster variable} $x_c$ is the character of $\Tbr$ satisfying 
\begin{equation}\label{eq:ord_V_cp_xc}
  \ord_{V_\cp}x_c=
  \begin{cases}
    1, &\text{if $c=\cp$,}\\
    0, &\text{otherwise,}
  \end{cases} \quad\text{for all $\cp\in\Jo$}
\end{equation} 
which can be written explicitly as a Laurent monomial in the chamber minors using $\ordM^{-1}$ (see \cref{cor:upper-tri-change-of-basis}). We denote the \emph{cluster} by $\xbr=\{x_c\}_{c \in \Jo}$.
\end{definition}
 The conditions~\eqref{eq:ord_V_cp_xc} are equivalent to 
\begin{equation}\label{eq:grid_vs_x_c}
  \grid_{c,k} = \prod_{\cp\in\Jo} x_\cp^{\ord_{V_\cp}\grid_{c,k}} \quad\text{for all $c\in\Jo$ and $k\in\pmn$}.
\end{equation}

\begin{corollary}\ \label{cor:cluster_vars}
\begin{enumerate}
\item For each $c\in\Jo$, there exists a unique character $x_c\in\chL\Tbr$ satisfying the conditions~\eqref{eq:ord_V_cp_xc}.
\item For $c\in\Jo$, the character $x_c\in\chL\Tbr$ extends to a regular function on $\BR_\br$, $\bigBR_\br$, $\BigBR_\br$. 
\item\label{item:frozen_invertible} For $c\in\Jfro$, the frozen cluster variable $x_c$ is invertible in $\C[\BR_\br]$ and $\C[\bigBR_\br]$.
\item The cluster variables in $\xbr$ are irreducible and algebraically independent.
\end{enumerate}
\end{corollary}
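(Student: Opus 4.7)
The plan is to deduce all three statements from the defining vanishing condition $\ord_{\tV_\cp}(x_c)=\delta_{c,\cp}$ combined with the divisor geometry developed in the previous subsections. The principal ingredients I will use are that $\BigBR_\br$ is smooth (hence normal) by the iterated fiber bundle description in \cref{rmk:braidVarIntuition}; that the chamber minors are $G$-invariant regular functions on $\BigBR_\br$ by \cref{lemma:grid_bigBR}; and that by \cref{prop:V_d_irr} the complement $\BigBR_\br\setminus\tTbr$ is exactly the union of the irreducible Deodhar hypersurfaces $\tV_\cp$ for $\cp\in\Jo$.

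For part~(1), the cluster variable $x_c$ is a Laurent monomial in the chamber minors $\{\crossing{\cp}\}_{\cp\in\Jo}$ by \cref{prop:deodharIsTorus}, so it extends to a $G$-invariant rational function on $\BigBR_\br$. Its divisor of zeros and poles is supported on $\bigcup_{\cp\in\Jo}\tV_\cp$, and by the defining condition the order of vanishing along each $\tV_\cp$ equals $\delta_{c,\cp}\geq 0$. Normality of $\BigBR_\br$ then forces $x_c$ to be regular on $\BigBR_\br$; restriction yields regularity on $\bigBR_\br$, and $G$-invariance descends this to regularity on $\BR_\br$.

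For part~(2), when $c\in\Jfro$ the hypersurface $\tV_c$ is disjoint from $\bigBR_\br$ by \cref{dfn:mutable}. Every other component $\tV_\cp$ meeting $\bigBR_\br$ has $\cp\in\Jmut$, hence $\cp\neq c$ and contributes order zero to $x_c$. The zero divisor of $x_c$ on $\bigBR_\br$ is therefore empty, making $x_c$ a unit in $\C[\bigBR_\br]$, and by $G$-invariance in $\C[\BR_\br]$ as well.

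For part~(3), on the normal variety $\BigBR_\br$ the divisor of $x_c$ equals the reduced irreducible prime divisor $\tV_c$; any factorization $x_c=fg$ in $\C[\BigBR_\br]$ would decompose $\tV_c$ as a sum of two effective divisors, forcing one summand to be zero and the corresponding factor to be a unit. The same argument applies to the mutable variables in $\C[\BR_\br]$ and $\C[\bigBR_\br]$, while the frozen variables are already units by part~(2). For algebraic independence, \cref{prop:deodharIsTorus} shows that $\{\crossing{\cp}\}_{\cp\in\Jo}$ is a $\Z$-basis of $\chL\Tbr$; since the change-of-basis matrix $\ordM^{-1}$ is integer upper unitriangular, $\{x_c\}_{c\in\Jo}$ is also a $\Z$-basis, giving an isomorphism $\Tbr\xrasim(\Cx)^{|\Jo|}$. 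The $x_c$ are therefore algebraically independent on $\Tbr$, and hence on $\BR_\br$ (and its companions) since $\Tbr$ is Zariski-dense. The one technical subtlety is ensuring the smoothness/normality of $\BigBR_\br$, which is automatic from its iterated fiber bundle structure arising from the parameterization in \cref{rmk:braidVarIntuition}.
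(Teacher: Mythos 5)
Your proposal is correct and follows essentially the same route as the paper's proof: regularity from the nonnegativity of $\ord_{\tV_\cp}x_c$ together with the fact that $\BigBR_\br\setminus\tTbr=\bigcup_\cp\tV_\cp$, invertibility of frozen variables from $\tV_c\cap\bigBR_\br=\emptyset$, irreducibility from the irreducibility of the divisor $\tV_c$, and algebraic independence from the character-lattice basis. You merely make explicit the normality/smoothness of $\BigBR_\br$ (via the parameterization of \cref{rmk:braidVarIntuition}) that the paper leaves implicit.
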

\begin{proof}
The existence and uniqueness of $x_c$ follows from the invertibility of $\ordM$: writing $\ordM^{-1}=(m_{\cp,c})_{\cp,c\in\Jo}$, we have $x_c = \prod_{\cp\in\Jo} \crossing{c}^{m_{\cp,c}}$. 
For $c\in\Jo$, $x_c$ is a rational function on $\BigBR_\br$ which does not have a pole on $\tTbr$ or on $\tV_\cp$ for all $\cp\in\Jo$. This implies that $x_c$ is regular on $\BR_\br$, $\bigBR_\br$, $\BigBR_\br$. By \cref{prop:V_d_irr}, $x_c$ is irreducible. Since $\xbr=\{x_c\}_{c\in\Jo}$ is a basis of the character lattice of $\Tbr$, we see that the cluster variables in $\xbr$ are algebraically independent. Finally, for $c\in\Jfro$, the function $1/x_c$ is regular on $\bigBR_\br\setminus \tVo_c$, and as we mentioned after \cref{dfn:V_d}, we have $\tVo_c\cap \bigBR_\br=\emptyset$ for $c\in\Jfro$.
\end{proof}

Recall from \cref{prop:triang} that $\ord_{V_\cp}\grid_{c,k}$ can only be nonzero when $\cp>c$; see also~\eqref{eq:grid_vs_x_c}. The next result follows from the parametrization \eqref{eq:propagate_left} and will be used later in the proof. We denote $\ord_{V_\cp}\grid_{c,k}$ by $\ordbr_{V_\cp} \grid_{c,k}$ to emphasize dependence on $\br$.
\begin{lemma} \label{lem:extend-word}
The integer $\ord_{V_\cp}\grid_{c,k}$ only depends on $i_{c+1},\dots,i_m$. That is, suppose that $\br=i_1i_2\cdots i_m$ and $\br'=i'_1i'_2\cdots i'_{m'}$ are two double braid words in the alphabet $\pmn$ such that for some $c\in[m]$ and $c'\in[m']$ (with $m-c=m'-c'$), we have $i_{c+1}\cdots i_m=i'_{c'+1}\cdots i'_{m'}$. Then we have
\begin{equation*}%
\ordbr_{V_\cp}\grid_{c,k}=\ordbrp_{V_{\cp'}}\grid_{c',k}
\end{equation*} 
 for all $k\in\pmn$, $\cp>c$, and $\cp'>c'$ such that $m-\cp=m'-\cp'$.
\end{lemma}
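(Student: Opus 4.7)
The strategy is to use the parametrization of $\BigBR_\br$ from \cref{rmk:braidVarIntuition}, which identifies it with $G/U_+\times\C^m$ via coordinates $(X_m=Y_m,\tparp_1,\dots,\tparp_m)$. Iterating the propagation formula~\eqref{eq:propagate_left}, the pair $(X_{c'},Y_{c'})$ for any $c'\geq c$ depends only on $(X_m=Y_m,\tparp_{c+1},\dots,\tparp_m)$ together with the letters $i_{c+1},\dots,i_m$. Hence $Z_{c'}=Y_{c'}^{-1}X_{c'}$ and the grid minor $\grid_{c,k}$ factor through the forgetful projection $\pi_c\colon\BigBR_\br\to\Pcal_c$, where $\Pcal_c:=G/U_+\times\C^{m-c}$ carries coordinates $(X_m=Y_m,\tparp_{c+1},\dots,\tparp_m)$; write $\grid_{c,k}=\pi_c^\ast g_{c,k}$. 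By construction, the pair $(\Pcal_c,g_{c,k})$ depends only on $i_{c+1},\dots,i_m$, so if $\br'$ agrees with $\br$ on its right part as in the statement, there is a natural identification $\Pcal_c\cong\Pcal_{c'}$ sending $g_{c,k}$ to $g_{c',k}$.

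\textbf{Next,} I locate $\tV_\cp$ in these coordinates for $\cp>c$. The almost-positive condition~\eqref{eq:deodharHyp} coincides with the generic condition~\eqref{eq:DTcond} at positions $c'\geq\cp$, while $\wp{\cp-1}\apd\cp<\wp{\cp-1}$ strictly because $\cp$ is solid. By \cref{lem:morePositionFacts} applied at the propagation step between positions $\cp$ and $\cp-1$, this drop of relative position occurs precisely when $\tparp_\cp$ equals a unique value $t^\ast_\cp$ that is a rational function in $(X_m=Y_m,\tparp_{\cp+1},\dots,\tparp_m)$. For $c'<\cp-1$, the recursion defining $\wp{c'}\apd\cp$ in \cref{def:almostPos} is the same PDS-style ``min'' rule, which is the generic behavior of propagation by \cref{lem:morePositionFacts}; hence those conditions hold automatically on a dense open subset once $\tparp_\cp=t^\ast_\cp$ is imposed. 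Combining this with \cref{prop:V_d_irr} (irreducibility and codimension one), $\tV_\cp$ is the closure of $\{\tparp_\cp=t^\ast_\cp\}$ in $\BigBR_\br$, and therefore $\tV_\cp=\pi_c^{-1}(\tilde{H}_\cp)$ for the irreducible codimension-one subvariety $\tilde{H}_\cp\subset\Pcal_c$ cut out by the same equation. The pair $(\Pcal_c,\tilde{H}_\cp)$ again depends only on $i_{c+1},\dots,i_m$ and the shift $m-\cp$.

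\textbf{To conclude,} since $\pi_c$ is a trivial $\C^c$-bundle and $\tV_\cp=\pi_c^{-1}(\tilde{H}_\cp)$ with $\tilde{H}_\cp$ irreducible, the local ring $\mathcal{O}_{\BigBR_\br,\tV_\cp}$ is a DVR whose uniformizer is pulled back from a uniformizer of $\mathcal{O}_{\Pcal_c,\tilde{H}_\cp}$, yielding $\ord_{\tV_\cp}(\pi_c^\ast f)=\ord_{\tilde{H}_\cp}(f)$ for every rational function $f$ on $\Pcal_c$. Applied to $f=g_{c,k}$ this gives $\ord(\br;\cp,c,k)=\ord_{\tilde{H}_\cp}g_{c,k}$, which is intrinsic to the right part. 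The hypothesis $m-\cp=m'-\cp'$ identifies $\tilde{H}_\cp$ with $\tilde{H}_{\cp'}$ under $\Pcal_c\cong\Pcal_{c'}$, yielding $\ord(\br;\cp,c,k)=\ord(\br';\cp',c',k)$. The main subtlety lies in the middle step, namely verifying that the almost-positive conditions at positions $c'<\cp-1$ hold on a dense subset of $\{\tparp_\cp=t^\ast_\cp\}$, so that $\tV_\cp$ is the full preimage $\pi_c^{-1}(\tilde{H}_\cp)$ rather than a proper closed subset; this is handled by iterating \cref{lem:morePositionFacts} leftward from $\cp-1$, using that the almost-positive recursion matches the PDS rule for $c'<\cp$.
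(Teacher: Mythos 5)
Your proposal is correct and takes essentially the same route the paper does: the paper justifies this lemma in one sentence by appealing to the right-to-left parametrization \eqref{eq:propagate_left}, under which $Z_c$, $\wp{c}$, and the hypersurfaces $\tV_{\cp}$ for $\cp>c$ are all manufactured from the suffix data alone. You have simply filled in the details the paper leaves implicit, and you correctly identify and resolve the one nontrivial point, namely that $\tV_{\cp}$ is the \emph{full} preimage $\pi_c^{-1}(\tilde H_{\cp})$ because the almost-positive conditions at indices below $\cp$ are the generic ones from \cref{lem:morePositionFacts}.
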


\subsection{A two-form on the braid variety, and a seed} \label{sec:deo_form}
We now introduce a two-form, which, together with the cluster variables, determines an exchange matrix via~\eqref{eq:intro:omcoef_dfn}. At the end of the section, we put everything together to define a seed for the braid variety.

We start by introducing a family of $1$-forms on $\Tbr$. 

For $i,j \in \pm I$, recall that $a_{ij} = 0$ if $i,j$ have different signs, and $a_{ij}= a_{(-i)(-j)}$ otherwise, and that $d_i := d_{|i|}$.  For each $c\in[0,m]$ and $i\in \pm I$, we set
\begin{equation}\label{eq:Lci-defn}
  \Lci ci:=\dlog \left(\prod_{k \in \pm I} \grid_{c,k}^{a_{i k}}\right)
=\frac{1}{2} \sum_{k \in \pm I} a_{i k} \dlog \grid_{c,k}
= \frac{1}{2} \sum_{k \in \pm I,\ \cp\in\Jo}a_{i k} ~\left(\ordbr_{V_\cp}\grid_{c,k} \right)~\dlog x_\cp.
\end{equation}
Consider the following $2$-forms on $\Tbr$:
\begin{equation}\label{eq:ombr_dfn}
  \ombrc:=\sign(i) ~d_i ~\Lci{c-1}i \wedge \Lci ci \quad\text{for $c\in[m]$ and $i:=i_c$,} \qquad\text{and}\qquad \ombr:=\sum_{c\in [m]} \ombrc.
\end{equation}
Note that $\dlog \grid_{c-1,j} \wedge \dlog \grid_{c,k}$ only contributes to $\ombrc$ if $j=k$ or $|j|,|k|$ are adjacent in the Dynkin diagram for $G$. Written in terms of $\dlog \grid_{c-1,j} \wedge \dlog \grid_{c,k}$, $\ombrc$ is essentially the same as \cite[Definition of $\epsilon_{(n)}$]{ShWe}, which generalizes \cite[Definitions 2.2, 2.3]{BFZ}.

\begin{remark}\label{rmk:constant_coefs}
By~\eqref{eq:Lci-defn}, the form $\ombr$ has constant coefficients when expressed in terms of $(\dlog x_c\wedge \dlog_\cp)_{c,\cp\in\Jo}$.
\end{remark}

Since $\Tbr$ is open dense in $\BR_\br$, the forms $\ombr$ and $\ombrc$ are rational $2$-forms on $\BR_\br$. Though it is not apparent from the above formula, it will follow from our main result (\cref{thm:main}) combined with the results of~\cite{Muller_WP_form} that $\omega_\br$ extends to a regular $2$-form on the entire $\BR_\br$.

Recall that~\eqref{eq:grid_mon_reln} holds for $i_c\in \pmn$. Taking $\dlog$ of both sides of~\eqref{eq:grid_mon_reln}, we get 
\begin{equation}\label{eq:L_mon_reln}
  \Lci{c-1}{i_c}+\Lci c{i_c} = 0 \quad\text{if $c$ is hollow.}
\end{equation}
Thus, $\ombrc=0$ for all $c\in[m]\setminus\Jo$, which implies the following result.
\begin{corollary}\label{cor:hollow}
We have $\ombr=\sum_{c \in \Jo} \ombrc$.
\end{corollary}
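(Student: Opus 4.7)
The plan is to observe that the corollary follows immediately from equation~\eqref{eq:L_mon_reln} together with the antisymmetry of the wedge product. Specifically, the definition in~\eqref{eq:ombr_dfn} expresses $\ombr$ as the sum $\sum_{c\in[m]}\ombrc$ over \emph{all} crossings; so what needs to be shown is that the summands indexed by hollow crossings vanish identically.

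For this, fix a hollow crossing $c\in[m]\setminus\Jo$ and set $i:=i_c$. By~\eqref{eq:L_mon_reln} we have $\Lci{c-1}{i}=-\Lci{c}{i}$. Substituting this into the formula
\[
\ombrc=\sign(i)\, d_i\, \Lci{c-1}{i}\wedge \Lci{c}{i}
\]
and using $\alpha\wedge\alpha=0$ for any 1-form $\alpha$, we obtain
\[
\ombrc=-\sign(i)\, d_i\, \Lci{c}{i}\wedge \Lci{c}{i}=0.
\]
Therefore $\sum_{c\in[m]\setminus\Jo}\ombrc=0$, which gives $\ombr=\sum_{c\in\Jo}\ombrc$ as claimed.

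There is no real obstacle here: the identity~\eqref{eq:grid_mon_reln} was already established for all $i_c\in\pm I$ (the case $i_c\in-I$ being symmetric to $i_c\in I$ via the second set of formulas in \cref{lemma:tpar}), and the rest is a one-line consequence of the wedge square vanishing. The only tiny care needed is to note that~\eqref{eq:L_mon_reln} genuinely holds in both sign cases, which is already packaged into~\eqref{eq:grid_mon_reln} as noted in the text preceding it.
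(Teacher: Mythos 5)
Your proof is correct and is essentially identical to the paper's argument: the paper also derives \eqref{eq:L_mon_reln} by taking $\dlog$ of \eqref{eq:grid_mon_reln} and concludes that $\ombrc=0$ for hollow $c$ since the wedge of a $1$-form with itself vanishes.
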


The next proposition follows immediately from \cref{cor:deo_tor,cor:cluster_vars}. 

\begin{proposition}
	Let $\br \in (\pmn)^m$ be such that $\Demprod(\br) = \wo$. 
 The tuple
	\begin{equation}\label{eq:sig-br-def}
		\Sbr:=(\Tbr, \xbr, \dbr, \ombr)
	\end{equation}
	is an abstract seed on $\BR_{\br}$ in the sense of \cref{dfn:abstract_seed} below. 
\end{proposition}

\section{Cluster algebras}

Cluster algebras were discovered by Fomin and Zelevinsky \cite{FZ}.  We consider skew-symmetrizable cluster algebras, relying on formalism similar to \cite{FoGo_ensembles}.

\subsection{Background}\label{sec:cluster_backgr}

\begin{definition}\label{dfn:abstract_seed}
	A rank $n$ and dimension $n+m$ \emph{(abstract) seed} is a quadruple $\Sigma = (T, \x, \d, \omega)$, where
	\begin{enumerate}
		\item $T$ is a complex algebraic torus of dimension $\npm$,
		\item $\x =  (x_1,\ldots,x_{\npm})$ is an ordered basis of $\chL{T}$, where $x_1,\ldots,x_n$ (resp., $x_{n+1},\ldots,x_{\npm}$) are mutable (resp., frozen) variables,
		\item $\d = (d_1,\ldots,d_{\npm})$ is a collection of positive integers,
		\item $\omega$ is a 2-form on $T$ of the form
\begin{equation}\label{eq:om_dfn_cluster}
\omega =\sum_{i \leq j} d_j \tB_{ij} \dlog x_i \wedge \dlog x_j= \sum_{i \leq j} d_i \tB_{ji} \dlog x_j \wedge \dlog x_i,
\end{equation}
where $\tB_{ii}=0$ for $i\in[n+m]$ and $\tB_{ij}\in\Q$ for all $i,j\in[\npm]$. 
	\end{enumerate}
\end{definition}

The matrix 
$\tB=(\tB_{ij})_{(i,j)\in[\npm]\times[n]}$
 is the usual $(\npm) \times n$ \emph{extended exchange matrix} in the theory of cluster algebras. 
We have $d_j \tB_{ij} = -d_i\tB_{ji}$ for $i,j \in [\npm]$; in particular, the top $n \times n$ \emph{principal part} $B$ of $\tB$ is skew-symmetrizable. %

\begin{definition}\label{dfn:mut}
	Let $\Sigma = (T,\x,\d,\omega)$ be a seed and $k$ a mutable index. We say that $\Sigma$ is \emph{integral at $k$} if $\tB_{jk}\in\Z$ for all $j\in[\npm]$. In this case, we define
\begin{equation}\label{eq:mut_dfn}
  x_k':=\frac{\prod_{\tB_{jk}>0} x_j^{\tB_{jk}} + \prod_{\tB_{jk}<0} x_j^{-\tB_{jk}}}{x_k}.
\end{equation}
   The \emph{mutation} of $\Sigma$ in the direction $k$ is the seed $\mu_k(\Sigma) = (T',\x',\d,\omega')$ where $T'$ is the algebraic torus with basis of characters $\x'= (x_1,\ldots,x'_k,\ldots,x_{\npm})$ and the 2-form $\omega'$ on $T'$ is the pullback of $\omega$ via the natural rational map $T' \to T$.
\end{definition}
\noindent We say that $\Sigma$ is \emph{integral} if it is integral at each $k\in[n]$, i.e., if 
 $\tB_{ik} \in \Z$ for all $i\in[\npm]$ and $k\in[n]$.
\begin{remark}
As discussed in~\cite{FoGo_ensembles}, $\omega'$ may be expressed in the form~\eqref{eq:om_dfn_cluster} using another matrix $\tB' = \mu_k(\tB)$ obtained from $\tB$ via the usual cluster mutation of exchange matrices as defined in e.g.~\cite[Definition~2.7.6]{FWZ_book}. In particular, if $\Sigma$ is integral (resp., integral at $k$) then so is $\mu_k(\Sigma)$.
\end{remark}

For the rest of this subsection, we assume that all seeds are integral.  Following~\cite[Section~5.1]{LS}, a seed $\Sigma$ is \emph{full rank} if $\tB$ has rank $n$ and is \emph{really full rank} if the rows of $\tB$ span $\Z^n$ over $\Z$. We will prove the seeds $\Sbr$ from~\eqref{eq:sig-br-def} are really full rank in \cref{cor:really_full_rank}. Note that begin (really) full rank is invariant under mutation.

Let $X$ be an irreducible complex algebraic variety of dimension $\npm$.  A \emph{seed on $X$} is an abstract seed $\Sigma = (T, \x, \d, \omega)$ together with an identification $T \subset X$ of $T$ with an open dense subset of $X$. The inclusion $T \hookrightarrow X$ induces an identification of the field $\C(X)$ of rational functions on $X$ with the field of rational functions $\C(\x) := \C(x_1,\ldots,x_\npm)$ in the initial cluster variables $\x$. In practice, we abuse notation and write $\x$ for a tuple of elements in $\C(X)$.

The \emph{cluster algebra} $\A(\Sigma)$ is the subring of $\C(\x)$ generated by all cluster variables together with inverses of frozen variables. We let $\Vcal(\Sigma) := \Spec(\A(\Sigma))$ denote the \emph{cluster variety}.  We say that \emph{$(X, \Sigma)$ is a cluster variety} if $X$ is an affine variety and the coordinate ring $\C[X]$ is identified with $\A(\Sigma)$ under the identification $\C(X) \cong \C(\x)$.

We will need the following property of cluster variables.
\begin{proposition}[{\cite[Theorem~3.1]{GLS}}]\label{prop:irreducible}
Each cluster variable is an irreducible element of $\Acal(\Sigma)$. %
\end{proposition}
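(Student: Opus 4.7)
The plan is to derive irreducibility directly from the Laurent phenomenon, together with the unique factorization of Laurent polynomial rings. Since every cluster variable is one of the initial variables in some seed, I may take the given variable to be $x_k$ in the initial cluster $\x$ of $\Sigma$. First, $x_k$ is not a unit of $\Acal(\Sigma)$: the natural inclusion $\Acal(\Sigma) \hookrightarrow \Lcal_\Sigma := \C[x_1^{\pm 1},\dots,x_{n+m}^{\pm 1}]$ coming from the initial chart $T \subset \Xcal(\Sigma)$ sends $x_k$ to a non-unit of $\Lcal_\Sigma$, so its inverse is not regular on $T$.

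Suppose now $x_k = ab$ with $a, b \in \Acal(\Sigma)$. By the Laurent phenomenon, $a,b \in \Lcal_\Sigma$. Since $\Lcal_\Sigma$ is a UFD in which $x_k$ is prime and the units are exactly the Laurent monomials, one factor, say $a$, must be a Laurent monomial $a = c\prod_j x_j^{\alpha_j}$, while $b = c^{-1} x_k \prod_j x_j^{-\alpha_j}$. The remaining task is to show that $\alpha_j = 0$ for every mutable $j \neq k$ and that $\alpha_k \in \{0,1\}$; once established, one of $a,b$ is a Laurent monomial in the frozen variables only, hence a unit of $\Acal(\Sigma)$ by \cref{cor:cluster_vars}\ref{item:frozen_invertible}.

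To pin down the exponents, I fix a mutable index $j$ and apply the Laurent phenomenon in the mutated seed $\mu_j(\Sigma)$: the element $a$ must also be a Laurent polynomial in the mutated cluster $(x_1,\dots,x_j',\dots,x_{n+m})$. Substituting the exchange relation $x_j = (M_1+M_2)/x_j'$ (where $M_1, M_2$ are monomials in $\{x_i : i \neq j\}$ with disjoint support) converts $a$ into $c\,(M_1+M_2)^{\alpha_j}(x_j')^{-\alpha_j}\prod_{i\neq j}x_i^{\alpha_i}$. The binomial $M_1+M_2$ is an irreducible Laurent polynomial coprime to each $x_i$ (being a sum of two coprime monomials) and is independent of $x_j'$, so negative powers of $M_1+M_2$ cannot be absorbed into the Laurent ring of the mutated cluster; this forces $\alpha_j \geq 0$. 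Running the identical argument on $b$ gives $-\alpha_j \geq 0$ for $j \neq k$ and $1 - \alpha_k \geq 0$ for $j=k$, as required.

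The main obstacle is the degenerate case in which the $j$th column of $\tB$ vanishes, so that $M_1 + M_2 = 2$ is itself a monomial and mutation at $j$ imposes no constraint on $\alpha_j$. I would handle this by separating off any such decoupled mutable index: $\Acal(\Sigma)$ then factors as a Laurent polynomial ring in $x_j$ tensored with a strictly smaller cluster algebra, and an induction on the number of coupled mutable indices reduces to the generic case. For the braid seeds $\Sigma_\br$ constructed in this paper, one expects no such degenerate column to arise (and this can be checked directly from the explicit form of $\tB$ given by \eqref{eq:intro:omcoef_dfn}), so the generic argument applies without modification.
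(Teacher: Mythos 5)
The paper does not actually prove this statement; it cites \cite[Theorem~3.1]{GLS}, and your argument is essentially the standard one from that reference (Laurent phenomenon plus unique factorization in Laurent polynomial rings, comparing adjacent clusters). The skeleton is right, but two steps are genuinely broken as written. First, your non-unit argument fails: $x_k$ \emph{is} a unit of $\Lcal_\Sigma=\C[x_1^{\pm1},\dots,x_{n+m}^{\pm1}]$ (every variable of a Laurent polynomial ring is invertible), and $1/x_k$ \emph{is} regular on the torus $T$; likewise $x_k$ is not prime in $\Lcal_\Sigma$ --- it is a unit, which is in fact why both factors $a,b$ are forced to be Laurent monomials (a conclusion that survives, but not for the reason you give). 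That $x_k$ is not a unit of $\Acal(\Sigma)$ is invisible in the initial chart; it must be extracted from an adjacent cluster, e.g.\ by applying your own exponent argument to $x_k^{-1}$: if $x_k^{-1}\in\Acal(\Sigma)$, then writing it as $x_k'/(M_1+M_2)$ shows it is not a Laurent polynomial in $\mu_k(\x)$ unless $M_1+M_2$ is a monomial. This is fixable, but the fix is exactly the nontrivial part of the proof, and it exposes the second problem.

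Second, the degenerate case cannot be repaired by the induction you propose, because the statement is simply false there: if the $j$th column of $\tB$ vanishes, then $x_jx_j'=2$, so $x_j$ is a unit of $\Acal(\Sigma)$ and hence, by definition, not irreducible. No reduction to ``the generic case'' can establish an assertion that fails for $x_j$ itself; the correct resolution is that the proposition carries a hypothesis excluding zero columns of $\tB$ (as in the cited theorem), which for the seeds $\Sbr$ follows from the really-full-rank property (\cref{cor:really_full_rank}). Two smaller points: irreducibility of $M_1+M_2$ is neither needed nor true in general (e.g.\ $y^2+z^2$ factors over $\C$), and ``coprime to each $x_i$'' is vacuous in a Laurent ring; the property you actually use is that $M_1+M_2$ is a sum of two distinct monomials not involving $x_j'$, hence a non-unit of $\C[\mu_j(\x)^{\pm1}]$, from which $\alpha_j\ge 0$ follows by unique factorization. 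With those repairs the argument goes through and recovers the cited result.
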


Our proofs will utilize some notions on cluster algebras that we now recall.

\begin{definition}\label{defn:freezing}
  Let $\Sigma$ be an abstract seed of rank $n$ and dimension $\npm$, and let $\Iset\subset[n]$. The \emph{freezing of $\Sigma$ at $\Iset$}, denoted $\ifreeze{\Sigma}[\Iset]$, is the seed obtained from $\Sigma$ by declaring the variables $\{x_c\}_{c \in \Iset}$ to be frozen.
  It is a seed of rank $n-|\Iset|$ and dimension $\npm$. For $k\in[n]$, we denote $\ifreeze\Sigma[k]:=\ifreeze\Sigma[\{k\}]$. 
\end{definition}

To a seed $\Sigma$ we associate a directed graph $\Digrt(\Sigma)$ with vertex set $[n+m]$ and an arrow $i\to j$ whenever $\tB_{ij}>0$. We let $\Digr:=\Digr(\Sigma)$ be the \emph{mutable part} of $\Digrt(\Sigma)$, i.e., the induced subgraph of $\Digrt(\Sigma)$ with vertex set $[n]$. We say that a mutable index $\si\in[n]$ is a \emph{sink} if it has no outgoing arrows in $\Digr$. Let $\ND$ denote the set of vertices of $\Digr$ having an arrow to $\si$, and denote $\NDH:=\ND\cup\{\si\}$.  The following definition is a variation of locally-acyclic seeds~\cite{Mul} and Louise seeds~\cite{LS}; see also~\cite[Section~5.4 and Remark~5.14]{GL_plabic}.

\begin{definition}\label{def:sinkrec}
	The class of \emph{\sinkrec seeds} is defined recursively as follows.
	\begin{itemize}
		\item Any seed $\Sigma$ such that $\Digr(\Sigma)$ has no arrows is \sinkrec.
		\item Any seed that is mutation equivalent to a \sinkrec seed is \sinkrec.
		\item Suppose that $\Sigma$ is a seed with a sink $\si\in[n]$ such that the seeds $\ifreeze\Sigma[\si]$ and $\ifreeze\Sigma[\NDSH]$ are \sinkrec. Then $\Sigma$ is \sinkrec.
	\end{itemize}
\end{definition}

The \emph{upper cluster algebra} $\Ucal(\Sigma)\subset\C[\x^{\pm1}]$ is the intersection of Laurent polynomial rings $\cap_{\Sigma'}\C[\bx'^{\pm 1}]$, wher $\bx'$ is the cluster of $\Sigma'$ and the intersection is over all $\Sigma'$ which can be obtained from $\Sigma$ by a sequence of mutations. If $\Sigma$ is full rank, then $\Ucal(\Sigma)=\C[\x^{\pm1}]\cap \bigcap_{k\in[n]} \C[\mu_k(\x)^{\pm1}]$~\cite[Corollary 1.9]{BFZ}.
\begin{proposition}\label{prop:A=U}
Suppose that $\Sigma$ is a \sinkrec seed. Then $\Acal(\Sigma)=\Ucal(\Sigma)$.
\end{proposition}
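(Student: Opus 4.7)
The plan is structural induction on the three cases of the recursive definition of \sinkrec seeds.

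\emph{Base case.} When $\Digr(\Sigma)$ has no arrows, the principal $n \times n$ block of $\tB$ vanishes. Each mutation $\mu_k$ is then an involution and different mutations commute; the exchange relations read $x_k x_k' = M_k^+ + M_k^-$, with $M_k^\pm$ monomials in the frozen variables alone. Such a seed is acyclic in the sense of~\cite{BFZ}, so by their acyclicity theorem $\Acal(\Sigma) = \Ucal(\Sigma)$.

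\emph{Mutation-equivalence case.} If $\Sigma = \mu_k(\Sigma')$ for some \sinkrec seed $\Sigma'$ with $\Acal(\Sigma') = \Ucal(\Sigma')$ by induction, then the Laurent phenomenon gives $\Acal(\Sigma) = \Acal(\Sigma')$ and~\cite{BFZ} gives $\Ucal(\Sigma) = \Ucal(\Sigma')$, so $\Acal(\Sigma) = \Ucal(\Sigma)$.

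\emph{Sink-recursive case.} This is the main step. Suppose $s$ is a sink and both $\ifreeze\Sigma[s]$ and $\ifreeze\Sigma[\NDSH]$ are \sinkrec, so by induction $\Acal = \Ucal$ for both. I would apply Muller's principle of local acyclicity~\cite{Mul} (cf.~\cite{LS} and~\cite[Section~5.4 and Remark~5.14]{GL_plabic}). Since $s$ is a sink, $\tB_{is} \geq 0$ for every mutable $i$, so the exchange relation at $s$ has the form
\[
x_s x_s' \;=\; M \prod_{i \in \ND} x_i^{\tB_{is}} \;+\; N,
\]
where $M$ and $N$ are monomials in the frozen variables, hence units in $\Acal(\Sigma)$. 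Because $M$ and $N$ are units, one cannot have both $x_s = 0$ and $\prod_{i \in \ND} x_i = 0$ at any point of $\Spec\Acal(\Sigma)$, so $\{x_s\} \cup \{x_i\}_{i \in \ND}$ is a covering family. Muller's theorem then yields
\[
\Acal(\Sigma) \;=\; \Acal(\Sigma)[x_s^{-1}] \;\cap\; \Acal(\Sigma)\bigl[\bigl(\textstyle\prod_{i \in \ND} x_i\bigr)^{-1}\bigr],
\]
and the analogous formula for $\Ucal(\Sigma)$. The identifications $\Acal(\ifreeze\Sigma[s]) = \Acal(\Sigma)[x_s^{-1}]$ and $\Acal(\ifreeze\Sigma[\NDSH]) = \Acal(\Sigma)[(x_s\prod_{i\in\ND} x_i)^{-1}]$ (freezing of a mutable variable is localization at it, provided cluster variables are preserved) together with the inductive equalities then imply $\Acal(\Sigma) = \Ucal(\Sigma)$.

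The main obstacle I anticipate is rigorously verifying the freezing-equals-localization identifications for both $\Acal$ and $\Ucal$ and ensuring that the covering argument is compatible with taking intersections in the ambient field of rational functions. These are precisely the subtleties treated by the local acyclicity framework of~\cite{Mul,LS}, from which the desired conclusion follows.
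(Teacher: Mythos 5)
Your route is ultimately the same as the paper's: the paper's entire proof consists of observing that \sinkrec seeds are locally acyclic (by induction together with the covering-pair lemma \cite[Lemma~5.3]{Mul}) and then invoking $\Acal=\Ucal$ for locally acyclic cluster algebras \cite[Theorem~2]{MullerLA2}. You are unpacking that machinery, and your central computation is the right one: because $\si$ is a sink, one of the two terms of its exchange binomial is a monomial in frozen variables, hence a unit, so $V(x_\si)\cap V(\prod_{i\in\ND}x_i)=\emptyset$.

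There is, however, a concrete mismatch in your sink-recursive step as written. Your cover consists of $\{x_\si\neq 0\}$ and $\{\prod_{i\in\ND}x_i\neq 0\}$; the second piece is the localization $\Acal(\Sigma)[(\prod_{i\in\ND}x_i)^{-1}]$, which corresponds to the freezing $\ifreeze\Sigma[{\ND}]$. But the inductive hypothesis of \cref{def:sinkrec} concerns $\ifreeze\Sigma[{\NDSH}]$ with $\NDSH=\ND\cup\{\si\}$, which you correctly identify with $\Acal(\Sigma)[(x_\si\prod_{i\in\ND}x_i)^{-1}]$ --- a strictly further localization. Knowing $\Acal=\Ucal$ after inverting $x_\si\prod_{i\in\ND}x_i$ does not directly give it after inverting only $\prod_{i\in\ND}x_i$, so the two rings you intersect are not the ones your induction controls. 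The repair is short but necessary: in $\ifreeze\Sigma[{\ND}]$ the vertex $\si$ becomes an isolated mutable vertex (all arrows incident to $\si$ in $\Digr$ come from $\ND$), so $\ifreeze\Sigma[{\ND}]$ is itself \sinkrec by the third clause of \cref{def:sinkrec} applied to the sink $\si$ (both of its freezings there coincide with $\ifreeze\Sigma[{\NDSH}]$), and it has fewer mutable vertices, so induction applies to it directly. With that substitution your argument closes; this bookkeeping is precisely what \cite[Lemma~5.3]{Mul} packages, which is why the paper proves the intermediate statement ``\sinkrec $\Rightarrow$ locally acyclic'' rather than running the $\Acal=\Ucal$ induction by hand. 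A smaller point: in your base case, the acyclicity theorem of \cite{BFZ} requires coprimality, which a seed with no mutable arrows need not satisfy (two mutable vertices attached identically to the frozen part have equal exchange binomials); cite Muller's acyclic $\Rightarrow$ $\Acal=\Ucal$ instead, which needs no coprimality hypothesis.
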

\begin{proof}
It follows from induction and~\cite[Lemma~5.3]{Mul} that \sinkrec seeds are locally acyclic in the sense of~\cite{Mul,MullerLA2}.  By~\cite[Theorem~2]{MullerLA2}, we have $\Acal(\Sigma)=\Ucal(\Sigma)$. 
\end{proof}

We will also need the notion of quasi-equivalence of seeds, which was first studied in~\cite{Fraser}. We adapt the definition of \cite{Fraser} to our conventions.
\begin{definition}\label{dfn:quasi_eq}
Two seeds $\Sigma=(T,\x,\d,\om)$ and $\tilde\Sigma=(\tilde T,\tilde\x,\tilde\d,\tilde\om)$ of rank $n$ and dimension $\npm$ are \emph{quasi-equivalent}, denoted $\Sigma \quasi \tilde\Sigma$, if the following conditions are satisfied:
\begin{enumerate}[leftmargin=30pt]
\item\label{quasi1} $T=\tilde T$, $\d=\tilde\d$, $\om=\tilde\om$;
\item\label{quasi2} the sublattice of $\chL T$ spanned by the frozen variables $x_{n+1},\dots,x_{\npm}$ coincides with the sublattice spanned by $\tilde x_{n+1},\dots,\tilde x_{\npm}$;
\item\label{quasi3} for each $k\in[n]$, we have $\tilde x_k=x_k M_k$, where $M_k$ is a Laurent monomial in $x_{n+1},\dots,x_{\npm}$.
\end{enumerate}
\end{definition}

It is easy to see that if $\Sigma$ is integral and $\Sigma\quasi \tilde \Sigma$ then $\tilde\Sigma$ is integral. 
 The following is also straightforward to check.

\begin{lemma}\label{lem:quasimutate}
If $\Sigma $ and $ \tilde \Sigma$ are quasi-equivalent seeds then $\mu_k(\Sigma) \quasi \mu_k(\tilde{\Sigma})$ for all mutable $k$.
\end{lemma}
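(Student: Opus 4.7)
To prove quasi-equivalence of $\mu_k(\Sigma)=(T',\x',\d,\om')$ and $\mu_k(\tilde\Sigma)=(\tilde T',\tilde\x',\d,\tilde\om')$, I would first observe the easy parts: mutation does not touch the frozen variables, so the frozen sublattice of the character lattice remains identified between the two seeds; and for any mutable index $j\neq k$ the old equality $\tilde x_j = x_j M_j$ still holds. The 2-forms $\om'$ and $\tilde\om'$ are defined as pullbacks of $\om = \tilde\om$ along the standard birational mutation map, so they coincide as soon as the ambient tori $T'$ and $\tilde T'$ are identified. Thus the only real content is to show that the new mutable variables differ by a frozen monomial, i.e.\ $\tilde x_k' = x_k'\cdot M_k'$ for some Laurent monomial $M_k'$ in the frozens.

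I would derive this by comparing the two exchange relations
\[
x_k'\,x_k = \prod_{\tB_{jk}>0} x_j^{\tB_{jk}} + \prod_{\tB_{jk}<0} x_j^{-\tB_{jk}}, \qquad \tilde x_k'\,\tilde x_k = \prod_{\tilde\tB_{jk}>0}\tilde x_j^{\tilde\tB_{jk}} + \prod_{\tilde\tB_{jk}<0}\tilde x_j^{-\tilde\tB_{jk}}
\]
side by side. Substituting $\tilde x_j = x_jM_j$ for $j$ mutable and the appropriate frozen monomial expression for $\tilde x_j$ when $j$ is frozen rewrites each term on the right-hand side of the $\tilde\Sigma$-relation as a Laurent monomial in the $x_j$'s. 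The key claim is that \emph{both} of these terms equal the corresponding term of the $\Sigma$-relation multiplied by one and the same Laurent monomial $Q$ in the frozen variables. Factoring $Q$ out and dividing through by $\tilde x_k = x_kM_k$ then gives $\tilde x_k' = x_k'\cdot(Q/M_k)$, so one takes $M_k' := Q/M_k$.

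The existence of the common factor $Q$ comes from the invariance $\om=\tilde\om$. Expanding $\dlog\tilde x_j$ in the $x$-basis and comparing the coefficients of $\dlog x_i\wedge\dlog x_k$ yields two facts. First, equating coefficients with $i$ mutable gives $\tilde\tB_{jk}=\tB_{jk}$ for every mutable $j$, so the two exchange relations have matching \emph{mutable} monomial content. Second, equating coefficients with $i = l$ frozen gives a $\Z$-linear identity that, after exponentiation, is precisely the statement that the frozen corrections picked up by the positive and the negative terms of the $\tilde\Sigma$-exchange relation upon substitution coincide; this common correction is the sought-after monomial $Q$.

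The main obstacle is bookkeeping rather than anything deep: the frozen variables of $\tilde\Sigma$ can be an arbitrary $GL_m(\Z)$-change of basis of the frozens of $\Sigma$, and the frozen rows of $\tilde\tB$ can differ from those of $\tB$, so one must unwind the identity $\om=\tilde\om$ carefully before the common factor $Q$ is manifest. Once this is done, no cluster-algebraic subtlety remains, and the conclusion $\mu_k(\Sigma)\quasi\mu_k(\tilde\Sigma)$ follows by direct substitution.
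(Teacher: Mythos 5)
Your proof is correct; the paper leaves this lemma as ``straightforward to check,'' and your argument supplies exactly the needed verification. In particular, you correctly identify and prove the one nontrivial point: that the invariance $\om=\tilde\om$, read off from the coefficients of $\dlog x_i\wedge\dlog x_k$, forces $\tilde\tB_{jk}=\tB_{jk}$ for mutable $j$ and forces the two terms of the exchange binomial to acquire one and the same frozen correction $Q$ (equivalently, that $\hat y_k=\prod_j x_j^{\tB_{jk}}$ is a quasi-equivalence invariant), from which $\tilde x_k'=x_k'\cdot(Q/M_k)$ and the identification of the mutated tori and $2$-forms follow at once.
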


\begin{corollary}\label{cor:quasi}
Suppose two seeds $\Sigma,\tilde\Sigma$ are quasi-equivalent.
  Then they define the same cluster algebra $\A(\Sigma) = \A(\Sigma') \subset \C(T)$.
\end{corollary}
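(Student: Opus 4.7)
The plan is to verify $\Acal(\Sigma) \subseteq \Acal(\tilde\Sigma)$ and then obtain the reverse inclusion by symmetry.

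First, I would unpack the hypothesis. The equality of the two frozen sublattices of $\chL T$ says exactly that each $\tilde x_{n+i}$ is a Laurent monomial in $x_{n+1},\dots,x_{\npm}$, and conversely. Hence the Laurent ring $F := \C[x_{n+1}^{\pm 1},\dots,x_{\npm}^{\pm 1}]$ sitting inside $\C(T)$ coincides with $\C[\tilde x_{n+1}^{\pm 1},\dots,\tilde x_{\npm}^{\pm 1}]$, and both $\Acal(\Sigma)$ and $\Acal(\tilde\Sigma)$ contain $F$ by definition of the cluster algebra (inversion of frozens is allowed).

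Second, I would iterate \cref{lem:quasimutate}. For any finite mutation sequence $\mu_{k_\ell}\circ\cdots\circ\mu_{k_1}$, applying it simultaneously to $\Sigma$ and to $\tilde\Sigma$ yields two quasi-equivalent seeds $\Sigma^{(\ell)} \quasi \tilde\Sigma^{(\ell)}$. By the very definition of quasi-equivalence, each mutable cluster variable of $\Sigma^{(\ell)}$ equals the corresponding cluster variable of $\tilde\Sigma^{(\ell)}$ multiplied by a Laurent monomial in the common frozen subalgebra $F$. In particular, every cluster variable $y$ appearing anywhere in the seed pattern of $\Sigma$ can be written as $y = \tilde y \cdot M$, where $\tilde y$ is a cluster variable of $\tilde\Sigma$ and $M \in F^\times$.

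Finally, I would assemble the argument: the algebra $\Acal(\Sigma)$ is generated over $\C$ by the cluster variables of $\Sigma$ together with the inverses of the frozen variables of $\Sigma$. Each such generator lies in $\Acal(\tilde\Sigma)$ by the previous two paragraphs -- frozen inverses belong to $F\subseteq \Acal(\tilde\Sigma)$, and each mutable cluster variable is an element of $\Acal(\tilde\Sigma)$ times an element of $F^\times$. This gives $\Acal(\Sigma)\subseteq \Acal(\tilde\Sigma)$; swapping the roles of $\Sigma$ and $\tilde\Sigma$ yields equality. The whole proof is bookkeeping, and there is no substantive obstacle once \cref{lem:quasimutate} ensures quasi-equivalence is preserved under a single mutation.
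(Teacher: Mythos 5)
Your proposal is correct and follows essentially the same route as the paper: the paper's proof is a one-line appeal to \cref{lem:quasimutate}, concluding that each cluster variable of $\Sigma$ differs from the corresponding one of $\tilde\Sigma$ by a Laurent monomial in the (common) frozen variables, which is exactly the content you spell out. Your version merely makes explicit the bookkeeping (the coincidence of the frozen Laurent subalgebras and the symmetry argument) that the paper leaves implicit.
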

\begin{proof}
It follows from \cref{lem:quasimutate} that each cluster variable in $\A(\Sigma)$ differs from the corresponding cluster variable in $\A(\tilde\Sigma)$ by a factor equal to a Laurent monomial in the frozen variables. 
\end{proof}

\subsection{Deletion-contraction}\label{sec:dc}

We give an inductive criterion for a pair $(X,\Sigma)$ to be a sink-recurrent cluster variety, which is a key part of our proof of \cref{thm:main}. In \cref{sec:braid_dc}, we will apply this criterion to the seeds $\Sbr$ from~\eqref{eq:sig-br-def}.  See \cite[Corollary 5.15]{GL_plabic} for a different application suggesting our nomenclature. %

\begin{assumption}\label{assn:dc}
For the remainder of this section, we let $\Sigma = (T,\x,\d,\omega)$ be an abstract seed of rank $n$ and dimension $\npm$. Let  $\Digr:=\Digr(\Sigma)$. We assume that $\Sigma$ is \sinkrec, with sink $\si$ in $\Digr$ such that $\Sigma$ is integral at $\si$. Further, we assume that there exists a frozen index $\froi$ such that $\tB_{\froi \si}=\pm1$ and $\tB_{\froi j}=0$ for $j\in[n]\setminus\{\si\}$. 
 Suppose that the exchange relation for $x_\si$ in $\Sigma$ is given by $x_\si x_\si'=M_1+x_\froi M_2$ for some monomials $M_1,M_2$ in $\{x_j\}_{j\in[n+m]\setminus\{\si,\froi\}}$.
\end{assumption}

\begin{definition}
Suppose that $\si$ has $q:=|\ND|$ mutable neighbors.  
 The \emph{contraction} $\icon\Sigma[\si]=(\icon T[\si],\icon\x[\si],\icon\d[\si],\icon\om[\si])$ is a seed of rank $n-q-1$ and dimension $\npm-2$ defined as follows. 

\begin{enumerate}
\item $\icon\x[\si]$ is obtained from $\x$ by omitting $x_\si$ and $x_\froi$ and declaring the indices in $\ND$ to be frozen.
\item $\icon T[\si]$ is an algebraic torus with character lattice generated by $\icon\x[\si]$.
\item $\icon\d[\si]$ is obtained by restricting the sequence $\d$ to the set $[\npm]\setminus\{\si,\froi\}$. 
\item $\icon\om[\si]$ is obtained from $\om$ by writing it in the form~\eqref{eq:om_dfn_cluster} and substituting $\dlog x_\si:=0$ and $\dlog x_\froi:=\dlog M_1-\dlog M_2$.
\end{enumerate}

The \emph{deletion}\footnote{The terminology ``deletion-contraction'' comes from an analogous construction for matroids and hyperplane arrangements. Note that, in the case of cluster seeds, ``deletion'' corresponds to \emph{freezing} $x_s$ (thus deleting $s$ from the mutable part $\Digr$) and not to deleting $x_s$ from the seed.}
 $\ifreeze\Sigma[\si]$ is the seed of rank $n-1$ and dimension $\npm$ obtained by declaring $x_\si$ to be frozen (cf. \cref{defn:freezing}).
\end{definition}

In our next result, we use the following notation. If the same cluster variable $x$ is viewed as a function on two different varieties $U$ and $U'$ (for example, $U = \Vcal(\Sigma)$ and $U' = \Vcal(\ifreeze\Sigma[\si])$), we denote the corresponding two functions by $x|_{U}$ and $x|_{U'}$. 
\begin{theorem}[Deletion-contraction recurrence]\label{thm:abstract-clust-stuc-del-con}
Let $X$ be an affine, normal, irreducible, complex algebraic variety, and let $\Sigma=(T,\x,\d,\omega)$ be a seed on $X$ with a sink $\si$ satisfying \cref{assn:dc}. Assume that all cluster variables in $\x\sqcup\{x_\si'\}$ are regular on $X$. Define subvarieties $W:=\{x_\si\neq0\}$ and $V:=\{x_\si=0\}$ of $X$.
 Suppose that we have isomorphisms $W \cong \Vcal(\ifreeze\Sigma[\si])$ and $V \cong V_1\times V_2$ with $V_1 := \Spec(\C[x'_\si])\cong\C$ and $V_2 := \Vcal(\icon\Sigma[\si])$. Let $\Vproj_1:V\to V_1$, $\Vproj_2:V\to V_2$, $\iota_W:W \hookrightarrow X$, and $\iota_V:V \hookrightarrow X$ denote the natural projections and inclusions. Suppose that:
\begin{itemize}
\item for each cluster variable $x$ of $\ifreeze\Sigma[\si]$, we have $\iota_W^\ast(x|_X) = x|_W$;
\item for each cluster variable $x$ of $\icon\Sigma[\si]$, we have $\iota_V^\ast(x|_X) = \Vproj_2^\ast(x|_{V_2})$;
\item for the cluster variable $x_\si'$, we have $\iota_V^\ast(x_\si'|_X) = \Vproj_1^\ast(x_\si'|_{V_1})$.
\item the seed $\ifreeze\Sigma[\si]$ is (really) full rank.
\end{itemize}
 Then $(X,\Sigma)$ is a cluster variety and $\Sigma$ is (really) full rank.
\end{theorem}

\begin{proof}
First, the condition that $\ifreeze\Sigma[\si]$ is integral is included in the assumption that $(W,\ifreeze\Sigma[\si])$ is a cluster variety. Since $\Sigma$ is integral at $\si$, this implies
$\Sigma$ is also integral.

Second, the assumption that $\ifreeze\Sigma[\si]$ is (really) full rank together with \cref{assn:dc} imply that $\Sigma$ is (really) full rank. Indeed, row $\froi$ of the exchange matrix of $\Sigma$ contains a single nonzero entry equal to $\pm1$ in column $\si$. The exchange matrix of $\ifreeze\Sigma[\si]$ is obtained from that of $\Sigma$ by removing column $\si$. 

Let $j\in[n]\setminus\{\si\}$ be a mutable index. Clearly, the (pullback under $\iota_W$ of the) exchange relation for $x_j$ in $\Sigma$ coincides with the exchange relation for $x_j$ in $\ifreeze\Sigma[\si]$. Thus, the mutated variable $x_j'$ is regular on~$W$. Next, assume that $j\notin \ND$. By assumption, $j$ is not connected to $\si,\froi$ in $\Digr$, and thus the terms involving $\dlog x_j$ are unchanged when passing from $\om$ to $\icon\om[\si]$. Thus, the pullback of the exchange relation for $x_j$ under $\iota_V$ is still the exchange relation for $x_j$ in $\icon\Sigma[\si]$, and therefore the mutated variable $x_j'$ is regular on $V$. For $j\in\ND$, we claim that $x_j'$ must also be regular on $V$. Indeed, by~\eqref{eq:mut_dfn}, $x_j'$ is regular on $V$ if $x_j^{-1}$ is regular on $V$. But $\iota_V^\ast(x_j|_X) = \Vproj_2^\ast(x_j|_{V_2})$, and $x_j|_{V_2}$ is a frozen variable in $\icon\Sigma[\si]$, so indeed $x_j$ is invertible on $V$. It follows that for all $j\in[n]\setminus\{\si\}$, the mutated variable $x_j'$ is a regular function on $X$ since it is regular on both $V$ and $W$. For $j=\si$, $x_\si'$ is regular on $X$ by assumption.

Next, we show that $\C[X]\subset \Ucal(\Sigma)$. As $\Sigma$ is full-rank, \cite[Corollary 1.9]{BFZ} implies that $\Ucal(\Sigma) = \C[\bx^{\pm 1}] \cap \bigcap_{k\in[n]} \C[\mu_k(\bx)^{\pm 1}]$. The containment $\C[X]\subset \Ucal(\Sigma)$ is equivalent to constructing inclusions $T\hookrightarrow X$ and $\mu_j(T)\hookrightarrow X$ for all $j\in[n]$. Since $\Sigma$ is a seed on $X$, we have $T\subset X$.  For the tori $\mu_j(T)$, we show that the subset $X_j\subset X$ where the regular functions in $\mu_j(\x)$ are all non-vanishing is isomorphic to an algebraic torus $\mu_j(T)\cong (\Cast)^{\npm}$ via the map $\varphi_j:X_j\to\mu_j(T)$ sending $y\in X_j$ to $z:=(x_1(y),\dots,x_j'(y),\dots,x_{\npm}(y))$.  If $j\in[n]\setminus\si$ then we have $X_j\subset W$, and thus the statement follows since $W$ is a cluster variety. So let $j=\si$. Consider the torus $\mu_\si(T)\cong (\Cast)^{\npm}$. Let $p:=M_1+x_\froi M_2\in\C[X]$ be the exchange binomial for $x_\si$ (cf. \cref{assn:dc}). Since $p$ does not involve $x_\si$ and $x_\si'$, we can also view $p$ as a regular function on $\mu_\si(T)$ compatible with pullback under $\varphi_\si$. Let $z=(z_1,\dots,z_{\npm})\in\mu_\si(T)$. Our goal is to show that $z$ has a unique preimage under $\varphi_\si$. Suppose first that $p(z)\neq0$. Then $\varphi_\si^{-1}(z)\subset T$, and the result follows. Suppose now that $p(z)=0$. Then $\varphi_\si^{-1}(z)\subset V$. Recall that $V\cong V_1\times V_2$. Since $x'_\si=z_\si$, the first coordinate $\Vproj_1\circ\varphi_\si^{-1}(z)$ of the preimage
is uniquely determined by $z$. The second coordinate $\Vproj_2\circ\varphi_\si^{-1}(z)$ of the preimage is uniquely determined by $(z_i)_{i\in[\npm]\setminus\{\si,\froi\}}$. We have shown that $z$ has a unique preimage under $\varphi_\si$, which completes the proof of the inclusion $\C[X]\subset \Ucal(\Sigma)$. The statement of the theorem now follows from \cref{prop:BFZ} below. 
\end{proof}

\begin{proposition}\label{prop:BFZ}
Let $X$ be an affine, factorial, irreducible, complex algebraic variety, and let $\Sigma=(T,\x,\d,\omega)$ be an integral sink-recurrent seed on $X$.
  Suppose that $\C[X]\subset \Ucal(\Sigma)$. 
  Then $(X,\Sigma)$ is a cluster variety.
\end{proposition}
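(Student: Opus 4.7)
My plan is to combine Proposition~\ref{prop:A=U} with an induction on the sink-recurrent structure of $\Sigma$, using the normality of $X$ to glue local information. Since $\Sigma$ is integral and sink-recurrent, Proposition~\ref{prop:A=U} yields $\Acal(\Sigma) = \Ucal(\Sigma)$, so the hypothesis $\C[X] \subset \Ucal(\Sigma)$ already gives one inclusion $\C[X] \subset \Acal(\Sigma)$ as subrings of $\C(X) = \C(T)$. The content of the proposition is the reverse inclusion $\Acal(\Sigma) \subset \C[X]$, equivalently that every cluster variable in every seed mutation-equivalent to $\Sigma$ defines a regular function on $X$.

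I would establish this reverse inclusion by inducting on the recursive definition of sink-recurrent seeds (\cref{def:sinkrec}). The mutation-equivalence clause is routine: $\Acal$ is mutation-invariant, and so is $\Ucal$, hence so is the hypothesis. When $\Digr(\Sigma)$ has no arrows, the exchange relations $x_k x_k' = M_1+M_2$ with monomial $M_1,M_2$ in the frozens let me read off regularity of each $x_k'$ on $X$ directly from $\C[X] \subset \Ucal(\Sigma)$, and every cluster variable of $\Acal(\Sigma)$ is one of these.

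For the genuine sink-recurrent step, choose a sink $\si$ such that $\ifreeze\Sigma[\si]$ and $\ifreeze\Sigma[\NDSH]$ are sink-recurrent. The plan is to write $X$ up to a codimension-two locus as the union of $W := \{x_\si \neq 0\}$ and the divisor $V := \{x_\si = 0\}$; to apply the inductive hypothesis to cluster structures on $W$ and $V$ corresponding to $\ifreeze\Sigma[\si]$ and (after contracting at $\si$) $\icon\Sigma[\si]$, respectively; and to reassemble regularity on $X$ via normality. Given $f \in \Acal(\Sigma)$, its Laurent expansions in clusters of the two freezings exhibit it as a regular function on each of $W$ and $V$; because $X$ is normal and affine and $X \setminus (W \cup V)$ has codimension at least two, standard Hartogs-type extension then yields $f \in \C[X]$.

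The main obstacle is verifying that these restrictions really are integral sink-recurrent seeds on $W$ and on $V$ and that the hypothesis $\C[\ \cdot\ ] \subset \Ucal(\cdot)$ descends to them — in other words, that the \emph{seed-on-variety} structure is compatible with the freezing and contraction operations used to unwind the sink-recurrent recursion. This is the local-to-global step that also lies at the heart of Muller's arguments establishing $\Acal = \Ucal$ for locally acyclic cluster algebras~\cite{MullerLA2}, and will likely require setting up the restriction-of-seed formalism carefully and checking its compatibility with passage to upper cluster algebras before the induction can run cleanly.
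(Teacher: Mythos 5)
There is a genuine gap. Your strategy inverts the logic of the hypothesis: $\C[X]\subset\Ucal(\Sigma)$ is an inclusion of $\C[X]$ \emph{into} the upper cluster algebra, so it cannot ``directly'' produce regularity of the mutated variables $x_k'$ on $X$ --- that regularity is exactly the reverse inclusion $\Ucal(\Sigma)\subset\C[X]$ you are trying to prove. This already breaks your base case: for an arrowless quiver, knowing that $x_k'=(M_1+M_2)/x_k$ lies in $\Ucal(\Sigma)$ says nothing about whether it lies in $\C[X]$. The inductive step has two further problems. First, $W=\{x_\si\neq0\}$ and $V=\{x_\si=0\}$ cover $X$ exactly, and $V$ is a \emph{closed} divisor; regularity of $f$ on the open set $W$ together with regularity of some restriction of $f$ to $V$ does not imply regularity of $f$ on $X$ (indeed $f$ regular on $W$ need not even restrict to $V$). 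Hartogs-type extension on a normal variety requires an \emph{open} cover whose complement has codimension at least two. Second, the cluster-variety structures on $W$ and $V$ that your induction invokes (via $\ifreeze\Sigma[\si]$ and $\icon\Sigma[\si]$) are hypotheses of \cref{thm:abstract-clust-stuc-del-con}, not of this proposition; \cref{prop:BFZ} assumes only normality and $\C[X]\subset\Ucal(\Sigma)$ and is applied as the \emph{final} step of the deletion--contraction argument, so importing those structures here would be circular.

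The actual proof is a one-step argument with no induction on the sink-recurrent structure. The inclusions $\C[X]\subset\C[\mu_j(\x)^{\pm1}]$ for $j\in[n]$, which are part of the hypothesis $\C[X]\subset\Ucal(\Sigma)$, realize each mutated torus $\mu_j(T)$ as an open subset $X_j\subset X$; the standard Berenstein--Fomin--Zelevinsky codimension count shows that $X\setminus(T\cup\bigcup_{j}X_j)$ has codimension at least two; normality then gives $\C[X]=\C[T\cup\bigcup_j X_j]=\Ucal(\Sigma)$, since the right-hand side is by definition the intersection of the Laurent rings of the initial and once-mutated clusters. Sink-recurrence enters only at the very end, through \cref{prop:A=U}, to identify $\Ucal(\Sigma)$ with $\Acal(\Sigma)$. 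If you want to salvage your outline, the gluing must be performed over the open tori $T,X_1,\dots,X_n$ rather than over $W$ and the closed divisor $V$.
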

\begin{proof}
The inclusions $\C[X] \subset \C[x_1^{\pm 1},\ldots,(x'_j)^{\pm 1},\ldots,x_\npm^{\pm 1}]$ give tori $\mu_j(T)\cong X_j \subset X$ as in the proof of \cref{thm:abstract-clust-stuc-del-con}. By a standard argument, this implies that 
 the complement of $T \cup \bigcup_{j\in[n]} X_j$ has codimension greater than or equal to two in $X$; see~\cite[Section 3]{Zel}, \cite[Proof of Theorem~2.10]{BFZ}, or~\cite[Lemmas~9.5--9.8]{GLSBS1}.
 Since $X$ is factorial and in particular normal, we have $\C[X] = \C[T \cup \bigcup_{j\in[n]} X_j] \supset \Ucal(\Sigma)$. We have the reverse containment by assumption, so $\C[X] = \Ucal(\Sigma)$.  By assumption, $\Sigma$ is sink-recurrent, so \cref{prop:A=U} implies $\Acal(\Sigma) = \Ucal(\Sigma) = \C[X]$.
\end{proof}

\section{Double braid moves}\label{sec:2braid}

Recall from \cref{sec:deodhar-geometry} that we have constructed a single seed $\Sbr$ for each double braid variety $\BR_{\br}$. In this section, we first study natural isomorphisms between braid varieties corresponding to \emph{double braid moves}, and show that pullbacks along them are well-behaved on seeds. In \cref{sec:braid_dc}, we show how to apply \cref{thm:abstract-clust-stuc-del-con} on deletion-contraction to $\BR_{\br}$ for $\br$ of a particular form. Finally, in \cref{subsec:pf_thm_main_simply_laced}, we use double braid moves and deletion-contraction to prove \cref{thm:main} in simply-laced types.

Double braid moves are defined as follows:

\begin{enumerate}[label=\nrmfont{(B\arabic*)}]%
	\item \label{bm1} $i j \leftrightarrow j i$ if $i, j\in\pmn$ have different signs;
	\item \label{bm2} $i j \leftrightarrow j i$  if $i, j\in\pmn$ have the same sign and $(s_{|i|}s_{|j|})^2 = 1$;
	\item \label{bm3} $\underbrace{iji\dots}_{\text{$m_{ij}$ letters}}  \leftrightarrow \underbrace{jij\dots}_{\text{$m_{ij}$ letters}}$  if $i, j\in\pmn$ have the same sign and $(s_{|i|}s_{|j|})^{m_{ij}}=1$ with $m_{ij}\geq3$;
	\item \label{bm4} $\br_0i\leftrightarrow \br_0(-i^\ast)$ for $i\in \pmn$ and $\br_0\in(\pmn)^{m-1}$;
\item \label{bm5} $i\br_0\leftrightarrow (-i)\br_0$ for $i\in \pmn$ and $\br_0\in(\pmn)^{m-1}$.
\end{enumerate}

If double braid words $\br$ and $\br'$ are related by one of the moves~\ref{bm1}--\ref{bm5}, there is a natural isomorphism $\mis:\BR_\br\xrasim\BR_{\br'}$, discussed below.

\begin{definition}\label{dfn:mis_B1_B3}
Suppose that $\br$ and $\br'$ are related by one of the moves~\ref{bm1}--\ref{bm3}. If this move involves indices $l,l+1,\dots,r$, the isomorphism $\mis$ sends $(\Xbul,\Ybul)\in\BR_\br$ to the unique tuple $(\Xbul',\Ybul')\in\BR_{\br'}$ such that $X_c'=X_c$ and $Y_c'=Y_c$ for $0\leq c<l$ or $r\leq c\leq m$. The remaining weighted flags $X_l',\dots,X_{r-1}',Y_l',\dots,Y_{r-1}'$ are uniquely determined by \cref{lemma:rel-pos-facts}. 
\end{definition}
\noindent For the moves~\ref{bm4} and~\ref{bm5}, the isomorphism $\mis$ is described in \cref{sec:bm4,sec:bm5}, respectively.
The main result of this section is the following.

\begin{theorem}\label{thm:moves}
Suppose that $\br$ and $\br'$ are related by one of the moves~\ref{bm1}--\ref{bm5}. 
If $(\BR_\br,\Sbr)$ is a cluster variety then so is $(\BR_{\br'},\Sbrp)$. %
\end{theorem}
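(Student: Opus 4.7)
The plan is to analyze each move type (B1)--(B5) separately, comparing how the isomorphism $\mis: \BR_\br \xrasim \BR_{\br'}$ transports the Deodhar data of Section~2. Concretely, for each move I would (i) identify the image of the Deodhar torus $\Tbr$ under $\mis$ and match it with $T_{\br'}$; (ii) set up a natural bijection $\Jo \to J_{\br'}$ compatible with the mutable/frozen dichotomy of \cref{dfn:mutable}; (iii) compare the cluster variables $\xbr$ with $\xbrp$, which are characterized uniquely by \cref{propdef:intro_cluster}; and (iv) match the 2-forms $\ombr$ with $\ombrp$. The goal, in the simply-laced case, is to show that for moves (B1), (B2), (B4), (B5) the pushforward seed $\mis_\ast \Sbr$ is quasi-equivalent to $\Sbrp$---so that \cref{cor:quasi} forces equality of cluster algebras---while for (B3) with $m_{ij}=3$ the two seeds are related by a single mutation, to which standard mutation-invariance of the cluster algebra applies.

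For (B1) and (B2), which commute adjacent letters, \cref{def:almostPos} implies that the solid/hollow labels and the almost positive subexpressions carry over under the natural reindexing, so $\Jmut$ and $\Jfro$ correspond on the nose. I would then compare the cluster variables locally: each grid minor $\grid_{c,k}$ is affected only through the two commuting letters, and by \cref{lemma:tpar} together with the relation~\eqref{eq:grid_mon_reln} one sees that $\xbr$ and $\xbrp$ either coincide or differ by monomials in frozen variables, yielding quasi-equivalence. The 2-forms then match by linearity, since $\ombr$ is a sum of local terms in~\eqref{eq:ombr_dfn} and the contributions of the two swapped letters combine symmetrically. For (B4) and (B5), which alter only a boundary letter, I would use \cref{lem:extend-word} to localize the comparison to the endpoint, then verify by a direct $\SL_2$ computation via~\eqref{eq:SL2_xs_sx} that the resulting cluster variables differ at most by frozen monomials.

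For move (B3) $iji \leftrightarrow jij$ with $m_{ij} = 3$ in simply-laced type, I expect that exactly one of the three affected crossings is solid on each side, and that these two solid crossings correspond to the mutation index $c$. I would verify that \cref{assn:dc} holds by computing the relevant row of the exchange matrix $\tB$ via~\eqref{eq:intro:omcoef_dfn} and exhibiting the unique frozen neighbor $\froi$ with $\tB_{\froi\, c} = \pm 1$. The exchange relation $x_c x_c' = M_1 + x_{\froi} M_2$ would follow from a chamber-minor identity, obtained by expanding the middle weighted flag in the two ways permitted by \cref{lemma:rel-pos-facts} and \cref{lem:paramSingleStep}; the identification of $x_c'$ with the corresponding cluster variable on the $\br'$-side then falls out from the uniqueness in \cref{propdef:intro_cluster} applied to $\br'$. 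That the pullback of $\ombrp$ along $\mu_c$ equals $\ombr$ is automatic from the abstract definition of seed mutation, and can be checked locally via \cref{cor:crossing_changes_one_minor} and~\eqref{eq:grid_mon_reln}.

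The main obstacle is move (B3) in the multiply-laced case ($m_{ij} \in \{4, 6\}$), where $\Sbr$ and $\Sbrp$ are no longer related by a single mutation but by an entire sequence. This case is deferred to the folding machinery of \cref{sec:folding,sec:mult_laced}, which realizes a multiply-laced $\br$ as a fixed-point object under a diagram automorphism acting on a simply-laced unfolding, and then transports the simply-laced single-mutation argument back down to the multiply-laced setting; this is subtle because one must check compatibility of the unfolding with the Deodhar torus, cluster variables, and 2-form. A secondary technical issue, present already in the simply-laced case, is the bookkeeping of frozen monomials: equalities of grid and chamber minors on the two sides of a move usually hold only modulo frozen monomials, so quasi-equivalence rather than literal seed equality is the right framework, and one must consistently track the monomial corrections induced by $\mis$ throughout.
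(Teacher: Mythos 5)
Your overall architecture---a case analysis over the moves, quasi-equivalence as the right notion of ``same seed up to frozen monomials,'' and folding deferred for the multiply-laced \ref{bm3} moves---matches the paper's. But the case analysis itself contains two concrete errors that would derail the proof. First, you treat every \ref{bm1} move as a non-mutation move whose clusters agree up to frozen monomials. This fails for the \emph{solid-special} \ref{bm1} move (both crossings solid and $s_{|i|}\up{c}=\up{c}s_j$ as in~\eqref{eq:solid_spec_u_si_sj}): there the Deodhar hypersurfaces $V_{c+1}$ and $V_{c+1}\pb$ are genuinely distinct divisors, so $x_{c+1}$ and $x_{c+1}\pb$ cannot differ by a frozen monomial; the seeds are related by a single mutation, and the exchange relation is the generalized-minor identity of \cref{prop:det_B1}. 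Second, your description of the short \ref{bm3} move is backwards: the mutation occurs when \emph{all three} affected crossings are solid (not ``exactly one on each side''), the configurations with one or two hollow crossings being the non-mutation cases; and the exchange relation there is again a Fomin--Zelevinsky identity (\cref{prop:det_B3}), not something extracted from \cref{assn:dc}, which concerns the deletion-contraction recurrence for words of the form $ii\br'$ and plays no role in the braid-move analysis.

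There is also a gap in how the hypothesis of the theorem is used. In the mutation cases the determinantal identities only yield $x'=x\pb\prod_{\cp}x_{\cp}^{r_\cp}$ with $r_\cp\geq 0$ (cf.~\eqref{eq:B1_x'_vs_x_pb}); to get quasi-equivalence one must show $r_\cp=0$ at all mutable indices, and this is precisely where the assumption that $(\BR_\br,\Sbr)$ is a cluster variety enters, via the irreducibility of cluster variables (\cref{prop:irreducible}) together with the fact that $x\pb$ is not a unit. Your proposal never identifies where that hypothesis is invoked. Finally, your remark that matching the $2$-forms in the \ref{bm3} case ``is automatic from the abstract definition of seed mutation'' conflates the mutated form $\mu^*\ombr$ with the geometrically defined form $\ombrp$ on $\BR_{\br'}$; the equality $\mis^*\ombrp=\ombr$ is the content of \ref{IS0} and requires the identity~\eqref{eq:M2identity} (or the argument via~\cite{ShWe,BFZ}), not just formal functoriality.
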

\noindent We then use \cref{thm:moves} and \cref{thm:abstract-clust-stuc-del-con} to prove \cref{thm:main}; see \cref{thm:br_del_con,subsec:pf_thm_main_simply_laced,subsec:pf_thm_main_mult_laced}.

The proof of \cref{thm:moves} will occupy \crefrange{sec:2braid}{sec:mult_laced}. Along the way, we will construct a seed $\Sigma'=(T',\x',\d',\om')$ obtained from $\Sbr=(T,\x,\d,\om)$ by one or several mutations, 
followed by a relabeling. We will show the following for moves~\bmref{bm1}--\bmref{bm5}:
\begin{enumerate}%
\item[{\crtcrossreflabel{\nrmfont{(F)}}[IS0]}]
  The $2$-form is invariant: $\mis^*\om_{\br'}=\ombr$. %
\item[{\crtcrossreflabel{\nrmfont{(Q)}}[IS2]}]
  Suppose that $(\BR_\br,\Sbr)$ is a cluster variety. Then the seeds $\Sigma'$ and $\mis^*\Sbrp$ are quasi-equivalent. 
\end{enumerate}
Here, for a seed $\Sbrp=(T_{\br'},\x_{\br'},\d_{\br'},\om_{\br'})$, $\mis^*\Sbrp=(T\pb,\bx\pb,\d\pb,\om\pb)$ is an abstract seed on $\BR_\br$ defined by 
\begin{equation}\label{eq:Sbrp_dfn}
  T\pb:=\mis^{-1}(T_{\br'}),\quad \x\pb:=\mis^* \x_{\br'},\quad \d\pb:=\d_{\br'}, \quad\text{and}\quad \om\pb:=\mis^\ast\om_{\br'}.
\end{equation}
Note that~\ref{IS2} immediately implies \cref{thm:moves}. %

\begin{definition}
  A \ref{bm1}--\ref{bm3} move is \emph{solid} if all indices involved are solid. For $i, j\in I$, the \ref{bm1} move $(-i) j \leftrightarrow j (-i)$ on indices $c, c+1$ is \emph{special} if $u\pd{c} s_i= s_j u\pd{c}$ and \emph{solid-special} if it is both solid and special. A \ref{bm3} move with $m_{ij}>3$ is \emph{\llong}; all other moves are \emph{\short}. Finally, a \ref{bm1}--\ref{bm5} move is a \emph{mutation move} if it involves at least one cluster mutation; otherwise it is a \emph{non-mutation move}.
\end{definition}

\begin{remark}
 As we will show in \cref{sec:B1_solid_special}, a solid-special \ref{bm1} move corresponds to a single mutation, at the rightmost index involved in the move. The move \ref{bm3} involving $q$ solid indices corresponds to a sequence of $\binom{q-1}{2}$ mutations on the rightmost $m_{ij}-2$ indices involved in the move (see \cref{sec:B3_solid,sec:mult_laced}). We will show that all other moves are non-mutation moves.
\end{remark}

We will show \ref{IS0}, \ref{IS2} for short moves directly. This will complete the proof of \cref{thm:moves} in simply-laced types. We then use this and folding to show \ref{IS0}, \ref{IS2} for long moves in \cref{sec:folding,sec:mult_laced}.

Throughout the rest of this section, we fix $\br, \br'$ related by a short move and thus an isomorphism
 $\mis: \BR_{\br} \xrasim \BR_{\br'}$.  For a rational function or a form $f$ on $\BR_{\br'}$, we use the shorthand $f\pb:=\mis^*f$. %

\begin{remark}\label{rmk:all_hollow}
If all indices involved in a move~\ref{bm1}--\ref{bm3} are hollow, then the statements \ref{IS0}, \ref{IS2} follow trivially; cf. \cref{cor:hollow}.
\end{remark}

\subsection{Mutation move: \ref{bm1}, solid-special}\label{sec:B1_solid_special}
Consider the case of a solid-special move~\ref{bm1} on indices $c,c+1$. Since both indices are solid, we denote $u:=\up{c-1}=\up{c}=\up{c+1}$ and $w:=\wp{c-1}=\wp c=\wp{c+1}$. The indices $i,j\in\pmn$ are of opposite signs; we assume that $i\in -I$ and $j\in I$ as the other case is similar. The solid-special condition yields %
\begin{equation}\label{eq:solid_spec_u_si_sj}
  u<s_{|i|}u=us_{j} \quad\text{and}\quad s_{|i|^*}w=ws_j<w.
\end{equation}

To show \ref{IS0}, we will utilize the following relation among grid minors. 

\begin{proposition}[{\cite[Theorem~1.17]{FZ_double}}]\label{prop:det_B1}
We have\footnote{Our Cartan matrix $a_{ij}:= \< \alpha_i, \ach_j \>$ is the transpose of that of~\cite{FZ_double}; see~\cite[Equation~(2.27)]{FZ_double}.}
\begin{equation}\label{eq:M1identity}
  \grid_{c, j} \grid_{c,j}\pb = \grid_{c+1, j} \grid_{c-1, j} + \prod_{k\neq j} \Delta_{c,k}^{-a_{jk}}.
\end{equation}
\end{proposition}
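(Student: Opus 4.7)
This is an instance of Fomin-Zelevinsky's generalized minor identity \cite[Theorem~1.17]{FZ_double}, applied to $g := Z_{c+1}$. The plan has four steps.

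\emph{Step 1 (common group element).} From the parametrization in the proof of \cref{lemma:tpar}, and using $i_c = -i \in -I$, $i_{c+1} = j \in I$, all four ``main'' configurations $Z_{c+1}, Z_c, Z_{c-1}, Z_c\pb$ differ from $g = Z_{c+1}$ by simple left/right factors:
\[Z_c = g\cdot z_j(t_{c+1}), \qquad Z_{c-1}=\z_{i^\ast}(t_c)^{-1}\cdot g\cdot z_j(t_{c+1}), \qquad Z_c\pb=\z_{i^\ast}(t'_{c+1})^{-1}\cdot g.\]

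\emph{Step 2 (rewriting grid minors).} Using $z_j(t)=x_j(t)\ds_j$ and $\z_{i^\ast}(t)^{-1}=\ds_{i^\ast}x_{i^\ast}(t)$, together with the invariance \eqref{eq:delta(yxy)} of generalized minors under left multiplication by $U_-$ and right multiplication by $U_+$ and the identities \eqref{eq:SL2_xs_sx}, I rewrite each of the grid minors $\grid_{c+1,j},\grid_{c,j},\grid_{c-1,j},\grid_{c,j}\pb$ as a polynomial in $t_c,t_{c+1},t'_{c+1}$ whose coefficients are generalized minors of $g$, with Weyl-group labels built from $\wp{c+1}$ by left-multiplication by $s_{i^\ast}$ and right-multiplication by $s_j$.

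\emph{Step 3 (matching the FZ pattern).} The solid-special condition $\up c s_i=s_j\up c$, equivalent to $\up c \alpha_i=\alpha_j$, translates via $\wp c = \wo\up c$ into $\wp{c+1}s_j s_i=s_{j^\ast}\wp{c+1}s_j$. Setting $u:=\wp{c+1}$ and choosing an appropriate $v$, one checks that the four minors above are, modulo the $t$-parameters,
\[\Delta_{u\omega_j,v\omega_j}(g),\quad \Delta_{us_j\omega_j,vs_j\omega_j}(g),\quad \Delta_{us_j\omega_j,v\omega_j}(g),\quad \Delta_{u\omega_j,vs_j\omega_j}(g).\]
For $k\neq j$ of the same sign as $j$, $s_j\omega_k=\omega_k$ yields $\grid_{c,k}=\Delta_{u\omega_k,v\omega_k}(g)$, while minors $\grid_{c,k}$ for $k$ of sign opposite to $j$ contribute trivially since $a_{jk}=0$.

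\emph{Step 4 (application of FZ).} The FZ identity \cite[Theorem~1.17]{FZ_double}
\[\Delta_{u\omega_j,v\omega_j}\,\Delta_{us_j\omega_j,vs_j\omega_j} - \Delta_{us_j\omega_j,v\omega_j}\,\Delta_{u\omega_j,vs_j\omega_j} = \prod_{k\neq j}\Delta_{u\omega_k,v\omega_k}^{-a_{jk}}\]
applied to $g$ then yields \eqref{eq:M1identity} after cancellation of $t$-parameters.

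The main obstacle is the bookkeeping in Steps~2--3: tracking how the $\ds_{i^\ast}$ and $\ds_j$ factors modify the Weyl-group labels, and confirming that the solid-special condition is precisely what is needed to produce the $(u,v)$-pattern of the FZ identity. A good sanity check is that the linear-in-$t$ extraction used here reduces to the calculation \eqref{eq:grid=p+t} already carried out in the proof of \cref{prop:triang}.
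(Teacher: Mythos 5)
Your proposal is correct and takes essentially the same route as the paper: both reduce \eqref{eq:M1identity} to \cite[Theorem~1.17]{FZ_double} via the parametrization $Z_c=Z_{c+1}z_j(t)$, $Z_c\pb=\z_{|i|^*}(t')^{-1}Z_{c+1}$, $Z_{c-1}=\z_{|i|^*}(t')^{-1}Z_{c+1}z_j(t)$, with the solid-special condition used to identify the four grid minors with the four generalized minors in the Fomin--Zelevinsky exchange pattern. The only differences are organizational: the paper applies the identity to $Z:=x_{|i|^*}(t')Z_{c+1}x_j(t)$ rather than to $g=Z_{c+1}$, so that each grid minor equals an FZ minor of $Z$ on the nose (e.g.\ $\grid_{c,j}=\Delta_{w\om_j,s_j\om_j}(Z)$) and no expansion in, or cancellation of, the $t$-parameters is required (though the cancellation you invoke does go through); also, the Weyl-group relation you state in Step~3 should simply be $s_{|i|^*}\wp{c}=\wp{c}s_j<\wp{c}$ as in~\eqref{eq:solid_spec_u_si_sj}, which is exactly what lets one absorb $x_{|i|^*}(t')$ into $U_-$ past $\dwp{c}^{-1}$.
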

\begin{proof}
 We may choose $t,t'\in\C$ such that $Z_c=Z_{c+1} z_j(t)$, $Z_c\pb=\z_{|i|^*}(t')^{-1} Z_{c+1}$, and $Z_{c-1}=Z_{c-1}\pb=\z_{|i|^*}(t')^{-1} Z_{c+1} z_j(t)=\ds_{|i|^*} x_{|i|^*}(t') Z_{c+1} x_j(t) \ds_j$. Let $Z:=x_{|i|^*}(t') Z_{c+1} x_j(t)$. By \cite[Theorem~1.17]{FZ_double}, we have
\begin{equation}\label{eq:M1_FZ}
  \Delta_{w\om_j,s_j\om_j}(Z)   \Delta_{ws_j\om_j,\om_j}(Z) = 
\Delta_{ws_j\om_j,s_j\om_j}(Z) \Delta_{w\om_j,\om_j}(Z) + \prod_{k\neq j} \Delta_{ws_j\om_k,\om_k}(Z)^{-a_{jk}}.
\end{equation}
Using properties of generalized minors from \cref{sec:minors}, one can check that each term of~\eqref{eq:M1identity} equals the corresponding term of \eqref{eq:M1_FZ}. For example, we have 
\begin{equation*}%
  \grid_{c,j}=\Delta_{w\om_j,\om_j}(Z_{c+1} x_j(t) \ds_j)
=\Delta_{\om_j,\om_j}(\dw^{-1} Z_{c+1} x_j(t) \ds_j)
=\Delta_{\om_j,\om_j}(\dw^{-1} Z \ds_j)=\Delta_{w\om_j,s_j\om_j}(Z),
\end{equation*}
where we have used $\dw^{-1} x_{|i|^*}(t')\in U_- \dw^{-1}$; cf.~\eqref{eq:delta(yxy)} and~\eqref{eq:solid_spec_u_si_sj}. For $\Delta_{c,k}^{-a_{jk}}$, $k\neq j$, we additionally used that $s_j\om_k=\om_k$.
\end{proof}

We shall use the following analog of \cite[Lemma 8.10]{GLSBS1}.
\begin{lemma}\label{lemma:special} 
For $\cp\in[0,m]$ and $-i, j \in I$ such that $\up\cp s_{j}=s_{|i|}\up\cp$, we have
\begin{equation}\label{eq:special}
  \prod_{k\in \pmn} \Delta_{\cp,k}^{a_{ik}} =  \prod_{k\in\pmn} \Delta_{\cp,k}^{\eps a_{jk}} \quad\text{and}\quad \Lci \cp i=\eps \Lci \cp j, \quad\text{where}\quad \eps:=
  \begin{cases}
    1, &\text{if $\up\cp<\up\cp s_j$,}\\
    -1, &\text{if $\up\cp>\up\cp s_j$.}\\
  \end{cases}
\end{equation}
\end{lemma}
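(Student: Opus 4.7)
The plan is to prove both equalities as identities of rational functions on the Deodhar torus $\Tbr$, and then extend them to all of $\BR_\br$ by density. Since the grid minors are regular on $\BR_\br$, equality on a Zariski-dense open subset suffices.

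First I will interpret each side using \cref{lemma:hc}. Recall $\grid_{\cp,k}=(\hr_\cp)^{\om_k}$ for $k\in I$ and $\grid_{\cp,-k}=(\hb_\cp)^{\om_k}$ for $k\in I$, with $\hb_\cp=\up\cp\cdot\hr_\cp$. The coefficient $a_{ik}$ vanishes unless $i,k$ have the same sign, so the LHS of~\eqref{eq:special} reduces to a product over $k\in -I$. Writing $k=-l$ and using $\alpha_{|i|}=\sum_{l\in I}a_{|i|,l}\om_l$, the LHS collapses to $(\hb_\cp)^{\alpha_{|i|}}=(\hr_\cp)^{\up\cp^{-1}\alpha_{|i|}}$. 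Analogously the RHS becomes $(\hr_\cp)^{\eps\alpha_j}$. Thus it suffices to check the root equality
\[\up\cp^{-1}\alpha_{|i|}=\eps\,\alpha_j.\]

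The hypothesis $\up\cp s_j=s_{|i|}\up\cp$ rewrites as $\up\cp s_j\up\cp^{-1}=s_{|i|}$. Conjugation of a reflection in $W$ has the form $\up\cp s_j\up\cp^{-1}=s_{\up\cp\alpha_j}$, so $\up\cp\alpha_j=\pm\alpha_{|i|}$. The sign is positive precisely when $\up\cp\alpha_j\in\Phi^+$, which is equivalent to $\up\cp<\up\cp s_j$; hence $\up\cp\alpha_j=\eps\alpha_{|i|}$, and inverting gives $\up\cp^{-1}\alpha_{|i|}=\eps\alpha_j$ since $\eps^2=1$. This establishes the monomial identity as characters of the Deodhar torus, and therefore as rational functions on $\BR_\br$.

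For the second statement, I will simply take $\dlog$ of both sides of the first identity. Using the definition~\eqref{eq:Lci-defn},
\[2\,\Lci\cp i=\sum_{k\in\pmn}a_{ik}\,\dlog\grid_{\cp,k}=\eps\sum_{k\in\pmn}a_{jk}\,\dlog\grid_{\cp,k}=2\eps\,\Lci\cp j,\]
which yields $\Lci\cp i=\eps\Lci\cp j$ immediately. The main subtlety to watch is the bookkeeping of signs: matching the convention for $|i|$ when $i\in -I$, the vanishing pattern of $a_{ik}$ across opposite signs, and the comparison between the Bruhat inequality $\up\cp<\up\cp s_j$ and the sign of $\up\cp\alpha_j$. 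Once these are aligned, the argument is purely a formal manipulation on the torus $\Tbr$.
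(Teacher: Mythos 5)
Your proof is correct and follows essentially the same route as the paper's: reduce both sides via \cref{lemma:hc} to $(\hb_\cp)^{\alpha_{|i|}}=(\hr_\cp)^{\eps\alpha_j}$, use $\hb_\cp=\up\cp\cdot\hr_\cp$, and take $\dlog$ for the second identity. The only difference is that you explicitly derive $\up\cp\alpha_j=\eps\alpha_{|i|}$ from the hypothesis via $\up\cp s_j\up\cp^{-1}=s_{\up\cp\alpha_j}$ and the length criterion, a step the paper treats as immediate.
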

\begin{proof}
 We have 
$\alpha_j = \sum_{k \in I} a_{jk} \omega_k$
 and similarly for $\alpha_{|i|}$.  By~\eqref{eq:hc}, the first identity in~\eqref{eq:special} therefore becomes $(\hb_\cp)^{\alpha_{|i|}}=(\hr_\cp)^{\eps\alpha_j}$, which follows from the assumption 
$\up\cp \alpha_{j} = \eps\alpha_{|i|}$ together with $\hb_\cp=\up\cp\cdot \hr_\cp$; cf.~\eqref{eq:h_c_dfn}. Taking $\dlog$ of both sides, we obtain the second identity.
\end{proof}

\begin{remark}
Equations~\eqref{eq:M1identity} and~\eqref{eq:special}
 are true as stated in the case $i,-j\in I$ as well.
\end{remark}

\begin{proof}[Proof of~\ref{IS0} for~\ref{bm1}, solid-special]
Only the terms $\ombrc$ and $\ombrx{c+1}$ change when applying the move \ref{bm1}.  
 Note that we must have $d_{i} = d_j$ because the simple roots $\alpha_{|i|},\alpha_j$ are related by the action of $u\in W$ (which preserves the lengths of roots). Applying~\eqref{eq:special}, we get
\begin{align*}
\frac{1}{d_j}(\ombr-\ombrp\pb)&=\frac{1}{d_j}\left(\ombrc+\ombrx{c+1} - \ombrpc\pb-\ombrpx{c+1}\pb\right) \\
&=-L_{c-1,i} \wedge L_{c,i} +L_{c,j} \wedge L_{c+1,j} -
L_{c-1,j}\pb \wedge L_{c,j}\pb + L_{c,i}\pb \wedge L_{c+1,i}\pb\\
&=-L_{c-1,j} \wedge L_{c,j} +L_{c,j} \wedge L_{c+1,j} -
L_{c-1,j} \wedge L_{c,j}\pb + L_{c,j}\pb \wedge L_{c+1,j}\\
&= (L_{c,j} + L_{c,j}\pb) \wedge (L_{c-1,j}+L_{c+1,j}).
\end{align*}
For $\cp\in\{c-1,c,c+1\}$, let $M_\cp:=\prod_{k\neq j} \Delta_{\cp,k}^{-a_{jk}}$. Thus, $M_c$ is the third term in~\eqref{eq:M1identity}. By~\eqref{eq:h_tpar}, we have $\hr_c=\ach_j(t_{c+1})\hr_{c+1}$ and $\hb_{c-1}=\ach_{|i|}(t_c)\hb_c$. This implies that $\hr_{c-1}=\ach_j(t_ct_{c+1})\hr_{c+1}$ since $\hb_c=u\cdot \hr_c$. Thus, we have $M:=M_{c-1}=M_c=M_{c+1}$. Since $M_{c+1}\pb=M_{c+1}$, we get that $M=M_{c-1}\pb=M_c\pb=M_{c+1}\pb$. Set $A :=  \frac{\grid_{c, j}\grid_{c,j}\pb}{M}$ and $B:=\frac{\grid_{c-1, j}\grid_{c+1, j}}{M}$.  Then~\eqref{eq:M1identity} gives $A=B+1$.  Thus, $dA=dB$, and so $\dlog A\wedge \dlog B=0$. It remains to prove that $\dlog A=L_{c,j}+L_{c,j}\pb$ and $\dlog B=L_{c-1,j}+L_{c+1,j}$. Indeed, by~\eqref{eq:Lci-defn}, for $\cp\in\{c-1,c,c+1\}$, we have
\begin{equation}\label{eq:Lci_vs_grid}
  \Lci{\cp}{j} = \frac12\dlog \left(\frac{\grid_{\cp,j}^2}{M_\cp}\right) = \dlog \left(\frac{\grid_{\cp,j}}{M^{1/2}}\right)
  \quad\text{and}\quad
  \Lci{\cp}{j}\pb = \frac12\dlog \left(\frac{(\grid_{\cp,j}\pb)^2}{M_\cp\pb}\right) = \dlog \left(\frac{\grid_{\cp,j}\pb}{M^{1/2}}\right),
\end{equation}
since $M_{\cp}=M_{\cp}\pb=M$. Using the additivity of $\dlog$, we get $\dlog A=L_{c,j}+L_{c,j}\pb$ and $\dlog B=L_{c-1,j}+L_{c+1,j}$. 
\end{proof}

\begin{proof}[Proof of~\ref{IS2} for~\ref{bm1}, solid-special]
We do not use the assumption that $(\BR_\br,\Sbr)$ is a cluster variety until the last paragraph of this proof. 
 Let $x:=x_{c+1}$ and $V:=V_{c+1}$. Applying \cref{prop:triang,prop:APS-gives-var-nonappearance}, we see that 
\begin{equation}\label{eq:ord_B1_special}
  \ord_V\grid_{c,j}=\ord_V\grid_{c,i}=1 \quad\text{and}\quad \ord_V\grid_{\cp,k}=0 \quad\text{for $(\cp,k)\in[0,m]\times(\pmn) \setminus \{(c,j),(c,i)\}$.}
\end{equation}
In particular, $\dlog x$ appears in $\ombr$ only in the terms $\Lci cj$ and $\Lci ci$ in 
 $\ombrx{c+1}=d_j\Lci cj\wedge \Lci{c+1}j$ and $\ombrc=-d_i\Lci {c-1}i\wedge\Lci ci$, respectively. Recall from~\eqref{eq:special} that we actually have $\Lci cj=\Lci ci$. Using this and $d_i=d_j$, we find
\begin{equation*}%
  \ombr - \ombrest = \ombrc + \ombrx{c+1} = d_j \Lci cj\wedge \left(\Lci{c+1}j + \Lci{c-1}i\right),
\end{equation*}
where $\ombrest := \sum_{\cp\in\Jo\setminus\{c,c+1\}} \ombrcp$ by \cref{cor:hollow}. Expanding the forms $\Lci{\cp}k$ in terms of $(\dlog x_{\cpp})_{\cpp\in\Jo}$ via~\eqref{eq:Lci-defn}, we see from~\eqref{eq:ord_B1_special} that $\dlog x$ appears in $\Lci cj$ with coefficient $1$ and that $\dlog x$ does not appear in $\ombrest$. Using~\eqref{eq:Lci_vs_grid}, we get
\begin{equation}\label{eq:dlog_x_coef_B1_special}
  \ombr - \ombrestp = d_j\dlog x\wedge (\Lci{c+1}j+\Lci{c-1}i)=  
  d_j\dlog x\wedge \left( \dlog(\grid_{c+1,j}\grid_{c-1,j}) - \dlog \prod_{k\neq j} \grid_{c,k}^{-a_{jk}}\right),
\end{equation}
where $\ombrestp$ is a linear combination of terms $\dlog x_{\cp}\wedge \dlog x_{\cpp}$ for $x_{\cp},x_{\cpp}\neq x$.
By \cref{prop:triang}, a cluster variable $x_{\cp}$ for $\cp\in\Jo$ may appear on the right-hand side of~\eqref{eq:dlog_x_coef_B1_special} only for $\cp\geq c$. Moreover, we have already observed that $d_{c}=d_{i}=d_j=d_{c+1}$. Let us denote $p_{\cp}:=\ord_{V_{\cp}} (\grid_{c+1,j}\grid_{c-1,j})$ and $q_{\cp}:=\ord_{V_{\cp}}\prod_{k\neq j} \grid_{c,k}^{-a_{jk}}$. Clearly, $p_{\cp},q_{\cp}\geq0$. 

Expanding $\ombr$ as a linear combination of terms $\dlog x_{\cp}\wedge \dlog x_{\cpp}$ for $\cp<\cpp$ (resp., $\cp>\cpp$) via~\eqref{eq:Lci-defn}, we see from~\eqref{eq:om_dfn_cluster} that for each $\cp\in\Jo\setminus\{c+1\}$, $d_j\tB_{\cp,c+1}$ equals the coefficient of $\dlog x_\cp\wedge \dlog x_{c+1}$, regardless of whether $\cp<c+1$ or $\cp>c+1$. Since $x=x_{c+1}$ does not appear in $\ombrestp$, we see from~\eqref{eq:dlog_x_coef_B1_special} that $d_j\tB_{\cp,c+1} =q_{\cp}-p_{\cp}$ for all $\cp\in\Jo\setminus\{c+1\}$. In fact, this identity also holds for $\cp=c+1$ since in this case $\tB_{\cp,c+1} = q_\cp = p_\cp = 0$. 

Combining~\eqref{eq:solid_spec_u_si_sj} and \cref{def:almostPos}, we see that $\vp{\cp}\apd{c+1} = \vp{\cp}$ for all $\cp\neq c$. In particular, by \cref{dfn:mutable}, the cluster variable $x$ is mutable. Thus, the mutated variable $x':=x'_{c+1}$ satisfies
\begin{equation}\label{eq:xx'_B1_special}
  x x' = \prod_{\cp\in\Jo: p_{\cp}>q_{\cp}} x_{\cp}^{p_{\cp}-q_{\cp}} + \prod_{\cp\in\Jo: q_{\cp}>p_{\cp}} x_{\cp}^{q_{\cp}-p_{\cp}}.
\end{equation}

We have $V_\cp=V_\cp\pb$ and $x_{\cp}=x_\cp\pb$ for all $\cp\in\Jo\setminus\{c+1\}$. Let $V\pb:=V_{c+1}\pb$ and $x\pb:=x_{c+1}\pb$. A generic point $(\Xbul,\Ybul)\in V$ satisfies $X_{c-1}\Lwrel{ws_j} Y_{c+1}$ and $X_{c+1}\Lwrel{w} Y_{c-1}$, while a generic point $(\Xbul,\Ybul)\in V\pb$ satisfies $X_{c+1}\Lwrel{ws_j} Y_{c-1}$ and $X_{c-1}\Lwrel{w} Y_{c+1}$. Thus, $V\neq V\pb$.

For $\cp\in\Jo$, applying $\ord_{V_\cp}$ to both sides of~\eqref{eq:M1identity}, we get
\begin{equation}\label{eq:ord_geq_B1_special}
  \ord_{V_\cp}\grid_{c, j} + \ord_{V_\cp} \grid_{c,j}\pb \geq \min(p_\cp,q_\cp).
\end{equation}
For $\cp=c+1$, we have $\ord_{V}\grid_{c, j}=1$, $\ord_{V} \grid_{c,j}\pb=0$ (since $V\neq V\pb$), and $p_{c+1}=q_{c+1}=0$ by~\eqref{eq:ord_B1_special}. Similarly, $\ord_{V\pb}\grid_{c,j}\pb=1$, $\ord_{V\pb}\grid_{c,j}=0$, and the order of vanishing of $\grid_{c+1,j}\pb\grid_{c-1,j}\pb$ and $\prod_{k\neq j} (\grid_{c,k}\pb)^{-a_{jk}}$ at $V\pb$ is zero. 

 Dividing both sides of~\eqref{eq:M1identity} by $\prod_{\cp\in\Jo\setminus\{c+1\}} x_{\cp}^{\min(p_\cp,q_\cp)}$, we get
\begin{equation*}%
  x x\pb \prod_{\cp\in\Jo\setminus\{c+1\}} x_{\cp}^{r_\cp} = \prod_{\cp\in\Jo: p_{\cp}>q_{\cp}} x_{\cp}^{p_{\cp}-q_{\cp}} + \prod_{\cp\in\Jo: q_{\cp}>p_{\cp}} x_{\cp}^{q_{\cp}-p_{\cp}},
\end{equation*}
where $r_{\cp}:=\ord_{V_\cp}\grid_{c, j} + \ord_{V_\cp} \grid_{c,j}\pb -\min(p_\cp,q_\cp)\geq0$. By~\eqref{eq:xx'_B1_special}, we get 
\begin{equation}\label{eq:B1_x'_vs_x_pb}
  x'=x\pb \prod_{\cp\in\Jo\setminus\{c+1\}} x_{\cp}^{r_\cp}.
\end{equation}

Now, assume that $(\BR_\br,\Sbr)$ is a cluster variety. We get from \cref{prop:irreducible} that the mutated cluster variable $x'$ is irreducible in $\C[\BR_\br]$. The function $x\pb$ vanishes on $V\pb\subset\BR_\br$ and therefore is not a unit in $\C[\BR_\br]$. It follows that $r_\cp=0$ for all mutable $\cp$, i.e., 
\begin{equation}\label{eq:ord=ord}
  \ord_{V_\cp}\grid_{c, j} + \ord_{V_\cp} \grid_{c,j}\pb = \min(p_\cp,q_\cp) \quad\text{for $\cp\in\Jmut$.}
\end{equation}
Thus, $x'$ and $x\pb$ differ by a monomial in the frozen variables: we have
\begin{equation}\label{eq:B1_x'_vs_x_pb2}
  x'=x\pb \prod_{\cp\in\Jfro} x_{\cp}^{r_\cp}.
\end{equation}
 We claim that the mutated seed $\Sigma':=\mu_{c+1}\Sigma_\br$ is quasi-equivalent to the pulled back seed $\Sigma_{\br'}\pb$. Recall that $\Sigma_{\br'}\pb=(T\pb,\x\pb,\d\pb,\om\pb)$ was defined in~\eqref{eq:Sbrp_dfn} while $\Sigma'= (T',\x',\d',\om')$ was defined in \cref{dfn:mut}. To show that these seeds are quasi-equivalent, we check each condition in \cref{dfn:quasi_eq}. We have $\d\pb = \d'$ since $d_i = d_j$. We have $\om'=\ombr$ by \cref{dfn:mut} and $\om\pb =  \ombr$ by~\ref{IS0}. The tori $T\pb = T'$ are both obtained as the subset of $\BR_\br$ where the cluster variables in $\{x_\cp\}_{\cp\in\Jo\setminus\{c+1\}}\cup\{x'\}$ are nonzero in view of~\eqref{eq:B1_x'_vs_x_pb2}. Thus, condition~\eqref{quasi1} in \cref{dfn:quasi_eq} is satisfied. The set of frozen variables has not changed, so condition~\eqref{quasi2} is satisfied trivially. Condition~\eqref{quasi3} is satisfied by~\eqref{eq:B1_x'_vs_x_pb2}.
\end{proof}

\subsection{Non-mutation move: \ref{bm1}, not solid-special}\label{sec:bm1_non_spec}
We continue to assume that the move involves indices $c,c+1$, and that $i\in -I$, $j\in I$.

\subsubsection{\ref{bm1}, special, non-solid}
Suppose that at least one of the indices is hollow, and that the move is special. Then it follows that $c+1$ is hollow and $c$ is solid in both $\br$ and $\br'$. By~\eqref{eq:L_mon_reln}, $\Lci{c}j=-\Lci{c+1}j$ and $\Lci{c}i\pb=-\Lci{c+1}i\pb$. Applying~\eqref{eq:special} with $\eps=1$ for $\cp=c-1,c$ and $\eps=-1$ for $\cp=c+1$ and using $d_{i}=d_j$, we obtain 
\begin{equation*}%
\frac{\ombrc}{d_j}
=-\Lci{c-1}i\wedge \Lci ci
=\Lci{c-1}i\wedge \Lci{c+1}i
=\Lci{c-1}i\pb\wedge\Lci{c+1}i\pb
=-\Lci{c-1}j\pb\wedge\Lci{c+1}j\pb
=\Lci{c-1}j\pb\wedge\Lci{c}j\pb
=\frac{\ombrpc\pb}{d_{i}},
\end{equation*}
which proves~\ref{IS0}. The clusters $\x_\br$ and $\x_{\br'}\pb$ are identical, which proves~\ref{IS2}.

\subsubsection{\ref{bm1}, non-special}
We start by introducing a formalism for working with the forms $\Lci\cp k$. Let $\la:=\sum_{k\in I} b_k \om_k$ with $b_k\in\Q$, and let $h$ be an $\H$-valued rational function on $\BR_\br$. We introduce a rational $1$-form
\begin{equation*}%
  \dlog h^\la:=\sum_{k\in I} b_k  \dlog (h^{\om_k}).
\end{equation*}
It is clear that 
\begin{equation}\label{eq:dlog_h_additive}
  \dlog h^{\la_1+\la_2}=\dlog h^{\la_1} + \dlog h^{\la_2} \quad\text{and}\quad
\dlog (h_1 h_2)^{\la}=\dlog h_1^\la + \dlog h_2^\la.
\end{equation}
For $\cp\in [0,m]$ and $k\in I$, \cref{lemma:hc} gives %
\begin{equation}\label{eq:Lci_vs_dlog_h}
  \Lci \cp k = \dlog (\hr_\cp)^{\alpha_k/2} = \dlog(\hb_\cp)^{\up\cp\alpha_k/2} ,\quad%
\Lci \cp{-k} = \dlog (\hb_\cp)^{\alpha_k/2} = \dlog (\hr_\cp)^{\up\cp^{-1}\alpha_k/2}.
\end{equation}
Finally, suppose that $h_1=h_2\ach_k(t)$. Then we have
\begin{equation}\label{eq:la_ach_scalar}
  \dlog h_1^\la=\dlog h_2^\la + \<\la,\ach_k\> \dlog t.
\end{equation}

\begin{proof}[Proof of~\ref{IS0} and \ref{IS2} for~\ref{bm1}, non-special]
Suppose as before that the move involves indices $c,c+1$, and that $i\in -I$, $j\in I$. Assume first that both $c,c+1$ are solid, and let $u:=\up{c-1}=\up{c}=\up{c+1}$. Let $a:=\<u^{-1}\alpha_{|i|}/2,\ach_j\>$ and $a':=\<u\alpha_j/2,\ach_{|i|}\>$. Using~\eqref{eq:Lci_vs_dlog_h}--\eqref{eq:la_ach_scalar} and~\eqref{eq:h_tpar}, we get
\begin{align}%
\label{eq:IS0_bm1_non_special_1}  \Lci ci&=\Lci{c+1}i+a\dlog\tpar_{c+1}, &\Lci{c-1}i&=\Lci ci+\dlog\tpar_c,  &\Lci cj&=\Lci{c+1}j+\dlog\tpar_{c+1};\\
\label{eq:IS0_bm1_non_special_2}  \Lci cj\pb &=\Lci{c+1}j\pb+a'\dlog\tpar_{c+1}\pb, &\Lci{c-1}j\pb&=\Lci cj\pb+\dlog\tpar_c\pb,  &\Lci ci\pb&=\Lci{c+1}i\pb+\dlog\tpar_{c+1}\pb.
\end{align}
Since the move is non-special, the coroots $\ach_j$ and $u^{-1}\ach_{|i|}$ are linearly independent, which implies $\tpar_c\pb=\tpar_{c+1}$ and $\tpar_{c+1}\pb=\tpar_c$. Note also that we have $\Lci{c+1}i\pb=\Lci{c+1}i$ and $\Lci{c+1}j\pb=\Lci{c+1}j$. Using~\eqref{eq:IS0_bm1_non_special_1}--\eqref{eq:IS0_bm1_non_special_2} to express each $1$-form $\Lci\cp k$ in terms of $\Lci{c+1}i$, $\Lci{c+1}j$, $\dlog\tpar_c$, and $\dlog\tpar_{c+1}$, we find
\begin{equation*}%
  \ombrc+\ombrx{c+1} - \ombrpc\pb - \ombrpx{c+1}\pb = (d_ja'-d_ia)\dlog\tpar_c\wedge\dlog\tpar_{c+1}.
\end{equation*}
Since $d_ja'=d_ia$, we get that $\ombr=\ombrp\pb$. The clusters $\x_\br$ and $\x_{\br'}\pb$ differ by a relabeling  $c\leftrightarrow c+1$.

Suppose now that one of $c,c+1$ is hollow. For instance, let $c\notin\Jo$ and $c+1\in\Jo$. By \cref{cor:h_stays}, we have $\hr_{c}=\hr_{c-1}$, and thus $\Lci cj=\Lci{c-1}j$. Similarly, $\Lci cj\pb=\Lci{c+1}j\pb$. Recall that $\Lci{c\pm 1}j\pb=\Lci{c\pm1}j$. Thus, $\ombrx{c+1}=\ombrpc\pb$, and so $\ombr=\ombrp\pb$. The case where $c\in\Jo$ and $c+1\notin\Jo$ is similar. The clusters $\x_\br$ and $\x_{\br'}\pb$ differ by a relabeling $c\leftrightarrow c+1$. For the case $c,c+1\notin\Jo$, see \cref{rmk:all_hollow}.
\end{proof}

\subsection{Non-mutation move: \ref{bm2}} Suppose that the move involves indices $c,c+1$. We have $\ombrc=\ombrpx{c+1}\pb$ and $\ombrx{c+1}=\ombrpc\pb$, so $\ombr=\ombrp\pb$. The chamber minors satisfy $  \grid_{c} = \grid_{c+1}\pb$ and $\grid_{c+1}=\grid_{c}\pb$.
Thus, the clusters $\x_\br$ and $\x_{\br'}\pb$ differ by a relabeling $c\leftrightarrow c+1$. This shows~\ref{IS0} and~\ref{IS2}.

\subsection{Mutation move: \ref{bm3}, solid, \short}\label{sec:B3_solid}
We proceed analogously to the case of solid-special \ref{bm1} in \cref{sec:B1_solid_special}. Suppose that the move $\br\to\br'$, $iji\to jij$, involves indices $c-1,c,c+1$, and that all three indices are solid. Suppose in addition that $i,j\in I$; the case $i,j\in -I$ is similar.

For \ref{IS0}, we will use the following relation among grid minors. 
\begin{proposition}[{\cite[Theorem~1.16(1)]{FZ_double}}]\label{prop:det_B3}
We have
  \begin{equation}\label{eq:M2identity}
    \grid_{c, i} \grid_{c, j}\pb = \grid_{c+1, i} \grid_{c-2, j} + \grid_{c-2, i}\grid_{c+1, j}.
  \end{equation}
\end{proposition}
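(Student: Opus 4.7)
The plan follows the template of the proof of \cref{prop:det_B1}. Since all three indices $c-1,c,c+1$ are solid with letters $i,j,i\in I$ in $\br$ and $j,i,j\in I$ in $\br'$, \cref{lem:paramSingleStep} produces scalars $t_{c-1},t_c,t_{c+1}$ and $t_{c-1}\pb,t_c\pb,t_{c+1}\pb$ such that
\begin{equation*}
  Z_{c-2}=Z_{c+1}\,z_i(t_{c+1})\,z_j(t_c)\,z_i(t_{c-1})=Z_{c+1}\,z_j(t_{c+1}\pb)\,z_i(t_c\pb)\,z_j(t_{c-1}\pb),
\end{equation*}
where the second equality uses that $\moveiso$ fixes $(X_a,Y_a)$ for $a\leq c-2$ and $a\geq c+1$, so $Z_{c+1}\pb=Z_{c+1}$ and $Z_{c-2}\pb=Z_{c-2}$. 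All three crossings being solid also yields a common Weyl group element $w=\wp{c-2}=\wp{c-1}=\wp c=\wp{c+1}$.

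Using $z_k(t)=x_k(t)\ds_k$ together with the identities \eqref{eq:delta(yxy)} and \eqref{eq:SL2_xs_sx} and the $\dw^{-1}U_+\subset U_-\dw^{-1}$-style shuffling employed in the proofs of \cref{prop:det_B1} and \cref{prop:triang}, I will rewrite each of the six grid minors entering~\eqref{eq:M2identity} as a generalized minor of a common element $Z\in G$ built from $Z_{c+1}$ and the scalars $t_\bullet$. Schematically, I expect
\begin{equation*}
  \grid_{c+1,k}\leftrightarrow\Delta_{w\om_k,\om_k}(Z),\quad \grid_{c,i}\leftrightarrow\Delta_{w\om_i,s_i\om_i}(Z),\quad \grid_{c,j}\pb\leftrightarrow\Delta_{w\om_j,s_j\om_j}(Z),
\end{equation*}
and $\grid_{c-2,i}\leftrightarrow\Delta_{w\om_i,s_is_j\om_i}(Z)$, $\grid_{c-2,j}\leftrightarrow\Delta_{w\om_j,s_js_i\om_j}(Z)$. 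The two factorizations of $Z_{c-2}$ displayed above are reconciled via the lift $\ds_i\ds_j\ds_i=\ds_j\ds_i\ds_j$ of the braid relation to $G$, which ensures that each $\grid_{c-2,k}$ is unambiguous regardless of whether we use the $\br$- or the $\br'$-factorization.

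The proof then concludes by applying \cite[Theorem~1.16(1)]{FZ_double}, whose specialization to the rank-two $A_2$ subsystem $\langle s_i,s_j\rangle$ is a three-term identity of the form
\begin{equation*}
  \Delta_{w\om_i,s_i\om_i}(Z)\,\Delta_{w\om_j,s_j\om_j}(Z)=\Delta_{w\om_i,\om_i}(Z)\,\Delta_{w\om_j,s_js_i\om_j}(Z)+\Delta_{w\om_j,\om_j}(Z)\,\Delta_{w\om_i,s_is_j\om_i}(Z),
\end{equation*}
which translates, under the dictionary above, into~\eqref{eq:M2identity}. The main obstacle is the bookkeeping in the middle step: carefully tracking how the three interleaved factors $\ds_i,\ds_j,\ds_i$ (or $\ds_j,\ds_i,\ds_j$) get absorbed into the right subscripts of the generalized minors, and in particular ensuring that the ``mixed'' term $\grid_{c,j}\pb$ on the left-hand side of~\eqref{eq:M2identity} (built from the $\br'$-factorization) fits alongside $\grid_{c,i}$ (built from the $\br$-factorization). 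The case $i,j\in-I$ is handled analogously using the $\z_k(t)$ factorization.
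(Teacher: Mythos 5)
Your proposal follows the paper's proof essentially verbatim: factor $Z_{c-2}$ out of $Z_{c+1}$ in both the $\br$- and $\br'$-orders (the paper makes the braid relation on the $z$'s explicit via $z_i(t_1)z_j(t_2)z_i(t_3)=z_j(t_3)z_i(t_1t_3-t_2)z_j(t_1)$ checked in $\SL_3$, while you get the second factorization from \cref{lem:paramSingleStep} applied to the $\br'$-chain, which amounts to the same thing), set $Z:=Z_{c-2}(\ds_i\ds_j\ds_i)^{-1}$, identify the six grid minors with generalized minors of $Z$, and invoke \cite[Theorem~1.16(1)]{FZ_double}. The only flaw is a transposition in your schematic dictionary: the correct identifications are $\grid_{c-2,j}=\Delta_{w\om_j,s_is_j\om_j}(Z)$ and $\grid_{c-2,i}=\Delta_{w\om_i,s_js_i\om_i}(Z)$, not $s_js_i\om_j$ and $s_is_j\om_i$ as you wrote --- since $s_i\om_j=\om_j$, your versions degenerate to $s_j\om_j$ and $s_i\om_i$ and would make the displayed three-term identity false as written; this is precisely the bookkeeping you flagged as remaining, and it is the only correction needed.
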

\begin{proof}
We have $Z_{c-2}=Z_{c+1}z_{i}(t_1)z_{j}(t_2)z_{i}(t_3)$ for some $t_1,t_2,t_3\in\C$. We have  $z_{i}(t_1)z_{j}(t_2)z_{i}(t_3)=z_{j}( t_3)z_{i}(t_2')z_{j}(t_1 )$ for $t_2':=t_1 t_3 -t_2$, which can be checked inside $\SL_3$. Thus, $Z_{c-1}=Z_{c+1}z_{i}(t_1)z_{j}(t_2)$, $Z_c=Z_{c+1}z_{i}(t_1 )$, and $Z_c\pb=Z_{c+1}z_{j}(t_3)$. Let $Z:=Z_{c-2} (\ds_i\ds_j\ds_i)^{-1}$. Let $w:=\wp{c-1}=\wp c=\wp{c+1}$. By~\cite[Theorem~1.16(1)]{FZ_double},
\begin{equation}\label{eq:M2_FZ}
	\Delta_{w\om_i,s_i\om_i}(Z) \Delta_{w\om_j, s_j\om_j}(Z) = 
	\Delta_{w\om_i,\om_i}(Z) \Delta_{w\om_j, s_is_j\om_j}(Z) +
	\Delta_{w\om_i,s_js_i\om_i}(Z) \Delta_{w\om_j, \om_j}(Z).
\end{equation}
Similarly to the proof of \cref{prop:det_B1}, we observe that each term in~\eqref{eq:M2identity} equals the corresponding term in~\eqref{eq:M2_FZ}.
\end{proof}

\begin{proof}[Proof of~\ref{IS0} for~\ref{bm3}, solid, \short]
 Let $\tbeta = \beta \wo$ and $\tbeta' = \beta' \wo$.  By definition, $\sum_{j \in J_{\tbeta}} \omega_j(\tbeta)$ and $\sum_{j \in [m]} \omega_j(\br)$ are identical when expressed in terms of the symbols $\Delta_{c,i}$.  It is known (\cite[Proposition 3.25]{ShWe} or \cite{BFZ}) that \cref{prop:det_B3} implies $\omega_{\tbeta} = \omega_{\tbeta'}$.  Since the same identity for grid minors in \cref{prop:det_B3} holds on $\BR_\beta$, we deduce that $\omega_\br= \omega_{\br'}$.
\end{proof}

\begin{proof}[Proof of~\ref{IS2} for~\ref{bm3}, solid, \short]
 Let $x:=x_{c+1}$ and $V:=V_{c+1}$. By \cref{prop:triang,prop:APS-gives-var-nonappearance}, $x$ is mutable, $\ord_V\grid_{c-1,i}=\ord_V\grid_{c,i}=1$, and $\ord_V\grid_{\cp,k}=0$ when $\cp\notin\{c-1,c\}$ or $k\in I\setminus\{i\}$.

Note that $d_i=d_j$. Collecting the terms of $\ombrx{c-1}+\ombrx{c}+\ombrx{c+1}$ involving $\dlog x$, we get
\begin{equation}\label{eq:B3_dlog_x}
  d_i\dlog x\wedge \left(\Lci{c+1}i-\Lci{c-2}i + \frac12(\Lci{c-1}j-\Lci cj)\right).
\end{equation}
Applying~\eqref{eq:Lci-defn} and using \cref{cor:crossing_changes_one_minor}, we get
\begin{align*}%
  \Lci{c+1}i-\Lci{c-2}i&=\dlog(\grid_{c+1,i})-\dlog(\grid_{c-2,i})+\frac12\dlog(\grid_{c-2,j})-\frac12\dlog(\grid_{c+1,j});\\
  \Lci{c-1}j-\Lci cj &= \dlog(\grid_{c-1,j})-\dlog\grid_{c,j}= \dlog(\grid_{c-2,j})-\dlog\grid_{c+1,j}.
\end{align*}
Thus,~\eqref{eq:B3_dlog_x} becomes $ d_i\dlog x\wedge \left(\dlog(\grid_{c+1,i}\grid_{c-2,j}) - \dlog(\grid_{c-2,i}\grid_{c+1,j})\right)$.
The rest of the proof is entirely analogous to the argument for solid-special \ref{bm1} given at the end of \cref{sec:B1_solid_special}, using~\eqref{eq:M2identity} in place of~\eqref{eq:M1identity}. %
\end{proof}

\subsection{Non-mutation move: \ref{bm3}, non-solid, short}\label{sec:B3_non_solid}
Suppose that at least one of the indices $c-1,c,c+1$ is hollow. By \cref{rmk:all_hollow}, we may assume that there are either one or two hollow indices in $\{c-1,c,c+1\}$. Explicitly, underlining the hollow crossings, the possible moves are
$i\,\unj\,i\leftrightarrow j\,i\,\unj$ and 
 $i\,\underline{j}\,\underline{i}\leftrightarrow \underline{j}\,\underline{i}\,j$ (or the moves obtained from these by swapping the roles of $i$ and $j$).

For $l\in\{i,j\}$ and $\cp\in\Jo$, let us denote
\begin{equation}\label{eq:B3_non_solid_notation}
  A_l:=\dlog\prod_{k\neq i,j} \grid_{c+1,k}^{a_{lk}},\quad B_l:=\dlog\grid_{c+1,l}, \quad\text{and}\quad T_\cp:=\dlog\tpar_\cp.
\end{equation}
Using~\eqref{eq:crossing_changes_one_minor}--\eqref{eq:grid_mon_reln}, we can express the $\dlog$s of grid minors $\grid_{\cp,l}$ for $l\in\{i,j\}$ and $\cp\in\{c-1,c,c+1\}$ in the symbols~\eqref{eq:B3_non_solid_notation}. Using $\dlog \grid_{c-2,l}\pb=\dlog\grid_{c-2,l}$ for $l\in\{i,j\}$, we express $T_{\cp}\pb$ in terms of $T_{\cpp}$ for all indices $\cp\in\{c-1,c,c+1\}$ which are solid in $\br'$. 
Thus, we can express the forms $\ombrx{\cp}$, $\ombrpx{\cp}\pb$, $\cp\in\{c-1,c,c+1\}$ in terms of the symbols~\eqref{eq:B3_non_solid_notation}. Using a straightforward computation, we check $\ombr=\ombrp\pb$. 

We observe using \cref{cor:crossing_changes_one_minor} that the clusters $\x_\br$ and $\x_{\br'}\pb$ differ by a relabeling, which shows~\ref{IS2}.

\subsection{Non-mutation move: \ref{bm4}}\label{sec:bm4}
Suppose that $i\in I$. The isomorphism $\mis:\BR_\br\xrasim\BR_{\br'}$ sending $(\Xbul,\Ybul)\mapsto (\Xbul',\Ybul')$ is given by $X'_{m-1}=X'_m=Y'_m:=X_{m-1}$, $Y'_{m-1}:=Y_{m-1}$, and $(X'_c,Y'_c):=(X_c,Y_c)$ for all $0\leq c<m-1$. 
The last crossing in $\br_0i$ and $\br_0(-i^\ast)$ is always hollow, and thus the statements~\ref{IS0} and~\ref{IS2} follow trivially. 

\subsection{Non-mutation move: \ref{bm5}}\label{sec:bm5}
Suppose that $i\in I$. The isomorphism $\mis:\BR_\br\xrasim\BR_{\br'}$ sending $(\Xbul,\Ybul)\mapsto (\Xbul',\Ybul')$ is defined as follows. For $c\in[m]$, we set $(X'_c,Y'_c):=(X_c,Y_c)$ and $X'_0:=X'_1$. Note that $Y_0=Y_1=Y_1'$ and recall $X_0\Rwrel{\wo} Y_0$. 
We let $Y_0'$ be the unique weighted flag satisfying $X_0\Rwrel{\wo s_{i^\ast}} Y_0' \Rrel{s_{i^\ast}} Y_0$. It follows that $X'_0\Lwrel{\wo }Y'_0$ and $Y'_0\Rrel{s_{i^\ast}} Y'_1$, so $(\Xbul',\Ybul')\in\BR_{\br'}$.  The inverse map is defined similarly: $X_0$ is the unique weighted flag satisfying $X'_0\Rrel{s_i} X_0 \Rwrel{s_i\wo } Y'_0$.

The statement~\ref{IS0} is trivial if the first crossing of $\br$ is hollow. If the first crossing of $\br$ is solid, we have 
\begin{equation*}%
  X_0'=X_1'=X_1 \Rrel{s_i} X_0 \Rwrel{s_i\wo } Y_0'\Rrel{s_{i^\ast}} Y_0=Y_1=Y_1'.
\end{equation*}
It follows that after acting on all these flags by some $g\in G$, we can find $t,t'\in\C$ and $h\in\H$ such that
\begin{equation*}%
  X_0'=X_1'=X_1=\dwo \ds_ih \z_i(t) U_+,\quad X_0=\dwo \ds_ihU_+,\quad Y_0'=U_+,\quad Y_0=Y_1=Y_1'=z_{i^\ast}(t')U_+.
\end{equation*}
Here, we have $X_0 \Rwrel{s_i\wo } Y_0'$ and thus $X_0 \Lwrel{\wo s_i} Y_0'$, and we have used a representative $\dwo\ds_i$ of $\wo s_i$ in $N_G(\H)$. Let us denote $h_0:=\hr_0=\hb_0$ and $h_0\pb:=(\hr_0)\pb=(\hb_0)\pb$. We have $Z_0=Y_0^{-1}X_0=z_{i^\ast}(t')^{-1}\dwo \ds_ih$, and thus, proceeding as in the proof of \cref{lemma:tpar}, we get $h_0=h$. Similarly, $Z_0'=(Y_0')^{-1}X_0'= \dwo  \ds_i h \z_i(t)$, so $h_0\pb=s_{i} \cdot h$.
Applying~\eqref{eq:Lci_vs_dlog_h}, we find
\begin{equation*}%
  \Lci0{-i}\pb=\dlog(s_i\cdot h_0)^{\alpha_i/2}=\dlog(h_0)^{-\alpha_i/2}=- \Lci0i.
\end{equation*}
 Applying~\eqref{eq:special} for $\cp=0,1$, we obtain $\Lci0i=\Lci0{-i}$ and  $\Lci1i=\Lci1{-i}$. Recall that $\Lci1i=\Lci1i\pb$. Thus, we get $\ombrx1=\ombrpx1\pb$, and therefore $\ombr=\ombrp\pb$, finishing the proof of~\ref{IS0}.

We now prove~\ref{IS2}. Let $\br = i \br_0$ and $\br' = (-i) \br_0$. If the first crossing is hollow, the claim is trivial. Suppose that the first crossing is solid.  We have $\grid_{c,k}=\grid_{c,k}\pb$ for all $c\geq1$ and $k\in\pmn$. Thus, $x_c=x_c\pb$ for all $c\in\Jo$ such that $c>1$. Since $h_0\pb=s_i\cdot h_0$, \cref{lemma:hc} implies that $x_1\pb=x_1^{-1}M$, where $M$ is a Laurent monomial in the grid minors $\grid_{0,k}$ for $k\neq i$ of the same sign as $i$. It follows from \cref{prop:triang,prop:APS-gives-var-nonappearance} that $M$ is a Laurent monomial in the frozen variables other than $x_1$. This shows~\ref{IS2}.

\subsection{Deletion-contraction for double braid varieties} \label{sec:braid_dc}
\cref{thm:moves} tells us that it is enough to show $(\BR_{\br}, \Sbr)$ is a cluster variety for a single $\br$ in a double-braid-move equivalence class. We now explain how the 
 cluster algebraic results from \cref{sec:dc} apply to $(\BR_{\br}, \Sbr)$ for a special shoice of $\br$.%

\begin{theorem}\label{thm:br_del_con} 
Let $i \in I$ and consider a double braid word $\br= i i \br'$ on positive letters. If $(\BR_{i\br'}, \Sigma_{i\br'})$ and $(\BR_{\br'}, \Sigma_{\br'})$ are sink-recurrent cluster varieties and $\Sigma_{i\br'}$ is really full rank, then $(\BR_{\br}, \Sigma_{\br})$ is a sink-recurrent cluster variety and $\Sigma_{\br}$ is really full rank.
\end{theorem}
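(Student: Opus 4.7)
The proof strategy is to apply the deletion-contraction recurrence \cref{thm:abstract-clust-stuc-del-con} with $X := \BR_\br$. The first task is to identify the sink $\si$ and the distinguished frozen index $\froi$ from the positive distinguished subexpression of $\br = ii\br'$. Because the first two letters of $\br$ coincide, \cref{dfn:pds} forces $u^{(0)} = u^{(1)} = \id$, so crossing $1$ is always solid; moreover the almost positive sequence (\cref{def:almostPos}) gives $v^{(0)\langle 1 \rangle} = s_i \neq \id$, so $x_1$ is frozen and I would set $\froi := 1$. Crossing $2$ is solid precisely when $u^{(2)} = \id$; in that case $v^{(0)\langle 2 \rangle} = \id$, so $x_2$ is mutable and I would set $\si := 2$. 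The alternative $u^{(2)} = s_i$ renders crossing $2$ hollow; this subcase is handled separately by exhibiting a direct isomorphism $\BR_\br \cong \BR_{i\br'} \times \C^\times$ and reducing to the hypothesis on $i\br'$.

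In the main case ($\si = 2$, $\froi = 1$), I would first verify \cref{assn:dc}. Using the formula \eqref{eq:intro:omcoef_dfn} together with the order-of-vanishing results \cref{prop:triang} and \cref{prop:APS-gives-var-nonappearance}, one computes the entries of $\tB$ near $\si, \froi$ by tracking how the grid minors $\Delta_{c, k}$ transform across the first two solid crossings. The outputs needed are $\tB_{\froi, \si} = \pm 1$, $\tB_{\froi, j} = 0$ for all mutable $j \neq \si$, and the fact that $\si$ is a sink in the mutable quiver. The exchange relation at $\si$ should then take the required form $x_\si x_\si' = M_1 + x_\froi M_2$ by direct expansion of the identity relating $\Delta_{0, i}$, $\Delta_{1, i}$, and $\Delta_{2, i}$, with all other cluster variables in $M_1, M_2$ coming from the ``tail'' $\br'$.

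Next, I would construct the two geometric isomorphisms required by \cref{thm:abstract-clust-stuc-del-con}, using the inductive hypotheses to put them in concrete form. For the deletion side, $W := \{x_\si \neq 0\} \cong \Xcal(\ifreeze{\Sigma_\br}[\si])$, which under the hypothesis on $i\br'$ matches $\BR_{i\br'} \times \C^\times$: when $x_\si \neq 0$ the composition $X_0 \Rrel{s_i} X_1 \Rrel{s_i} X_2$ yields a strict $s_i$-relation between $X_0$ and $X_2$, so the forgetful map $(X_\bullet, Y_\bullet) \mapsto (X_0, X_2, X_3, \ldots; Y_0, Y_2, Y_3, \ldots)$ lands in $\BR_{i\br'}$, with the $\C^\times$-factor parameterized by $x_1 = x_\froi$. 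For the contraction side, $V := \{x_\si = 0\} \cong \C \times \BR_{\br'}$: the vanishing of $x_\si$ forces a degeneration in which the first two crossings collapse so that $X_0 = X_2$ and $Y_0 = Y_2$, leaving $X_1$ as a free parameter identified with $x_\si'$, while the reduced tuple $(X_0, X_3, \ldots; Y_0, Y_3, \ldots)$ is a point of $\BR_{\br'}$. Matching cluster variables on the nose under these isomorphisms may require \cref{cor:quasi} to absorb Laurent monomials in the frozen variables.

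Finally, sink-recurrence of $\Sigma_\br$ follows from \cref{def:sinkrec}: both $\ifreeze{\Sigma_\br}[\si]$ and $\ifreeze{\Sigma_\br}[\NDSH]$ reduce to sink-recurrent seeds via the inductive hypotheses (freezings of $\Sigma_{i\br'}$ and $\Sigma_{\br'}$, respectively). Applying \cref{thm:abstract-clust-stuc-del-con} then yields the cluster variety conclusion. The main obstacle will be the explicit construction of the two isomorphisms, especially the contraction side: ensuring the degeneration $X_0 = X_2$ holds scheme-theoretically on $V$, and matching the remaining Deodhar hypersurfaces together with their induced torus characters to the seed of $\BR_{\br'}$, requires careful use of the parametrization \eqref{eq:propagate_left} and the characterization of cluster variables from \cref{sec:deo_cluster}.
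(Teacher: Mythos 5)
Your high-level strategy is the paper's: invoke \cref{thm:abstract-clust-stuc-del-con} with sink $\si=2$ and distinguished frozen index $\froi=1$, handling the case where crossing $2$ is hollow separately via $\BR_\br\cong\BR_{i\br'}\times\Cx$. However, both geometric isomorphisms are set up incorrectly. On the deletion side, the forgetful map $(\Xbul,\Ybul)\mapsto(X_0,X_2,X_3,\ldots)$ does not land in $\BR_{i\br'}$: composing $X_2\Rrel{s_i}X_1\Rrel{s_i}X_0$ gives only a \emph{weak} $s_i$-relation between $X_2$ and $X_0$ (the lengths do not add, and the torus factor $\alphacheck_i(t)$ is nontrivial in general), whereas \eqref{eq:braidDiagram} demands strict relations. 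The correct map drops $(X_0,Y_0)$ and keeps $(X_1,\ldots,X_m)$, with the $\Cx$-coordinate given by $Mx_1/x_2$. On the contraction side, the picture is backwards: on $V=\{x_2=0\}$ one has $X_1\Lwrel{\wo s_i}X_m$, so $X_1$ is \emph{uniquely determined} by $X_2$ and $Y_2$ via \cref{lemma:rel-pos-facts}; there is no degeneration $X_0=X_2$, and the free $\C$-parameter is $X_0$, not $X_1$. The correct map records $(X_2,\ldots,X_m)$ together with $x_2'$.

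The more serious gap is the claim that the exchange relation at $2$ follows ``by direct expansion of the identity relating $\Delta_{0,i}$, $\Delta_{1,i}$, and $\Delta_{2,i}$.'' No such three-term identity exists for the word $ii$. The paper obtains both the regularity of $x_2'$ on $\BR_\br$ (an explicit hypothesis of \cref{thm:abstract-clust-stuc-del-con} that you never verify) and the recovery of $X_0$ from $x_2'$ on $V$ by first applying the moves $ii\br'\xrightarrow{\ref{bm5}}(-i)i\br'\xrightarrow{\ref{bm1}}i(-i)\br'$, where the solid-special \ref{bm1} mutation is governed by the determinantal identity \eqref{eq:M1identity}, and then transporting the conclusion back through quasi-equivalence using \eqref{eq:B1_x'_vs_x_pb}. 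Without this detour you have neither the exchange relation in the required form $x_2x_2'=M_1+x_1M_2$, nor the regularity of $x_2'$, nor an inverse to the contraction map, so the hypotheses of the deletion-contraction theorem remain unverified.
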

\begin{proof}
Suppose first that at least one of the first two crossings in $\br$ is hollow, in which case $1$ must be solid and $2$ must be hollow. 
Consider an arbitrary point $(\Xbul,\Ybul)\in\BR_\br$. Since the letters in $\br$ are positive, we have $Y_0=Y_1=\dots=Y_m=X_m$. 
Since $\wp2\leq\wo s_i$ and $\wp0=\wo$, we must have $X_1\Lwrel{\wo}X_m$ and $X_2\Lwrel{\wo s_i} X_m$. It follows that $\hrb_1$ and $\hrb_2$ are regular functions on $\BR_\br$.  Choose a representative $Z_2=\dwo \hb_2 \ds_i^{-1}$ as in~\eqref{eq:h_c_dfn}, and let $t,t'\in\C$ be such that $Z_1=Z_2z_i(t)$ and $Z_0=Z_2z_i(t)z_i(t')$. Thus, $t,t'$ are regular functions on $\BR_\br$. Proceeding as in the proof of \cref{lemma:tpar}, we find $\hr_1=\hb_2$ and $\hr_0=\hb_2\ach_i(t')$, where $\hr_0,\hr_1$ are regular on $\BR_\br$. It follows that $\grid_{0,i}=t'\grid_{1,i}$. For any $\cp\in\Jo$ such that $\cp>1$, the function $x_\cp$ depends on $Z_2,Z_3,\dots,Z_m$ but does not depend on $t,t'$. By \cref{prop:triang}, we have $\grid_{0,i}=x_1M$ for some monomial $M$ in $\{x_\cp\}_{\cp>1}$. The Deodhar hypersurface $V_1$ is clearly given by the equation $t'=0$. We conclude that $x_1=t'$. We thus have an isomorphism
\begin{equation}\label{eq:r_iso_dc}
  \ris:\BR_\br\xrasim \BR_{i\br'}\times \Cast,\quad (\Xbul,\Ybul)\mapsto\left((X_1,\dots,X_m,Y_1,\dots,Y_m),x_1\right).
\end{equation} 
Moreover, since $\grid_{1,i}=M$ involves only frozen variables, we see that $1$ is connected to only frozen indices in $\Digrt(\Sbr)$. It follows that the principal parts of $\Sbr$ and $\Sigma_{i\br'}$ agree, and therefore $(\BR_{\br}, \Sigma_{\br})$ is a sink-recurrent cluster variety. Moreover, since $\Sigma_{i\br'}$ is really full rank, so is $\Sbr$.

Suppose now that the first two crossings are both solid. %
Our goal is to apply \cref{thm:abstract-clust-stuc-del-con}. Recall from \cref{prop:smooth_affine_etc} that $\BR_\br$ is affine, factorial, and irreducible. We now show that $\Sigma_{\br}$ is sink-recurrent. Let $\Digr:=\Digr(\Sbr)$. The seed $\ifreeze{\Sigma_{\br}}[2]$ is obtained from $\Sigma_{i\br'}$ by adding an isolated frozen variable $x_1$, and $\ifreeze{\Sigma_{\br}}[\NDH]$ is obtained from $\Sigma_{\br'}$ by adding isolated frozen variables $x_1$ and $x_2$, so both of these seeds are sink-recurrent.

Next, the variable $x_1$ is frozen in $\Sigma_\br$. We claim that $\tB_{12}= 1$, and $\tB_{1c}=0$ for mutable $c>2$. Indeed, since the first two crossings are solid, we have $u\pd{0} = u\pd{1} = u\pd{2} = \id$. Moreover, since $i_1=i_2=i$, by \cref{def:almostPos}, we get that $\vp{0}\apd{c} = \vp{1}\apd{c}$ for all $c\in\Jo$ such that $c>2$. (For $c=2$, we have $\vp{0}\apd{2} = \id$ and $\vp{1}\apd{2}=s_i$.) Let $c>2$ be mutable so that $\vp{0}\apd{c} = \id$. Then $\vp{1}\apd{c}=\id$, and thus by \cref{prop:APS-gives-var-nonappearance}, we have $\ord_{V_c}\grid_{0,k} = \ord_{V_c}\grid_{1,k} = 0$ for all $k\in\pmn$. Thus, $\dlog x_c$ does not contribute to $\Lci 1i$, and so $\tB_{1c}= 0$. By \cref{prop:triang,prop:APS-gives-var-nonappearance}, we have $\ord_{V_1}\grid_{0,i} = \ord_{V_2}\grid_{1,i} = 1$, $\ord_{V_1}\grid_{0,j} = \ord_{V_2}\grid_{1,j} = 0$ for all $j\in I\setminus \{i\}$, and $\ord_{V_1}\grid_{1,j} = \ord_{V_2}\grid_{0,j}  = 0$ for all $j\in I$. Thus, $\tB_{12}= 1$.

Next, by \cref{cor:crossing_changes_one_minor}, the sum of terms of $\ombr$ involving $x_2$ is clearly of the form $d_i\dlog x_2\wedge\dlog M$ for a Laurent monomial $M$ in $\bx$, and thus $\Sbr$ is integral at $2$. We have shown that $\Sbr$ satisfies \cref{assn:dc}. 

Next, we show that the mutated cluster variable $x_2'$ is regular on $\BR_\br$. We apply the moves
$ii\br'\xrightarrow{\ref{bm5}}(-i)i\br' \xrightarrow{\ref{bm1}} i(-i)\br'$. 
 Denote $\dot\x:=\x_{(-i)i\br'}$ and $\ddot\x:=\x_{i(-i)\br'}$. It follows from the argument in \cref{sec:B1_solid_special} that $\dot x_2$ is mutable in $\Sigma_{(-i)i\br'}$, and by~\eqref{eq:B1_x'_vs_x_pb}, its mutation $\dot x_2'$ is regular on $\BR_{(-i)i\br'}$, as it equals the pullback $\ddot x_2\pb$ times a monomial in the other cluster variables in $\dot\x$ with nonnegative exponents.  As explained in \cref{sec:bm5}, the seeds $\Sbr$ and $\Sigma_{(-i)i\br'}$ are quasi-equivalent. By \cref{lem:quasimutate}, we find that the mutation $x_2'$ differs from $\dot x_2'$ by a unit (cf. part~\eqref{item:frozen_invertible} of \cref{cor:cluster_vars}), and thus $x_2'$ is regular on $\BR_\br$. 

Let $W := \{x_2 \neq 0\}$ $V := V_2= \{x_2 = 0\}$ be the open-closed covering of $\BR_{\br}$ coming from $x_2$. Our final goal is to construct isomorphisms
\begin{equation*}%
  W \cong \BR_{i\br'} \times \C^\times \cong \Vcal(\ifreeze{\Sigma_{\br}}[2]) \quad \text{and} \quad V \cong V_1\times V_2 = \Spec(\C[x_2']) \times  \BR_{\br'} \cong \C\times \Vcal(\icon{\Sigma_{\br}}[2])
\end{equation*}
satisfying the conditions of \cref{thm:abstract-clust-stuc-del-con}. %
Recall that by \cref{prop:APS-gives-var-nonappearance}, for $\cp\in\Jo$, $\cp>2$, we have $\ord_{V_\cp}\grid_{1,i}=0$ if and only if $\ord_{V_\cp}\grid_{0,i}=0$. Moreover, the same proposition implies $\ord_{V_2}\grid_{0,i}=0$. It follows by \cref{prop:triang} that $\grid_{1,i}$ is equal to $x_2$ times a monomial in the frozen variables, and that $\grid_{0,i}$ is equal to $x_1$ times a monomial in the same set of frozen variables. 
 Since $W$ is the complement of $V_2$, we see that $(\Xbul,\Ybul)\in W$ if and only if $X_1\Lwrel{\wo} Y_1=X_m$. Thus, $\hr_1$ is a regular function on $W$. We choose a representative $Z_1=\dwo \hr_1$ and  let $t\in\C$ be such that $Z_0=Z_1z_i(t')$. Then we get $t'=\grid_{0,i}/\grid_{1,i}=Mx_1/x_2$, where $M$ is a Laurent monomial in the frozen variables other than $x_1$. Similarly to~\eqref{eq:r_iso_dc}, we let $\risW:W \to \BR_{i\br'} \times \C^\times$ be the map sending $(\Xbul,\Ybul)$ to $\left((X_1,\dots,X_m,Y_1,\dots,Y_m),Mx_1/x_2\right)$. By assumption, we have $\BR_{i\br'}\cong \Vcal(\Sigma_{i\br'})$. The frozen index $1$ is only connected to other frozen indices in $\Digrt(\Sbr)$. Thus, the seed $\ifreeze{\Sigma_{\br}}[2]$ is obtained from $\Sigma_{i\br'}$ by adding an isolated frozen vertex, and therefore $\Vcal(\ifreeze{\Sigma_{\br}}[2])\cong \Vcal(\Sigma_{i\br'})\times \Cx$. Adjusting the isolated frozen variable by a Laurent monomial in the other frozen variables, we see that the pullbacks of $x_1,\dots, x_{\npm}$ under the inclusion $\Vcal(\ifreeze{\Sigma_{\br}}[2])\cong W\hookrightarrow X$ are indeed the same-named cluster variables in $\ifreeze{\Sigma_{\br}}[2]$. This verifies the assumptions on $\iota_W$ in \cref{thm:abstract-clust-stuc-del-con}.

 Now suppose that $(\Xbul,\Ybul)\in V$. We have $X_0\Lwrel{\wo} X_m$ but not $X_1\Lwrel{\wo} X_m$, so we must have $X_1\Lwrel{\wo s_i} X_m$, and therefore $X_2\Lwrel{\wo} X_m$. Consider the map $\risV:V\to \BR_{\br'}\times \C$ sending $(\Xbul,\Ybul)$ to $\left((X_2,\dots,X_m,Y_2,\dots,Y_m),x_2'\right)$. We claim that this map is an isomorphism. To construct an inverse, we need to show how to recover $X_0,X_1,Y_0,Y_1$ from the image of $\risV$. We have $Y_0=Y_1=X_m$. Also, $X_1$ is uniquely determined by $Y_1=Y_2$ and $X_2$, since $Y_1\Lwrel{s_i\wo} X_1 \Lrel{s_i} X_2$. It remains to recover $X_0$. Note also that we can recover the cluster variables $x_\cp$, $\cp>2$, as well as the mutated cluster variable $x'_2$, from the image of $\risV$. Since $(\Xbul,\Ybul)\in V$, the frozen variable $x_1$ is also recovered from the exchange relation $0=x_2x_2'=M_1+x_1M_2$ for $x_2$. 

In order to recover $X_1$, we apply moves $ii\br'\xrightarrow{\ref{bm5}}(-i)i\br' \xrightarrow{\ref{bm1}} i(-i)\br'$ as we did above. 
Let $(\ddot\Xbul,\ddot\Ybul)$ denote the image of $(\Xbul,\Ybul)$ in $\BR_{i(-i)\br'}$ under this isomorphism $\mis$, and let $\ddot\x:=\x_{i(-i)\br'}$.  As in \cref{sec:bm5}, let $Y_0'$ be the unique weighted flag satisfying $X_0\Rwrel{\wo s_{i^\ast}} Y_0' \Rrel{s_{i^\ast}} Y_0$.  Then $\ddot X_2=X_2$, and $\ddot Y_2=Y_2$, and $(\ddot X_1,\ddot X_0,\ddot Y_0,\ddot Y_1)=(X_2,X_1,Y_0',Y_0')$.  We have that $Y'_0$ is uniquely determined by $X_2$, $Y_2$, and $\ddot\x$: if $\ddot x_2=0$ then $Y_0'$ is uniquely determined by $Y_2\Lrel{s_{i^\ast}} Y_0'\Lwrel{\wo s_{i^\ast}} X_2$; otherwise, we have $X_2\Lwrel{\wo} Y_0'$, and the values of $\ddot\x$ uniquely fixes the $U_+\times U_+$-double coset $\ddot Z_1:=(Y_0')^{-1}X_2$ which determines $Y_0'$.  The weighted flag $X_0$ is then uniquely determined by $X_1\Rrel{s_i} X_0\Rwrel{s_i\wo}Y_0'$.  It thus suffices to show that $\ddot\x$ is uniquely determined by the image of $\risV$. For $\cp\in\Jo$, $\cp>2$, we have $\ddot x_\cp=x_\cp$. Moreover, $\ddot x_1=M/x_1$ for some monomial $M$ in the frozen variables $x_\cp$ other than $x_1$ (all of which must satisfy $\cp>2$ since $x_2$ is mutable). Finally, by~\eqref{eq:B1_x'_vs_x_pb}, $\ddot x_2$ differs from $x_2'$ by a monomial in the cluster variables other than $x_2$. We are done with verifying the assumptions on $\iota_V$ in \cref{thm:abstract-clust-stuc-del-con}.

Finally, we verify that $\ifreeze{\Sigma_{\br}}[2]$ is really full rank. This is immediate as $\Sigma_{i\br'}$ is really full-rank by assumption and we obtain $\ifreeze{\Sigma_{\br}}[2]$ by adding an isolated frozen.

We have verified all conditions in \cref{assn:dc} and \cref{thm:abstract-clust-stuc-del-con}. Thus, $(\BR_\br,\Sbr)$ is a cluster variety and $\Sbr$ is really full rank. We have already shown that it is sink-recurrent.
\end{proof}

\subsection{Proof of Theorem~\ref{thm:main} for $G$ simply-laced} \label{subsec:pf_thm_main_simply_laced}
We proceed by induction on the number $m$ of indices in $\br$, and prove the stronger statement that $(\BR_\br,\Sbr)$ is a cluster variety and $\Sbr$ is really full rank. Recall that we always assume $\Demprod(\br)=\wo$. The base case is $m=\ell(\wo)$, where all indices are hollow. The cluster algebra is $\Abr=\C$, which has no seeds and thus any seed of $\Abr$ is vacuously really full rank. The braid variety $\BR_\br$ is a point.

Suppose now that $m>\ell(\wo)$. Applying \ref{bm1} and \ref{bm4}, we can assume that all letters of $\br$ belong to $I$. Since $G$ is simply-laced, all braid moves are automatically short. Applying \ref{bm2}--\ref{bm3}, we may therefore transform $\br$ into a braid word of the form $\br_1ii\br_2$ for some braid words $\br_1,\br_2$ and $i\in I$. We can also apply \emph{conjugation moves} to $\br$: if $\br=j\br_0$, the conjugation move consists of the moves
\begin{equation}\label{eq:rotn}
  \br=j\br_0\xrightarrow{\text{\bmref{bm5}}} (-j)\br_0 \xrightarrow{\text{\bmref{bm1}}} \cdots \xrightarrow{\text{\bmref{bm1}}} \br_0(-j) \xrightarrow{\text{\bmref{bm4}}} \br_0 j^\ast.
\end{equation}
Applying conjugation moves, we may further transform $\br$ into the form $\br':=ii\br_2\br_1^\ast$, where $\br_1^\ast$ is obtained from $\br_1$ by applying the map $j\mapsto j^\ast$ to each letter. Utilizing the inductive hypothesis and applying \cref{thm:br_del_con} to $\br'$, we find that $(\BR_{\br'},\Sigma_{\br'})$ is a cluster variety and $\Sigma_{\br'}$ is really full-rank. It follows from \cref{thm:moves} (for short moves) and \cref{cor:quasi} that $(\BR_\br,\Sbr)$ is therefore also a cluster variety and $\Sbr$ is really full-rank. \qed

\begin{remark}\label{rmk:full_rank_simply_laced}
It follows from our proof that the seed $\Sbr$ is really full rank when $G$ is simply-laced.
\end{remark}

Finally, we show that for $G$ simply-laced, double braid moves correspond to mutation equivalence.
\begin{proposition}\label{prop:moves_are_mutations_simply_laced}
	Suppose that $G$ is simply-laced and $\br, \br'$ are related by a braid move \ref{bm1}--\ref{bm4}. The seeds $\Sbr, \Sbrp\pb$ are mutation equivalent (up to relabeling cluster variables).
\end{proposition}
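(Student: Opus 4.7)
The plan is a case analysis on the move type \ref{bm1}--\ref{bm4}, drawing together what has already been established in \crefrange{sec:B1_solid_special}{sec:bm4}. For each such move, property \ref{IS2} furnishes a seed $\Sigma'$, obtained from $\Sbr$ by a prescribed finite sequence of mutations followed by a relabeling of indices, and asserts $\Sigma' \sim \mis^* \Sbrp$. Since quasi-equivalent seeds share the same frozen sublattice and their mutable cluster variables differ only by Laurent monomials in the frozen variables, the proposition amounts to packaging this quasi-equivalence together with the explicit mutation sequence for each move.

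First I would dispatch the non-mutation cases: non-special \ref{bm1}, the special non-solid sub-case of \ref{bm1}, \ref{bm2}, the non-fully-solid sub-cases of \ref{bm3}, and \ref{bm4}. In each of these subsections it is shown that $\x_\br$ and $\mis^* \x_{\br'}$ already agree up to a permutation of indices (typically the swap $c \leftrightarrow c+1$) and, in the \ref{bm5}-derived tail of \ref{bm4}, up to multiplication of mutable variables by frozen Laurent monomials. No mutation is required, and the identification is manifest.

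Next I would treat the mutation cases. A solid-special \ref{bm1} on indices $(c,c+1)$ is handled by taking $\Sigma' = \mu_{c+1}(\Sbr)$, the analysis in \cref{sec:B1_solid_special} together with \cref{prop:det_B1} verifying $\mu_{c+1}(\Sbr) \sim \mis^* \Sbrp$. For \ref{bm3}, the simply-laced hypothesis forces $m_{ij} = 3$, so every \ref{bm3} move is short; the only sub-case that is a mutation move is the fully-solid three-letter move, which by the formula $\binom{q-1}{2} = \binom{2}{2} = 1$ requires a single mutation at the rightmost involved index, with quasi-equivalence established from \cref{prop:det_B3}. Chaining these elementary mutations with the quasi-equivalence of \ref{IS2}, and invoking \cref{lem:quasimutate} whenever a mutation needs to be commuted past a quasi-equivalence, yields an explicit mutation sequence from $\Sbr$ to a seed whose mutable cluster variables agree with those of $\mis^* \Sbrp$ up to frozen monomial factors and an index relabeling.

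The one point requiring care is purely definitional: to interpret quasi-equivalence as an instance of "relabeling of cluster variables". This is justified because a quasi-equivalence fixes the ambient torus, the frozen lattice, the tuple $\d$, and the $2$-form $\omega$, and alters the mutable cluster variables only by Laurent monomials in the frozens; such an alteration is precisely the freedom of rescaling (relabeling) allowed in the statement of the proposition. Once this interpretation is granted, the mutation sequences above identify $\Sbr$ with $\mis^* \Sbrp$ in the required sense. The main potential obstacle is not mathematical but bookkeeping: keeping track, across the several sub-cases and the quasi-equivalence rescalings, of exactly which index relabeling is being applied so that the composite identification is coherent.
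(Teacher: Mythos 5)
There is a genuine gap, and it is exactly at the point you flag as ``purely definitional.'' Property \ref{IS2} only gives a \emph{quasi-equivalence} $\Sigma' \quasi \mis^*\Sbrp$, meaning the mutable cluster variables may differ by nontrivial Laurent monomials in the frozen variables. You then declare that such a discrepancy ``is precisely the freedom of rescaling (relabeling) allowed in the statement.'' It is not: ``relabeling cluster variables'' in \cref{prop:moves_are_mutations_simply_laced} means a permutation of the index set, and the proposition asserts that after mutating and permuting, the seeds are \emph{identical}. If quasi-equivalence were all that was claimed, the proposition would be an immediate restatement of \ref{IS2} (given \cref{thm:main} to discharge its hypothesis) and would need no proof. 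The stronger form is what is actually used later --- e.g.\ \eqref{eq:mu_braid-def} asserts an on-the-nose equality $\pi_{\braid}\circ\mu_{\braid}(\USigma_{\brl}) = \ml^*\USigma_{\brl'}$, not a quasi-equivalence --- so the gap is not merely cosmetic.

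The missing idea is the paper's extension trick: choose a word $\br_0$ so that every frozen variable of $\br,\br'$ becomes mutable in $\tbeta:=\br_0\br$ and $\tbeta':=\br_0\br'$ (\cref{lem:extend-word}), run the same mutations and relabeling on $\Sigma_{\tbeta}$, and apply \ref{IS2} there. Since a quasi-equivalence can only alter mutable variables by monomials in \emph{frozen} variables, and the variables in question are now all mutable in the extended seeds, the quasi-equivalence for $\tbeta,\tbeta'$ forces the correction monomials to be trivial, whence $\Sigma' = \Sbrp\pb$ exactly. Your case analysis of which moves are mutation moves versus non-mutation moves is consistent with \crefrange{sec:B1_solid_special}{sec:bm4} (though note that for \ref{bm4} the last crossing is always hollow, so that case is trivial --- there is no ``\ref{bm5}-derived tail'' to worry about), but without the extension argument you have only reproved \ref{IS2}, not the proposition.
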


\begin{proof} By \cref{thm:main} for simply-laced $G$, $(\BR_\br,\Sbr)$ is a cluster variety. By~\ref{IS2}, there is a seed $\Sigma'$, which differs from $\Sbr$ by mutation and possibly relabeling, such that $\Sigma' \sim \Sbrp\pb$. We claim that these seeds are actually identical. Indeed, choose a double braid word $\br_0$ such that all cluster variables of $\br,\br'$ become mutable in $\tbeta:=\br_0\br$, $\tbeta':=\br_0\br'$; cf. \cref{lem:extend-word}. Let $\tilde\Sigma'$ be obtained from $\Sigma_{\tbeta}$ using the same mutations and relabeling by which $\Sigma'$ was produced from $\Sbr$. Now, $(\BR_\tbeta,\Sigma_{\tbeta})$ is also a cluster variety, so by~\ref{IS2}, $\tilde\Sigma' \sim \Sigma_{\tbeta'}\pb$. Since all frozen variables of $\Sigma'$ are mutable in $\tilde\Sigma'$, it follows that the seeds $\Sigma'$ and $\Sbrp\pb$ are identical. 
\end{proof}

\section{Folding}\label{sec:folding}

Before completing the proof in \cref{sec:mult_laced}, we review some background on folding. We first compare Deodhar geometry (\cref{sec:deodhar-geometry}) in the case of a multiply-laced group $G$ to the case of the ``unfolded'' simply-laced group $\UG$. We then review folding on seeds.

\subsection{Pinnings}
Let $G$ be a complex, simple, simply-connected algebraic group. Choose a pinning
$(\H,B_+,B_-,x_i,y_i;i\in I)$. Then there exists an algebraic group $\UG$ of simply-laced type with pinning $(\UH,\UB_+,\UB_-,\ux_\ui,\uy_\ui;\ui\in\UI)$; see~\cite[\S1.6]{Lus2}. We have a bijection $\sig:\UI\to\UI$ which extends to an automorphism $\sig:\UG\to\UG$, and a map $\iota: G\to\UG$ which yields algebraic group isomorphisms
\begin{equation*}%
  \iota: G\xrasim \UG^\sig,\quad \H\xrasim \UH^\sig, \quad B_{\pm}\xrasim (\UB_{\pm})^\sig, \quad U_{\pm}\xrasim (\UU_{\pm})^\sig.
\end{equation*}
The maps $gB_+\mapsto \iota(g) \UB_+$ and $gU_+\mapsto \iota(g)\UU_+$ induce isomorphisms of %
varieties:
\begin{equation}\label{eq:iota_GB}
  \iota: G/B_+\xrasim (\UG/\UB_+)^\sig \quad\text{and}\quad G/U_+\xrasim (\UG/\UU_+)^\sig.
\end{equation}
For the first isomorphism, see~\cite[\S8.8]{Lus2}. 
 The surjectivity and injectivity of the second map follow from that of the first by a straightforward computation.

For an element $i\in I$, we denote by $\orb[i]\subset \UI$ the associated $\sigma$-orbit, i.e., the orbit under the cyclic group generated by $\sigma$. We also let $\orb[-i]:=\{-\ui\mid \ui\in\orb[i]\}\subset -\UI$. 
 We let $\{\ualpha_{\ui}\mid \ui\in\UI\}$, $\{\uach_{\ui}\mid \ui\in\UI\}$, and $\{\uom_\ui\mid\ui\in\UI\}$ be the simple roots, simple coroots, and fundamental weights of the root system of $\UG$. Letting $\ucartan_{\ui\uj}:=\<\ualpha_\ui,\uach_\uj\>$ be the entries of the associated Cartan matrix (and setting $\ucartan_{(-\ui)(-\uj)}:=\ucartan_{\ui\uj}$ and $\ucartan_{(\pm\ui)(\mp\uj)}:=0$ as before), we have 
\begin{equation}\label{eq:d_i=|orb|}
  d_i=|\orb[i]| \quad\text{and}\quad a_{ij}=\sum_{\uj\in\orb[j]} \ucartan_{\ui\uj} \quad\text{for all $i,j\in\pm I$ and  $\ui\in\orb[i]$.}
\end{equation}
 The Coxeter generators of the Weyl group $\UW$ of $\UG$ are denoted by $\{\us_\ui\mid\ui\in\UI\}$. Restricting $\iota$ to the normalizer of $\H$, we get a group isomorphism
\begin{equation}\label{eq:iota_W}
  \iota: W\xrasim \UW^\sig,\quad s_i\mapsto\prod_{\ui\in\orb[i]} \us_\ui.
\end{equation}
Here the order inside $\orb[i]$ is immaterial since the corresponding elements $\us_\ui$ commute. It follows that the longest element $\wo\in W$ gets mapped under~\eqref{eq:iota_W} to the longest element $\uwo$ of $\UW$, because $\sig:\UW\to\UW$ preserves Coxeter length and therefore $\uwo\in\UW^\sig$. The following result is immediate.

\begin{lemma}\label{lemma:iota_rel}
Let $B_1,B_2\in G/U_+$. If $B_1\xrightarrow{w} B_2$ then  $\iota(B_1)\xrightarrow{\iota(w)} \iota(B_2)$. If $B_1\Rwrel{w} B_2$ then $\iota(B_1)\Rwrel{\iota(w)} \iota(B_2)$. In particular, if $B_1\Rwrel{\wo} B_2$ then $\iota(B_1)\Rwrel{\uwo} \iota(B_2)$.
\end{lemma}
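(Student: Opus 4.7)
The plan is to reduce both claims to a single observation: $\iota$ sends the canonical lift $\dw \in G$ of $w \in W$ to the canonical lift $\dot{\iota(w)} \in \UG$ of $\iota(w) \in \UW$. Once this compatibility is established, applying $\iota$ to the representatives given by the definitions of strict/weak relative position and invoking~\eqref{eq:iota_GB} will finish the proof essentially immediately.

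First, I would verify the pinning compatibility $\iota \circ \phi_i = \prod_{\ui \in \orb[i]} \phi_\ui$ as homomorphisms $\SL_2 \to \UG$; this is part of the construction of $\iota$ from~\cite[\S1.6]{Lus2}. Evaluating on $\left(\begin{smallmatrix}0 & -1 \\ 1 & 0\end{smallmatrix}\right)$ yields $\iota(\ds_i) = \prod_{\ui \in \orb[i]} \dot{\us}_\ui$. Since the factors commute (they lie in mutually commuting $\SL_2$-subgroups of $\UG$) and $\iota(s_i) = \prod_{\ui \in \orb[i]} \us_\ui$ by~\eqref{eq:iota_W}, the right-hand side coincides with the canonical lift $\dot{\iota(s_i)}$ in $\UG$. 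Extending via~\eqref{eq:dw_dfn} using a reduced expression $w = s_{i_1}\cdots s_{i_l}$ (and noting that the concatenation $\iota(w) = \prod_j \prod_{\ui \in \orb[i_j]} \us_\ui$ is a reduced expression in $\UW$, since $\sigma$ preserves Coxeter length), we obtain $\iota(\dw) = \dot{\iota(w)}$ for every $w \in W$.

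Given this, the first assertion is immediate: if $B_1 \xrightarrow{w} B_2$, choose $g \in G$ with $(gB_1, gB_2) = (U_+, \dw U_+)$; applying $\iota$ and using the second isomorphism in~\eqref{eq:iota_GB} yields $(\iota(g)\iota(B_1), \iota(g)\iota(B_2)) = (\UU_+, \dot{\iota(w)}\UU_+)$, so $\iota(B_1) \xrightarrow{\iota(w)} \iota(B_2)$. The weak case is identical, carrying along an extra factor $\iota(h) \in \UH$ coming from $\iota(H) \subset \UH$. The final assertion about $\wo$ then follows because $\sigma(\wo) = \wo$ (since $\sigma$ preserves length and $\wo$ is the unique longest element), whence $\iota(\wo) = \uwo$ by~\eqref{eq:iota_W}.

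I do not anticipate a real obstacle: the argument is bookkeeping around the pinning compatibility of $\iota$, which is standard (cf.~\cite[\S1.6, \S8.8]{Lus2}), and no combinatorial subtlety arises inside each orbit since the $\us_\ui$ with $\ui \in \orb[i]$ pairwise commute, so the order in the product defining $\iota(\dw)$ is immaterial.
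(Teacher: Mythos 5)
Your proposal is correct and matches the intended argument: the paper declares this lemma immediate, and the content it is suppressing is exactly what you spell out, namely that pinning compatibility plus the commutativity of the $\us_\ui$ within an orbit and the length-additivity of folding give $\iota(\dw)=\dot{\overline{\iota(w)}}$ (the canonical lift in $\UG$), after which one applies $\iota$ to the defining representatives and uses~\eqref{eq:iota_GB}, with $\iota(\wo)=\uwo$ handling the last claim. No gaps.
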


\subsection{Braid varieties}\label{sec:folding_braid}
Let $\br=i_1i_2\dots i_m\in\DRW$ be a double braid word. 
 Let $\bru=\uisub_1\uisub_2\dots\uisub_{\um}\in(\pm \UI)^{\um}$ be obtained by concatenating the letters in $\orb[i_1],\orb[i_2],\dots,\orb[i_m]$ (choosing the order inside each $\orb[i_c]$ arbitrarily), where $\um:=|\orb[i_1]|+|\orb[i_2]|+\dots+|\orb[i_m]|$. We let $\relab:[\um]\to[m]$ denote the unique order-preserving map satisfying $|\relab^{-1}(c)|=|\orb[i_c]|$ for all $c\in[m]$. It is clear that an index $c\in[m]$ is solid (resp., hollow) if and only if all indices in $\relab^{-1}(c)$ are solid (resp., hollow). In other words, the set $\UJo$ of solid crossings for $\bru$ is given by
\begin{equation}\label{eq:relab_J}
  \UJo=\relab^{-1}(\Jo).
\end{equation}

Let $\bigBR'_\br$ be the variety of tuples $(\UXbul,\UYbul)$ of weighted flags in $\UG/\UU_+$ satisfying
\begin{equation*}%
\begin{tikzcd}
		\UX_0& \arrow[l,"{\iota(s_{i_1}^+)}"'] \UX_1& \arrow[l,"{\iota(s_{i_2}^+)}"'] \cdots& \arrow[l,"{\iota(s_{i_m}^+)}"'] 
		\UX_m \\
		\UY_0 \arrow[r,"{\iota(s_{i_1^\ast}^-)}"'] \arrow[u, Rightarrow, "\uwo"']& \UY_1 \arrow[r,"{\iota(s_{i_2^\ast}^-)}"'] & \cdots \arrow[r,"{\iota(s_{i_m^\ast}^-)}"'] & \UY_m. \arrow[u, equal]
\end{tikzcd}
\end{equation*}
Let $\BigBR'_\br$ be obtained by omitting the condition $\UX_0\Lwrel{\uwo} \UY_0$. \Cref{lemma:rel-pos-facts} yields isomorphisms $\bigBR'_\br\cong \bigBR_\bru$ and $\BigBR'_\br\cong \BigBR_\bru$.
 Let $\BR'_\br$ be the quotient of $\bigBR'_\br$ by the free $\UG$-action. Then $\BR'_\br\cong \BR_\bru$. %

The map $\sig$ acts on the varieties $\bigBR'_\br$, $\BigBR'_\br$, and $\BR'_\br$ termwise by acting on each $\UX_c$ and $\UY_c$. Let $T'_\br\subset \BR'_\br$ be the image of the Deodhar torus $T_{\bru}\subset \BR_\bru$ under the isomorphism $\BR_\bru\cong \BR'_\br$. We have the following straightforward result.
\begin{proposition}
Applying $\iota$ termwise yields isomorphisms
\begin{equation}\label{eq:iota_BR}
  \bigBR_\br\xrasim (\bigBR'_\br)^\sig, \quad \BigBR_\br\xrasim (\BigBR'_\br)^\sig, \quad \BR_\br\xrasim (\BR'_\br)^\sig, \quad\text{and}\quad T_\br\xrasim (T'_\br)^\sig.
\end{equation}
\end{proposition}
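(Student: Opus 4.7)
The plan is to deduce all four isomorphisms from the pointwise identifications $\iota : G/U_+ \xrasim (\UG/\UU_+)^\sig$ of~\eqref{eq:iota_GB} and $\iota : W \xrasim \UW^\sig$ of~\eqref{eq:iota_W}, combined with \cref{lemma:iota_rel}. Termwise application of $\iota$ identifies $(G/U_+)^{[0,m]} \times (G/U_+)^{[0,m]}$ with the $\sig$-fixed locus of $(\UG/\UU_+)^{[0,m]} \times (\UG/\UU_+)^{[0,m]}$. For each $c \in [m]$, the relative position condition $X_c \Rrel{s_{i_c}^+} X_{c-1}$ transports to $\iota(X_c) \Rrel{\iota(s_{i_c}^+)} \iota(X_{c-1})$; since $\iota(s_{i_c}^+) = \prod_{\ui \in \orb[i_c]} \us_\ui^+$ is a product of commuting Coxeter generators, \cref{lemma:rel-pos-facts} splits this uniquely into the chain of single-step relations defining $\BigBR'_\br$. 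The analogous statement holds for the $Y$-chain using $s_{i_c^\ast}^-$. Conversely, a $\sig$-fixed tuple in $\BigBR'_\br$ has all its weighted flags in $(\UG/\UU_+)^\sig = \iota(G/U_+)$ and all its relative positions in $\UW^\sig = \iota(W)$, so it pulls back to $\BigBR_\br$. This gives $\BigBR_\br \xrasim (\BigBR'_\br)^\sig$; and since $\iota(\wo) = \uwo$, the condition $X_0 \Rwrel{\wo} Y_0$ corresponds to the condition cutting out $\bigBR'_\br$ inside $\BigBR'_\br$, so restricting yields $\bigBR_\br \xrasim (\bigBR'_\br)^\sig$.

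For the quotient isomorphism $\BR_\br \xrasim (\BR'_\br)^\sig$, the identification $\iota : G \xrasim \UG^\sig$ makes the free $G$-action on $\bigBR_\br$ correspond to the restriction to $\UG^\sig$ of the free $\UG$-action on $(\bigBR'_\br)^\sig$. The induced map $(\bigBR'_\br)^\sig / G \to (\BR'_\br)^\sig$ is injective: if $(\UX_\bullet, \UY_\bullet)$ and $\ug \cdot (\UX_\bullet, \UY_\bullet)$ are both $\sig$-fixed, applying $\sig$ gives $\sig(\ug) \cdot (\UX_\bullet, \UY_\bullet) = \ug \cdot (\UX_\bullet, \UY_\bullet)$, and freeness forces $\ug = \sig(\ug) \in \UG^\sig = G$. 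For surjectivity, I would use the parametrization of \cref{rmk:braidVarIntuition}: the projection $(\UX_\bullet, \UY_\bullet) \mapsto \UY_m$ makes $\bigBR'_\br$ a $\UG$-equivariant bundle over $\UG/\UU_+$ with fiber parametrized by $\btparp \in \C^{\um}$. In any $\sig$-fixed $\UG$-orbit, pick the representative with $\UY_m$ equal to the identity coset of $\UG/\UU_+$ (which is $\sig$-fixed); the resulting parameters $\btparp$ are then forced to be $\sig$-invariant, i.e., constant on each orbit block $\orb[i_c]$, and so the representative itself lies in $\bigBR_\br$.

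For the Deodhar torus $T_\br \xrasim (T'_\br)^\sig$, I would show $\iota(\up c) = \uup{\tilde c}$ by descending induction on $c$, where $\tilde c := \sum_{c' \le c} |\orb[i_{c'}]|$ indexes the position in $[\um]$ at the right end of the $c$-th block. Within each block, the commuting generators $\us_\ui$ for $\ui \in \orb[i_c]$ collapse the iterated $\min$ defining the positive distinguished subexpression of $\bru$ on that block into $\min(\iota(\up c), \iota(s_{i_c}^-)\iota(\up c)\iota(s_{i_c}^+)) = \iota(\up{c-1})$, completing the induction. Hence $\iota(\wp c) = \tilde w_{\tilde c}$, and the Deodhar condition $X_c \Lwrel{\wp c} Y_c$ matches exactly $\UX_{\tilde c} \Lwrel{\tilde w_{\tilde c}} \UY_{\tilde c}$; the intermediate conditions for $\bru$ inside each block are then forced by \cref{lemma:iota_rel} and $\sig$-invariance.

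The main obstacle is the surjectivity in the $\BR_\br$-step, since a $\sig$-fixed $\UG$-orbit need not in general contain a $\sig$-fixed point (the obstruction is a class in $H^1(\langle\sig\rangle,\UG)$). The approach above sidesteps this by invoking the explicit parametrization of \cref{rmk:braidVarIntuition} together with the $\sig$-fixed identity coset in $\UG/\UU_+$, at the cost of verifying that $\sig$ permutes the parameters $\btparp$ within each orbit block via $\sig(\ux_\ui(t)) = \ux_{\sig(\ui)}(t)$ on the Chevalley generators.
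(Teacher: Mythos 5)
The paper offers no proof of this proposition (it is flagged as ``straightforward''), so there is nothing to compare against line by line. Most of what you wrote supplies exactly the details one would want: the termwise identification of flags via \eqref{eq:iota_GB}, the transport of relative positions via \cref{lemma:iota_rel} together with the injectivity of $\iota$ on $W$ and the uniqueness of (weak) relative position for the converse direction, the injectivity of $(\bigBR'_\br)^\sig/G\to(\BR'_\br)^\sig$ via freeness of the $\UG$-action, and the block-endpoint computation $\iota(\up c)=\uup{\tilde c}$ for the Deodhar torus. These steps are all correct.

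The genuine gap is in the surjectivity of $\BR_\br\to(\BR'_\br)^\sig$. Normalizing $\UY_m$ to the identity coset does \emph{not} pin down a representative of the orbit: the stabilizer of $\UU_+\in\UG/\UU_+$ is $\UU_+$ itself, so the points of a given $\UG$-orbit with $\UY_m=\UU_+$ form a $\UU_+$-torsor, and for a $\sig$-stable orbit one only obtains $\sig(x)=\ug\cdot x$ with $\ug\in\UU_+$, not $\sig(x)=x$. Consequently the parameters $\btparp$ of \cref{rmk:braidVarIntuition} attached to such a representative are not ``forced'' to be $\sig$-invariant --- they transform under the residual $\UU_+$-action --- and you are back at the cohomological obstruction you set out to avoid, now with coefficients in $\UU_+$ rather than $\UG$. (That obstruction does vanish, since $H^1$ of a finite cyclic group with coefficients in a unipotent group over $\C$ is trivial by d\'evissage to vector groups, but this argument is absent from your write-up.) The cleaner fix, which is what \cref{rmk:free} is set up to provide, is to normalize the weakly $\uwo$-related pair $(\UY_0,\UX_0)$ instead: for such a pair there is a \emph{unique} $\ug\in\UG$ with $\ug\UY_0=\UU_+$ and $\ug\UX_0\in\UH\duwo\UU_+/\UU_+$, as one checks using $\duwo^{-1}\UU_+\duwo=\UU_-$ and $\UU_+\cap\UU_-\UH=\{e\}$. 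Since $\UU_+$, $\duwo$, and $\UH$ are all $\sig$-stable, this canonical representative of a $\sig$-stable orbit is automatically $\sig$-fixed, and the remainder of your argument then goes through unchanged.
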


\subsection{Grid minors}\label{sec:folding_grid}
Recall that we have the character and cocharacter lattices $\chL\H:=\Hom(\H,\Cast)$, $\cochL\H:=\Hom(\Cast,\H)$. The map $\iota:\H\to\UH$ induces a map $\iota_*:\cochL\H\to\cochL\UH$ sending $\ach_i\mapsto \sum_{\ui\in\orb[i]} \uach_{\ui}$ for $i\in I$, so that $\iota(\ach_i(t))=\prod_{\ui\in\orb[i]} \uach_{\ui}(t)$ for $t\in\Cast$. It also induces a map $\iota^\ast:\chL\UH\to\chL\H$ sending $\uom_\ui\mapsto \om_i$ for all $i\in I$ and  $\ui\in\orb[i]$, so that $\iota(h)^{\uom_\ui}=h^{\om_i}$ for $h\in\H$. It follows that for all $g\in G$, $v,w\in W$, $i\in I$, and $\ui\in\orb[i]$, we have
\begin{equation}\label{eq:iota_mnr}
  \Delta_{v\om_i,w\om_i}(g)=\Delta_{\iota(v) \uom_\ui,\iota(w)\uom_\ui} (\iota(g)).
\end{equation}

Let  $(\Xbul,\Ybul)\in\BR'_\br$. As usual, for $c=0,1,\dots,m$, we denote $Z_c:=Y_c^{-1}X_c$. Let $\uup c:=\iota(\up c)$ and $\wwp c:=\iota(\wp c)$. For $\ui\in \UI$,  consider analogs of grid minors for $\BR'_\br$:
\begin{equation}\label{eq:ugrid}
          \ugrid_{c, \ui}(\Xbul,\Ybul)= \Delta_{\wwp c\uom_\ui, \uom_\ui}(Z_c), \qquad   \ugrid_{c,-\ui}(\Xbul,\Ybul)=\Delta_{\uwo \uom_\ui, \uup c^{-1}\uom_\ui}(Z_c).
\end{equation}

Comparing~\eqref{eq:iota_mnr}--\eqref{eq:ugrid} to \cref{dfn:grid_minors}, we find that the grid minors on $\BR_\br$ are pullbacks of the minors defined in~\eqref{eq:ugrid}: for $c=0,1,\dots,m$, $i\in \pm I$, and $\ui\in\orb[i]$, we have
\begin{equation}\label{eq:iota_grid}
  \iota^\ast \ugrid_{c, \ui}= \grid_{c,i}.
\end{equation}
Using \cref{cor:crossing_changes_one_minor}, we obtain the following description of chamber minors on $\BR_\bru$, which we denote by $\ucrossing{\uc}$, $\uc\in\UJo$; cf.~\eqref{eq:relab_J}.
\begin{lemma}\label{lemma:iota_crossing}
  Let $\uc\in\UJo$ be a solid crossing for $\bru$. Set $\ui:=\uisub_\uc$ and $c:=\relab(\uc)\in J$. Then the isomorphism $\BR_\bru \cong \BR'_\br$ sends the chamber minor $\ucrossing{\uc}$ to $\ugrid_{c-1,\ui}$, and we have $\iota^\ast \ugrid_{c-1,\ui}=\crossing{c}$.
\end{lemma}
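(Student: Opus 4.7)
My plan is to reduce the chamber minor $\ucrossing{\uc}=\ugrid_{\uc-1,\ui}$ on $\BR_\bru$ to a grid minor at the left boundary of the block of $\bru$ corresponding to position $c$ of $\br$, and then identify this boundary grid minor with $\ugrid_{c-1,\ui}$ on $\BR'_\br$. The second assertion $\iota^\ast\ugrid_{c-1,\ui}=\crossing{c}$ will then be immediate from \eqref{eq:iota_grid}.

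Set $[l+1,r]:=\relab^{-1}(c)\subset[\um]$, a block of size $|\orb[i_c]|$ containing $\uc$. By construction of $\bru$, the letters $\uisub_{l+1},\dots,\uisub_r$ form a permutation of the orbit $\orb[i_c]\subset\pm\UI$, so $\ui=\uisub_\uc$ appears exactly once in this block, namely at position $\uc$, while every other $\uisub_{\uc'}$ in the block shares the sign of $\ui$ but differs from it. By \eqref{eq:relab_J} every $\uc'\in[l+1,r]$ is solid for $\bru$, so for each $\uc'\in[l+1,\uc-1]$, \cref{cor:crossing_changes_one_minor} applied with target label $\uk=\ui$ gives $\ugrid_{\uc'-1,\ui}=\ugrid_{\uc',\ui}$. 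Iterating yields $\ugrid_{\uc-1,\ui}=\ugrid_{l,\ui}$ as regular functions on $\BR_\bru$.

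Under the isomorphism $\BR_\bru\xrasim\BR'_\br$ of \cref{sec:folding_braid}, the flags at the block-boundary position $l$ identify with $\UX'_{c-1},\UY'_{c-1}$, so the double coset $Z_l$ corresponds to $Z'_{c-1}$. Combined with the Weyl-group identity $\uwp l=\wwp{c-1}$, the defining formula \eqref{eq:ugrid} then shows that $\ugrid_{l,\ui}$ on $\BR_\bru$ agrees with $\ugrid_{c-1,\ui}$ on $\BR'_\br$; hence $\ucrossing{\uc}\mapsto\ugrid_{c-1,\ui}$ under the iso. Since $\ui\in\orb[i_c]$, applying \eqref{eq:iota_grid} with indices $(c-1,i_c)$ gives $\iota^\ast\ugrid_{c-1,\ui}=\grid_{c-1,i_c}=\crossing{c}$, completing both claims.

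The main auxiliary input is the identity $\uup r=\iota(\up c)$ for every $c$, whence $\uwp l=\wwp{c-1}$. This is a straightforward downward induction on $c$: the base case $\uup\um=\uwo=\iota(\up m)$ is immediate, and in the inductive step, the block-wise Demazure recursion defining the positive distinguished subexpression of $\bru$ collapses to a single step for $\br$, using \eqref{eq:iota_W} together with the commutativity $\us_\ui\us_\uj=\us_\uj\us_\ui$ for $\ui,\uj\in\orb[i_c]$.
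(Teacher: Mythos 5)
Your argument is correct and matches the paper's intended (one-line) justification: propagate $\ugrid_{\uc-1,\ui}$ leftward through the solid block $\relab^{-1}(c)$ using \cref{cor:crossing_changes_one_minor} (the letters $\uisub_{\uc'}\neq\ui$ of the same sign leave the $\ui$-grid minor unchanged), identify the block boundary $Z_l$ with $Z'_{c-1}$ under $\bigBR_\bru\cong\bigBR'_\br$, and conclude via~\eqref{eq:iota_grid}. The compatibility $\uup{r}=\iota(\up c)$ of the positive distinguished subexpressions that you verify by downward induction is exactly what the paper implicitly uses in asserting~\eqref{eq:relab_J}, so nothing is missing.
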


\subsection{$2$-form}
 Our next goal is to show that the two-form also folds.
\begin{lemma}\label{lem:om_folding}
Let $\omp_\br$ be the pullback of the $2$-form $\om_{\bru}$ on $\BR_{\bru}$ under the isomorphism $\BR'_\br\cong \BR_\bru$. We have $    \iota^\ast\omp_\br=\om_\br$.
\end{lemma}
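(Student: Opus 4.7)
The plan is to verify $\iota^\ast\omp_\br=\om_\br$ by a local, orbit-by-orbit computation. By \cref{cor:hollow} applied to both $\br$ and $\bru$, together with \eqref{eq:relab_J} and the fact that $c\in\Jo$ is solid iff every index in $\relab^{-1}(c)$ is solid, it suffices to prove, for each solid $c\in\Jo$, that
\[
\iota^\ast\sum_{\uc\in\relab^{-1}(c)}\om_{\bru,\uc}\;=\;\om_{\br,c}.
\]
Fix such $c$ with $i:=i_c$, $d:=d_i=|\orb[i]|$, list $\relab^{-1}(c)=\{\uc_1<\cdots<\uc_d\}$, and set $\ui_j:=\uisub_{\uc_j}\in\orb[i]$. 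All $\uc_j$ are solid; write $\tilde t_\uc$ for the Deodhar parameter of $\bru$ at $\uc$ (the $\bru$-analog of $\tpar$ from \cref{lemma:tpar}), $\Delta^\bru_{\uc,\uk}$ for the grid minor of $\bru$, and $L^\bru_{\uc,\ui}$ for the analog of \eqref{eq:Lci-defn} on $\BR_\bru$.

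The core of the argument is an orbit-interior computation. Since $\us_\ui$ and $\us_\uj$ commute for distinct $\ui,\uj\in\orb[i]$, we have $\ucartan_{\ui\uj}=0$ and $\<\uom_\ui,\uach_\uj\>=\delta_{\ui\uj}$ in $\orb[i]$. Iterating \eqref{eq:h_tpar} across the orbit (assume $\sign(i)=+1$; the case $\sign(i)=-1$ is symmetric after replacing $\hr$ by $\hb$ and $\ui_j$ by $|\ui_j|$) yields
\[
\hr_\uc\;=\;\Bigl(\prod_{j:\uc_j>\uc}\uach_{\ui_j}(\tilde t_{\uc_j})\Bigr)\,\hr_{\tau(c)}\qquad\text{for }\uc\in\{\tau(c-1),\ldots,\tau(c)\},
\]
where $\tau(c):=\uc_d$ denotes the right boundary of the orbit and $\hr_\uc$ here refers to the $\hr$-element on $\BR_\bru$. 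Plugging this into \eqref{eq:Lci_vs_dlog_h} and invoking \eqref{eq:la_ach_scalar} together with the pairing identity above produces
\[
L^\bru_{\uc_j,\ui_j}=L^\bru_{\tau(c),\ui_j},\quad L^\bru_{\uc_j-1,\ui_j}=L^\bru_{\tau(c),\ui_j}+\dlog\tilde t_{\uc_j},\quad \Delta^\bru_{\tau(c-1),\ui_j}=\tilde t_{\uc_j}\,\Delta^\bru_{\tau(c),\ui_j}.
\]

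For the pullback, identity \eqref{eq:iota_grid} together with the isomorphism $\BR'_\br\cong\BR_\bru$ gives $\iota^\ast\Delta^\bru_{\tau(c),\uk}=\grid_{c,k}$ for any $\uk\in\orb[k]$. Combined with the orbit sum in \eqref{eq:d_i=|orb|},
\[
\iota^\ast L^\bru_{\tau(c),\ui_j}=\tfrac{1}{2}\sum_{k\in\pm I}\Bigl(\sum_{\uk\in\orb[k]}\ucartan_{\ui_j\uk}\Bigr)\dlog\grid_{c,k}=\tfrac{1}{2}\sum_{k\in\pm I}a_{ik}\dlog\grid_{c,k}=L_{c,i}.
\]
Pulling back the third identity in the previous display and comparing with \cref{cor:crossing_changes_one_minor} applied to $\br$ then forces $\iota^\ast\tilde t_{\uc_j}=\grid_{c-1,i}/\grid_{c,i}=\tpar_c$ for every $j=1,\ldots,d$.

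Assembly is now mechanical: $\iota^\ast\om_{\bru,\uc_j}=\sign(i)\,(L_{c,i}+\dlog\tpar_c)\wedge L_{c,i}=\sign(i)\,\dlog\tpar_c\wedge L_{c,i}$, and summing over $j$ while using $L_{c-1,i}-L_{c,i}=\dlog\tpar_c$ (from \eqref{eq:h_tpar}, \eqref{eq:Lci_vs_dlog_h}, \eqref{eq:la_ach_scalar} applied to $\br$) yields $d_i\sign(i)\,L_{c-1,i}\wedge L_{c,i}=\om_{\br,c}$, as required. The main obstacle is the orbit-interior step: the argument must be propagated through both sign regimes $i\in I$ and $i\in-I$ (switching between $\hr$ and $\hb$), and one must verify that the pairings $\<\uom_{\ui_j},\uach_{\ui_{j'}}\>=\delta_{jj'}$ in $\orb[i]$ genuinely decouple the factors $\uach_{\ui_j}(\tilde t_{\uc_j})$ so that the contribution of each individual letter isolates cleanly.
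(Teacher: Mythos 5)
Your argument is correct and follows essentially the same route as the paper's proof: both reduce to an orbit-by-orbit computation at solid crossings, use the vanishing of the Cartan entries $\ucartan_{\ui\uj}$ within a $\sigma$-orbit to collapse the intermediate $1$-forms to their orbit-boundary values, and then apply \eqref{eq:iota_grid} and \eqref{eq:d_i=|orb|} to identify $\iota^\ast L^\bru_{\tau(c),\ui}$ with $\Lci ci$. The paper packages your explicit telescoping in the parameters $\tilde t_{\uc_j}$ into the intermediate forms $\Lpci c\ui$ and a citation of \cref{cor:crossing_changes_one_minor}, but the content is the same.
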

\begin{proof}
Recall from~\eqref{eq:Lci-defn}--\eqref{eq:ombr_dfn} that we have $1$-forms $\Lci ci=\frac{1}{2} \sum_{k \in \pm I} a_{i k} \dlog \Delta_{c,k}$ on $\BR_\br$ for $(c,i)\in[m]\times (\pm I)$, and that for $c\in[m]$ and $i:=i_c$, we set $\om_c(\br):=\sign(i) d_i \Lci{c-1}i \wedge \Lci ci$. For $\ui\in\pm\UI$, introduce a $1$-form $\Lpci c\ui:=\frac{1}{2} \sum_{\uj \in \pm \UI} \ucartan_{\ui \uj} \dlog \Delta'_{c,\uj}$. By \cref{cor:crossing_changes_one_minor}, we have 
\begin{equation}\label{eq:folding_omp_br}
  \omp_\br=\sum_{c\in\Jo} \sign(i_c)\sum_{\ui\in\orb[i_c]} \Lpci{c-1}\ui\wedge\Lpci c\ui.
\end{equation}
Next, applying~\eqref{eq:iota_grid} and~\eqref{eq:d_i=|orb|}, we see that for all $c\in[m]$, $i\in \pmn$, and $\ui\in\orb[i]$, we have
\begin{equation}\label{eq:folding_Lci}
  \iota^\ast  \Lpci c\ui
= \iota^\ast \left(  \frac12 \sum_{\uj\in \pm\UI} \da_{\ui\uj} \dlog\ugrid_{c,\uj}\right)
=\frac12 \sum_{j\in \pmn} \left( \sum_{\uj\in \orb[j]} \da_{\ui\uj}\right) \dlog\grid_{c,j}
= \Lci ci.
\end{equation}
The result follows by combining~\eqref{eq:folding_omp_br}--\eqref{eq:folding_Lci} with~\eqref{eq:d_i=|orb|}.
\end{proof}

\subsection{Folding seeds} We briefly review the notion of folding seeds, following \cite[Section 4.4]{FWZ_book}, though translating into our conventions.

\begin{definition}\label{def:admiss}
	Let $\USigma=(\UT, \ux, \ud, \uom)$ be a seed with $\ud=(1, \dots, 1)$, with mutable indices $\UJmut$ and frozen indices $\UJfro$. Let $\sig$ be a bijection acting on $\UJ:=\UJmut\sqcup\UJfro$. Let $J$ be the set of $\sig$-orbits, and for $j\in J$, we denote the corresponding orbit by $\orb[j]$. 
 An orbit is \emph{mutable} (resp., \emph{frozen}) if it consists entirely of mutable (resp., frozen) indices. 
 The bijection $\sig$ also acts on the set of cluster variables by $\sig(\ux_{\uj})= \ux_{\sig(\uj)}$.
	We call $\USigma$ \emph{weakly $\sig$-admissible}\footnote{Our notion of weak $\Sheta$-admissibility differs from the notion of admissibility in \cite[Definition 4.4.1]{FWZ_book} in that we do not require that for any $\ua, \ua'$ in the same orbit and any $\uk$ mutable, $\utB_{\ua\uk}\tB_{\ua'\uk}\geq 0$.} if:
	\begin{enumerate}
		\item\label{admiss1} Every orbit is either mutable or frozen.
		\item\label{admiss2} The 2-form $\omega$ is invariant under the $\Sheta$-action.
		\item\label{admiss3} For all $\ua,\ua'\in\UJmut$ in the same $\sig$-orbit, $\utB_{\ua\ua'}=0$, where $\utB$ is the exchange matrix of $\USigma$.
	\end{enumerate}
\end{definition}
Part~\eqref{admiss1} implies a natural decomposition $J=\JJmut\sqcup \JJfro$. The map $\Sheta$ also acts on the torus $\UT$ by permuting coordinates. Notice that $\UT^{\Sheta}$ is isomorphic to $(\C^\times)^{|\orbSet|}$. We denote by $\iota:\UT^\sig\hookrightarrow \UT$ the inclusion map.

\begin{definition}\label{def:folded}
	Suppose $\USigma$ is weakly $\Sheta$-admissible, with notation as in \cref{def:admiss}.  
 The \emph{folded seed} is a seed with index set $J=\JJmut\sqcup \JJfro$, defined as $\iota^\ast\USigma=\fseed \Sigma:=(T, \x, \d, \om)$ where
	\begin{itemize}
        \item $T=\UT^\sig$;
		\item $\x=(x_{\orbit j})_{\orbit j \in \orbSet}$, where for $j\in J$, $x_{\orbit{j}}:=\iota^* \ux_\uj$ for any $\uj \in \orb[j]$;
		\item $d_{\orbit j}= |\orb[j]|$ for $j\in J$;
		\item $\om=\iota^* \uom.$
	\end{itemize}
\end{definition}

Note that $x_{\orbit j}$ is well-defined, since $\iota^* \ux_\uj=\iota^* \ux_{\sheta (\uj)}$ for all $\uj\in\orb[j]$. The exchange matrix $\fmtx \tB$ of $\fseed \Sigma$ is therefore written in terms of the exchange matrix $\utB$ of $\USigma$ as
\begin{equation}\label{eq:foldedB}
  \tB_{ab}=\sum_{\ua \in \orb[a]} \utB_{\ua \ub}, \quad\text{where $\ub\in\orb[b]$ is arbitrary}.
\end{equation}
In particular, if $\USigma$ is integral then so is $\Sigma$. For the rest of this subsection, we assume that $\USigma$ and $\Sigma$ are integral.

For weakly $\Sheta$-admissible $\USigma$ and $j\in\JJmut$, we denote by $\mu_{\orb[j]} \USigma=\mu_{\orb[j]} (\USigma)$ the result of mutating $\USigma$ once at each index in $\orb[j]$, and call $\mu_{\orb[j]}$ an \emph{orbit-mutation}. Note that $\mu_{\orb[j]} \USigma$ does not depend on the order of mutation. The seed $\mu_{\orb[j]} \USigma$ may not be weakly $\Sheta$-admissible; we introduce the following notion to avoid such mutations.

\begin{definition}\label{def:quasi-admiss}
	Let $\USigma$ be a weakly $\Sheta$-admissible seed and $j\in\JJmut$. 
 We call $\mu_{\orb[j]}$ \emph{quasi-admissible} if for all $k\in\JJmut$, we have $\tB_{\uk\uj} \tB_{\uk'\uj} \geq 0$ for all $\uk, \uk' \in \orb[k]$ and $\uj \in \orb[j]$.
\end{definition}

The name ``quasi-admissible'' is justified by the following proposition.

\begin{prop}\label{prop:quasi-admiss-mut}
	Let $\USigma$ be a weakly $\Sheta$-admissible seed and $j\in\JJmut$. If $\mu_{\orb[j]}$ is quasi-admissible, then $\mu_{\orb[j]}(\USigma)$ is weakly $\Sheta$-admissible and
\begin{equation*}%
  \mu_{\orbit j}(\iota^\ast \USigma)\quasi \iota_j^\ast (\mu_{\orb[j]} \USigma),
\end{equation*}
where $\iota_j$ 
 is an inclusion of the associated tori. 
\end{prop}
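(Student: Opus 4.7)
The plan is to prove the two claims in turn. The main obstacle will be verifying Condition (3) of Definition \ref{def:admiss} for the mutated seed $\mu_{\orb[j]}(\USigma)$: the vanishing of the relevant entries rests on a delicate combination of $\sigma$-equivariance with the sign discipline imposed by quasi-admissibility, and requires care because the orbits $\orb[a]$ and $\orb[j]$ may have different sizes.

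First, I would verify that $\mu_{\orb[j]}(\USigma)$ is weakly $\sigma$-admissible. Condition (1) is immediate since mutation does not change the mutable/frozen status of any index, so orbits remain entirely mutable or entirely frozen. For Condition (2), the individual mutations at distinct elements of $\orb[j]$ commute (because $\utB_{\uj\uj'}=0$ within the orbit by admissibility of $\USigma$) and together define a $\sigma$-equivariant birational map between the two tori; pulling back the $\sigma$-invariant 2-form $\uom$ along this map preserves $\sigma$-invariance. For Condition (3) within $\orb[a]=\orb[j]$, the mutation formula negates the relevant entries, giving $\utB'_{\uj\uj'}=-\utB_{\uj\uj'}=0$. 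For a mutable orbit $\orb[a]\neq\orb[j]$ and $\ua\neq\ua'$ in $\orb[a]$, I would compute
\[
\utB'_{\ua\ua'} = \utB_{\ua\ua'} + \sum_{\uj\in\orb[j]}\tfrac12\bigl(|\utB_{\ua\uj}|\utB_{\uj\ua'}+\utB_{\ua\uj}|\utB_{\uj\ua'}|\bigr)
\]
and show it vanishes: using $\sigma$-equivariance of $\utB$ to re-index the sum via some $\tau$ with $\tau(\ua)=\ua'$, combined with the sign constraint from quasi-admissibility applied to both $\orb[a]$ and any other mutable orbit meeting the support of the sum, the summands cancel in pairs.

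Second, for the quasi-equivalence, both seeds $\mu_{\orbit j}(\iota^\ast\USigma)$ and $\iota_j^\ast(\mu_{\orb[j]}\USigma)$ share the same underlying torus $\UT^\sigma$, the same $\d$-vector, and the same frozen sublattice. That the two 2-forms coincide follows because $\iota$ and $\iota_j$ are inclusions of $\sigma$-fixed loci and pullback commutes with mutation of 2-forms along the natural birational maps between tori. The crux is the comparison of mutable cluster variables at the orbit $\orb[j]$. Pulling back the exchange relation in $\USigma$ at any $\uj\in\orb[j]$ via $\iota$ and grouping terms by orbit yields
\[
x_{\orb[j]}\cdot\iota^\ast\tilde{\ux}_\uj
=\prod_{\orb[k]}x_{\orb[k]}^{\,p_{\orb[k]}}+\prod_{\orb[k]}x_{\orb[k]}^{\,n_{\orb[k]}},
\]
where $p_{\orb[k]}:=\sum_{\uk\in\orb[k]}[\utB_{\uk\uj}]_+$ and $n_{\orb[k]}:=\sum_{\uk\in\orb[k]}[-\utB_{\uk\uj}]_+$, and these sums are independent of the choice of $\uj\in\orb[j]$ by $\sigma$-invariance. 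Quasi-admissibility forces $\min(p_{\orb[k]},n_{\orb[k]})=0$ for mutable $\orb[k]$, so for those orbits $p_{\orb[k]}=[\tB_{\orb[k],\orb[j]}]_+$ and $n_{\orb[k]}=[-\tB_{\orb[k],\orb[j]}]_+$, matching the folded exchange polynomial at $\orb[j]$ on the mutable part. For frozen $\orb[k]$, the exponents may differ from $[\pm\tB_{\orb[k],\orb[j]}]_+$ by a common nonnegative shift $c_{\orb[k]}:=\min(p_{\orb[k]},n_{\orb[k]})$, contributing a monomial $M:=\prod_{\orb[k]\text{ frozen}}x_{\orb[k]}^{c_{\orb[k]}}$ in the frozen variables. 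Factoring out $M$ recovers the exchange relation for $x'_{\orb[j]}$ in $\mu_{\orbit j}(\iota^\ast\USigma)$, so $\iota^\ast\tilde{\ux}_\uj=M\cdot x'_{\orb[j]}$, which is exactly the claimed quasi-equivalence.
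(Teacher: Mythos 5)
Your proof is correct and follows essentially the same route as the paper: establish weak $\sigma$-admissibility of the mutated seed from $\sigma$-equivariance of $\utB$ plus quasi-admissibility, then compare the folded exchange relation at $\orb[j]$, with the surplus common frozen factor (your $M$, the paper's $N$ built from ``bad paths'') accounting for the quasi-equivalence. One minor simplification: in your verification of condition (3) for a mutable orbit $\orb[a]\neq\orb[j]$ no re-indexing or pairwise cancellation is needed, since quasi-admissibility forces $\utB_{\ua\uj}$ and $\utB_{\ua'\uj}$ to have the same weak sign, so each summand $\tfrac{1}{2}\bigl(|\utB_{\ua\uj}|\utB_{\uj\ua'}+\utB_{\ua\uj}|\utB_{\uj\ua'}|\bigr)$ already vanishes individually.
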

\begin{proof}
	We use the notation of \cref{def:admiss,def:folded}. In particular, let $\Sigma:=\iota^\ast\USigma$ be the folded seed on index set $J$. Since the exchange matrix $\utB$ of $\USigma$ is skew-symmetric, it is equivalent to a quiver $\UQ$; we will use the two interchangeably. 
	
It is clear that $\mu_{\orb[j]} \USigma$ satisfies condition~\eqref{admiss1} of \cref{def:admiss}. Because there are no arrows between vertices in $\orb[j]$, mutating at all vertices of $\orb[j]$ shows 
 that the exchange matrix of $\mu_{\orb[j]} \USigma$ satisfies $(\mu_{\orb[j]} \utB )_{\ua\ub}= (\mu_{\orb[j]} \utB )_{\sheta(\ua) \sheta(\ub)}$ for $\ua,\ub\in\UJ$. The assumption that $\mu_{\orb[j]}$ is quasi-admissible implies that for $k\in\JJmut$, $(\mu_{\orb[j]} \utB )_{\uk\uk'}=0$ for all $\uk,\uk'\in\orb[k]$. 
 Thus, $\mu_{\orb[j]} \USigma$ is weakly $\Sheta$-admissible.

	Let $\Sigma_1:=\mu_{\orbit j}(\Sigma)$ and $\Sigma_2:=\iota^\ast_j(\mu_{\orb[j]} \USigma)$. Let $\by$ and $\bz$ be the clusters in $\Sigma_1$ and $\Sigma_2$, respectively. For $\orbit k \neq \orbit j$, $y_{\orbit k}=z_{\orbit k}$ because both are equal to the cluster variable $x_k$ of $\Sigma$. 

	To analyze the relationship between $y_{\orbit j}$ and $z_{\orbit j}$, we need the following notions. Let $a,k\in J$ and  choose $\ua\in\orb[a]$, $\uk \in \orb[k]$. We call a path $\ua \to \uk \to \ua'$ in $\UQ$ a \emph{bad path} if $\ua, \ua'$ are in the same orbit; condition~\eqref{admiss2} of \cref{def:quasi-admiss} implies that no bad path in $\UQ$ begins or ends in a mutable orbit. Let $P_{\uk}$ be a maximal (by inclusion) collection of arrow-disjoint bad paths with middle vertex $\uk$.
	
	In $\USigma$, for $\uj\in\orb[j]$, the mutation $\ux_\uj'$ of $\ux_\uj$ is defined by the exchange relation
	\[\ux_\uj \ux_\uj'= M' N' + M'' N'', \quad\text{where}\]
\[N':= \prod_{(\ua \to \uj \to \ua')\in P_\uj} \ux_{\ua} \qquad \text{and} \qquad N'':= \prod_{(\ua \to \uj \to \ua')\in P_\uj} \ux_{\ua'},\]
 and $M', M''$ are the appropriate monomials in the cluster variables of $\USigma$. Notice that if $\ux_{\ua}$ appears in $M'$ and $\ux_{\ub}$ appears in $M''$ for $\ua\in\orb[a]$, $\ub\in\orb[b]$, then $\orb[a] \neq \orb[b]$, by the maximality of $P_\uj$. Notice also that by assumption, $N'$ and $N''$ are monomials in the frozen variables. We set $N:=\iota^*(N')= \iota^*(N'')$.  Using \eqref{eq:foldedB}, we have%
\begin{equation}\label{eq:z_orbit_j}
   \iota^* (\ux_\uj')= N  \frac{\iota^* M' + \iota^* M''}{x_j}, \qquad \text{and} \qquad \mu_j(x_{\orbit j})= \frac{\iota^* M' + \iota^* M''}{x_{\orbit j}}.
\end{equation}
 This shows that the tori and the lattices spanned by the frozens of $\Sigma_1, \Sigma_2$ agree, and that cluster variables differ by Laurent monomials in frozens. The multipliers $\d$ of both seeds are the same by definition.  The 2-forms of the two seeds agree by the functoriality of pullbacks. 
\end{proof}

\section{Proof of Theorem~\ref{thm:main} for $G$ multiply-laced}\label{sec:mult_laced}

Fix multiply-laced braid words $\br,\br'$ related by a long braid move \bmref{bm3}, so that
\[\br= \br_1\underbrace{iji\dots}_{\text{$m_{ij}$ letters}}\br_2= \br_1 \midbr \br_2 \quad\text{and}\quad \br'= \br_1\underbrace{jij\dots}_{\text{$m_{ij}$ letters}}\br_2= \br_1 \midbr' \br_2.\]
By \cref{dfn:mis_B1_B3}, we have an isomorphism 
$\mis: \BR_{\br} \xrightarrow{\sim} \BR_{\br'}.$ The goal of this section is to show \ref{IS0}, \ref{IS2}, and thus \cref{thm:moves}, for the geometrically defined seeds $\Sigma_{\br}$ and $\mis^*\Sigma_{\br'}$, and a particular mutation $\Sigma'$ of $\Sigma_{\br}$ (defined in \eqref{eq:Sig''-multiply-laced}). We will then show \cref{thm:main} for $G$ multiply-laced in \cref{subsec:pf_thm_main_mult_laced}, and discuss consequences of \cref{thm:main} in \cref{sec:lefschetz}

 Note that we already have shown \cref{thm:main,thm:moves} for simply-laced braid varieties.

\subsection{Proof of \ref{IS0} for long braid moves}
Let $\brl_1, \brl_2, \midbrl, \midbrl'$ be lifts of $\br_1, \br_2, \midbr, \midbr'$, respectively, following the conventions of \cref{sec:folding_braid}. Define $\brl:= \brl_1 \midbrl \brl_2$, $\brl':= \brl_1  \midbrl' \brl_2$ which are lifts of $\br$ and $\br'$, respectively. There is at least one sequence of short braid moves~\ref{bm3} relating $\brl$ and $\brl'$; fix such a sequence of braid moves and denote the corresponding isomorphism of braid varieties by $\ml: \BR_{\brl} \xrightarrow{\sim} \BR_{\brl'}$. 

Throughout this section, we abuse notation and use $\iota$ to denote the compositions
\[\iota: \BR_{\br} \xrightarrow{\sim} (\BR'_{\br} )^\sig \hookrightarrow \BR'_{\br} \xrightarrow{\sim} \BR_{\brl}  \qquad \text{and}\qquad \iota: \BR_{\br'} \xrightarrow{\sim} (\BR'_{\br'} )^\sig\hookrightarrow \BR'_{\br'} \xrightarrow{\sim} \BR_{\brl'}.\]

\begin{lemma}\label{lem:lifting_braid_iso}
	We have the equality $\iota \circ \mis = \ml \circ \iota$.
\end{lemma}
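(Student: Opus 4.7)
The plan is to verify $\iota\circ\mis=\ml\circ\iota$ flag-by-flag: for arbitrary $(\Xbul,\Ybul)\in\BR_\br$, show that the two resulting tuples in $\BR_{\brl'}$ coincide at every position. The argument will rest on two observations. First, both $\mis$ and $\ml$ are ``local'' in that they fix all weighted flags at positions outside the window of the braid move. Second, inside such a window, the flags in a braid variety are uniquely determined by their boundary flags together with the strict sequence of relative positions, via iterated application of \cref{lemma:rel-pos-facts}(3).

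First I would fix indexing. Let $l,\ldots,r$ be the positions in $\br$ affected by the move $\midbr\to\midbr'$. Using the orbit refinement of \cref{sec:folding_braid}, these correspond to a lifted window at positions $\tilde{l},\ldots,\tilde{r}$ in both $\brl$ and $\brl'$, so that the letters of $\brl'$ (resp.\ $\brl$) inside this window arise from the orbit refinement of $\midbr'$ (resp.\ $\midbr$). In particular, $\brl$ and $\brl'$ are related by a chain of short braid moves each supported inside this window, and $\ml$ is their composition.

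Next I would argue that $\iota\circ\mis(\Xbul,\Ybul)$ and $\ml\circ\iota(\Xbul,\Ybul)$ both agree with $\iota(\Xbul,\Ybul)$ on the flags at positions outside the lifted window $[\tilde{l}-1,\tilde{r}]$. For $\iota\circ\mis$ this is immediate from \cref{dfn:mis_B1_B3}, since $\mis$ fixes the flags of $(\Xbul,\Ybul)$ at positions in $[0,l-1]\cup[r,m]$, and applying $\iota$ then maps these to the same flags of $\iota(\Xbul,\Ybul)$ at positions outside the lifted window (any refinement flags being inserted by \cref{lemma:rel-pos-facts}(3) using only the fixed boundary data). For $\ml\circ\iota$ this holds because each short braid move comprising $\ml$ is supported inside the lifted window and fixes all flags outside its (smaller) window. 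In particular, the two sides agree at the boundary positions $\tilde{l}-1$ and $\tilde{r}$.

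Finally I would invoke uniqueness inside the lifted window: both tuples lie in $\BR_{\brl'}$, share the boundary flags just identified, and satisfy the strict sequence of relative positions dictated by the letters of $\brl'$ in positions $\tilde{l},\ldots,\tilde{r}$. This sequence is a reduced expression for $\iota$ applied to the common Demazure product of $\midbr$ and $\midbr'$, since by~\eqref{eq:iota_W} and \cref{lemma:iota_rel} the map $\iota$ sends each simple reflection to a commuting product of simple reflections in $\UW$ of total length equal to the orbit size. Iterated application of \cref{lemma:rel-pos-facts}(3) then uniquely determines the intermediate flags, so the two tuples coincide. No substantive obstacle is anticipated; the care required is entirely in bookkeeping the indexing between $\br$, $\brl$, and $\brl'$ under the refinement $\relab$.
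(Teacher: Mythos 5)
Your proposal is correct and follows essentially the same route as the paper's (much terser) proof: both arguments reduce to the observations that $\mis$ and $\ml$ fix all flags outside the (lifted) window, that $\midbrl$ and $\midbrl'$ are reduced, and that the intermediate flags inside the window are then uniquely determined by the boundary data via \cref{lemma:rel-pos-facts}(3). The extra bookkeeping you supply (length-additivity of $\iota$ under the orbit refinement, agreement at the boundary positions) is exactly what the paper leaves implicit.
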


\begin{proof}
Observe that the words $\midbrl,\midbrl'$ are reduced. Thus, the sequence of moves~\ref{bm3} from $\midbrl$ to $\midbrl'$ fixes the weighted flags to the left and right of the indices involved in $\midbrl,\midbrl'$, and all weighted flags in between are uniquely determined; cf. \cref{dfn:mis_B1_B3}.
\end{proof}

\begin{lemma}\label{lemma:IS0_mult_laced}
	We have $\om_{\br}= \mis^*\om_{\br'}$; that is, \ref{IS0} holds for $\br, \br'$.
\end{lemma}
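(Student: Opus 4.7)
The plan is to deduce the long-move identity $\om_\br = \mis^\ast \om_{\br'}$ from the simply-laced short-move case by folding, using the compatibility diagram of Lemma~\ref{lem:lifting_braid_iso} and the form-pullback identity of Lemma~\ref{lem:om_folding}.

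First I would record the ingredients. The isomorphism $\ml : \BR_\brl \xrasim \BR_{\brl'}$ is a composition of the short braid-move isomorphisms from \cref{sec:2braid}, so by the simply-laced case of~\ref{IS0} (already established in \cref{subsec:pf_thm_main_simply_laced} for each short move), the two-form is preserved:
\begin{equation*}
\ml^\ast \om_{\brl'} = \om_{\brl}.
\end{equation*}
Next, \cref{lem:om_folding} (applied once to $\br$ and once to $\br'$, after identifying $\BR'_\br \cong \BR_\brl$ and $\BR'_{\br'} \cong \BR_{\brl'}$) gives
\begin{equation*}
\iota^\ast \om_{\brl} = \om_\br, \qquad \iota^\ast \om_{\brl'} = \om_{\br'}.
\end{equation*}
Finally, \cref{lem:lifting_braid_iso} tells us that $\iota \circ \mis = \ml \circ \iota$ as morphisms $\BR_\br \to \BR_{\brl'}$.

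The proof is then the one-line computation
\begin{equation*}
\mis^\ast \om_{\br'} \;=\; \mis^\ast \iota^\ast \om_{\brl'} \;=\; (\iota \circ \mis)^\ast \om_{\brl'} \;=\; (\ml \circ \iota)^\ast \om_{\brl'} \;=\; \iota^\ast \ml^\ast \om_{\brl'} \;=\; \iota^\ast \om_{\brl} \;=\; \om_\br.
\end{equation*}

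There is no real obstacle beyond assembling these ingredients correctly; the main conceptual point is that \cref{lem:lifting_braid_iso} lets us replace a long multiply-laced move by a sequence of short simply-laced moves on the lift, where \ref{IS0} is already known, while \cref{lem:om_folding} transports the resulting equality of forms back down along the folding inclusion $\iota$. The only item that deserves a careful check is that the sequence of short \ref{bm3} moves relating $\brl$ to $\brl'$ indeed exists and that $\ml$ is well-defined (independent of the chosen sequence only up to what is needed here, namely the value of $\ml^\ast \om_{\brl'}$); this is immediate since each short move preserves the form, so any sequence yields the same pullback.
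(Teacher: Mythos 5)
Your proof is correct and coincides with the paper's own argument: the paper proves the same chain $\om_{\br}=\iota^* \om_{\brl}= \iota^* \ml^* \om_{\brl'}= \mis^* \iota^* \om_{\brl'}= \mis^* \om_{\br'}$ using \cref{lem:om_folding}, \ref{IS0} for the simply-laced lifts, and \cref{lem:lifting_braid_iso} in exactly the way you describe. Nothing is missing.
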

\begin{proof}
	We have $\om_{\br}=\iota^* \om_{\brl}= \iota^* \ml^* \om_{\brl'}= \mis^* \iota^* \om_{\brl'}= \mis^* \om_{\br'}$,
	where we have used \cref{lem:om_folding}, \ref{IS0} for $\brl, \brl'$, \cref{lem:lifting_braid_iso}, and \cref{lem:om_folding} again (in that order).
\end{proof}

\subsection{Proof of \ref{IS2} for long braid moves}
 We continue to use the notation established earlier in this section. Without loss of generality, we assume that either $\delta=ijij$ (in the case when $\alpha_i,\alpha_j$ form a root subsystem of type $B_2$ or $C_2$, where $|\orb[i]|=2$ and $|\orb[j]|=1$), or $\delta=121212$ (in the case of $G=G_2$, where $|\orb[1]|=3$ and $|\orb[2]|=1$).

The words $\midbr,\midbr'$ involve indices $r+1,\dots,r+p$, and for convenience, we decrease all indices by $r$ so that $\midbr,\midbr'$ are supported on $1,\dots,p$. 
 Similarly, we assume that $\midbrl,\midbrl'$ involve indices $1,\dots, \ptil$. 
 We define the seed 
\begin{equation} \label{eq:Sig''-multiply-laced}
	\Sigma':= \perm_{\fold} \circ \mu_{\fold}(\Sigma),
\end{equation}
 where $\perm_{\fold}$ is a permutation and $\mu_{\fold}$ is a sequence of mutations involving $1,\dots,p$. We list $\perm_{\fold},\mu_{\fold}$ in \tabref{tab:fold}(a--b). 
 In our tables, we only list the restriction of $\perm_{\fold}$ to the solid crossings in $1,\dots,p$. We would like to show that $\Sigma'$ and $\mis^* \Sigma_{\br'}$ are quasi-equivalent. To do so, we will eventually fold the seeds $\Sigma_{\brl}$ and $\Sigma_{\brl'}$, and then establish a chain of quasi-equivalences involving the folded seeds, $\Sigma'$ and $\mis^* \Sigma_{\br'}$.

As a first step, we fix a particular sequence $S$ of braid moves~\ref{bm3} between the lifts $\brl, \brl'$, and thus also fix $\ml$. By \cref{thm:main,thm:moves}, there is a corresponding mutation sequence $\mu_{\braid}$ and relabeling $\perm_{\braid}$ such that
\begin{equation}\label{eq:mu_braid-def}
	\perm_{\braid} \circ \mu_{\braid} (\USigma_{\brl})= \ml^* \USigma_{\brl'},
\end{equation}
where the seeds $\USigma_{\brl},\USigma_{\brl'}$ are the seeds denoted $\Sigma_{\brl},\Sigma_{\brl'}$ in \cref{sec:deodhar-geometry}. 
The sequence $S$ of braid moves is chosen so that $\mu_{\braid}$ and $\perm_{\braid}$ are as in \tabref{tab:fold}(c--d).%
We have the relabeling maps $\relab:[\mtil]\to[m]$ and $\relab':[\mtil]\to[m]$ as in \cref{sec:folding_braid}, where $\br,\br'$ (resp., $\brl,\brl'$) are on $m$ (resp., on $\mtil$) letters. 
By construction, we can extend the action of $\sig$ from $I$ to $\UJ_{\brl}$ and $\UJ_{\brl'}$. Specifically, for each letter $i_c$ in $\br$, $\sig$ permutes the letters $\orb[i_c]$ in the corresponding consecutive subword $\relab^{-1}(c)$ of $\brl$, and similarly for $\brl'$.

\begin{proposition}\label{prop:quasi-pullback-and-geo-seed} 
Let $C\subset \Jo$ be the set of indices $c$ such that none of $\uc\in\orb[c]$ is used in $\mu_{\braid}$, and let $\orb[C]:=\relab^{-1}(C)$. Let $C'\subset J_{\br'}$ and $\orbCp\subset\UJ_{\brl'}$ be defined similarly. 
 Then
\begin{equation*}%
  \iota^\ast{(\ifreeze{\USigma_{\brl}}[\orbC])} \quasi \ifreeze{\Sigma_{\br}}[C] \qquad \text{ and }\qquad \iota^\ast{(\ifreeze{\USigma_{\brl'}}[\orbCp])}  \quasi \ifreeze{\Sigma_{\br'}}[C'].
\end{equation*}
\end{proposition}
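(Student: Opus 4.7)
The plan is to verify that $\USigma_{\brl}$ is weakly $\sig$-admissible in the sense of \cref{def:admiss}, identify the folded seed $\iota^\ast\USigma_{\brl}$ with $\Sigma_\br$, and then use the fact that freezing commutes with folding to deduce the desired quasi-equivalence. The argument for $\br',\brl'$ is entirely parallel, so we focus on the first claim.

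To verify weak $\sig$-admissibility of $\USigma_{\brl}$ we check the three conditions of \cref{def:admiss}. Condition (1) follows from $\sig$-equivariance of the positive distinguished and almost-positive subexpression constructions: each block $\orb[c]=\relab^{-1}(c)\subset[\mtil]$ is a single $\sig$-orbit, and being mutable (equivalently, being solid) in $\brl$ is a block-level property, determined by whether $c$ itself is mutable in $\br$. For condition (2), the $\sig$-invariance of $\om_{\brl}$ is immediate from the explicit formula~\eqref{eq:folding_omp_br} for its pullback to $\BR'_\br$: the inner sum $\sum_{\ui\in\orb[i_c]}$ is manifestly $\sig$-invariant, since $\sig^\ast\ugrid_{c,\uk}=\ugrid_{c,\sig(\uk)}$ and hence $\sig^\ast\Lpci{c}{\ui}=\Lpci{c}{\sig(\ui)}$ (using $\sig$-invariance of the Cartan matrix of $\UG$).

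The main technical step is condition (3): for distinct $\uc,\uc'\in\orb[c]$ mutable, $\utB_{\uc\uc'}=0$. The crucial input is that distinct elements of a $\sig$-orbit in $\UI$ label pairwise commuting simple reflections of $\UG$, so $\da_{\ui\uj}=0$ for $\ui\neq\uj\in\orb[i]$. Expanding the coefficient of $\dlog\ucrossing{\uc}\wedge\dlog\ucrossing{\uc'}$ in $\om_{\brl}$ using~\eqref{eq:ombr_dfn} and \cref{cor:crossing_changes_one_minor} to track how grid minors evolve across the block $\orb[c]$, the vanishing of $\da_{\ui\uj}$ for $\ui\neq\uj\in\orb[i]$ then forces this coefficient to vanish. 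This is the most delicate computation in the argument.

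Once weak $\sig$-admissibility is established, we identify $\iota^\ast\USigma_{\brl}$ with $\Sigma_\br$. The torus $(\UT_{\brl})^\sig$ equals $T_\br$ by~\eqref{eq:iota_BR}. By \cref{lemma:iota_crossing}, for each solid $c\in\Jo$ and any $\uc\in\orb[c]$, we have $\iota^\ast\ucrossing{\uc}=\crossing{c}$, so cluster variables agree under $\orb[c]\leftrightarrow c$. The multipliers $d_{\orb[c]}=|\orb[c]|$ equal $d_{i_c}$ by~\eqref{eq:d_i=|orb|}, and the $2$-forms agree by \cref{lem:om_folding}; together with matching mutable/frozen partitions (from condition (1)), we conclude that $\iota^\ast\USigma_{\brl}$ and $\Sigma_\br$ coincide as seeds. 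Since freezing the indices $\orb[C]$ of $\USigma_{\brl}$ and then folding produces the same seed as folding first and then freezing $C$ in $\Sigma_\br$, the desired quasi-equivalence $\iota^\ast(\ifreeze{\USigma_{\brl}}[\orbC])\quasi\ifreeze{\Sigma_\br}[C]$ follows.
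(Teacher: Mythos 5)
There is a genuine gap at the step ``cluster variables agree under $\orb[c]\leftrightarrow c$.'' You deduce this from \cref{lemma:iota_crossing}, but that lemma only identifies the \emph{chamber minors}: $\iota^\ast\ugrid_{c-1,\ui}=\crossing{c}$. The cluster variables $x_c$ and $\ux_{\uc}$ are not the chamber minors; they are the Laurent monomials in chamber minors obtained by inverting the unitriangular matrix of orders of vanishing $\ord_{V_{\cp}}\grid_{c,k}$ on the Deodhar hypersurfaces. So to conclude $\iota^\ast(\ux_{\uc})=x_c$ you must show that the orders of vanishing match, i.e.\ $\ord_{V_{\cp}}\grid_{c,k}=\ord_{V'_{\ucp}}\ugrid_{c,\uk}$ --- this is exactly the content of the paper's proof, and it is carried out only for $\cp\in\Jo\setminus C$, where the explicit pattern of hollow crossings in $\midbr,\midbrl$ (hollow exactly on an interval $[r+1,p]$ or $[r,p-1]$) makes the computation via \cref{prop:triang,prop:APS-gives-var-nonappearance,lemma:tpar} tractable. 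The same gap infects your verification of condition (3) of weak $\sig$-admissibility: $\utB_{\uc\uc'}$ is the coefficient in the expansion of $\om_{\brl}$ in the basis of \emph{cluster variables}, so knowing $\ucartan_{\ui\uj}=0$ for distinct $\ui,\uj\in\orb[i]$ does not by itself give $\utB_{\uc\uc'}=0$ without the order-of-vanishing data.

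More seriously, your route proves too much: identifying the full seeds $\iota^\ast\USigma_{\brl}$ and $\Sigma_\br$ (all cluster variables, not just those in the braid-move window) is precisely the final Corollary of the paper, which the authors explicitly state they were \emph{unable} to prove directly from Deodhar geometry; they obtain it only a posteriori from the combinatorial algorithm of \cref{prop:alg}, which in turn presupposes \cref{thm:main,thm:moves}. Invoking it here would be circular, since \cref{prop:quasi-pullback-and-geo-seed} is an ingredient in the proof of \cref{thm:moves} for long braid moves. This is why the statement carries the freezings at $C$ and $\orbC$: they are not a bookkeeping device to make folding go through, but the precise restriction under which the identification of cluster variables can be established at this stage of the induction. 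Your outer framework (fold, then commute folding with freezing) is fine, but the missing piece is the explicit order-of-vanishing computation inside the window, which is where the actual work of the paper's proof lies.
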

\begin{proof}
We focus on the first quasi-equivalence. By \cref{lemma:IS0_mult_laced}, it suffices to show that for each $\cp\in \Jo\setminus C$ and $\ucp\in\relab^{-1}(\cp)$, we have $\iota^\ast(\ux_\ucp)=x_\cp$. Let us fix such $\cp,\ucp$. 
 Choose also $c\in[0,p]$, $k\in I$, and $\uk\in\orb[k]$. It is enough to show the statement
\begin{equation}\label{eq:ord_lift}
  \ord_{V_\cp}\grid_{c,k}=\ord_{V'_{\ucp}} \ugrid_{c,\uk},
\end{equation}
where $V'_{\ucp}\subset \BigBR'_\br$ is the Deodhar hypersurface corresponding to $\ux_\ucp\in\C[\BR_{\brl}]\cong\C[\BR'_\br]$ and $\ugrid_{c,\uk}$ was defined in \cref{sec:folding_grid}.

We observe that the hollow crossings in $\midbr,\midbr'$ (and thus in $\midbrl,\midbrl'$) have a very special form: one of $\midbr,\midbr'$ has hollow crossings in positions $[r+1,p]$, while the other one has hollow crossings in positions $[r,p-1]$, for some $r$; cf. \tabref{tab:fold}(a--b). In this case, computing $\ord_{V_\cp}\grid_{c,k}$ is straightforward. First, suppose that $\cp\leq r$. Then all crossings in $[p]$ to the left of $\cp$ are solid. It follows from \cref{prop:triang,prop:APS-gives-var-nonappearance,cor:crossing_changes_one_minor} that for $c\in[0,p]$ and $k\in I$, we have $\ord_{V_\cp}\grid_{c,k}=1$ if $(c,k)\in\{(\cp-1,i_\cp),(\cp-2,i_\cp)\}$ and $\ord_{V_\cp}\grid_{c,k}=0$ otherwise. Applying the same argument to compute $\ord_{V'_{\ucp}} \ugrid_{c,\uk}$, we obtain~\eqref{eq:ord_lift}. It remains to consider the case $\cp=p$ when the hollow crossings are in positions $[r,p-1]$. The crossings $r-1$ and $r-2$ are solid, so \cref{prop:APS-gives-var-nonappearance} implies that $\ord_{V_\cp}\grid_{c,k}=0$ for $k=i_{r-1}$, $c<r-1$ or $k=i_{r-2}$, $c<r-2$. Here $\{i_{r-1},i_{r-2}\}=\{i,j\}$. For $c=p-1$, we have $\ord_{V_\cp}\grid_{c,k}=\<\om_k,\ach_{i_p}\>$ by \cref{prop:triang,prop:APS-gives-var-nonappearance}. Thus, for $c\in[r,p-1]$, \cref{lemma:tpar} implies that $\ord_{V_\cp}\grid_{c,k}=\<\om_k,s_{i_{c+1}}\cdots s_{i_{p-1}}\ach_{i_p}\>$. We have thus determined the values $\ord_{V_\cp}\grid_{c,k}$ for all $(c,k)\in[0,p]\times\{i,j\}$ except for $(c,k)=(r-1,i_{r-2})$. By \cref{cor:crossing_changes_one_minor}, we have $\ord_{V_\cp}\grid_{r-1,i_{r-2}}=\ord_{V_\cp}\grid_{r,i_{r-2}}$. It is clear that we have $\ord_{V_\cp}\grid_{c,k}=0$ for $k\in I\setminus\{i,j\}$. Computing $\ord_{V'_{\ucp}} \ugrid_{c,\uk}$ via a similar argument, we obtain~\eqref{eq:ord_lift}.
\end{proof}

 For the remainder of the section, let $C,\orbC,C',\orbCp$ 
 be as in \cref{prop:quasi-pullback-and-geo-seed}.

The sequence $\mu_{\braid}$ is ill-adapted to folding, so we find another mutation sequence $\mu_{\lift}$ relating $\USigma_{\brl}$ and a relabeling $\perm_{\lift}$ of $\ml^* \USigma_{\brl'}$. Explicitly, $\mu_{\lift}$ is a sequence of orbit-mutations lifting the sequence $\mu_{\fold}$ from \tabref{tab:fold}(a--b), and $\perm_{\lift}$ is given in \tabref{tab:fold}(e--f). 
Part~\eqref{mlift-same-as-mu_lift1} of the next result generalizes~\cite[Theorem~3.5]{FoGo_X}, which concerns the ``all solid" case. 

\begin{proposition} \label{prop:mlift-same-as-mu_lift}
	Let $\mu_{\lift}$ %
 and $\perm_{\lift}$ be as listed in \tabref{tab:fold}(e--f). Then 
	\begin{enumerate}
		\item\label{mlift-same-as-mu_lift1} $\perm_{\braid} \circ \mu_{\braid}(\USigma_{\brl})= \perm_{\lift} \circ \mu_{\lift}(\USigma_{\brl})$.
		\item\label{mlift-same-as-mu_lift2} $\mu_{\lift}$ is a sequence of quasi-admissible mutations of $\ifreeze{\USigma_{\brl}}[\orbC]$.
	\end{enumerate}
\end{proposition}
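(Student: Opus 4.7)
The strategy is to reduce both claims to finite, explicit verifications on the local quiver associated with the lifted middle segment $\midbrl$, then use the results already established for the simply-laced case to pin down one side.

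\textbf{Locality and the reduction.} All mutations in $\mu_{\braid}$ and $\mu_{\lift}$ are at indices in $[\ptil]$, and both permutations $\pi_{\braid}$ and $\pi_{\lift}$ fix everything outside $[\ptil]$. Moreover, the restriction of $\USigma_{\brl}$ to $[\ptil]$ (together with its frozen boundary determined by the letters of $\brl_1,\brl_2$ nearest to $\midbrl$) depends only on $\midbrl$, by Lemma~\ref{lem:extend-word}. Thus it suffices to verify~\eqref{mlift-same-as-mu_lift1} and quasi-admissibility on this restricted quiver. Only two cases arise: $\delta = ijij$ (types $B_2,C_2$, lifted middle of length at most $6$) and $\delta = 121212$ (type $G_2$, lifted middle of length at most $12$), and within each case one runs over the pattern of solid/hollow crossings dictated by the positive distinguished subexpression.

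\textbf{Proof of (1).} By Theorem~\ref{thm:moves} applied to the simply-laced braid word $\brl$ and the fixed sequence of short braid moves $S$, together with~\eqref{eq:mu_braid-def}, the left-hand side $\pi_{\braid}\circ\mu_{\braid}(\USigma_{\brl})$ equals $\ml^*\USigma_{\brl'}$. The plan is to compute $\pi_{\lift}\circ\mu_{\lift}(\USigma_{\brl})$ directly in each case, working through the orbit-mutations listed in Table~\ref{tab:fold}(a--b) lifted one at a time (so each orbit-mutation is a concatenation of commuting mutations inside $\USigma_{\brl}$), and checking that the resulting exchange matrix and cluster agree with $\ml^*\USigma_{\brl'}$ after applying $\pi_{\lift}$. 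The exchange-matrix level of agreement is a matter of matrix mutation, while the cluster-variable level of agreement uses Proposition~\ref{prop:quasi-pullback-and-geo-seed} together with the exchange relations derived from Propositions~\ref{prop:det_B1} and~\ref{prop:det_B3} to identify the pulled-back cluster variables.

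\textbf{Proof of (2).} At each orbit-mutation $\mu_{\orb[j]}$ appearing in $\mu_{\lift}$, after freezing at $\orbC$, we must verify that for every mutable orbit $\orb[k]$ the entries $\utB_{\uk\uj}$ have a common sign as $\uk$ ranges over $\orb[k]$ (Definition~\ref{def:quasi-admiss}). Proceeding by induction on the orbit-mutations in $\mu_{\lift}$: the base seed $\ifreeze{\USigma_{\brl}}[\orbC]$ is $\sigma$-equivariant and satisfies conditions~\eqref{admiss1}--\eqref{admiss3} of Definition~\ref{def:admiss}, because no arrows exist between indices inside the same $\sigma$-orbit of a single crossing of $\brl$; and each subsequent orbit-mutation is checked to preserve $\sigma$-equivariance and the constant-sign condition using Proposition~\ref{prop:quasi-admiss-mut} once quasi-admissibility of the current step is known. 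The crucial role of the freezing at $\orbC$ is that it kills precisely those arrows which would otherwise carry mixed signs (they correspond to the orbits in $\Jo\setminus C$ not mutated by $\mu_{\lift}$). Once again this is a finite check on the local quiver.

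\textbf{Main obstacle.} The principal difficulty is the combinatorial bookkeeping in the $G_2$ case: with orbit size $3$ and $\midbrl$ of length up to $12$, the induced quiver is sizable, and one must carefully track how the intermediate quivers evolve under successive orbit-mutations, verifying $\sigma$-equivariance and the sign condition at every step. The $B_2/C_2$ cases are strictly simpler and can serve as a template. In both cases, the key conceptual point--that $\mu_{\lift}$ can be chosen to be a lift of the folded sequence $\mu_{\fold}$ rather than the more natural $\mu_{\braid}$--generalizes the Fock-Goncharov result~\cite[Theorem~3.5]{FoGo_X}.
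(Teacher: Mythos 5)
Your overall architecture --- localize to the quiver supported on the indices of $\midbrl$, reduce to finitely many hollow/solid patterns, and finish with an explicit finite verification --- matches the paper's, and your treatment of part~\eqref{mlift-same-as-mu_lift2} (an inductive sign check organized via \cref{prop:quasi-admiss-mut}) is essentially what the paper does by direct computation in the local quiver. But there is a genuine gap in your plan for part~\eqref{mlift-same-as-mu_lift1}: you never explain how to certify that two \emph{long} mutation sequences applied to the same seed $\USigma_{\brl}$ produce equal seeds. Matching exchange matrices is indeed ``a matter of matrix mutation,'' but matching clusters is the hard part, and the tools you cite do not accomplish it. \cref{prop:det_B1,prop:det_B3} supply exchange relations only for the single mutations attached to individual short braid moves; that is precisely how \eqref{eq:mu_braid-def} identifies $\pi_{\braid}\circ\mu_{\braid}(\USigma_{\brl})$ with $\ml^*\USigma_{\brl'}$, but $\mu_{\lift}$ is \emph{not} induced by a sequence of braid moves (that is the whole point of introducing it), so no determinantal identities are attached to its steps, and \cref{prop:quasi-pullback-and-geo-seed} compares folded with unfolded seeds rather than two mutation sequences with each other. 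After up to $16$--$18$ mutations in the $G_2$ case the cluster variables are complicated rational functions, and ``compute directly and check agreement'' is not a finite combinatorial check without a further reduction.

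The paper closes exactly this gap with two ingredients absent from your proposal: by \cite[Theorem~4]{GSV08} a seed in $\A(\USigma_{\brl})$ is determined by its cluster, and by \cite[Theorem~3.7]{FZ4} the cluster variables produced by any mutation sequence are obtained by specialization from the corresponding data in the principal-coefficients seed $\tSigma_{\res}=(\uby,\frQres)$ on the restricted quiver. This converts \eqref{mlift-same-as-mu_lift1} into the purely combinatorial identity \eqref{eq:braid-seed-equal-unfold-res} for the framed quiver $\frQres$, which is then verified by computer. A smaller inaccuracy: \cref{lem:extend-word} makes the orders of vanishing depend on the letters to the \emph{right} of a crossing, so the restricted quiver $\Qres$ depends on $\brl_2$ and not only on $\midbrl$; the paper normalizes by observing that only the hollow/solid pattern matters and replacing $\brl_2$ by an all-hollow word of type $A_3$ or $D_4$ before computing $\Qres$ with the algorithm of \cref{sec:comb-algor}.
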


We delay the proof of \cref{prop:mlift-same-as-mu_lift} to the end of the section. \Cref{prop:quasi-admiss-mut} and part~\eqref{mlift-same-as-mu_lift2} of \cref{prop:mlift-same-as-mu_lift} together imply the following result.
\begin{corollary}\label{cor:quasi-pullback-mutations}
We have 
$\fseedx(\mu_{\lift}\ifreeze{\USigma_{\brl}}[\orbC]) \quasi \mu_{\fold}(\fseedx \ifreeze{\USigma_{\brl}}[\orbC]).$
\end{corollary}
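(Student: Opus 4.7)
The plan is to derive the corollary by iterating Proposition~\ref{prop:quasi-admiss-mut} along the orbit-mutations that compose $\mu_{\lift}$, using Proposition~\ref{prop:mlift-same-as-mu_lift}(\ref{mlift-same-as-mu_lift2}) to supply the required quasi-admissibility at each stage, and using Lemma~\ref{lem:quasimutate} to propagate quasi-equivalences across single mutations.

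More precisely, write $\mu_{\lift} = \mu_{\orb[j_N]}\circ \cdots \circ \mu_{\orb[j_1]}$ as an ordered sequence of orbit-mutations, with the corresponding sequence $\mu_{\fold} = \mu_{j_N}\circ\cdots\circ\mu_{j_1}$ on the folded side. Set $\USigma^{(0)} := \ifreeze{\USigma_{\brl}}[\orbC]$ and define inductively
\[
\USigma^{(k)} := \mu_{\orb[j_k]}\bigl(\USigma^{(k-1)}\bigr), \qquad \Sigma^{(k)} := \mu_{j_k}\bigl(\Sigma^{(k-1)}\bigr), \qquad \Sigma^{(0)} := \fseedx\USigma^{(0)}.
\]
I would prove by induction on $k$ the joint statement that (i) $\USigma^{(k)}$ is weakly $\sig$-admissible, and (ii) $\Sigma^{(k)} \quasi \fseedx \USigma^{(k)}$. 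The base $k=0$ is immediate: $\USigma^{(0)}$ is obtained from $\USigma_{\brl}$ by declaring the $\sig$-stable set $\orbC$ of indices frozen, which preserves $\sig$-admissibility, and (ii) holds by equality.

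For the inductive step, by Proposition~\ref{prop:mlift-same-as-mu_lift}(\ref{mlift-same-as-mu_lift2}) the $(k{+}1)$-st orbit-mutation $\mu_{\orb[j_{k+1}]}$ is quasi-admissible for $\USigma^{(k)}$. Applying Proposition~\ref{prop:quasi-admiss-mut} then yields simultaneously that $\USigma^{(k+1)}$ is again weakly $\sig$-admissible (closing the induction on (i)) and that
\[
\mu_{j_{k+1}}\bigl(\fseedx\USigma^{(k)}\bigr) \quasi \fseedx\USigma^{(k+1)}.
\]
Combining this with the inductive hypothesis $\Sigma^{(k)} \quasi \fseedx\USigma^{(k)}$ and applying $\mu_{j_{k+1}}$ via Lemma~\ref{lem:quasimutate}, we obtain $\Sigma^{(k+1)} = \mu_{j_{k+1}}(\Sigma^{(k)}) \quasi \mu_{j_{k+1}}(\fseedx\USigma^{(k)}) \quasi \fseedx\USigma^{(k+1)}$, closing the induction on (ii). Taking $k=N$ gives exactly $\mu_{\fold}(\fseedx\USigma^{(0)}) \quasi \fseedx(\mu_{\lift}\USigma^{(0)})$, which is the claim.

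The only potential subtlety is ensuring Proposition~\ref{prop:mlift-same-as-mu_lift}(\ref{mlift-same-as-mu_lift2}) is read correctly as asserting quasi-admissibility at each intermediate seed $\USigma^{(k)}$ (not merely at $\USigma^{(0)}$); this is the natural meaning of ``sequence of quasi-admissible mutations'' and is also what makes the iterated application of Proposition~\ref{prop:quasi-admiss-mut} legitimate, since quasi-admissibility requires weak $\sig$-admissibility of the current seed, which in turn is propagated by that same proposition. No additional computation is needed beyond this bookkeeping.
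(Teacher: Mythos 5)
Your proposal is correct and is exactly the argument the paper intends: the paper's proof is the one-line observation that \cref{prop:quasi-admiss-mut} and part~\eqref{mlift-same-as-mu_lift2} of \cref{prop:mlift-same-as-mu_lift} together give the claim, and your induction (propagating weak $\sig$-admissibility and quasi-equivalence one orbit-mutation at a time, with \cref{lem:quasimutate} transporting quasi-equivalence across each folded mutation) is the standard way to unpack that citation. The subtlety you flag — that quasi-admissibility must hold at each intermediate seed — is indeed what ``sequence of quasi-admissible mutations'' means in \cref{prop:mlift-same-as-mu_lift}\eqref{mlift-same-as-mu_lift2}, so no gap remains.
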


\begin{proof}[Proof of \ref{IS2} for long braid moves]
	We have a string of quasi-equivalences:
	\[\fseedx(\mu_{\lift}\ifreeze{\USigma_{\brl}}[\orbC]) \quasi \mu_{\fold}(\fseedx \ifreeze{\USigma_{\brl}}[\orbC]) \quasi \mu_{\fold}\ifreeze{\Sigma_{\br}}[C], \]
	where the first quasi-equivalence is \cref{cor:quasi-pullback-mutations} and the second follows from \cref{prop:quasi-pullback-and-geo-seed} and \cref{lem:quasimutate}.  On the other hand,
	\[ \fseedx(\mu_{\lift}\ifreeze{\USigma_{\brl}}[\orbC]) 
= \fseedx( \perm_{\lift}^{-1} \circ \ml^* \ifreeze{\USigma_{\brl'}}[\orbCp] ) 
= \pi^{-1}_{\fold} \circ \fseedx(\ml^* \ifreeze{\USigma_{\brl'}}[\orbCp]) 
= \perm_{\fold}^{-1} \circ \mis^*(\fseedx \ifreeze{\USigma_{\brl'}}[\orbCp]) \quasi \perm_{\fold}^{-1} \circ \mis^*\ifreeze{\Sigma_{\br'}}[C'], \]
	where the first equality holds by \cref{prop:mlift-same-as-mu_lift} and \eqref{eq:mu_braid-def}, the second holds by direct computation (cf. \tabref{tab:fold}(c--f)), the third holds by \cref{lem:lifting_braid_iso}, and the final quasi-equivalence follows from \cref{prop:quasi-pullback-and-geo-seed} and the fact that $\mis^*$ preserves quasi-equivalence.
	
	Summarizing, we have that $\mu_{\fold} \ifreeze{\Sigma_{\br}}[C]$ is quasi-equivalent to (a relabeling of) $\mis^*\ifreeze{\Sigma_{\br'}}[C']$. %
	Notice that the cluster variables of $\mu_{\fold} \ifreeze{\Sigma_{\br}}[C]$, resp., $\mis^* \ifreeze{\Sigma_{\br'}}[C']$ are equal to the cluster variables of $\mu_{\fold}\Sigma_{\br}$, resp., $\mis^* \Sigma_{\br'}$. By assumption $(\BR_{\br}, \Sigma_{\br})$ is a cluster variety, so \cref{prop:irreducible} implies the cluster variables of $\mu_{\fold}\Sigma_{\br}$ are irreducible elements of $\C[\BR_{\br}]$. On the other hand, by \cref{cor:cluster_vars}, the cluster variables in $\mis^*\Sigma_{\br'}$ are irreducible elements of $\C[\BR_{\br}]$. Thus, the cluster variables in $\mu_{\fold}\Sigma_{\br}$ and $\mis^* \Sigma_{\br'}$ can differ only by units and $\mu_{\fold} \Sigma_{\br}$ is quasi-equivalent to (a relabeling of) $\mis^* \Sigma_{\br'}$.
\end{proof}

\def\holmidbrl{$[p]\setminus \UJ_{\brl}$}
\def\horskp{\\[0pt]}
\def\hskp{\\[-11pt]}
\setlength{\arraycolsep}{2pt}
\begin{table}
\scalebox{0.82}{
\vspace{-10in}
\begin{tabular}{cc}
$B_2/C_2$: & $G_2$:
\\
\begin{tabular}{c}
\begin{tabular}{| c | c | c| c | c| c|}

	\hline
	$\midbr \to \midbr'$ & $\mu_{\fold}$ & $\perm_{\fold}$  \\ \hline
	$i\,j\,i\,j \to j\,i\,j\,i\,$ & $\mu_{(4, 3, 4)}$ & \tiny{$ \begin{pmatrix}
	1 & 2 & 3& 4\\
	2 & 1 & 4 & 3
	\end{pmatrix}$}  \\
	\hline
		$i\,j\,\uni\,j \to j\,i\,j\,\uni$ & $\mu_{4}$ & \tiny{$ \begin{pmatrix}
			1 & 2 & 4\\
			2 & 1 & 3
		\end{pmatrix}$}\\
	\hline
		$i\,j\,i\,\unj \to j\,i\,\unj\, i$ & $\mu_{3}$ & \tiny{$ \begin{pmatrix}
			1 & 2 & 3\\
			2 & 1 & 4
		\end{pmatrix}$}  \\
	\hline
			$i\,j\,\uni\,\unj \to j\,\uni\,\unj\, i$ & $\id$ & \tiny{$ \begin{pmatrix}
			1 & 2\\
			4 & 1
		\end{pmatrix}$} \\
	\hline
			$i\,\unj\,\uni\,j \to j\,i\,\unj\,\uni$ & $\id$ & \tiny{$ \begin{pmatrix}
			1 & 4\\
			2 & 1
		\end{pmatrix}$} \\
	\hline
			$\uni\,\unj\,\uni\,j \to j\,\uni\,\unj\,\uni$ & $\id$ & \tiny{$ \begin{pmatrix}
			4\\
			1
		\end{pmatrix}$} \\
	\hline
			$i\,\unj\,\uni\,\unj \to \unj\,\uni\,\unj\, i$ & $\id$ & \tiny{$ \begin{pmatrix}
			1\\
			4
		\end{pmatrix}$} \\
	\hline
	\end{tabular}
\\
\hskp
(a) %
\end{tabular}
&
\begin{tabular}{c}
\begin{tabular}{| c | c| c|}
	\hline
 $\midbr \to \midbr'$ & $\mu_{\fold}$& $\perm_{\fold}$ \\ \hline
 $\one \two \one \two \one \twox \to \two \one \two \one \two \one$& $\mu_{(6, 3, 4, 6, 5, 6, 3, 4, 5, 6)}$& \tiny{$ \begin{pmatrix}
	1 & 2 & 3& 4 & 5& 6\\
	2 & 1 & 4 & 3& 6& 5
\end{pmatrix}$} \\
	\hline
$\one \two \one \two \one \utwox \to \two \one \two \one \utwo \one$& $\mu_{(3, 4, 5, 3, 4, 5)}$& \tiny{$ \begin{pmatrix}
			1 & 2 & 3& 4 & 5\\
			2 & 1 & 4 & 3& 6
		\end{pmatrix}$} \\
	\hline
 $\one \two \one \two \uone \twox \to \two \one \two \one \two \uone$& $\mu_{(6, 3, 4, 6, 3, 4)}$& \tiny{$ \begin{pmatrix}
			1 & 2 & 3& 4 & 6\\
			2 & 1 & 4 & 3& 5
		\end{pmatrix}$} \\
	\hline
 $\one \two \one \two \uone \utwox \to \two \one \two \uone \utwo \one$& $\mu_{(4 ,3, 4)}$& \tiny{$ \begin{pmatrix}
			1 & 2 & 3& 4 \\
			2 & 1 & 6 & 3
		\end{pmatrix}$} \\
	\hline
 $\one \two \one \utwo \uone \twox \to \two \one \two \one \utwo \uone$& $\mu_{(6 ,3, 6)}$& \tiny{$ \begin{pmatrix}
			1 & 2 & 3& 6 \\
			2 & 1 & 4 & 3
		\end{pmatrix}$} \\
	\hline
$\one \two \one \utwo \uone \utwox \to \two \one \utwo \uone \utwo \one$& $\mu_{3}$& \tiny{$ \begin{pmatrix}
			1 & 2 & 3 \\
			2 & 1 & 6
		\end{pmatrix}$} \\
	\hline
$\one \two \uone \utwo \uone \twox \to \two \one \two \uone \utwo \uone$& $\mu_{6}$& \tiny{$ \begin{pmatrix}
			1 & 2 & 6 \\
			2 & 1 & 3
		\end{pmatrix}$} \\
		\hline
$\one \two \uone \utwo \uone \utwox \to \two \uone \utwo \uone \utwo \one$& $\id$ & \tiny{$ \begin{pmatrix}
			1 & 2 \\
			6 & 1
		\end{pmatrix}$} \\
	\hline
$\one \utwo \uone \utwo \uone \twox \to \two \one \utwo \uone \utwo \uone$& $\id$  & \tiny{$ \begin{pmatrix}
			1 &  6 \\
			2 & 1
		\end{pmatrix}$} \\
	\hline
$\uone \utwo \uone \utwo \uone \twox \to \two \uone \utwo \uone \utwo \uone$& $\id$  & \tiny{$ \begin{pmatrix}
			6 \\
			1
		\end{pmatrix}$} \\
	\hline
$\one \utwo \uone \utwo \uone \utwox \to \utwo \uone \utwo \uone \utwo \one$& $\id$ & \tiny{$ \begin{pmatrix}
			1 \\
			6
		\end{pmatrix}$} \\
	\hline
	\end{tabular}
\\
\hskp
(b) %
\end{tabular}
\\
\horskp
\begin{tabular}{c}
	\begin{tabular}{|c|c|c|}
\hline
&&\negspc
 \holmidbrl & $\mu_{\braid}$ & $\perm_{\braid}$\\ \hline
 $\emptyset$ & $\mu_{(4, 5, 6, 4)}$ & \tiny{$\begin{pmatrix}
 		1&2& 3& 4& 5& 6\\
 		3&1& 2& 5& 4&6
 	\end{pmatrix}$}\\ \hline
   $\{4,5\}$ & $\mu_{6}$ & \tiny{$\begin{pmatrix}
 		1&2& 3& 6\\
 		3&1& 2& 4
 	\end{pmatrix}$}\\ \hline
  $\{6\}$ & $\mu_{(4, 5)}$ & \tiny{$\begin{pmatrix}
 		1&2& 3& 4& 5\\
 		3&1& 2& 6& 5
 	\end{pmatrix}$}\\ \hline
    $\{4,5,6\}$ & $\id$ & \tiny{$\begin{pmatrix}
 		1&2& 3\\
 		6&5& 1
 	\end{pmatrix}$}\\ \hline
     $\{3,4,5\}$ & $\id$ & \tiny{$\begin{pmatrix}
 		1&2& 6\\
 		2&3& 1
 	\end{pmatrix}$}\\ \hline
     $\{1,2,3,4,5\}$ & $\id$ & \tiny{$\begin{pmatrix}
 		6\\
 		1
 	\end{pmatrix}$}\\ \hline
       $\{3,4,5,6\}$ & $\id$ & \tiny{$\begin{pmatrix}
 		1 & 2\\
 		6 & 5
 	\end{pmatrix}$}\\ \hline
        \end{tabular}
\\
\hskp
(c) %
\end{tabular}
&
\begin{tabular}{c}
\begin{tabular}{|c|c|c|}  
\hline
&&\negspc%
 \holmidbrl & $\mu_{\braid}$ & $\perm_{\braid}$\\ \hline
$\emptyset$ & $\mu_{(8, 9, 5, 6,7, 8, 11, 10, 9, 5, 12, 6, 10, 8, 5, 11)}$ & \tiny{$\begin{pmatrix}
	1&2& 3& 4& 5& 6& 7& 8& 9& 10& 11 & 12\\
	2&3 & 4& 1& 9& 6&5& 7& 11& 12& 8& 10
\end{pmatrix}$}\\ \hline
$\{12\}$ & $\mu_{(8, 5, 6,7, 8, 11, 9, 5, 6, 10, 8, 5, 11)}$ & \tiny{$\begin{pmatrix}
		1&2& 3& 4& 5& 6& 7& 8& 9& 10& 11 \\
		2&3 & 4& 1& 12& 6&5& 7& 11& 10& 8
	\end{pmatrix}$}\\ \hline
$[9, 11]$ & $\mu_{(8, 5, 6,7, 8, 12, 5, 6, 8, 5)}$ & \tiny{$\begin{pmatrix}
		1&2& 3& 4& 5& 6& 7& 8& 12 \\
		2&3 & 4& 1& 9& 6&5& 7& 8
	\end{pmatrix}$}\\ \hline
$[9,12]$ & $\mu_{(6, 7, 5, 6, 8, 5)}$ & \tiny{$\begin{pmatrix}
		1&2& 3& 4& 5& 6& 7& 8\\
		3&4 & 2& 1& 11& 12&5& 10
	\end{pmatrix}$}\\ \hline
$[8,11]$ & $\mu_{(6, 5, 7, 12, 6, 5)}$ & \tiny{$\begin{pmatrix}
		1&2& 3& 4& 5& 6& 7&12\\
		2&3 & 4& 1& 6& 7&5& 8
	\end{pmatrix}$}\\ \hline
$[8, 12]$ & $\mu_{(5,6,7)}$ & \tiny{$\begin{pmatrix}
		1&2& 3& 4& 5& 6& 7\\
		2&3& 4& 1& 10& 11&12
	\end{pmatrix}$}\\ \hline
$[5,11]$ & $\mu_{12}$ & \tiny{$\begin{pmatrix}
		1&2& 3& 4& 12\\
		2&3 & 4& 1& 5
	\end{pmatrix}$}\\ \hline
$[5,12]$ & $\id$ & \tiny{$\begin{pmatrix}
		1&2& 3& 4\\
		10&11 &12 &1
	\end{pmatrix}$}\\ \hline
$[4,11]$ & $\id$ & \tiny{$\begin{pmatrix}
		1&2&3&12\\
		2&3&4& 1 
	\end{pmatrix}$}\\ \hline
$[2,12]$ & $\id$ & \tiny{$\begin{pmatrix}
		1\\
		12
	\end{pmatrix}$}\\ \hline
$[1,11]$ & $\id$ & \tiny{$\begin{pmatrix}
		12\\
		1
	\end{pmatrix}$}\\ \hline
	\end{tabular}
\\
\hskp
(d)
\end{tabular}
\\
\horskp
\begin{tabular}{c}
\begin{tabular}{|c|c|c|}\hline
&&\negspc
 \holmidbrl & $\mu_{\lift}$ & $\perm_{\lift}$\\ \hline
	$\emptyset$ & $\mu_{(6, 4, 5, 6)}$ & \tiny{$\begin{pmatrix}
			1&2& 3& 4& 5& 6\\
			2&3& 1& 6& 5&4
		\end{pmatrix}$}\\ \hline
        \end{tabular}
\\
\hskp
(e)
\end{tabular}
&
\begin{tabular}{c}
\scalebox{0.93}{
\begin{tabular}{|c|c|c|}  
\hline
          &&\negspc
 \holmidbrl & $\mu_{\lift}$ & $\perm_{\lift}$\\ \hline
	$\emptyset$ & $\mu_{(12, 5, 6, 7, 8, 12, 9, 10, 11, 12, 5, 6, 7, 8, 9, 10, 11, 12)}$ & \tiny{$\begin{pmatrix}
			1&2& 3& 4& 5& 6& 7& 8& 9& 10& 11 & 12\\
			2&3 & 4& 1& 6& 7& 8& 5& 10& 11& 12& 9
		\end{pmatrix}$}\\ \hline
	$\{12\}$ & $\mu_{(5, 6, 7, 8, 9, 10, 11, 5, 6, 7, 8, 9, 10 , 11)}$ & \tiny{$\begin{pmatrix}
			1&2& 3& 4& 5& 6& 7& 8& 9& 10& 11 \\
			2&3 & 4& 1& 6& 7&8& 5& 10& 11& 12
		\end{pmatrix}$}\\ \hline
	$[9, 11]$ & $\mu_{(12, 5, 6, 7, 8, 12, 5, 6, 7, 8)}$ & \tiny{$\begin{pmatrix}
			1&2& 3& 4& 5& 6& 7& 8& 12 \\
			2&3 & 4& 1& 6& 7&8& 5& 9
		\end{pmatrix}$}\\ \hline
	$[9,12]$ & $\mu_{(8, 5, 6, 7, 8)}$ & \tiny{$\begin{pmatrix}
			1&2& 3& 4& 5& 6& 7& 8\\
			2&3 & 4& 1& 10& 11&12& 5
		\end{pmatrix}$}\\ \hline
	$[8,11]$ & $\mu_{(5, 6, 7, 12, 5, 6, 7)}$ & \tiny{$\begin{pmatrix}
			1&2& 3& 4& 5& 6& 7&12\\
			2&3 & 4& 1& 6&7&8& 5
		\end{pmatrix}$}\\ \hline
\end{tabular}
}
\\
\hskp
(f)
\end{tabular}
\end{tabular}
}
\caption{\label{tab:fold}  The mutation sequences $\mu_{\fold}$, $\mu_{\braid}$, $\mu_{\lift}$, and the relabelings $\perm_{\fold}$, $\perm_{\braid}$, $\perm_{\lift}$ used in \cref{sec:mult_laced}. Hollow crossings are underlined. We denote $\mu_{(a_1, \dots, a_r)}:= \mu_{a_1} \circ \dots \circ \mu_{a_r}$. %
 For the case $B_2/C_2$, we denote $\orb[i]=\{\ui,\uii\}$, $\orb[j]=\{\uj\}$, $\midbrl= \ui\uii\uj\ui\uii\uj$,  $\midbrl'= \uj\ui\uii\uj\ui\uii$; for $G_2$, we denote $\orb[1]=\{\dot1,\dot3,\dot4\}$, $\orb[2]=\{\dot2\}$, $\midbrl= \dot1\dot3\dot4\dot2\dot1\dot3\dot4\dot2\dot1\dot3\dot4\dot2$, $\midbrl'= \dot2\dot1\dot3\dot4\dot2\dot1\dot3\dot4\dot2\dot1\dot3\dot4$. 
In (e) and (f), the cases where $\mu_{\lift}$ and $\mu_{\braid}$ coincide are omitted; we define $\perm_{\lift}:=\perm_{\braid}$ in those cases.}
\end{table}

\begin{table}
	\includegraphics[width=0.85\textwidth]{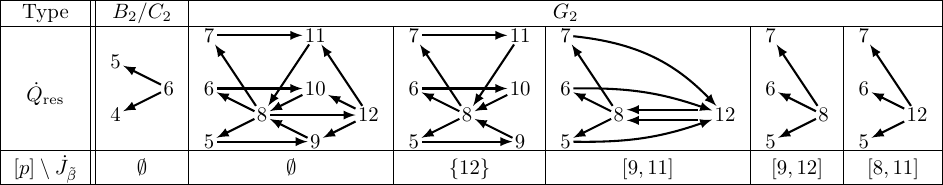}
	\caption{\label{fig:Qres} The quivers $\Qres$ from \cref{prop:mlift-same-as-mu_lift} listed in the same order as in  \tabref{tab:fold}(e--f). In the cases where $\mu_{\braid}= \mu_{\lift}$, $\Qres$ has no arrows.}
\end{table}

\begin{proof}[Proof of \cref{prop:mlift-same-as-mu_lift}] 
Recall that $\brl=\brl_1 \midbrl \brl_2$ and $\brl'=\brl_1 \midbrl'\brl_2$, and that we index the crossings of $\midbrl$ by $1, \dots, \ptil$. Let $J:=\Jo \setminus C$ be the set of indices which are mutated in $\mu_{\fold}$.

We show part~\eqref{mlift-same-as-mu_lift1}. 	
By \cite[Theorem 4]{GSV08}, a seed in $\A(\USigma_{\brl})$ is uniquely determined by its cluster, so we need only check~\eqref{mlift-same-as-mu_lift1}
at the level of cluster variables. This is easy to check for the cluster variables $\{x_c: c  \in C\}$ which are not touched by either mutation sequence.

Let $\USigma_{\brl}=(\ubx, \UQ)$, and let $\Qres$ be the induced subquiver of $\UQ$ on $\UJ:=\relab^{-1}(J)$. Let $\frQres$ be the \emph{framing} of $\Qres$; the extended exchange matrix of $\frQres$ is thus of size $2|\UJ| \times |\UJ|$ and the bottom $|\UJ| \times |\UJ|$ submatrix is the identity. We denote by $\tSigma_{\res}$ the seed $(\uby, \frQres)$ for some cluster $\uby$.  %
 By \cite[Theorem~3.7]{FZ4}, to show~\eqref{mlift-same-as-mu_lift1}, %
 it suffices to check that
	\begin{equation}\label{eq:braid-seed-equal-unfold-res}
	\perm_{\braid} \circ \mu_{\braid}(\tSigma_{\res})= \perm_{\lift} \circ \mu_{\lift}(\tSigma_{\res}).
\end{equation}
The relevant cluster variables in $\mu_{\braid}(\USigma_{\brl})$ and $\mu_{\fold}(\tSigma)$ can then be obtained from those in~\eqref{eq:braid-seed-equal-unfold-res} by specialization determined by $\UQ$.

To check~\eqref{eq:braid-seed-equal-unfold-res}, recall the description of the orders of vanishing of the cluster variables in $\UJ$ from the proof of \cref{prop:quasi-pullback-and-geo-seed}. This description only depends on which crossings in $\midbrl,\midbrl'$ are hollow, which in turn is determined by which crossings in $\brl_2$ are hollow. This implies that to compute $\Qres$, we may assume $\brl_2$ is a type $A_3$ braid word (in the $B_2/C_2$ case) or a type $D_4$ braid word (in the $G_2$ case) consisting entirely of hollow crossings. Applying the algorithm from \cref{sec:comb-algor} (to the simply-laced braids $\brl,\brl'$; cf. \cref{rmk:logic}), we get that $\Qres$ is as displayed in \cref{fig:Qres}. Equation~\eqref{eq:braid-seed-equal-unfold-res} may then be verified by computer.

Part~\eqref{mlift-same-as-mu_lift2} is also established by direct computation in $\Qres$.
\end{proof}
This completes the proof of \cref{thm:moves} for long braid moves.

\subsection{Finishing the proof}\label{subsec:pf_thm_main_mult_laced}
We now have shown \cref{thm:moves} for all braid moves. \cref{thm:main} for multiply-laced $G$ follows by the argument in \cref{subsec:pf_thm_main_simply_laced}. Repeating the proof of \cref{prop:moves_are_mutations_simply_laced}, we have the following.
\begin{proposition}
	Suppose $\br, \br'$ are related by a braid move \ref{bm1}--\ref{bm4}. The seeds $\Sbr, \Sbrp\pb$ are mutation equivalent (up to relabeling cluster variables).
\end{proposition}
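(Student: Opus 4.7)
The plan is to mimic the argument of \cref{prop:moves_are_mutations_simply_laced} verbatim, now that all the ingredients are available in the multiply-laced setting. First, by the extension of \cref{thm:main} to multiply-laced $G$ established just above, the pair $(\BR_\br,\Sbr)$ is a cluster variety. By \ref{IS2}, which has now been proved for every move \bmref{bm1}--\bmref{bm5} (and in particular for the long moves via the folding argument of \cref{sec:folding,sec:mult_laced}), there exists a seed $\Sigma'$ obtained from $\Sbr$ by a sequence of mutations followed by a relabeling such that $\Sigma' \quasi \mis^\ast\Sbrp$.

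Next, I would upgrade this quasi-equivalence to an honest equality (up to relabeling), which gives the mutation equivalence claimed. The obstacle to doing this directly from \ref{IS2} is that the frozen variables of $\Sigma'$ and $\mis^\ast\Sbrp$ may a priori differ by Laurent monomials in other frozens. To remove this ambiguity, I would apply the standard trick: use \cref{lem:extend-word} to choose a long double braid word $\br_0$ such that every index of $\Jo$ and $J_{\br'}$ becomes mutable in the concatenations $\tbeta:=\br_0\br$ and $\tbeta':=\br_0\br'$. The move $\br \leftrightarrow \br'$ lifts to a move $\tbeta \leftrightarrow \tbeta'$ of the same type, and by functoriality of our constructions the resulting seeds $\Sigma_\tbeta$ and $\mis^\ast\Sigma_{\tbeta'}$ contain $\Sbr$ and $\mis^\ast\Sbrp$ as subseeds after freezing.

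Applying \ref{IS2} to the extended move, I would produce a seed $\tilde\Sigma'$ from $\Sigma_\tbeta$ by exactly the same sequence of mutations and relabeling as produces $\Sigma'$ from $\Sbr$, and would obtain $\tilde\Sigma' \quasi \mis^\ast\Sigma_{\tbeta'}$. Since in $\tilde\Sigma'$ all the formerly frozen variables of $\Sigma'$ are mutable, no nontrivial Laurent monomial in frozens can be absorbed, and the quasi-equivalence forces $\tilde\Sigma'$ and $\mis^\ast\Sigma_{\tbeta'}$ to agree on the nose (up to the relabeling). Restricting back, $\Sigma'$ and $\mis^\ast\Sbrp$ then also agree up to relabeling, proving the mutation equivalence.

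The only point that needs care is verifying that the extension $\br\rightsquigarrow \tbeta$ is compatible with the mutation sequence prescribed by \ref{IS2}: the mutations recorded in \tabref{tab:fold} for the move only involve crossings in the local window where the move happens, and \cref{lem:extend-word} ensures that the relevant orders of vanishing (and hence exchange matrix entries restricted to that window) are unchanged by appending $\br_0$ on the left. This is the main technical point but is already essentially contained in \cref{lem:extend-word} and the locality of the constructions in \cref{sec:deodhar-geometry,sec:2braid,sec:mult_laced}.
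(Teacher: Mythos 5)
Your proposal is correct and follows essentially the same route as the paper: the paper's proof of this proposition is literally ``repeating the proof of \cref{prop:moves_are_mutations_simply_laced}'', i.e.\ invoke \cref{thm:main} (now available for multiply-laced $G$) and \ref{IS2}, then kill the residual frozen-monomial ambiguity of the quasi-equivalence by prepending a word $\br_0$ so that all relevant variables become mutable in $\tbeta=\br_0\br$, $\tbeta'=\br_0\br'$, using \cref{lem:extend-word}. Your extra remark about locality of the mutation sequence from \tabref{tab:fold}(a--b) under the extension is exactly the point already covered by \cref{lem:extend-word}, so nothing further is needed.
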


 Continuing \cref{rmk:full_rank_simply_laced}, we obtain the following.
\begin{corollary}\label{cor:really_full_rank}
The seeds $\Sbr$ are really full rank for all $\br$.
\end{corollary}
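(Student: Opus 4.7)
The plan is to mirror exactly the argument sketched in \cref{rmk:full_rank_simply_laced}, now that every ingredient has become available in the multiply-laced setting. The corollary will follow by the same induction used to prove \cref{thm:main}, combined with the three closure properties of the really-full-rank condition that drive the induction.

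First I would record that really full rank is preserved under mutation: mutation is a $\Z$-unimodular operation on the column lattice of the extended exchange matrix $\tB$ (the column being mutated is negated, and the others are modified by integer linear combinations of it), so the $\Z$-span of the columns is unchanged as a sublattice of $\Z^{n+m}$. Next, I would invoke the Proposition immediately preceding the corollary: for every braid move \ref{bm1}--\ref{bm4}, the seeds $\Sbr$ and $\Sbrp\pb$ are mutation equivalent up to relabeling of cluster variables, so really full rank is transferred across such moves. For move \ref{bm5}, the direct computation in \cref{sec:bm5} shows that $\Sbr$ and $\Sbrp\pb$ are quasi-equivalent with the non-frozen cluster variables agreeing and one frozen variable replaced by a unit multiple of its inverse; this change of basis of the frozen sublattice is $\Z$-unimodular, so again really full rank is preserved.

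The third closure property is already recorded in \cref{rmk:full_rank_dc}: the deletion-contraction mechanism underlying \cref{thm:br_del_con} preserves really full rankness, because the exchange matrix of $\Sbr$ differs from that of $\ifreeze{\Sbr}[\si]$ only by inserting the column indexed by $\si$, whose row entry in the auxiliary frozen index $\froi$ is $\pm1$.

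With these three facts in hand, I would conclude by reproducing the inductive argument of \cref{subsec:pf_thm_main_simply_laced} and \cref{subsec:pf_thm_main_mult_laced} verbatim. The base case $m=\ell(\wo)$ has no mutable variables, so really full rankness holds vacuously. For the inductive step, one uses moves \ref{bm1}--\ref{bm5} (and conjugation moves assembled from these) to transform $\br$ into a braid word of the form $ii\br_2\br_1^\ast$, then applies \cref{thm:br_del_con} together with the inductive hypothesis for $\Sigma_{i\br_2\br_1^\ast}$ and $\Sigma_{\br_2\br_1^\ast}$; by the three closure properties above, really full rankness propagates through each step. No new obstacle is anticipated, since all the nontrivial work has already been carried out in establishing \cref{thm:moves} and the preceding Proposition for both the simply-laced and multiply-laced cases.
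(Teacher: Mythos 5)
Your proposal is correct and follows essentially the same route as the paper: the paper's proof is precisely the observation (recorded in \cref{rmk:full_rank_simply_laced} and \cref{rmk:full_rank_dc}) that really-full-rankness is preserved under moves \ref{bm1}--\ref{bm5} and under deletion-contraction, propagated through the induction of \cref{subsec:pf_thm_main_simply_laced,subsec:pf_thm_main_mult_laced}. Your added justifications (unimodularity of mutation on the column lattice, and the separate treatment of \ref{bm5} via quasi-equivalence) are correct and merely make explicit what the paper leaves implicit.
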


\subsection{Curious Lefschetz property}\label{sec:lefschetz}
We now briefly discuss the consequences of our results for the cohomology $H^\ast(\BR_\br,\C)$; we refer to~\cite{LS} and~\cite[Section~10.1]{GLSBS1} for further details. Recall from \cref{prop:smooth_affine_etc} that $\BR_\br$ is a smooth, affine, complex algebraic variety of dimension $d:=d(\br)$. 
The mixed Hodge structure~\cite{Del71} of $H^k(\BR_\br,\C)$ endows the the cohomology group with a \emph{Deligne splitting} $H^k(\BR_\br,\C) = \bigoplus_{p,q}H^{k,(p,q)}(\BR_\br,\C)$. Since $\BR_\br$ is a sink-recurrent cluster variety of really full rank, it follows from~\cite[Theorem~8.3]{LS} that $H^\ast(\BR_\br,\C)$ is of \emph{mixed Tate type}, i.e., $H^{k,(p,q)}(\BR_\br,\C) = 0$ for $p\neq q$. The form $\ombr$, which coincides with the \emph{GSV form}~\cite{GSV} in view of~\eqref{eq:om_dfn_cluster}, defines an element $[\ombr]\in H^{2,(2,2)}(\BR_\br,\C)$. When the dimension $d$ of $\BR_\br$ is even, we say that $\BR_\br$ satisfies the \emph{curious Lefschetz property} with respect to $[\ombr]$ if the cup product induces isomorphisms
\begin{equation*}%
  [\ombr]^{d-p}: H^{p+s,(p,p)}(\BR_\br,\C) \xrasim H^{2d-p+s,(2d-p,2d-p)}(\BR_\br,\C)
\end{equation*}
for all $p$ and $s$. We also say that $\BR_\br$ satisfies (the weaker) \emph{curious \Poincare symmetry} if 
\begin{equation*}%
  \dim H^{p+s,(p,p)}(\BR_\br,\C) = \dim H^{2d-p+s,(2d-p,2d-p)}(\BR_\br,\C).
\end{equation*}

The following result is a consequence of the results of~\cite{LS}; see~\cite[Theorem~10.1]{GLSBS1}.\footnote{While~\cite{LS} work in the skew-symmetric setting, the curious Lefschetz theorem therein generalizes to the skew-symmetrizable case.}
\begin{theorem}\label{thm:cur_lef}
  Even-dimensional double braid varieties $\BR_\br$ satisfy the curious Lefschetz property with respect to $[\ombr]$ and thus also curious \Poincare symmetry. Odd-dimensional $\BR_{\br}$ satisfy curious \Poincare symmetry.
\end{theorem}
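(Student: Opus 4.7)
The plan is to assemble the ingredients already developed: (i) $(\BR_\br,\Sbr)$ is a cluster variety (\cref{thm:main}), (ii) the seed $\Sbr$ is sink-recurrent, hence locally acyclic (established implicitly in \cref{sec:braid_dc} via \cref{thm:br_del_con}), (iii) $\Sbr$ is really full rank (\cref{cor:really_full_rank}), and (iv) the $2$-form $\ombr$ furnishes the compatible (pre)symplectic data needed to invoke the curious Lefschetz framework of Lam--Speyer~\cite{LS}.

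First, I would recall the curious Lefschetz statement: for a locally acyclic, really full rank cluster variety $X$ equipped with a compatible $2$-form, the mixed Hodge structure on $H^*(X)$ satisfies the curious Lefschetz property in even dimensions, and one always has curious Poincaré symmetry. In~\cite{LS} this is proven for skew-symmetric seeds, but as the paper notes, the argument relies only on cluster-theoretic input and the Deligne--Hodge machinery applied to toric charts $T\hookrightarrow X$ glued along their mutations; nothing in that argument uses that the $B$-matrix is skew-symmetric rather than skew-symmetrizable. So the first step of my proof is to state this generalization (citing~\cite[Section 10]{GLSBS1} as the place where the argument was adapted to the skew-symmetrizable geometric setting used here), and explain that its hypotheses are: affine normal irreducible, locally acyclic, really full rank, and a compatible algebraic $2$-form.

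Second, I would verify each hypothesis for $(\BR_\br,\Sbr)$: (a) $\BR_\br$ is affine, normal, and irreducible by construction in \cref{sec:deodhar-geometry}; (b) local acyclicity follows from sink-recurrence (\cref{def:sinkrec} and~\cite{Mul,MullerLA2}), which we established inductively together with \cref{thm:main} in \cref{sec:braid_dc,subsec:pf_thm_main_simply_laced,subsec:pf_thm_main_mult_laced}; (c) really full rank is exactly \cref{cor:really_full_rank}; (d) the $2$-form $\ombr$ defined in \cref{sec:deo_form} is, by construction~\eqref{eq:intro:omcoef_dfn}, the cluster $2$-form associated to the seed $\Sbr$ on the Deodhar torus $\Tbr$, and since all cluster variables are regular (\cref{cor:cluster_vars}) and frozens are invertible, the $2$-form is defined on the union of seed tori whose complement has codimension $\geq 2$ (as in the proof of \cref{prop:BFZ}); by normality it extends to a regular $2$-form on $\BR_\br$.

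Third, having checked the hypotheses, I would invoke the Lam--Speyer theorem to conclude curious Lefschetz in the even-dimensional case and curious Poincaré symmetry in all dimensions. The parity dichotomy in the statement simply reflects that curious Lefschetz is formulated only for varieties of even dimension (since it requires a nondegenerate $2$-form whose top power is a volume form), while curious Poincaré symmetry is an unconditional symmetry of the $E$-polynomial that follows from really full rank alone via the weight filtration argument of~\cite{LS}.

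The main obstacle, as flagged in the parenthetical remark in the paper, is the passage from skew-symmetric to skew-symmetrizable seeds. I expect this to be routine but not entirely automatic: one must check that the key inputs to~\cite{LS} — namely the description of the weight filtration on $H^*(X)$ in terms of cluster tori, and the compatibility of the $2$-form with mutation — go through when the $B$-matrix is only skew-symmetrizable. The compatibility with mutation is built into our definition of a seed via the $2$-form $\om$ in~\eqref{eq:om_dfn_cluster}, and the weight filtration argument in~\cite{LS} uses the inclusions $T,\mu_k(T)\hookrightarrow X$ and the codimension-two complement, both of which we have in the skew-symmetrizable setting (cf.\ the proof of \cref{prop:BFZ}). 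So the extension is primarily a bookkeeping exercise, which I would carry out by following~\cite[Section~10]{GLSBS1} essentially verbatim.
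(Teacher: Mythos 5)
Your proposal matches the paper's proof, which is likewise a one-step assembly of \cref{cor:really_full_rank}, the acyclicity properties coming from sink-recurrence (via the proof of \cref{prop:A=U}), and the curious Lefschetz theorem of \cite{LS} (cf.\ \cite[Section~10]{GLSBS1}), together with the same parenthetical remark that the skew-symmetric argument of \cite{LS} extends to the skew-symmetrizable case. One small caution: the curious Lefschetz theorem in \cite{LS} requires the Louise property rather than mere local acyclicity, but this is exactly what sink-recurrence supplies (its recursion on a sink and its in-neighbors is a Louise-type recursion), so your verification of the hypotheses goes through as written.
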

\noindent Note that if $d=d(\br)$ is odd, one can always add an isolated vertex to the seed $\Sigma_\br$, and the corresponding variety $\BR_\br\times\Cx$ will satisfy the curious Lefschetz property.

\section{Combinatorial algorithm}\label{sec:comb-algor}
\def\b{{\mathbf{b}}}
\def\c{{\mathbf{c}}}
\def\Xprime{W}
The exchange matrices for our seeds $\Sigma_\br$ are defined via Deodhar geometry. In particular, writing $\om_{\br}$ in terms of $\dlog x_e \wedge \dlog x_c$ requires knowing orders of vanishing along Deodhar hypersurfaces.  In this section, we give an algorithm to compute the order of vanishing of $\grid_c$ on $V_\cp$, which determines our cluster algebras. %

Let $c\in[0,m]$. The function $h^{\pm}_c$ of \cref{sec:torus-valued} is an $H$-valued character on $T_\br$.  We may thus write $h^{\pm}_c = \prod_{\cp \in \Jo} \gamma^{\pm}_{\br,c,\cp}(x_\cp)$, where $\gamma^{\pm}_{\br,c,\cp}$ are cocharacters of $H$ satisfying $\gamma^-_{\br,c,\cp} = u_c \cdot \gamma^+_{\br,c,\cp}$. By~\eqref{eq:hc} and~\eqref{eq:grid_vs_x_c}, for all $c\in[0,m]$ and $k\in I$, we have
\begin{equation}\label{eq:grid_vs_gamma}
  \grid_{c,\pm k} = (h^{\pm}_c)^{\om_k} = \prod_{\cp\in\Jo} x_\cp^{\<\om_k,\gamma^\pm_{\br,c,\cp}\>}, 
\quad\text{and so}\quad
\ordbr_{V_\cp}\grid_{c,\pm k} = \<\om_k,\gamma^\pm_{\br,c,\cp}\> 
\quad\text{for all $\cp\in\Jo$, $c\in[0,m]$, $k\in I$.}
\end{equation}
\begin{lemma}\label{lem:alg} Let $c \in [0, m]$ and let $e\in\Jo$ be such that $e \geq c$.
\begin{enumerate}
\item \label{lem:alg1}
Suppose $\beta'$ is obtained from $\beta$ by removing the first $c-1$ letters.  Then $\gamma^{\pm}_{\br,c,\cp} = \gamma^{\pm}_{\br',1,\cp-c+1}$.
\item\label{lem:alg2}
Suppose $\beta'$ is obtained from $\beta$ by doing 
non-mutation moves \bmref{bm1} and \bmref{bm4}, only involving indices greater than $c$,
 and let $\cp'$ be the image of $\cp$ under the resulting identification of cluster seeds.  Then we have $\gamma^{\pm}_{\br,c,\cp} = \gamma^{\pm}_{\br',c,\cp'}$.
\item\label{lem:alg3}
Suppose $\beta'$ is obtained from $\beta$ by removing solid crossings greater than $\cp$.  Then $\gamma^{\pm}_{\br,c,\cp} = \gamma^{\pm}_{\br',c,\cp}$.
\end{enumerate}
\end{lemma}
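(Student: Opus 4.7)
The cocharacter $\gamma^\pm_{\br,c,\cp}\in\cochL H$ is determined by its pairings with the fundamental weights: combining $\Delta_{c,\pm k}=(\hrb_c)^{\omega_k}$ from \cref{lemma:hc} with the defining factorization $\hrb_c=\prod_\cp\gamma^\pm_{\br,c,\cp}(x_\cp)$ gives $\langle\omega_k,\gamma^+_{\br,c,\cp}\rangle=\ord(\br;\cp,c,k)$ and $\langle\omega_k,\gamma^-_{\br,c,\cp}\rangle=\ord(\br;\cp,c,-k)$. Hence all three parts reduce to equalities among orders of vanishing of grid minors on Deodhar hypersurfaces. Part (1) is then an immediate consequence of \cref{lem:extend-word}: one prepends arbitrary letters to $\br'$ to recover Demazure product $\wo$ (this does not affect the orders, again by \cref{lem:extend-word}), after which the tails of $\br$ and of the extended $\br'$ coincide, and \cref{lem:extend-word} gives the claim.

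For part (2), we inspect each non-mutation move \ref{bm1}--\ref{bm4} as handled in \cref{sec:2braid}. In every case, $\mis\colon\BR_\br\xrasim\BR_{\br'}$ preserves the flags $X_{c'},Y_{c'}$ for $c'$ strictly less than the smallest position involved in the move (see \cref{dfn:mis_B1_B3} and \cref{sec:bm4}), so under the hypothesis that the move involves only indices $>c$ the quantities $Z_c$ and $\hrb_c$ are preserved. Moreover, in each non-mutation case \cref{sec:2braid} establishes that $\mis^\ast\x_{\br'}$ coincides with $\x_\br$ up to the natural relabeling $\pi$ of indices. Comparing the two factorizations $\hrb_c=\prod_\cp\gamma^\pm_{\br,c,\cp}(x_\cp)=\prod_{\cp'}\gamma^\pm_{\br',c,\cp'}(\mis^\ast x_{\cp'})$ then yields $\gamma^\pm_{\br,c,\cp}=\gamma^\pm_{\br',c,\pi(\cp)}$ by uniqueness.

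Part (3) is the main difficulty; by iteration it suffices to remove a single solid crossing $c_0>\cp$. Iterating \cref{lemma:tpar} and \cref{cor:h_stays} yields an explicit product formula
\[
\hr_c=\prod_{c'>c,\,c'\in\Jo}\alphacheck_{W_{c,c'}\alpha_{c'}}(t_{c'}),
\]
in which $W_{c,c'}\in W$ is built from the hollow crossings in $(c,c')$ carrying positive letters in $I$, and $\alpha_{c'}\in\{\alpha_{i_{c'}},u_{c'}^{-1}\alpha_{|i_{c'}|}\}$ depending on the sign of $i_{c'}$; an analogous formula holds for $\hb_c$. A direct comparison of the recurrences in \cref{dfn:pds,def:almostPos} shows that $u'_c=u_c$ for $c\leq c_0-2$, $u'_c=u_{c+1}$ for $c\geq c_0-1$, and the analogous deletion identity holds for the almost positive sequence $v^{\<\cp\>}$ whenever $\cp<c_0$. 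Consequently the Weyl-group elements $W_{c,c'}$ and roots $\alpha_{c'}$ coincide in $\br$ and $\br'$ for every $c'\neq c_0$ (after the natural position relabeling), and $\ord_{V_\cp^{(\br)}}t_{c_0}=0$ for $\cp<c_0$ (immediate from \cref{prop:triang} applied to $t_{c_0}=\Delta_{c_0-1,i_{c_0}}/\Delta_{c_0,i_{c_0}}$). The desired identity $\gamma^\pm_{\br,c,\cp}=\gamma^\pm_{\br',c,\cp}$ therefore reduces to the equality $\ord_{V_\cp^{(\br)}}t_{c'}=\ord_{V_\cp^{(\br')}}t_{c'}$ for all $c'\neq c_0$ and $\cp<c_0$.

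This last reduction will be the main obstacle. Since each $t_{c'}$ is a ratio of grid minors, it amounts to an identity of orders of grid minors on the Deodhar hypersurfaces in $\br$ versus $\br'$. The plan is to establish it via a local identification of generic neighborhoods of $V_\cp^{(\br)}$ and $V_\cp^{(\br')}$: the parametrization~\eqref{eq:propagate_left} exhibits $\BigBR_\br$ and $\BigBR_{\br'}$ as affine parameter spaces differing only in the extra coordinate $t'_{c_0}$, and by a variant of the proof of \cref{prop:V_d_irr}, the generic part of $V_\cp^{(\br)}$ is fibered by this coordinate over the generic part of $V_\cp^{(\br')}$. Although the induced identification of flags is non-trivial (the naive flag-forgetting map fails, in the spirit of \cref{sec:bm5}), the grid minors $\Delta_{c,k}$ for $c\leq\cp$ should descend to the base since their dependence on $t'_{c_0}$ enters only through a unit on a neighborhood of $V_\cp$; this then gives the required equality of orders.
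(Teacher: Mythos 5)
Parts (1) and (2) of your argument are fine and essentially agree with the paper's, which disposes of them in one line via \cref{lem:extend-word} and the definitions (your extra care about restoring the Demazure product in part (1) is a reasonable precaution). The problem is part (3), which you yourself flag as ``the main obstacle'': the final paragraph is a plan, not a proof, and the plan does not work as stated. You propose to delete an arbitrary solid crossing $c_0>\cp$ and to identify a generic neighborhood of $V_\cp$ in $\BigBR_\br$ with a product of $\Spec \C[t'_{c_0}]$ and a neighborhood of $V_\cp$ in $\BigBR_{\br'}$, asserting that the grid minors $\grid_{c,k}$ with $c\le \cp$ depend on $t'_{c_0}$ ``only through a unit'' near $V_\cp$. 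That assertion is essentially equivalent to what you are trying to prove, and for a solid crossing sitting in the middle of the word it is not clear: in the parametrization~\eqref{eq:propagate_left} every flag $X_{c''},Y_{c''}$ with $c''<c_0$, hence every grid minor $\grid_{c'',k}$ with $c''<c_0$, genuinely depends on $t'_{c_0}$, and the hypersurface $V_\cp$ is cut out by relative-position conditions at indices $\le\cp<c_0$ that also involve $t'_{c_0}$. You concede that the naive flag-forgetting map fails, but you never supply the identification that replaces it.

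The paper closes exactly this gap by first normalizing the word: using parts (1) and (2) (truncation and non-mutation moves \ref{bm1}--\ref{bm4}) one reduces to the case where the crossing to be removed is the \emph{largest} solid crossing $\cpp$, with $\cp\in\Jmut$ and $i_\cpp=i_{\cpp+1}\in I$, i.e., the deletion happens at a repeated positive letter. In that position the map forgetting $(X_\cpp,Y_\cpp)$, restricted to the complement $W$ of $V_\cpp$, is an honest $\Cast$-bundle over $\BR_{\br''}$, the grid minors satisfy $\pi^*(\grid_c^{\br''})=\grid_c^{\br}$ on the nose, and $V^{\br}_\cp\cap W$ surjects onto $V^{\br''}_\cp$; since both are hypersurfaces, the orders of vanishing agree. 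Your reduction to ``remove one solid crossing anywhere'' skips this normalization, which is where all the content lies; if you first move the crossing into the paper's standard position, your fibration picture becomes correct and the intermediate computation with $\hr_c$ and the elements $W_{c,c'}$ is unnecessary.
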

\begin{proof}
Part~\eqref{lem:alg1} follows from \cref{lem:extend-word}. 

For part~\eqref{lem:alg2}, as explained in \cref{sec:bm1_non_spec,sec:bm4}, applying non-mutation moves \bmref{bm1} and \bmref{bm4} results in a relabeling of the cluster variables. In particular, the pullbacks $V_{\cp'}\pb$ and $\grid_{c,\pm k}\pb$ coincide with $V_\cp$ and $\grid_{c,\pm k}$, respectively. Thus, by~\eqref{eq:grid_vs_gamma}, we get $\gamma^{\pm}_{\br,c,\cp} = \gamma^{\pm}_{\br',c,\cp'}$

  We prove~\eqref{lem:alg3}.  Suppose $\beta$ has a solid crossing $\cpp > \cp$ and assume that $\cpp$ is the largest solid crossing in $\br$. Similarly to \cref{prop:moves_are_mutations_simply_laced}, we may append a double braid word $\br_0$ to the left of $\br$ and $\brp$ so that $\cp\in\Jmut$; by part~\eqref{lem:alg1}, this does not affect $\gamma^{\pm}_{\br,c,\cp}$ or $\gamma^{\pm}_{\br',c,\cp'}$. Applying non-mutation moves \bmref{bm1} and \bmref{bm4} if necessary, we may assume that $i_\cpp\in I$; by part~\eqref{lem:alg2}, this preserves $\gamma^{\pm}_{\br,c,\cp}$ and $\gamma^{\pm}_{\br',c,\cp'}$. Since $\cpp$ is the largest solid crossing, we may apply moves~\ref{bm1}--\ref{bm3} involving hollow indices $\cpp+1,\dots,m$ until we have $ i_\cpp = i_{\cpp+1}$; these moves do not affect $\gamma^{\pm}_{\br,c,\cp}$ or $\gamma^{\pm}_{\br',c,\cp'}$ by \cref{rmk:all_hollow}. From now on, we assume $i_{\cpp} = i_{\cpp+1}\in I$.

  Let $\beta''$ be obtained from $\beta$ by removing the letter $i_\cpp$ from $\beta$.  Let $\Xprime \subset \BR_{\beta}$ be the open subset obtained by removing the Deodhar hypersurface $V_\cpp$, if $\cpp$ is mutable; otherwise, let $\Xprime := \BR_{\beta}$.  The projection $\pi:\Xprime \to \BR_{\beta''}$ given by forgetting the flags $(X_\cpp,Y_\cpp)$ is a fiber bundle with fiber $\Cast$.  We have $\pi^*(\grid_c^{\beta''}) = \grid_c^\beta$ and $\pi$ maps $V^\beta_\cp \cap \Xprime$ surjectively onto $V^{\beta''}_\cp$. (Here we use the superscript to refer to the braid variety on which $\grid_c$ and $V_\cp$ are defined.) Since both $V^\beta_\cp\subset \BR_\br$ and $V^{\beta''}_\cp\subset\BR_{\br''}$ are hypersurfaces,  it follows that the order of vanishing of $\grid_c^\beta$ on $V^\beta_\cp$ is equal to that of $\grid_c^{\beta''}$ on $V^{\beta''}_\cp$.  Repeating this argument, we obtain~\eqref{lem:alg3}.
\end{proof}
\def\rev{{\rm rev}}
Using \cref{lem:alg}, we may assume that $c = 1$, $\cp = m-1$ and $\beta =  (-\b^{*\rev}) \a k k$, where $\a,\b$ are words in $I$, and $(-\b^{*\rev})$ is obtained by reversing $\b$ and applying the map $i \mapsto -i^*$ to each letter, and $k = i_\cp = i_{m} \in I$.  Define 
\begin{equation*}%
  \gamma(\a,k,\b):=\gamma^+_{ (-\b^{*\rev})  \a k k,1,m-1},
\end{equation*}
and let $a$ and $b$ denote the Demazure product of $\a$ and $\b$, respectively.  Recall also from \cref{sec:dist_subexpr} that $*$ denotes Demazure product.

\begin{proposition}\label{prop:alg}
Suppose that \cref{thm:main,thm:moves} hold for $G$. Then the cocharacter $\gamma(\a,k,\b)$ satisfies, and is recursively defined by the following properties.

\begin{enumerate}[label=\Roman*),leftmargin=25pt]
\item\label{algI} We have $\gamma(\a,k,\b) = 0$ if $a = \wo$ or $b= \wo$.
\item\label{algII} The cocharacter $\gamma(\a,k,\b) =\gamma(a,k,b)$ only depends on the Demazure products $a,b$.
\item\label{algIII} We have $\gamma(\a,k,\emptyset) = a \alpha^\vee_k$ if $as_k>a$ and $\gamma(\a,k,\emptyset) =0$ if $as_k<a$.
\item\label{algIV} Suppose that $\a,\b$ are reduced.  We let $\a' = i\a$ and $\b = \b'j$, where the Demazure products satisfy $a' = s_i a > a$ and $b= b's_j > b'$.
\begin{enumerate}[label=(\arabic*)]
\item\label{alg1} If $a'*s_k*b > a *s_k *b$, then $\gamma(\a,k,\b) = s_i \cdot \gamma(\a', k, \b)$.
\item\label{alg2} If $a*s_k*b > a *s_k *b'$, then $\gamma(\a,k,\b) = \gamma( \a, k, \b')$.
\item\label{alg3} If $w:=a'*s_k*b = a *s_k *b = a *s_k *b'$, write $\alpha^\vee= \alpha^\vee_i$ and $\beta^\vee = -w^{-1} \cdot \alpha^\vee_j$.
\begin{enumerate}[label=(3\alph*)]
\item\label{alg3a} Suppose that $\alpha^\vee \neq \beta^\vee$.  Then $\gamma(\a',k,\b) = \gamma(\a,k,\b') + x\alpha^\vee+y\beta^\vee$ for $x,y \in \Z$, and we have $\gamma(\a,k,\b) =  \gamma(\a,k,\b') + y\beta^\vee$.
\item\label{alg3b} Suppose that $\alpha^\vee = \alpha^\vee_i = \beta^\vee$.  Then $\gamma(\a,k,\b) - \gamma(\a',k,\b') \in \Z\alpha_i^\vee$,  and
$$
\langle \omega_i,\gamma(\a,k,\b)\rangle = -\langle \omega_i,\gamma(\a',k,\b') \rangle+ \min\left(\langle \omega_i,\gamma(\a',k,\b)\rangle +\langle \omega_i,\gamma(\a,k,\b')\rangle, - \sum_{l \neq i} a_{il} \langle \omega_l,\gamma(\a',k,\b')\rangle\right).
$$
\end{enumerate}
\end{enumerate}
\end{enumerate}
\end{proposition}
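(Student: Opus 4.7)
The five clauses will be verified individually; together they determine $\gamma$ by induction on $2\ell(\wo) - \ell(a) - \ell(b)$, since clause~\ref{algIV} expresses $\gamma(\a,k,\b)$ in terms of $\gamma$ values at strictly larger Demazure products in one factor, with~\ref{algI} as terminal case and~\ref{algIII} handling an empty $\b$. Clause~\ref{algII} makes it legal to rewrite $\a$ or $\b$ freely as reduced expressions for $a$ or $b$. Throughout, Lemma~\ref{lem:alg} is used to shorten or extend the ambient braid word without changing $\gamma$.

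Clauses~\ref{algI} and~\ref{algII} are disposed of first. For~\ref{algI}, if $a = \wo$, \cref{lem:alg}\eqref{lem:alg2} lets us replace $\a$ by a reduced expression for $\wo$; a direct analysis of the positive distinguished subexpression of $\beta = (-\b^{*\rev})\a k k$ together with the almost-positive sequence at $\cp = m-1$ shows that either $\cp$ is hollow or the mutability condition $v^{\langle\cp\rangle}_0 = \id$ fails, so $\gamma = 0$ tautologically. The case $b=\wo$ is symmetric via the $-I$ side. For~\ref{algII}, any two words in $I^*$ with the same Demazure product are connected by commutations, short braid moves, and insertions/removals of hollow pairs $ii$---all occurring strictly to the left of the final $kk$. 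By the analysis of \cref{sec:2braid} these are non-mutation~\bmref{bm1}--\bmref{bm3} moves whose isomorphisms pull back $x_\cp$ to $x_\cp$ up to a Laurent monomial in frozens, so $\gamma$ is unchanged.

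Clause~\ref{algIII} follows by direct computation. Take $\b = \emptyset$ and $\a$ reduced for $a$, so every letter of $\a$ is solid in $\beta = \a k k$. Position $m$ is hollow (since $\wo s_k < \wo$), and position $\cp = m-1$ is solid with $u^{(\cp)} = \wo s_k$; the condition $a s_k > a$ is then equivalent to $u^{(0)} = \id$, i.e.\ to mutability of $x_\cp$. Iterating \cref{lemma:tpar} from $h^+_m = 1$, each solid letter $i_c$ contributes a factor $\alpha^\vee_{i_c}(t_c)$ conjugated by $u^{(c)}$; the variable $t_\cp = x_\cp$ enters $h^+_1$ exactly through the factor $a\cdot\alpha^\vee_k(t_\cp)$, giving $\gamma = a\alpha^\vee_k$. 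When $a s_k < a$, position $\cp$ is hollow and $\gamma = 0$.

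The recursive step~\ref{algIV} is the substantive content. In case~\ref{alg1}, the hypothesis $a'*s_k*b > a*s_k*b$ forces the new initial letter $i$ of $i\a$ to be solid in $\beta^{\text{new}} = (-\b^{*\rev})(i\a)kk$; \cref{lemma:tpar} gives $h^{+,\text{new}}_1 = \alpha^\vee_i(t_1)\cdot h^{+,\text{new}}_2$, and a comparison of PDSs shows $h^{+,\text{new}}_2 = s_i\cdot h^{+,\text{old}}_1$, whence $\gamma(\a,k,\b) = s_i\cdot\gamma(\a',k,\b)$ since the $t_1$-factor is independent of $x_\cp$. Case~\ref{alg2} is parallel using the negative side and \cref{cor:h_stays}: the saturating extension of $\b$ introduces a hollow crossing leaving $h^+_1$ unchanged on the torus. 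The main obstacle is case~\ref{alg3}. In~\ref{alg3a}, linear independence of $\alpha^\vee$ and $\beta^\vee$ lets us isolate the $\beta^\vee$-contribution from the joint cocharacter difference and recover $\gamma(\a,k,\b) = \gamma(\a,k,\b') + y\beta^\vee$. The truly hard case is~\ref{alg3b}: here the two extensions collide along a single coroot direction, and the correct coefficient of $\omega_i$ in $\gamma(\a,k,\b)$ comes from tropicalizing the cluster exchange relation $x_\cp x'_\cp = M_1 + M_2$ at $\cp$. By \cref{thm:main}, this exchange is geometrically realized by the~\bmref{bm1} or~\bmref{bm3} move relating $(\a,\b)$ to $(\a',\b')$, and the two monomials $M_1, M_2$ are identified via \cref{prop:det_B1} and~\cref{prop:det_B3}; applying \cref{prop:triang} to extract the $x_\cp$-valuation of each side produces the min formula in the statement. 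Pinning down the correct exchange binomial in this collision case and extracting its tropicalization is the technical heart of the argument.
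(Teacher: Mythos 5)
Your outline has the right skeleton (verify each clause, track $h^{\pm}$ via \cref{lemma:tpar}, use \cref{lem:alg} and the double braid moves, and handle \ref{alg3b} by taking orders of vanishing in an exchange relation), but several of the individual verifications rest on claims that are false, and in a way that inverts the mechanism the argument actually uses. The recurring problem is the solid/hollow combinatorics, which is not cosmetic because \cref{lemma:tpar} treats the two cases oppositely: a hollow positive letter \emph{conjugates} $h^+$ by $s_{i_c}$, while a solid one multiplies it by $\alpha^\vee_{i_c}(t_c)$ with no conjugation. In clause \ref{algIII}, for $\a$ reduced with $as_k>a$ one has $d(\a kk)=1$, so $\cp=m-1$ is the \emph{only} solid crossing and every letter of $\a$ is hollow; the factor $a\alpha^\vee_k$ is the composite of those hollow conjugations, not a contribution of ``solid letters conjugated by $u^{(c)}$'' as you write. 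In \ref{alg1} the added letter is likewise hollow under the hypothesis $a'*s_k*b>a*s_k*b$ (one checks it amounts to $s_i(as_k)>as_k$), and this hollowness is precisely the source of the $s_i\cdot(-)$ in the formula; if the letter were solid, as you claim, it would introduce a new cluster variable rather than a conjugation, and your intermediate identity $h^{+,\text{new}}_2=s_i\cdot h^{+,\text{old}}_1$ has no justification. Clause \ref{algI} is worse: $\cp=m-1$ is always solid, and when $a=\wo$ the almost-positive sequence gives $\vp0\apd\cp=\id$, so $\cp$ is in fact \emph{mutable}; and in any case frozenness would not give $\gamma=0$ ``tautologically'' --- clause \ref{algIII} itself produces $\gamma=a\alpha^\vee_k\neq 0$ for a frozen $\cp$. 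The paper's argument for \ref{algI} is geometric: a generic point of $V_\cp$ still satisfies $Y_0\Rwrel{\wo}X_0$, so the relevant grid minors do not vanish along $V_\cp$.

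Two further gaps. For clause \ref{algII}, insertion or removal of a pair $ii$ is not among \ref{bm1}--\ref{bm5} and changes $\dim\BR_\br$ by two, so it is not an operation whose effect on $\gamma$ you control; the actual proof first uses \ref{bm4} and \ref{bm1} to transport a letter $-k^*$ to the left of $\a$ (non-mutation moves because $as_k>a$), so that the distinguished crossing sits to the \emph{left} of $\a$, and only then invokes part~\eqref{lem:alg3} of \cref{lem:alg} --- which removes solid crossings \emph{greater} than $\cp$, whereas the crossings of $\a$ originally lie at positions less than $\cp=m-1$. For \ref{alg3b}, the exchange relation to tropicalize is the one attached to the solid-special \ref{bm1} swap of $i$ and $-j^*$ at the front of the word, not an exchange ``at $\cp$''; the min formula comes from applying $\ord_{V_\cp}$ to \eqref{eq:M1identity} and upgrading the inequality \eqref{eq:ord_geq_B1_special} to an equality via irreducibility of the mutated variable (\cref{prop:irreducible}), which is exactly where the hypothesis that \cref{thm:main,thm:moves} hold enters. (A smaller point: your induction measure $2\ell(\wo)-\ell(a)-\ell(b)$ increases in case \ref{alg2}; the recursion terminates because $(\ell(\wo)-\ell(a))+\ell(b)$ strictly decreases.)
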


\begin{proof}
We first argue that the stated properties determine $\gamma(\a,k,\b)$.  By~\ref{algI} and~\ref{algIII}, we know $\gamma(\a,k,\b)$ when $a = \wo$ or $b = \id$.  If $a \neq \wo$ and $b \neq\id$, property~\ref{algIV} allows us to express $\gamma(\a,k,\b)$ in terms of $\gamma(\a',k,\b),\gamma(\a,k,\b'),\gamma(\a',k,\b')$ where $a' > a$ and $b'<b$.  Thus, all values of $\gamma(\a,k,\b)$ are determined.  We now prove~\ref{algI}--\ref{algIV}.

Suppose that  $a = \wo$ or $b= \wo$.  Then a generic point $(X_\bullet,Y_\bullet)$ in $V_\cp$ satisfies $Y_0 \Rwrel{\wo} X_0$.  It follows that $\grid_1$ does not vanish on $V_\cp$, establishing~\ref{algI}.  

We show~\ref{algII}. It is clear that if $as_k<a$ then $\gamma(\a,k,\b)=0$. Suppose that $as_k>a$. We apply the moves $\br\xrightarrow{\ref{bm4}} (-\b^{*\rev}) \a k(-k^\ast)\xrightarrow{\ref{bm1}}(-\b^{*\rev}) \a (-k^\ast)k \xrightarrow{\ref{bm1}}\cdots\xrightarrow{\ref{bm1}} (-\b^{\ast\rev}) (-k^\ast)\a k$. Since $as_k>a$, these are non-mutation moves, and thus part~\eqref{lem:alg2} of \cref{lem:alg} applies. We may now remove the solid crossings from $\a$ using part~\eqref{lem:alg3} of \cref{lem:alg}, and then reverse the procedure to put $\br$ back into its original form with solid crossings removed from $\a$. 

We prove~\ref{algIII}. If $as_k<a$, we have already shown that $\gamma(\a,k,\emptyset)=0$. Assume $as_k>a$. When $a = \id$, the result follows from \cref{prop:triang}. 
  For $a \neq \id$, we apply induction,~\ref{algII}, and the hollow case of \cref{lemma:tpar}.

We prove~\ref{algIV}.  For~\ref{alg1}, adding the letter $i$ in front of  $(-\b^{*\rev}) \a  k k$ produces a new hollow crossing, and the claim follows from Lemmas~\ref{lemma:tpar} and~\ref{lem:alg}.  Similarly, for~\ref{alg2}, the letter $-j^\ast$ is a hollow crossing in $(-\b^{*\rev}) \a  k k$.  Case~\ref{alg3} holds if both $i$ and $-j^\ast$ are solid crossings in the word $i(-j^\ast)(-(\b')^{*\rev}) \a  k k$.  If swapping the order of $i$ and $-j^\ast$ is a non-mutation move then we are in Case~\ref{alg3a}, and the claim follows from \cref{lemma:tpar} and the linear independence of $\alpha^\vee$ and $\beta^\vee$.  If  swapping the order of $i$ and $-j^\ast$ is a mutation, then we are in Case~\ref{alg3b}, and the claim follows 
from~\eqref{eq:ord=ord}
 and the assumption that \cref{thm:main,thm:moves} have been shown for $G$.
\end{proof}

The algorithm has been implemented at \cite{GalTut}, where some examples can be found.

\begin{remark}\label{rmk:logic}
The logical dependencies in our proof are summarized as follows. In \cref{sec:2braid}, we give a complete proof of \cref{thm:main,thm:moves} for the case when $G$ is simply-laced. Thus, \cref{prop:alg} applies in this case. The proof for the case when $G$ is multiply-laced is given in \cref{sec:mult_laced}; it depends on \cref{prop:alg}, but only invokes it for the simply-laced group~$\UG$.
\end{remark}

The following result follows from our algorithm, but we have been unable to show it directly from Deodhar geometry.
\begin{corollary}
Let $\iota: \BR_{\br} \hookrightarrow \BR_{\brl}$ and $\relab:[\tilde m] \to [m]$ be as in \cref{sec:folding_braid,sec:mult_laced}.  Then for each $\cp\in \Jo$ and $\ucp\in\relab^{-1}(\cp)$, we have $\iota^\ast(\ux_\ucp)=x_\cp$. 
\end{corollary}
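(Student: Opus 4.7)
My plan is to verify $\iota^\ast\ux_\ucp = x_\cp$ by showing that both sides are characters of the Deodhar torus $T_\br$ with the same orders of vanishing on every Deodhar hypersurface $V_\cpp\subset\BR_\br$, $\cpp\in\Jo$. The right-hand side is characterized this way by \cref{propdef:intro_cluster}. For the left-hand side, I will first use the fact that $\ux_\ucp$ is a Laurent monomial in the chamber minors $\ucrossing{\uc'}$ of $\BR_\brl$ (by \cref{prop:deodharIsTorus} applied to $\brl$), and that $\iota^\ast \ucrossing{\uc'}=\crossing{\relab(\uc')}$ by \cref{lemma:iota_crossing}. So $\iota^\ast\ux_\ucp$ is a Laurent monomial in the chamber minors $\{\crossing{c'}\}_{c'\in\Jo}$ of $\BR_\br$, and hence a character of $T_\br$.

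With Step 1 in hand, expand $\ux_\ucp=\prod_{\uc'\in\UJo}\ucrossing{\uc'}^{m_{\uc'}}$, where the integers $m_{\uc'}$ are determined by inverting the upper-unitriangular system $\sum_{\uc'} m_{\uc'}\ord_{V'_{\ucpp'}}\ucrossing{\uc'}=\delta_{\ucp,\ucpp'}$ coming from \cref{prop:triang} for $\brl$. Then $\iota^\ast\ux_\ucp=\prod_{c'\in\Jo}\crossing{c'}^{n_{c'}}$ with $n_{c'}:=\sum_{\uc'\in\relab^{-1}(c')}m_{\uc'}$. Thus $\ord_{V_\cpp}(\iota^\ast\ux_\ucp)=\sum_{c'}n_{c'}\ord_{V_\cpp}\crossing{c'}$. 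To conclude the equality with $\delta_{\cp,\cpp}$, it will be enough to establish the folding identity
\[
\ord_{V_\cpp}\crossing{c'}\;=\;\ord_{V'_{\ucpp}}\ucrossing{\uc'}\quad\text{for any $\uc'\in\relab^{-1}(c')$ and any (fixed) $\ucpp\in\relab^{-1}(\cpp)$,}
\]
(and more generally the analogous identity $\ord_{V_\cpp}\grid_{c,k}=\ord_{V'_{\ucpp}}\ugrid_{\uc,\uk}$ for all compatible lifts). Indeed, substituting this into the sum above turns $\ord_{V_\cpp}(\iota^\ast\ux_\ucp)$ into a sum of $\sum_{\uc'\in\relab^{-1}(c')}m_{\uc'}\ord_{V'_{\ucpp}}\ucrossing{\uc'}$, which by the characterization of $\ux_\ucp$ collapses to $\delta_{\ucp,\ucpp}$, and this equals $\delta_{\cp,\cpp}$ since $\relab$ sends $\ucp\mapsto\cp$ and $\ucpp\mapsto\cpp$.

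The folding identity above will be proved by running the recursion of \cref{prop:alg} in parallel for $\br$ in the root system $\Phi$ of $G$ and for $\brl$ in the simply-laced root system $\UPhi$ of $\UG$, and verifying that it commutes with folding. Demazure products transport correctly via $\iota:W\xrightarrow{\sim}\UW^\sigma$ (see \eqref{eq:iota_W}), and the root-system data match via $\iota^\ast\uom_\uk=\om_k$ and $\iota_\ast\ach_k=\sum_{\uk\in\orb[k]}\uach_\uk$, together with \eqref{eq:d_i=|orb|}. Cases \ref{algI}, \ref{algII}, \ref{algIII} and \ref{alg1}, \ref{alg2}, \ref{alg3a} of the recursion are linear in the root-data inputs and follow immediately, because the quantities $\langle\om_k,\gamma\rangle$ involved pull back under folding in one step using $a_{ij}=\sum_{\uj\in\orb[j]}\da_{\ui\uj}$.

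The main obstacle is the compatibility of case \ref{alg3b}, whose output involves the $\min$ of two expressions. The sum $-\sum_{l\ne i}a_{il}\langle\om_l,\gamma(\a',k,\b')\rangle$ unfolds as $-\sum_{\ul\ne\ui}\da_{\ui\ul}\langle\uom_\ul,\gamma(\ua',\uk,\ub')\rangle$ because for $\ul\in\orb[i]\setminus\{\ui\}$ the simple reflections $\us_\ui,\us_\ul$ commute (they lie in a common $\sigma$-orbit) and so $\da_{\ui\ul}=0$; by induction on the length of the braid word, each $\langle\om_l,\gamma\rangle$ agrees with the corresponding $\langle\uom_\ul,\gamma\rangle$. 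The remaining subtlety is that the $\min$ must be attained on the same branch in $G$ as in $\UG$; this is an invariant of the parallel inductive book-keeping, and is where the proof genuinely relies on running both recursions together rather than invoking folding as a black box. Once this case-by-case verification is complete, the folding identity follows, and with it the desired equality $\iota^\ast\ux_\ucp=x_\cp$.
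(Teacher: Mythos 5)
Your proposal follows essentially the same route as the paper: reduce the identity $\iota^\ast(\ux_\ucp)=x_\cp$ to the statement that the orders of vanishing of the grid/chamber minors on Deodhar hypersurfaces are preserved under folding (by inverting the unitriangular order matrix of \cref{prop:triang}), and then establish that statement by running the recursion of \cref{prop:alg} for $\br$ and for $\brl$ and matching the outputs. The paper's own proof is in fact terser than yours --- it simply asserts that the order-of-vanishing identity ``follows from applying \cref{prop:alg} to $\BR_\br$ and $\BR_{\brl}$ separately'' --- so your additional bookkeeping, including the honest flag that the $\min$-branch in case~\ref{alg3b} is the one place where the two recursions must genuinely be tracked in parallel (one step for $\br$ corresponding to a block of $|\orb[i]|$ steps for $\brl$), is consistent with and somewhat more explicit than what the authors wrote, though that case is asserted rather than fully verified in both your write-up and theirs.
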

\begin{proof}
With notation as in the proof of \cref{prop:quasi-pullback-and-geo-seed}, it suffices to show that $\ord_{V_\cp}\grid_{c,k}=\ord_{V'_{\ucp}} \ugrid_{c,\uk}$ for $c \leq e$, $k\in I$, and $\uk\in\orb[k]$.  This follows from applying \cref{prop:alg} to $\BR_\br$ and $\BR_{\brl}$ separately.
\end{proof}

\bibliographystyle{alpha}
\bibliography{clustergen}

\end{document}